\definecolor{aleacolor}{rgb}{0.16,0.59,0.78}
\newcommand*\colvec[1]{
	\global\colveccount#1
	\begin{pmatrix}
		\colvecnext
	}
	\def\colvecnext#1{
		#1
		\global\advance\colveccount-1
		\ifnum\colveccount>0
		\\
		\expandafter\colvecnext
		\else
	\end{pmatrix}
	\fi
}
\newcommand{\ndN}{\mathbb{N}}
\newcommand{\ndZ}{\mathbb{Z}}
\newcommand{\ndR}{\mathbb{R}}
\newcommand{\ndC}{\mathbb{C}}
\newcommand{\ndT}{\mathbb{T}}
\newcommand{\ndF}{\mathbb{F}}
\renewcommand{\Pr}[1]{\mathbb{P}(#1)}
\newcommand{\Prb}[1]{\mathbb{P}\left(#1\right)}
\newcommand{\Ex}[1]{\mathbb{E}[#1]}
\newcommand{\Exb}[1]{\mathbb{E}\left[#1\right]}
\newcommand{\one}{{\mathbbm{1}}}
\newcommand{\convdis}{\,{\buildrel d \over \longrightarrow}\,}
\newcommand{\convp}{\,{\buildrel p \over \longrightarrow}\,}
\newcommand{\convas}{\,{\buildrel a.s. \over \longrightarrow}\,}
\newcommand{\eqdist}{\,{\buildrel d \over =}\,}
\newcommand{\He}{\mathrm{H}}
\newcommand{\cA}{\mathscr{A}}
\newcommand{\cB}{\mathscr{B}}
\newcommand{\cC}{\mathscr{C}}
\newcommand{\cD}{\mathscr{D}}
\newcommand{\cE}{\mathscr{E}}
\newcommand{\cF}{\mathscr{F}}
\newcommand{\cG}{\mathscr{G}}
\newcommand{\cH}{\mathscr{H}}
\newcommand{\cJ}{\mathscr{T}}
\newcommand{\cL}{\mathscr{L}}
\newcommand{\cM}{\mathscr{M}}
\newcommand{\cT}{\mathcal{T}}
\newcommand{\cU}{\mathscr{U}}
\newcommand{\V}{\mathrm{V}}
\newcommand{\F}{\mathrm{F}}
\newcommand{\E}{\mathrm{E}}
\newcommand{\Inn}{\mathrm{Inn}}
\newcommand{\mfB}{\mathfrak{B}}
\newtheorem{theorem}{Theorem}[section]
\newtheorem{corollary}[theorem]{Corollary}
\newtheorem{proposition}[theorem]{Proposition}
\newtheorem{lemma}[theorem]{Lemma}
\newtheorem{definition}[theorem]{Definition}
\newtheorem{remark}[theorem]{Remark}
\numberwithin{equation}{section}
\title{\textbf{First-passage percolation on random simple triangulations}}
\date{}
\author{Benedikt Stufler}
\address[Benedikt Stufler]{Vienna University of Technology}
\email{benedikt.stufler at tuwien.ac.at}
\begin{document}

\vspace {-0.5cm}

\begin{abstract}
We study first-passage percolation on random simple triangulations and their dual maps with independent identically distributed link weights. Our main result shows that the first-passage percolation distance concentrates in an $o_p(n^{1/4})$ window around a constant multiple of the graph distance. 
\end{abstract}


\maketitle

\section{Introduction}

The first-passage percolation distance $d_{\mathrm{fpp}}^G$ on a connected graph $G$ assigns an independent copy of a random link-weight $\iota>0$ to each edge of the graph. The weights on the edges are interpreted as lengths, and the first-passage percolation distance between any two points is given by the minimal sum of weights along joining paths. 

Our main result studies the first-passage percolation metric on random simple triangulations. That is, planar maps without multi-edge or loops, with all faces having degree $3$.

\begin{theorem}
	\label{te:main0}
	Let $\cT_n$ denote the uniform simple triangulation with $n+1$ vertices. Suppose that $\iota$ has finite exponential moments and that there exists a constant $\kappa>0$ such that $\Pr{\iota \ge \kappa} = 1$. Then there exists a constant~$c_{\mathrm{fpp}}^\cT>0$ such that
	\[
	n^{-1/4} \sup_{x,y \in \V(\cT_n)} \left| d_{\mathrm{fpp}}^{\cT_n}(x,y) - c_{\mathrm{fpp}}^\cT d_{\cT_n}(x,y) \right| \convp 0
	\]
	as $n$ tends to infinity.
\end{theorem}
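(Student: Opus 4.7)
The plan is to identify $c_{\mathrm{fpp}}^\cT$ as an asymptotic fpp-speed on the local (Benjamini--Schramm) limit $\cT_\infty$ of $\cT_n$, namely the Uniform Infinite Planar Simple Triangulation, rooted at a vertex $\rho$ and decorated with independent link-weights. Setting $F(r) := d_{\mathrm{fpp}}^{\cT_\infty}(\rho,\partial B_r(\rho))$, I would combine the subadditivity of $F$ with the Markovian peeling-by-layers of $\cT_\infty$ to obtain, via Kingman's subadditive ergodic theorem or a direct renewal argument, a deterministic constant $c_{\mathrm{fpp}}^\cT \in [\kappa,\Ex{\iota}]$ with $F(r)/r \to c_{\mathrm{fpp}}^\cT$ almost surely.

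\textbf{Concentration on the local limit.} The next step is to upgrade this law of large numbers to a quantitative tail estimate
\[
\Prb{\sup_{x \in B_r(\rho)} \bigl| d_{\mathrm{fpp}}^{\cT_\infty}(\rho,x) - c_{\mathrm{fpp}}^\cT d_{\cT_\infty}(\rho,x) \bigr| \ge r^{1/2}(\log r)^{C}} \le r^{-A},
\]
valid for any prescribed $A>0$ and a suitable $C=C(A)$. The exponential-moment assumption together with the uniform lower bound $\iota \ge \kappa$ permit a martingale/Azuma-type argument along the peeling of $\cT_\infty$: each successive peeling layer contributes an additive fpp-increment with subexponential fluctuation around $c_{\mathrm{fpp}}^\cT$, while $\kappa>0$ rules out fpp-geodesics that detour significantly around the corresponding graph-geodesic.

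\textbf{Transfer to $\cT_n$ and uniformity.} A quantitative form of the local convergence $\cT_n \to \cT_\infty$ --- absolute continuity of balls of sub-diameter graph-radius in $\cT_n$ with respect to their analogues in $\cT_\infty$, with controlled Radon--Nikodym density --- transfers the tail estimate to $\cT_n$, covering distances up to the full scale $\asymp n^{1/4}$ by stitching together overlapping local segments. To obtain the supremum in the theorem I would then pick a net $\cN_n \subset \V(\cT_n)$ of size $n^{o(1)}$ that is $o(n^{1/4})$-dense in the graph metric, apply the tail estimate via a union bound over $\cN_n \times \cN_n$, and interpolate to an arbitrary pair $(x,y)$ using the deterministic inequality $|d_{\mathrm{fpp}}^{\cT_n}(x,y) - d_{\mathrm{fpp}}^{\cT_n}(x',y')| \le \sum_{e \in \gamma} \iota_e$ along a graph-geodesic $\gamma$ of length $O(d_{\cT_n}(x,x')+d_{\cT_n}(y,y'))$, whose random weight has exponential moments uniformly in $n$.

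\textbf{Main obstacle.} The principal difficulty is the concentration step: the required tail bound on scale $o(n^{1/4})$ must hold simultaneously for all vertices in a ball of radius $\asymp n^{1/4}$ and must survive a union bound over $\Theta(n^2)$ pairs, which is strictly more than Kingman's theorem provides. Executing this relies on exploiting the quasi-independence of successive peeling layers of $\cT_\infty$ beyond mere subadditivity, and on fine control over the probability that an fpp-geodesic drifts far from the corresponding graph-geodesic --- this is where the structural combinatorics of simple triangulations and the specific geometry of the UIPT genuinely enter.
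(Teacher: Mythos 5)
Your first and third steps are broadly aligned with the paper: the constant is indeed identified via Kingman's subadditive ergodic theorem (though the paper does this on the Lower Half-Plane Triangulation $\cL$, not directly on $\cT_\infty$, and then transfers to $\cT_\infty$ via an explicit comparison principle), and an absolute-continuity estimate (Lemma~\ref{le:finbod9}) is exactly how the paper passes from $\cT_\infty$ to $\cT_n$. However, your second step --- the quantitative tail bound of the form $\Prb{|d_{\mathrm{fpp}}^{\cT_\infty}(\rho,x) - c_{\mathrm{fpp}}^\cT d_{\cT_\infty}(\rho,x)| \ge r^{1/2}(\log r)^C} \le r^{-A}$ via an Azuma/martingale argument along peeling layers --- is a genuine gap, and it is precisely the step the paper is constructed to avoid.

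The difficulty is twofold. First, the peeling increments in a random triangulation are not of bounded variance: the layer sizes and the slot-filling triangulations are heavy-tailed, so Azuma's inequality does not apply; Talagrand-type arguments in the spirit of fpp on $\ndZ^d$ would have to contend with the additional randomness of the underlying map and with the fact that the number of edges potentially contributing to a geodesic of graph-length $r$ scales like $r^4$, which naively gives concentration only on scale $r^2$ --- far too weak. Second, your net-and-union-bound scheme \emph{requires} a tail bound strong enough to beat a union over $n^{o(1)}$ or more pairs, which is strictly stronger than the paper ever proves. The paper sidesteps this entirely: Proposition~\ref{pro:yoyo} only asserts convergence in probability (no rate), and this suffices because the paper uses a random mesh of \emph{constant} size $K$ built from i.i.d. uniform vertices $o_n^1,\ldots,o_n^K$. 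The fact that $K$ can be taken finite and independent of $n$ while still being $\epsilon n^{1/4}$-dense with high probability is a consequence of the GHP convergence $(\cT_n, (3/8)^{1/4}n^{-1/4}d_{\cT_n}, \mu_n) \to (\mathbf{M},d_{\mathbf{M}},\mu_{\mathbf{M}})$ to the Brownian map together with the full-support property of $\mu_{\mathbf{M}}$ (this is Inequality~\eqref{eq:approxmesh} and the underlying input from~\cite{zbMATH06812193}). With a constant-size mesh one only needs Proposition~\ref{pro:protypical848}, a statement about a single typical pair of vertices, plus re-rooting invariance --- and that in turn needs only the multi-scale decomposition of Proposition~\ref{pro:yoyo}, not any polynomial tail. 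So the Brownian-map input is doing real work here that concentration alone would not provide with the tools you sketch, and you would need to either supply a genuinely new quantitative concentration argument or incorporate the scaling-limit structure as the paper does.
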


Here $d_{\cT_n}(x,y)$ denotes the graph distance between points $x$ and $y$ of $\cT_n$. The constant $c_{\mathrm{fpp}}^\cT$ depends on the distribution of $\iota$, and determining an explicit expression appears to be a severely difficult problem. However, if the distribution of $\iota$ is explicit, simulations of random simple triangulations may be used to obtain  estimates.\footnote{The author's program \texttt{simtria} provides a highly optimized and efficient sampler for random simple triangulations. It supports multithreading and uses efficient memory management in order to facilitate  large scale simulations. The program is open source and available for free on the github: \url{https://github.com/BenediktStufler/simtria}.}

Similar results for bounded link-weights have been established in~\cite{zbMATH07144469} for  triangulations with loops and multi-edges, and in~\cite{lehericy2019firstpassage} for quadrangulations and unrestricted planar maps. The present work follows closely the proof strategy of~\cite{zbMATH07144469}, see below for more detailed comments on this. Concentration results of this sort are also known for members of the universality class of the Brownian tree. In particular,~\cite{zbMATH06729837} shows such a result for first-passage percolation distances on random outerplanar maps, and ~\cite[Thm. 7.1]{zbMATH06653517} for  random graph from subcritical graph classes like series-parallel-graphs and cacti graphs. Simple triangulations were also shown to admit the Brownian map as Gromov--Hausdorff--Prokhorov scaling limit~\cite{zbMATH06812193}, after rescaling distances by roughly $n^{-1/4}$.

We additionally prove a concentration result for first-passage percolation on random $3$-connected cubic planar graphs. Cubic planar graphs and related classes have received increasing attention in probabilistic and combinatorial literature. The asymptotic number of connected and $2$-connected cubic planar graphs was determined in~\cite{zbMATH05122852, zbMATH07213288} via singularity analysis methods and combinatorial decompositions. The shape of random cubic planar graphs was studied in~\cite{zbMATH06639396}, with a focus on the number of perfect matchings. The  work~\cite{stufler2022uniform} established a Uniform Infinite Cubic Planar Graph of as quenched local limit of random cubic planar graphs.

A  graph is called $3$-connected, if it is connected, has at least $4$ vertices,  and removing any pair of vertices does not disconnected the graph. In \cite{10.5565/PUBLMAT6612213}  the typical number of triangles in $3$-connected cubic planar graphs was determined. Further research directions include $4$-regular planar graphs~\cite{zbMATH06827273}, cubic graphs on general orientable surfaces~\cite{zbMATH06841874}, and cubic planar maps~\cite{coreprep}.

In particular, the work~\cite{zbMATH06556653} studies the asymptotic geometric shape of cubic planar maps by determining  the geodesic two- and three-point functions, after assigning independent random lengths with an exponential distribution to each edge. 

Our second main result is concerned with  first-passage percolation distances  on the model of $3$-connected cubic planar graphs:

\begin{theorem}
	\label{te:main}
	Let $\cC_n$ denote a uniformly selected $3$-connected cubic planar graph with $n$ labelled vertices. Suppose that $\iota$ has finite exponential moments and that there exists a constant $\kappa>0$ such that $\Pr{\iota \ge \kappa} = 1$. Then there exists a constant $c_{\mathrm{fpp}}^\cC>0$ such that
	\[
		n^{-1/4} \sup_{x,y \in \V(\cC(_n))} \left| d_{\mathrm{fpp}}^{\cC_n} (x,y) - c_{\mathrm{fpp}}^\cC d_{\cC_n}(x,y) \right| \convp 0
	\]
	as $n \in 2\ndN$ tends to infinity.
\end{theorem}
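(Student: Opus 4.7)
The approach I would take is to transfer the result from Theorem~\ref{te:main0} via planar duality. The abstract advertises that the paper's main results cover both simple triangulations \emph{and their dual maps}, so I would assume the paper proves a concentration statement for FPP on the dual map of a uniform random simple triangulation. By Whitney's theorem, any $3$-connected planar graph admits a unique planar embedding up to reflection, so a uniform random $3$-connected cubic planar graph $\cC_n$ is essentially a uniform random $3$-connected cubic planar map, whose planar dual is a $3$-connected simple triangulation. My plan is to make this correspondence precise and use it to pull back the dual-map version of Theorem~\ref{te:main0}.

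Concretely, by Euler's formula a cubic planar graph on $n$ vertices (with $n$ even) has $3n/2$ edges and $n/2+2$ faces, so its dual is a simple triangulation $\cT^*$ on $n/2+2$ vertices. Under planar duality, vertices of $\cC_n$ correspond bijectively to triangular faces of $\cT^*$ and edges of $\cC_n$ correspond bijectively to edges of $\cT^*$. Transporting the i.i.d.\ link weights $\iota$ across this edge-bijection, $d_{\mathrm{fpp}}^{\cC_n}(x,y)$ equals the FPP distance between the faces of $\cT^*$ corresponding to $x$ and $y$, while the graph distance $d_{\cC_n}(x,y)$ equals the dual graph distance on $\cT^*$ (the number of edges crossed by a shortest face-to-face walk). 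Thus Theorem~\ref{te:main} reduces to the concentration of FPP between faces of a uniform random simple triangulation around the corresponding dual graph distance, which is precisely the dual-map analog of Theorem~\ref{te:main0}, and the constant $c_{\mathrm{fpp}}^\cC$ is inherited directly from that statement.

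The main obstacle, in my view, is to verify that the random triangulation $\cT^*$ obtained from $\cC_n$ via the unique-embedding-then-dualize construction has (asymptotically) the same law as the random simple triangulation treated in the dual-map version of Theorem~\ref{te:main0}. This requires a careful enumeration comparison between labelled $3$-connected cubic planar graphs (the relevant counts are available via the decompositions of~\cite{zbMATH05122852, zbMATH06639396}) and the rooted-map models underlying the main theorem, while accounting for vertex labels, reflection symmetry, rooting conventions, and the (generically trivial) automorphism groups of $3$-connected structures. A perhaps cleaner route is to invoke the quenched local limit of~\cite{stufler2022uniform}: once one identifies the Benjamini--Schramm limit of $\cC_n$ with the planar dual of the local limit of the simple triangulation, the Efron--Stein/martingale concentration together with the ergodic identification of the linear constant carried out in the proof of Theorem~\ref{te:main0} transfer essentially \emph{verbatim} to the cubic planar graph setting. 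All remaining steps --- exponential moment truncation, subadditivity to compare FPP with graph distance, and the uniform supremum over pairs via a covering argument at scale $n^{1/4}$ --- are identical to those in the triangulation proof.
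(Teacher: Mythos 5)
Your high-level reduction --- Whitney's theorem plus the dual-map bijection between $3$-connected cubic planar maps and $3$-connected triangulations, giving $\cT_n^\dagger \eqdist \cC_{2(n-1)}$, and the bookkeeping identifying $\V(\cC_n)$ with $\F(\cT_n)$, $\E(\cC_n)$ with $\E(\cT_n)$, and $d_{\cC_n}$ with the dual graph distance --- is exactly the paper's starting point. Your worry about a ``careful enumeration comparison'' is overblown: because all vertices of a cubic map have degree $3$, the degree bias from rooting is trivial, and Whitney gives an exact (not merely asymptotic) distributional identification, which the paper states in one line.

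The genuine gap is in what you call ``the dual-map analog of Theorem~\ref{te:main0}''. You are assuming the paper proves a \emph{dual-to-dual} statement: concentration of the face FPP distance $d_{\mathrm{fpp}}^\dagger(f,g)$ around a constant multiple of the \emph{dual} graph distance $d_{\cT_n}^\dagger(f,g)$. That is not what the paper proves. The paper's intermediate result, Theorem~\ref{te:main1}, compares $d_{\mathrm{fpp}}^\dagger(f,g)$ to a constant multiple of the \emph{primal} graph distance $d_{\cT_n}(u,v)$ for $u\triangleleft f$, $v\triangleleft g$ --- which is the natural output of the skeleton decomposition and of subadditivity across layers indexed by primal distance. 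To obtain Theorem~\ref{te:main}, the paper then applies Theorem~\ref{te:main1} \emph{twice}: once with the given link-weights $\iota$, yielding $|d_{\mathrm{fpp}}^\dagger(f,g)-c_{\mathrm{fpp}}^\dagger\,d_{\cT_n}(u,v)|=o_p(n^{1/4})$; and once with constant link-weights $\iota\equiv 1$, yielding $|d_{\cC_n}(f,g)-c_1^\dagger\,d_{\cT_n}(u,v)|=o_p(n^{1/4})$ since the constant-weight FPP distance on the dual is the dual graph distance. Eliminating the primal distance by the triangle inequality gives the statement with $c_{\mathrm{fpp}}^\cC = c_{\mathrm{fpp}}^\dagger/c_1^\dagger$. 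Without this doubling trick you would still need to produce a dual-to-dual concentration bound from scratch, and neither the suggested Benjamini--Schramm limit nor an Efron--Stein-type argument (which the paper does not use) does that for you.
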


We emphasize that critical classes like cubic planar graphs or unrestricted planar graphs do not fall into the setting of subcritical graph classes considered in~\cite{zbMATH06653517}. The main results of the present work play a role in the proof of the main result in~\cite{stufler2022scaling} which establishes the Brownian map as scaling limit of random connected (and not necessarily $3$-connected) cubic planar graphs. Facilitating this application is also the reason why we go the extra mile and treat unbounded light-tailed link weights. See also the forthcoming independent proof~\cite{emt2022} of the scaling limit of cubic planar graphs with similar intermediate results on first-passage percolation distances.

The proof of Theorem~\ref{te:main} makes use of Whitney's theorem, which states that each $3$-connected planar graph has precisely two embeddings into the $2$-sphere. Hence, uniform $3$-connected cubic planar graphs are distributed like uniform $3$-connected cubic planar maps. The dual map construction yields a bijection between this class of planar maps and $3$-connected triangulations. We will show that the first-passage percolation distance on the dual of a random simple triangulation concentrates around a constant multiple of the graph distance in the triangulation. If the link-weights are constants equal to $1$, then the dual distance is equal to the graph distance on the associated $3$-connected cubic planar map. Hence applying this result twice  yields Theorem~\ref{te:main}.

Our proof strategy for simple triangulations follows closely the steps in~\cite{zbMATH07144469} that treats first-passage percolation  on  unrestricted triangulations (admitting loops and multi-edges) and their duals. The main challenge is that additional constraints arise in the skeleton decomposition of simple triangulations. We also have to do without the bijection between type I triangulations and type I triangulations of the $0$-gon (see~\cite[Fig. 2]{zbMATH07144469}), since simple triangulations may not contain loops. Some adaptions are also necessary to treat unbounded link-weights.  Moreover, the numbers of various classes of simple triangulation differ by at least exponential factors from the numbers of unrestricted triangulations. Thus, although it is natural to expect simple triangulations to behave in a similar way as unrestricted triangulations, we really need to go through the details.

\subsection*{Notation}
We  let $\mathbb{N} = \{1, 2, \ldots\}$ denote the collection of positive integers.  All unspecified limits are taken as $n$ becomes large, possibly taking only values in a subset of the natural numbers.  We use $\convdis$ and $\convp$ to denote convergence in distribution and probability. Almost sure convergence is denoted by $\convas$. Equality in distribution is denoted by $\eqdist$.  An event holds with high probability, if its probability tends to $1$ as $n \to \infty$.
For any integer $k \ge 0$ we let $[x^k]f(x) = a_k$ denote the $k$th coefficient of a power series $f(x) = \sum_{i \in \ndZ} a_i x^i$.

\section{Preliminaries}

\subsection{Planar maps}

A \emph{planar map} is an embedding of a connected multi-graph onto the $2$-dimensional sphere, so that edges of the multi-graph are represented by arcs that may only intersect at their endpoints.  The \emph{faces} of the planar map are the connected components created when removing it from the $2$-sphere.  All faces are required to be homeomorphic to  open discs.

Planar maps are viewed up to orientation-preserving homeomorphism. This way, the number of planar maps with a given number of edges is finite. In order to eliminate symmetries, we distinguish and orient a \emph{root edge}. All planar maps we consider in this work are rooted in this way. The origin of the root edge is called the \emph{root vertex}. A planar map is called \emph{simple}, if it has no multi-edges or loops. Using the stereographic projection, we may equivalently draw planar maps in the plane.

Any edge can be thought of as a pair of \emph{half-edges} with opposing directions. The half-edges delimiting a face are its \emph{boundary}. We say the boundary is \emph{simple}, if it is a simple graph-theoretic cycle. This way, the degree of the face is equal to the length of the cycle. 
The number of half-edges on the boundary of a face is its \emph{degree}. This way, any edge that has both sides incident to the same face is counted twice. 

\subsection{Triangulations}

A \emph{triangulation} is a planar map that only has faces of degree $3$. A triangulation without restrictions is called a \emph{type I} triangulation. If it has no loops, it is called a \emph{type II} triangulation. For triangulations this is equivalent to being non-separable. \emph{Simple triangulations} with no loops and no multi-edges are also called~\emph{type III} triangulations. For triangulations with at least four vertices, this is equivalent to being $3$-connected. The reader should beware that this terminology is not used uniformly throughout the literature. See for example the pioneering work by Tutte~\cite{zbMATH03169204} who additionally requires a simple triangulation to have no separating $3$-cycles.

A \emph{triangulation with a boundary} is a planar map $t$ where all faces have degree $3$, except for the face to the right of the root-edge, which is required to be simple. The \emph{boundary} $\partial t$ is boundary of this face and is also called the \emph{bottom cycle}. Letting $p \ge 1$ denote its length, we also say $t$ is a \emph{triangulation of the $p$-gon}. An \emph{inner vertex} of $t$ is a vertex that does not lie on the bottom cycle. The \emph{height} of a vertex $v \in \V(t)$ of $t$ is the graph distance between $v$ and the bottom cycle. That is, the minimal length of a path that originates in $v$ and ends at any of the vertices of the bottom cycle.

\subsection{The dual map construction}

The \emph{dual map} $M^\dagger$ of a planar map $M$ is the  ``red'' planar map constructed by placing a red vertex inside each face of $M$ and then adding for each edge $e$ of $M$ a red edge between the red vertices inside the two faces adjacent to $e$. These faces may be identical, and in this case the corresponding red edge is a loop. The root edge of the dual map $M^\dagger$ is the red edge corresponding to the root-edge of $M$, oriented in a canonical way.
For any vertex $v$ and face $f$ of $M$ we write $v \triangleleft f$ if $v$ is adjacent to $f$.

Let $\iota>0$ denote a positive random variable. The $\iota$-first-passage percolation metric $d_{\mathrm{fpp}}$  on the vertex set $\V(M)$ of a planar map $M$ assigns a weight to each edge according to an independent copy of $\iota$. The distance between any two points is then given by the minimal sum of weights along joining paths. We let $d^\dagger_{\mathrm{fpp}}$ denote the $\iota$-first-passage percolation distance on the dual map $M^\dagger$.

\section{$3$-connected cubic planar graphs and $3$-connected triangulations}

Restricting the dual map construction to $3$-connected triangulations yields a bijection between this class and $3$-connected cubic planar maps. Furthermore, Whitney's theorem ensures that each $3$-connected cubic planar map has precisely two embeddings into the $2$-sphere. Hence, if $\cT_n$ denotes a uniform triangulation with $n+1$ vertices (and consequently $3(n-1)$ edges and $2(n-1)$ faces) then the dual map $\cT_n^\dagger$ is distributed like the uniform random $3$-connected planar graph $\cC_{2(n-1)}$. 

In order to prove Theorem~\ref{te:main} it suffices to show:

\begin{theorem}
	\label{te:main1}
	 Suppose that $\iota$ has finite exponential moments and that there exists a constant $\kappa>0$ with $\Pr{\iota \ge \kappa} = 1$. Then there exists a constant $c_{\mathrm{fpp}}^\dagger>0$ such that
\begin{align*}
	n^{-1/4} \sup_{\substack{u,v \in \cT_n\\ u\triangleleft f, v\triangleleft g }} \left | c_{\mathrm{fpp}}^\dagger d_{\cT_n}(u,v) -  d_{\mathrm{fpp}}^\dagger(f,g)\right | \convp 0.
\end{align*}
	as $n$ tends to infinity.
\end{theorem}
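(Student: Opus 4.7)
The plan is to follow the strategy of~\cite{zbMATH07144469} for unrestricted triangulations, adapted to the simple (type III) setting. The heart of the argument is a slice decomposition of simple triangulations by graph distance, used to prove two-point concentration of $d_{\mathrm{fpp}}^\dagger$ around $c_{\mathrm{fpp}}^\dagger d_{\cT_n}$, which is then upgraded to the uniform estimate by a net argument.

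First I would set up a Krikun-type skeleton decomposition for simple triangulations along the lines of~\cite{zbMATH06812193}, establishing that the perimeter process $(P_r)_{r\ge 0}$ of the graph-distance hulls around a marked vertex forms a Markov chain with explicit transitions, and that, conditional on this chain, the annuli (``slices'') between consecutive hulls are independent and distributed as concatenations of downward triangles whose slots are filled with independent simple triangulations of smaller polygons. This gives a branching-type description of $\cT_n$ read outward from a marked vertex, with quantitative tail bounds on slice perimeters inherited from the transitions of $(P_r)$.

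Next, I would fix two vertices $u,v \in \cT_n$ with $D := d_{\cT_n}(u,v)$ and faces $f \triangleleft u$, $g \triangleleft v$. Any dual path from $f$ to $g$ must cross each of the $D$ graph-distance layers separating $u$ from $v$, and the slice decomposition splits $d_{\mathrm{fpp}}^\dagger(f,g)$ into a sum $\sum_r X_r$ of local contributions, one per slice. Using the Markov/independence structure and the exponential-moment assumption on $\iota$, I would show that the $X_r$ can be coupled with an i.i.d.\ sequence of light-tailed random variables of common mean $c_{\mathrm{fpp}}^\dagger := \Ex{X}$, errors being controlled by tail estimates on large slices. A Bernstein-type inequality then yields
\[
\Prb{\, \left|\, d_{\mathrm{fpp}}^\dagger(f,g) - c_{\mathrm{fpp}}^\dagger D\, \right| > n^{1/8}\log n \,} \le \exp\bigl(-(\log n)^2\bigr)
\]
whenever $D \le C n^{1/4}$, which holds with high probability by the scaling limit of~\cite{zbMATH06812193}. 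To upgrade to the supremum, I would choose a net $v_1,\ldots,v_N \in \V(\cT_n)$ with $N = n^{o(1)}$ that $\delta n^{1/4}$-covers $\cT_n$ in graph distance (provided by tightness from the scaling limit), apply a union bound over the $N^2$ pairs, and use the triangle inequality on both metrics to interpolate to arbitrary $(u,v,f,g)$: the short-leg contributions are $O_p(\delta n^{1/4})$ for both $d_{\cT_n}$ and $d_{\mathrm{fpp}}^\dagger$, again thanks to the exponential moments of $\iota$.

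The main obstacle, I expect, will be building the simple-triangulation slice decomposition in a form amenable to concentration. In~\cite{zbMATH07144469}, the absence of the simplicity constraint allows for a clean bijection between type I triangulations of the $p$-gon and type I triangulations of the $0$-gon with a marked edge, reducing the slices to a standard branching object; no such reduction is available here, because simple triangulations have no loops or multi-edges, so the simplicity constraint must be respected slice by slice and slot by slot. Secondary challenges are controlling the tails of the perimeter chain in the simple setting (since the relevant counts differ by exponential factors from the unrestricted case, as noted in the introduction) and propagating unbounded but light-tailed link-weights through the concentration estimates via the exponential moment hypothesis on $\iota$.
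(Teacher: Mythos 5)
Your proposal captures the outermost structure of the paper's argument---a Krikun-type hull decomposition, tail control on perimeters, a two-point estimate, and a net---but the engine you propose, an i.i.d.\ slice coupling followed by a Bernstein inequality, has a genuine gap. The first-passage distance $d_{\mathrm{fpp}}^\dagger(f,g)$ does not decompose additively as a sum $\sum_r X_r$ over graph-distance slices: a minimizing dual path visits each layer but can re-enter layers, and even the one-sided crossing-time bound produces slice contributions whose conditional distributions depend on the random slice perimeters, so they are neither independent nor identically distributed inside $\cT_n$. More fundamentally, the definition $c_{\mathrm{fpp}}^\dagger := \Ex{X}$ presupposes a stationary slice law that $\cT_n$ does not provide; the constant only becomes well defined in the translation-invariant lower half-plane triangulation $\cL$, where the paper obtains it via Liggett's form of Kingman's subadditive ergodic theorem (Proposition~\ref{pro:king345}, resting on Lemma~\ref{le:omegar}). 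It is then transported to $\cT_\infty^{(0)}$ through the comparison principle (Proposition~\ref{pro:comparison}) and the annulus estimates (Lemma~\ref{le:preclema}, Corollary~\ref{co:cobetatilde}, Proposition~\ref{pro:primal}), and from there to $\cT_n$ through the absolute-continuity bound of Lemma~\ref{le:finbod9}. Skipping the LHPT/UIPT step leaves $c_{\mathrm{fpp}}^\dagger$ undefined and the concentration claim unsupported.

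Your claimed tail $\exp(-(\log n)^2)$ for deviations of size $n^{1/8}\log n$ is also substantially stronger than what the paper proves or needs: the paper's two-point estimate (Equation~\eqref{eq:aathand}, the analogue of Proposition~\ref{pro:protypical848}) is only a statement that the probability of an $\epsilon n^{1/4}$-deviation tends to zero, and the net consists of a \emph{fixed, finite} number $K$ of uniform random anchor vertices (Inequality~\eqref{eq:approxmesh}, supplied by the Brownian-map scaling limit~\eqref{eq:convsimpleghp}), so the union bound involves only $K^2$ pairs and no super-polynomial tail is required. You correctly flag that the missing bijection between simple triangulations of the $p$-gon and the $0$-gon, and the exponentially different perimeter counts, are obstructions relative to the type~I case; but the resolution in the paper is not a sharper concentration inequality---it is the LHPT/UIPT comparison machinery together with Lemma~\ref{le:stateofthemind}, which supplies the rough a priori bound $d_{\cT_n}^\dagger(f,g) \le \alpha\, d_{\cT_n}(x,y) + n^\epsilon$ via degree control (Corollary~\ref{co:degbound345}, Proposition~\ref{pro:maxdeg}) and the downward-path construction.
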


Indeed, Theorem~\ref{te:main} follows by applying Theorem~\ref{te:main1} twice, once for $\iota$ and once for constant link-weights.

\section{The skeleton decomposition of simple triangulations}

\subsection{Enumerative properties}

As shown by~\cite{zbMATH03169204}, the set $\ndT_{n}$ of triangulations with $n+3$ vertices and hence $2(n+1)$ faces and $3(n+1)$ edges has cardinality
\begin{align}
	\label{eq:tn0}
	\#\ndT_n = \frac{2(4n+1)!}{(n+1)!(3n+2)!}.
\end{align}
For any integer $p \ge 3$, we let $\ndT_{n,p}$ denote the set of all simple (type III) triangulations of the $p$-gon with $n \ge 0$ inner vertices, and hence $3n + 2p - 3$ edges. In particular, $\ndT_n = \ndT_{n,3}$. As determined by~\cite{zbMATH03217678}, 
\begin{align}
	\label{eq:tnp}
	\#\ndT_{n,p} = \frac{2 (2p-3)! (4n + 2p-5)!}{(p-3)!(p-1)!n!(3n+2p - 3)!}.
\end{align}
In \cite[Sec. 2.1]{MR2013797} the following asymptotics were determined. As $n \to \infty$,
\begin{align}
	\label{eq:tnpasymp}
\#\ndT_{n,p} \sim C(p) n^{-5/2} (27/256)^{-n}
\end{align}
with
\begin{align}
	\label{eq:cpasymp}
	C(p) = \frac{(2p-3)!}{3 \sqrt{6\pi} (p-3)!(p-1)!} (16/9)^{p-2}.
\end{align}
As $p \to \infty$,
\begin{align}
	\label{eq:cpasymptotoo}
	C(p) \sim \frac{9 \sqrt{3}}{2048 \pi \sqrt{2}} (9/64)^{-p} \sqrt{p}.
\end{align}
For $p \ge 3$ we set
\begin{align}
	\label{eq:zpdef}
	Z(p) = \sum_{n \ge 0} (27/256)^n \#\ndT_{n,p}.
\end{align}
The sum converges by Equation~\eqref{eq:tnpasymp}. Its exact value was determined in~\cite[Prop. 2.4]{MR2013797}
\begin{align}
	\label{eq:zpexpr}
	Z(p) =  \frac{81(2p-4)!}{128(p-2)!p!} (16/9)^p.
\end{align}
We now define the simple Boltzmann triangulation with a given perimeter:
\begin{definition}[Boltzmann triangulations with a given perimeter]
	\label{def:boltzmann}
	For any integer $p \ge 3$ the \emph{simple Boltzmann triangulation with perimeter $p$} is a random element of $\bigcup_{n \ge 0} \ndT_{n,p}$ that, for all $n \ge 0$, assumes a triangulation $T \in \ndT_{n,p}$ with probability $(27/256)^n / Z(p)$.
\end{definition}

In other words, the simple Boltzmann triangulation has a random number of internal vertices determined by the probability generating function $Z(p)^{-1} \sum_{n \ge 0} (27/256)^n \#\ndT_{n,p} x^n$, and conditional on having size $n$ it is uniformly distribution over $\#\ndT_{n,p}$.

It will be notationally convenient to additionally set
\begin{align}
	Z(2) = 1.
\end{align}
By Equation~\eqref{eq:zpexpr} and standard arguments it follows that
\begin{align}
	\label{eq:zseries}
	\sum_{p \ge 2} Z(p)x^p = \frac{1}{512} \left((9-64 x)^{3/2}+288 x-27\right).
\end{align}
The well-known asymptotic \[
\binom{2n}{n} \sim \frac{4^n}{\sqrt{\pi n}}
\] for the central binomial coefficient implies
\begin{align}
	\label{eq:zpasymp}
	Z(p) \sim \frac{2}{\sqrt{\pi}} p^{-5/2} (64/9)^{p-2} 
\end{align}
as $p \to \infty$. 

The following observation is a version of~\cite[Lem. 1]{zbMATH07144469} for simple triangulations.
\begin{lemma}
	\label{le:eet}
	There exists a constant $c>0$ such that for all $n \ge 1$ and $p \ge 3$
	\[
		{\# \ndT_{n,p}} \le { c C(p) n^{-5/2} (256/27)^{n}}.
	\]
	Furthermore, for each $\alpha>0$ there is a constant $c(\alpha)>0$ with
	\[
			{\# \ndT_{n,p}} \ge { c(\alpha) C(p) n^{-5/2} (256/27)^{n}}.
	\]
	uniformly for all $n \ge 1$ and $p \le \alpha \sqrt{n}$.
\end{lemma}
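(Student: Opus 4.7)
The plan is to apply Stirling's formula directly to the exact expression~\eqref{eq:tnp} and compare with $C(p)n^{-5/2}(256/27)^n$. The factorials $(2p-3)!$, $(p-3)!$ and $(p-1)!$ in~\eqref{eq:tnp} are precisely those entering the definition~\eqref{eq:cpasymp} of $C(p)$, and they cancel cleanly in the ratio
\[
R(n,p) := \frac{\#\ndT_{n,p}}{C(p)\,n^{-5/2}(256/27)^n}.
\]
The task thus reduces to showing $R(n,p) \le c$ uniformly for $n \ge 1$, $p \ge 3$, and $R(n,p) \ge c(\alpha) > 0$ whenever $p \le \alpha\sqrt{n}$.

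Set $u = 4n+2p-5$, $v = 3n+2p-3$, $a = (2p-5)/(4n)$ and $b = (2p-3)/(3n)$. Applying Stirling's formula $m! = \sqrt{2\pi m}(m/e)^m \exp(O(1/m))$ to $u!$, $n!$ and $v!$, together with the identities $u - n - v = -2$, $u = 4n(1+a)$ and $v = 3n(1+b)$, yields
\[
R(n,p) = K\sqrt{u/v}\,\exp\bigl(F(n,p) + O(1/n)\bigr)
\]
for an explicit positive constant $K$ and $F(n,p) := u\log(1+a) - v\log(1+b) + 2$. Since $u/v \in [5/6, 4/3]$ for all admissible $n, p$, the square-root prefactor is bounded above and below by positive constants, and the $O(1/n)$ error is harmless. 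Both bounds on $R$ thus reduce to corresponding bounds on $F$.

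For the upper bound, rewrite $F = 4n\phi(a) - 3n\phi(b) + 2$ with $\phi(t) = (1+t)\log(1+t)$. The linear constraint $3nb - 4na = 2$ gives $db/da = 4/3$, and the strict convexity of $\phi$ yields
\[
\frac{dF}{da} = 4n\bigl[\phi'(a) - \phi'(b)\bigr] < 0 \quad\text{since } a < b,
\]
so $F$ is monotonically decreasing in $p$ for fixed $n$. The supremum of $F$ over $p \ge 3$ is therefore attained at $p = 3$, where a direct Taylor expansion gives $F(n,3) = -11/(8n) + O(1/n^2)$, which is uniformly bounded above for $n \ge 1$. This establishes the uniform upper bound on $F$, and hence on $R$.

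For the lower bound, the restriction $p \le \alpha\sqrt{n}$ forces $a, b = O(n^{-1/2})$. Taylor-expanding $\phi(t) = t + t^2/2 + O(t^3)$ and using $4na - 3nb = -2$ gives
\[
F(n,p) = 2na^2 - \tfrac{3}{2}nb^2 + O(na^3 + nb^3),
\]
whose absolute value is controlled uniformly by a constant depending only on $\alpha$, yielding $F \ge -c'(\alpha)$ and hence $R(n,p) \ge c(\alpha) > 0$. The main obstacle is the uniformity of the upper bound when $p$ is very large compared to $\sqrt{n}$, since the Taylor expansion of $\phi$ is no longer tight in that regime; the monotonicity argument resolves this cleanly by reducing the entire upper bound to a single Taylor computation at the boundary $p = 3$.
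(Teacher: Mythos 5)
Your proof is correct and follows essentially the same strategy as the paper: apply Stirling to the factorials in~\eqref{eq:tnp}, reduce to bounding the exponential factor, establish the upper bound via monotonicity in $p$, and treat the lower bound under $p\le\alpha\sqrt{n}$ by Taylor expansion. The key monotonicity computation is identical in substance — your $\phi'(a)<\phi'(b)$ (from strict convexity of $(1+t)\log(1+t)$) is the same inequality as the paper's $\log\tfrac{3(4+x)}{4(3+x)}\le 0$ — though your presentation differentiates the exact two-parameter expression directly rather than first discarding the $-5/(4n)$ and $-1/n$ corrections as the paper does.
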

\begin{proof}
	We use $\Theta(1)$ to denote a term that is bounded away from zero and infinity uniformly in $n$ and $p$. Likewise, we let $O(1)$ denote a term that is bounded from above uniformly in $n$ and $p$. By Equations~\eqref{eq:tnp}, \eqref{eq:cpasymp}
	\begin{align*}
		\frac{\# \ndT_{n,p}}{ C(p) n^{-5/2} (256/27)^{n}} &= \frac{\Theta(1)}{n^{-5/2} (16/9)^p (256/27)^{n} } \frac{(4n + 2p-5)!}{n!(3n+2p-3)!}.
	\end{align*}
	Stirling's formula yields
	\begin{align*}
		\frac{(4n + 2p-5)!}{n!(3n+2p-3)!} &= \Theta(1) \left( \frac{n + p/2}{n(n + 2p/3)} \right)^{1/2} \frac{ (4n + 2p - 5)^{4n + 2p-5}}{n^n (3n+2p-3)^{3n+2p-3}}. 
	\end{align*}
We may write
\begin{align*}
	\frac{ (4n + 2p - 5)^{4n + 2p-5}}{n^n (3n+2p-3)^{3n+2p-3}} = \frac{ (4n)^{4n+2p-5}}{n^n (3n)^{3n+2p-3}} \frac{(1 + \frac{p}{2n} - \frac{5}{4n})^{4n + 2p-5}}{(1 + \frac{2p}{3n} -\frac{1}{n})^{3n + 2p-3}}
\end{align*}
and
\begin{align*}
	\frac{ (4n)^{4n+2p-5}}{n^n (3n)^{3n+2p-3}} = \Theta(1/n^2) (256/27)^n (16/9)^p.
\end{align*}
Combining these equations yields
\begin{align*}
	\frac{\# \ndT_{n,p}}{ C(p) n^{-5/2} (256/27)^{n}} &= \Theta(1) \left( \frac{1 + \frac{p}{2n}}{1 + \frac{2p}{3n}} \right)^{1/2}  \frac{(1 + \frac{p}{2n} - \frac{5}{4n})^{4n + 2p-5}}{(1 + \frac{2p}{3n} -\frac{1}{n})^{3n + 2p-3}}
\end{align*}
The function $x \mapsto (1+x) / (1 + 2x/3)$ is bounded away from zero and infinity on the interval $[0, \infty[$. Hence 
\begin{align*}
	\left( \frac{1 + \frac{p}{2n}}{1 + \frac{2p}{3n}} \right)^{1/2} = \Theta(1)
\end{align*}
and
\begin{align}
	\frac{\# \ndT_{n,p}}{ C(p) n^{-5/2} (256/27)^{n}} &= \Theta(1)   \frac{(1 + \frac{p}{2n} - \frac{5}{4n})^{4n + 2p-5}}{(1 + \frac{2p}{3n} -\frac{1}{n})^{3n + 2p-3}}.
\end{align}
It is elementary that uniformly for all $n \ge 1$ and $3 \le p \le \alpha \sqrt{n}$ this expression is bounded from below by a constant that only depends on $\alpha$. This verifies the claimed lower bound. In order to show the upper bound, note that for all $n \ge 1$ and $p \ge 3$

\begin{align*}
	\frac{(1 + \frac{p}{2n} - \frac{5}{4n})^{4n + 2p-5}}{(1 + \frac{2p}{3n} -\frac{1}{n})^{3n + 2p-3}} &= \left( \frac{(1 + \frac{2p -3}{4n} - \frac{2}{4n})^{4 + \frac{2p-3}{n} - \frac{2}{n}}}{(1 + \frac{2p-3}{3n} )^{3 + \frac{2p-3}{n}}} \right)^n \\
	&\le  \left( \frac{(1 + \frac{2p -3}{4n} )^{4 + \frac{2p-3}{n} }}{(1 + \frac{2p-3}{3n} )^{3 + \frac{2p-3}{n}}} \right)^n.
\end{align*}
Consider the function
\[
	f(x) =  \frac{(1 + x/4 )^{4 + x }}{(1 + x/3 )^{3 + x}} .
\]
Note that $f(0) = 1$ and 
\[
	\frac{\mathrm{d}}{\mathrm{d}x} \log(f(x)) = \log \left( \frac{3(4+x)}{4(3+x)} \right) \le 0.
\]
Hence $f(x) \le 1$ for $x \ge 0$. Consequently,
\begin{align}
	\frac{\# \ndT_{n,p}}{ C(p) n^{-5/2} (256/27)^{n}} &= O(1) f \left( \frac{2p-3}{4n} \right)^n = O(1).
\end{align}

\end{proof}

\subsection{The skeleton decomposition of simple triangulations of the cylinder}

\label{sec:app}

We adapt Krikun's skeleton decomposition of type II triangulations~\cite{zbMATH02213763, 2005math.....12304K} to type III triangulations.  In the parts that do not depend on the type we follow closely the terminology from~\cite[Sec. 2.2]{zbMATH07144469}, where an adaption to type I triangulations was made.

\begin{definition}[Simple triangulation of the cylinder]
A \emph{simple triangulation of the cylinder} $\Delta$ of height $r \ge 1$ is a rooted simple planar map such that all faces have degree $3$ except for a so-called \emph{bottom face} and a \emph{top face}. We require the following conditions to be satisfied:
\begin{enumerate}
	\item The boundaries of these two distinguished faces are  simple graph-theoretic cycles, and hence have length at least $3$.
	\item The root edge lies on the boundary of the bottom face, so that the bottom face lies to its right.
	\item Every vertex on the boundary of the top face is at graph distance exactly $r$ of the boundary of the bottom face. 
	\item Every edge on the top face is incident to a face of degree $3$ with the third vertex at distance $r-1$ from the boundary of the bottom face.
\end{enumerate}   
\end{definition}

We let $\partial \Delta$ denote the boundary of the bottom face, and $\partial^*\Delta$ the boundary of the top face. Let $p \ge 3$ and $q \ge 3$ denote their lengths.  For $1 \le j \le r$ let $B_j(\Delta)$ denote the union of all faces of $\Delta$ whose boundary contains a vertex at graph distance strictly less than $j$ from the bottom cycle. The \emph{hull} $B_j^\bullet(\Delta)$ is the union of $B_j(\Delta)$ and all connected components of its complement, except the one containing the top face. This way, $B_j^\bullet(\Delta)$ is a triangulation of the cylinder of height $j$. We let $\partial_j \Delta = \partial^* B_j^\bullet(\Delta)$ denote its top cycle. We also set $\partial_0 \Delta =  \partial \Delta$ and $\partial_r \Delta = \partial^* \Delta$. For ease of reference, we always assume that $\Delta$ is drawn in the plane such that the unbounded face is the top face. This way, we may orient all cycles $\partial_j \Delta$, $0 \le j \le r$, in clock-wise direction.

\begin{figure}[t]
	\centering
	\begin{minipage}{\textwidth}
		\centering
		\includegraphics[width=1.0\linewidth]{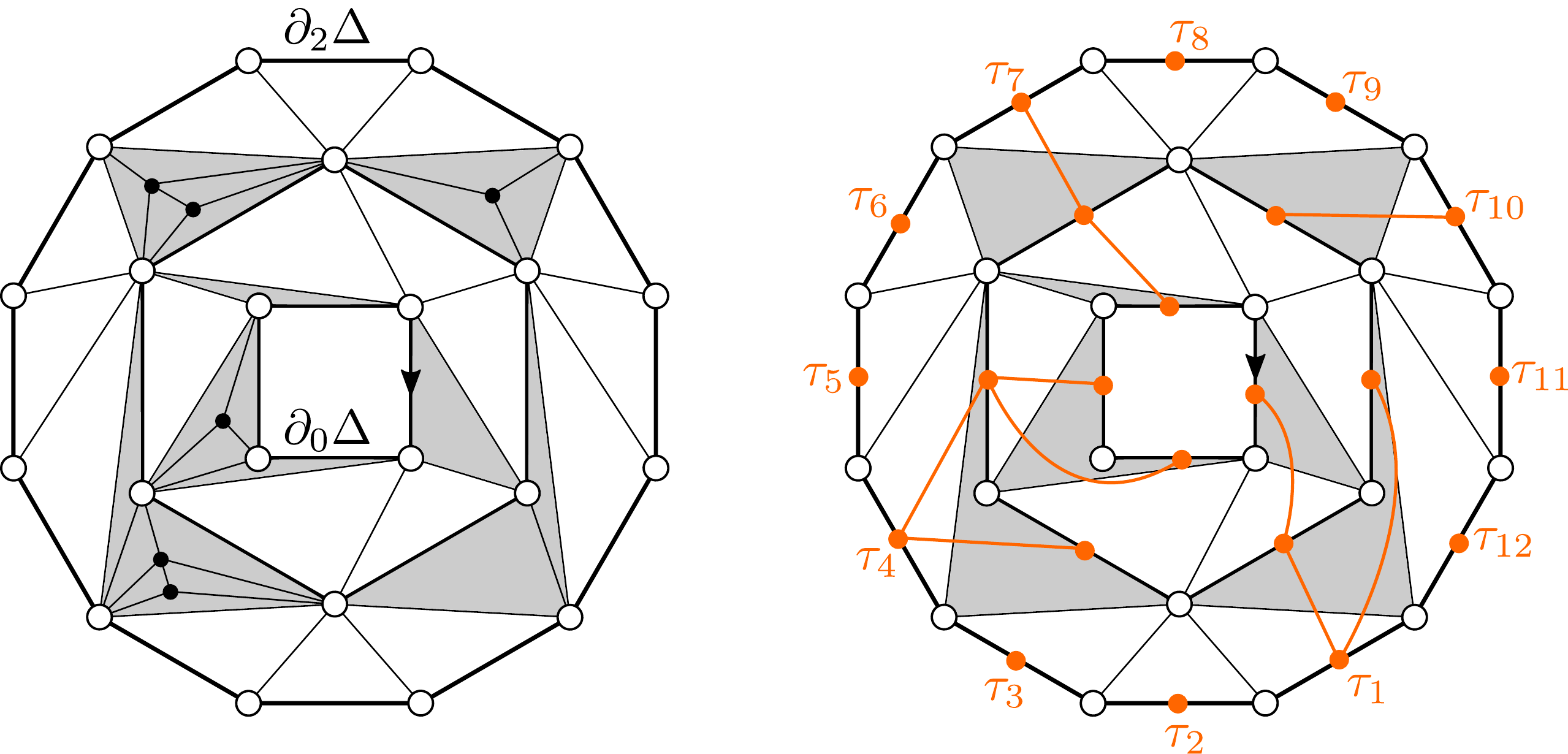}
		\caption{Skeleton decomposition of a simple triangulation of the cylinder. White faces represent downward triangles.}
		\label{fi:skeleton}
	\end{minipage}
\end{figure}

For $1 \le j \le r$, each edge of $\partial_j \Delta$ is incident to exactly one face whose third vertex lies on $\partial_{j-1} \Delta$. These faces are called the \emph{downward triangles} at height $j$. We let $\E_{\mathrm{d}}(\Delta)$ denote the collection of all edges of the cycles $\partial_0 \Delta, \ldots, \partial_r \Delta$. For $1 \le j \le r$ we may consider the vertices on $\partial_{j-1} \Delta$ corresponding to downward triangles at height $j$. This way, for each edge $e'$ of $\partial_{j-1} \Delta$ we may walk along $\partial_{j-1} \Delta$ in  counter-clockwise starting from the  ``middle'' of $e'$ until we encounter for the first time a vertex corresponding to a downward triangle at height $j$. We say the unique edge $e$ of $\partial_j \Delta$ is the parent of $e'$. The parent relation yields a forest structure on $\E_{\mathrm{d}}(\Delta)$, with the $q$ roots of the forest corresponding to the edges on $\partial_r \Delta$. We let $\tau_1, \ldots, \tau_q$ denote the trees of the forest ordered according to the clock-wise orientation of $\partial_r \Delta$, starting with the unique tree that contains the root-edge of $\Delta$. This way, $\tau_1$ has height $r$ and all other trees have height $\le r$. Furthermore, $\tau_1$ has a marked leaf at height $r$, corresponding to the root edge of $\Delta$. See Figure~\ref{fi:skeleton} for an illustration.

Note  that since $\Delta$ is simple, for all $1 \le j \le r$ it may never happen that only a single edge on $\partial_j \Delta$ has offspring and the others don't, because this would entail the presence of a multi-edge in $\Delta$. See Figure~\ref{fi:exception} for an example. Since the tree $\tau_1$ always has height $r$, this is equivalent to stating that $\tau_1$ has no vertex of height less than $r$ who is parent to all vertices of the next generation.

The forest $(\tau_1,\ldots, \tau_q)$ encodes the configuration of downwards triangles. Informally speaking, if we remove the downwards triangles from $\Delta$, what remains are the bottom cycle, the top cycle, and a collection of~\emph{slots}. Specifically, given an edge $e$ of $\partial_j \Delta$ for $1 \le j \le r$ with $c_e \ge 1$ children, the associated slot is bounded by the children of $e$ (who belong to $\partial_{j-1} \Delta$ and two \emph{vertical} edges between $\partial_j \Delta$ and $\partial_{j-1} \Delta$. The slot has a canonical root-edge, given by the vertical edge on its boundary that is incident to the downward triangle of $e$. We orient this root edge so that it points from $\partial_{j-1} \Delta$ to $\partial_j \Delta$. Thus, the slot corresponding to $e$ is  a simple triangulation of the $(c_e + 2)$-gon. Note that the boundary of the slot is always simple, since $\Delta$ is simple. It will be notationally convenient to also assign a ``slot'' to $e$ in the case $c_e = 0$, given by the edge of $e$'s downward triangle that is incident to the origin of~$e$, when orienting $e$ in the same direction as $\partial_j \Delta$.

\begin{figure}[t]
	\centering
	\begin{minipage}{\textwidth}
		\centering
		\includegraphics[width=0.25\linewidth]{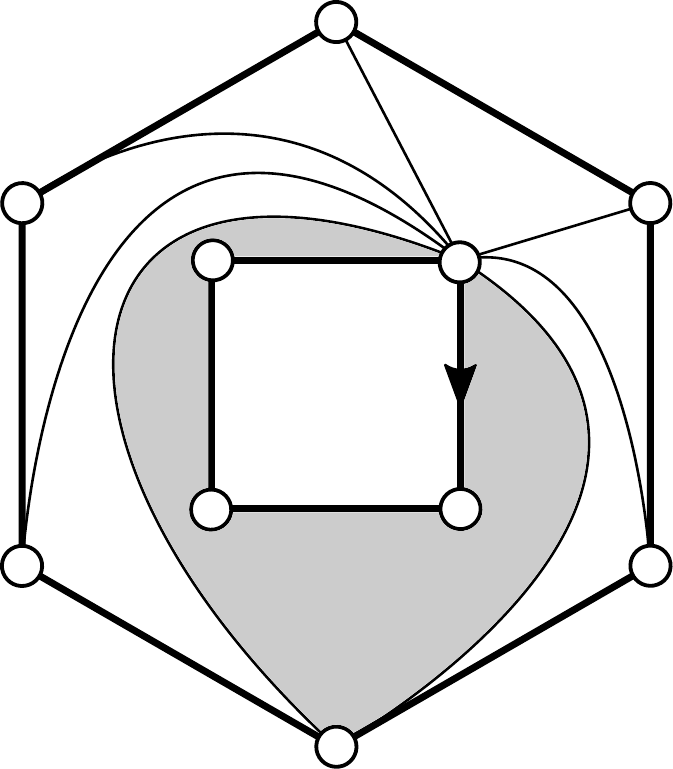}
		\caption{An example of a forbidden configuration of downward triangles that creates a double edge.}
		\label{fi:exception}
	\end{minipage}
\end{figure}

\begin{definition}[Admissible forests]
	\label{eq:defadmissible}
	We say a forest $\cF$ with a distinguished vertex to be $(p,q,r)$-admissible for integers $p,q \ge 3$ and $r \ge 1$ if the following conditions are met:
	\begin{enumerate}
		\item The forest is an ordered sequence $(\tau_1, \ldots, \tau_q)$ of planted plane trees.
		\item Each generation of the forest has at least $3$ vertices.
		\item All trees have height at most $r$.
		\item The forest has exactly $p$ vertices with height $r$.
		\item The distinguished vertex is a leaf of $\tau_1$ with height $r$.
		\item For all $1 \le j < r$, no vertex of $\tau_1$ with height $j$ is parent to all vertices of $\cF$ with height $j+1$.
	\end{enumerate}
We write $\cF^*$ for the set of vertices of $\cF$ with height less than $r$.
\end{definition}

For any vertex $v \in \cF$ we let $c_v$ denote the number of children of $v$. Thus, there is a bijection between simple triangulations of the cylinder of height $r$ with bottom and top cycle having lengths $p$ and $q$, and pairs $(\cF, (M_v)_{v \in \cF^*})$ of a $(p,q,r)$-admissible forest $\cF$ and a family $(M_v)_{v \in \cF^*}$ such that for each $v \in \cF^*$ we have that  $M_v$ is a simple triangulation of the $(c_v+2)$-gon if $c_v >0$, and a place-holder value if $c_v = 0$. Including the place-holder values for leaves with height less than $r$ will be notationally convenient.

\begin{remark}
For type II triangulations, vertices as described in the fifth condition in Definition~\ref{eq:defadmissible} are permitted, but the corresponding slot is subject to constraints in order to avoid loops~\cite[Lem. 2]{zbMATH02213763}. As argued above, for type III  triangulations we have no constraints on the decorations, since any such vertex would entail the presence of multi-edges and hence cannot exist in the forest associated to a type III triangulation of the cylinder.
\end{remark}

\begin{definition}[Left-most geodesics]
	Let $v$ be a vertex of $\partial_j \Delta$ for some $1 \le j \le r$. The \emph{left-most geodesic} from $v$ to the bottom cycle $\partial \Delta$ is defined as follows. Consider the clock-wise ordering of the edges incident to $x$, starting from the unique edge on $\partial_j \Delta$ that starts at $x$ and points in the same direction as the clock-wise orientation of $\partial_j \Delta$. The first edge of the left-most geodesic is the last edge in this ordering that ends at a vertex from $\partial_{j-1} \Delta$. From there we proceed inductively.
\end{definition}

Let $u,v$ be vertices on the top cycle $\partial^* \Delta$. Let $\cF'$ denote the subforest of the skeleton $\cF$ of $\Delta$ consisting of all trees whose root corresponds to an edge of $\partial^* \Delta$ that lies on the path from $u$ to $v$ in the clockwise direction of $\partial^* \Delta$. Let $\cF''$ denote the forest obtained by removing $\cF'$ from $\cF$. It is easy to see that for each integer $1 \le k \le r$ the left-most geodesics starting from $u$ and $v$ merge before or at step $k$ if and only if $\cF'$ or $\cF''$ have height less than $k$.

\subsection{Skeletons of random triangulations}

For all integers $j \ge 1$, we define the \emph{ball} $B_j(t)$ of a simple triangulation $t$ of the $p$-gon  as the submap consisting of all faces of $t$ that are incident to a vertex at distance strictly less than $j$ from the bottom cycle $\partial t$.  Hence any vertex $v$ of $B_j(t)$ has distance at most $j$ from $\partial t$. 

We say a pair $\bar{t} = (t, o)$ of a triangulation $t$ of the $p$-gon and a vertex $o$ of $t$ is a \emph{pointed triangulation of the $p$-gon}. Suppose that $o$ has height (that is, distance from $\partial t$) strictly larger than $j$. We construct the \emph{hull} $B_j^\bullet(\bar{t})$ by adding to $B_j(t)$ all connected components of the complement of $B_j(t)$, except for the component containing the marked vertex $o$. Since we assume the marked vertex to have height strictly larger than $j$, the hull $B_j^\bullet(\bar{t})$ is a triangulation of the cylinder of height $j$. Note that $B_j^\bullet(B_r^\bullet(\bar{t})) = B_j^\bullet(\bar{t})$ for $1 \le j < r$.

For any integer $p \ge 3$ we let $\cT_n^{(p)}$ be drawn uniformly at random from the set $\ndT_{n,p}$ of simple triangulations of the $p$-gon with $n$ inner vertices. We let $\overline{\cT}_n^{(p)}$ denote the pointed simple triangulation obtained by distinguishing a uniformly selected inner vertex of $\cT_n^{(p)}$. The hull $B_r^\bullet(\overline{\cT}_n^{(p)})$ is well-defined if the marked vertex has height at least $r+1$. If this condition is not met, we set $B_r^\bullet(\overline{\cT}_n^{(p)})$ to some place-holder value.
	
Let $\Delta$ be a triangulation of the cylinder with height $r \ge 1$, with $p, q \ge 3$ denoting the lengths of the bottom and top cycle. Let $N$ denote the total number of vertices of $\Delta$ and let $\cF = (\tau_1, \ldots, \tau_q)$ denote the $(p,q,r)$-admissible forest associated to $\Delta$. Let $(M_v)_{v \in \cF^*}$ denote the simple triangulations with a boundary filling in the slots of $\Delta$. For each $v \in \cF^*$ we let $\Inn(M_v)$ denote the number of inner vertices of $M_v$. In case $c_v=0$, we set $\Inn(M_v) = 0$.

The following lemma is analogous to~\cite[Lem. 2]{zbMATH07144469}.
\begin{lemma}
	\label{le:brtntria}
	We have
	\[
		\lim_{n \to \infty} \Prb{ B_r^\bullet(\overline{\cT}_n^{(p)}) = \Delta } = \frac{(64/9)^{-q} C(q)}{(64/9)^{-p}C(p)}   \prod_{v \in \cF^*} \theta(c_v) \frac{(256/27)^{-\Inn(M_v) }}{Z(c_v +2)}
	\]
	with $c_v$ denoting the outdegree of the vertex $v$ in the forest $\cF$, and $(\theta(k))_{k \ge 0}$ the probability weights of a probability distribution with mean
	\[
		\sum_{k \ge 1} k \theta(k) = 1
	\]
	and probability generating function
	\[
	g_\theta(x) = \sum_{k =0}^{\infty}x^k \theta(k) =  1 - \left(1 + \frac{1}{1-x} \right)^{-2}.
	\]
	The probability weights satisfy the asymptotic
	\[
		\theta(k) \sim \frac{3}{2 \sqrt{\pi}} k^{-5/2}
	\]
	as $k \to \infty$.
\end{lemma}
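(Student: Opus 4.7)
The plan is to compute the probability as a ratio of triangulation counts, apply the sharp asymptotics for $\#\ndT_{n,p}$, and recast the result as the claimed product formula by defining $\theta$ explicitly in terms of $Z$. By the uniform distribution on $\ndT_{n,p}$ and the uniform choice of marked inner vertex,
\[
\Prb{B_r^\bullet(\overline{\cT}_n^{(p)}) = \Delta} = \frac{N(\Delta)}{n \cdot \#\ndT_{n,p}},
\]
where $N(\Delta)$ counts the pairs $(t,o)$ with $t \in \ndT_{n,p}$ and $o$ an inner vertex of $t$ at height strictly greater than $r$ such that $B_r^\bullet(t,o) = \Delta$. Every such $t$ is obtained by gluing a simple triangulation $T$ of the $q$-gon into the top face of $\Delta$, and $o$ must then be an inner vertex of $T$; hence setting $m := n - \Inn_\Delta$, where $\Inn_\Delta$ denotes the number of vertices of $\Delta$ not on $\partial \Delta$, we find $N(\Delta) = m \cdot \#\ndT_{m,q}$.

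Substituting the asymptotic $\#\ndT_{n,p} \sim C(p) n^{-5/2} (256/27)^n$ from~\eqref{eq:tnpasymp} into numerator and denominator and using that $\Inn_\Delta$ stays fixed as $n \to \infty$ yields
\[
\Prb{B_r^\bullet(\overline{\cT}_n^{(p)}) = \Delta} \longrightarrow \frac{C(q)}{C(p)} \left(\tfrac{256}{27}\right)^{-\Inn_\Delta}.
\]
To obtain the product form I would decompose $\Inn_\Delta$ via the skeleton: since the cycles $\partial_0\Delta, \ldots, \partial_r\Delta$ are pairwise disjoint and each edge of $\partial_j\Delta$ corresponds to a vertex of $\cF$ at height $r-j$, the total number of skeleton vertices of $\Delta$ equals $|\cF| = p + |\cF^*|$, and adding the slot interiors gives
\[
\Inn_\Delta = |\cF^*| + \sum_{v \in \cF^*} \Inn(M_v).
\]
Combining this with the generation identity $\sum_{v \in \cF^*} c_v - |\cF^*| = p-q$ and the arithmetic fact $(27/256)(64/9) = 3/4$, one factor of $3/4$ can be distributed over the vertices of $\cF^*$ so that the limit rewrites to the stated product provided
\[
\theta(k) := \tfrac{3}{4}(9/64)^k \, Z(k+2).
\]

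It remains to verify that $\theta$ is a probability distribution with the properties claimed in the lemma. The tail $\theta(k) \sim \tfrac{3}{2\sqrt{\pi}} k^{-5/2}$ is immediate from~\eqref{eq:zpasymp}; the generating function identity, the normalization $g_\theta(1) = 1$, and the mean $g_\theta'(1) = 1$ all follow from the explicit formula~\eqref{eq:zseries} after substituting $y = 9x/64$ and expanding near $x=1$. The main obstacle is in fact the bijection step that underlies the formula $N(\Delta) = m \cdot \#\ndT_{m,q}$: one must check that gluing an arbitrary $T \in \ndT_{m,q}$ into the top face of $\Delta$ always produces a simple triangulation whose hull equals $\Delta$. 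A priori, an interior edge of $\Delta$ joining two vertices of $\partial^*\Delta$ could combine with a chord of $T$ to create a multi-edge in the glued map. Ruling this out, or absorbing the defect into a negligible error, requires a careful exploitation of the fact that $\Delta$ arises as a hull of a simple triangulation together with the no-double-edge condition on $\tau_1$ built into Definition~\ref{eq:defadmissible}, following the general strategy of~\cite{zbMATH07144469} but adapted to the type~III setting.
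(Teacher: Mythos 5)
Your proof follows the same route as the paper's: express the probability as $\dfrac{\#\ndT_{n-(N-p),q}}{\#\ndT_{n,p}}\bigl(1 - \tfrac{N-p}{n}\bigr)$, let $n \to \infty$ using~\eqref{eq:tnpasymp}, decompose $\Inn_\Delta$ via the skeleton, and redistribute powers of $\alpha = 64/9$ and $\rho = 256/27$ to land on $\theta(k) = \tfrac{1}{\rho}\alpha^{-k+1}Z(k+2) = \tfrac34(9/64)^k Z(k+2)$, exactly as in the paper. Your bookkeeping via $|\cF^*|$ and $\sum_v c_v - |\cF^*| = p-q$ is equivalent to the paper's via $N$ and $\#\E_{\mathrm{d}}(\Delta)$. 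The verification of the generating function, normalization, mean, and tail from~\eqref{eq:zseries} and~\eqref{eq:zpasymp} is the same.

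The one place where your account is off is the closing paragraph. The multi-edge worry is legitimate to raise, but neither of the two things you invoke is the mechanism that kills it. Condition 6 of Definition~\ref{eq:defadmissible} (no generation consisting entirely of children of one vertex) rules out double edges that would arise \emph{inside} the skeleton configuration (the configuration of Figure~\ref{fi:exception}); it says nothing about interactions between $\Delta$ and the inserted $T$. What actually prevents a chord of $\partial^*\Delta$ inside $\Delta$ is condition~4 of the definition of a simple triangulation of the cylinder: if $e = \{u,v\}$ were an edge of $\Delta$ between two non-adjacent vertices of the top cycle, then $e$ together with one arc of $\partial^*\Delta$ would bound a disk $R \subset \Delta$ whose boundary vertices all lie at graph distance exactly $r$ from $\partial\Delta$ and whose interior vertices lie at distance $\ge r+1$; but the arc of $\partial^*\Delta$ on $\partial R$ is nonempty, and condition~4 forces each of its edges to have a downward triangle inside $R$ with apex at distance $r-1$, which is impossible. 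Hence $\Delta$ has no chord of $\partial^*\Delta$, every gluing of a simple $T \in \ndT_{m,q}$ produces a simple triangulation, and the count $N(\Delta) = m\,\#\ndT_{m,q}$ is exact — there is no ``defect to absorb.'' With that fixed, your argument is complete.
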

\begin{proof}
	For ease of notation, we set
	\[
		\alpha = 64/9
	\]
	and
	\[
		\rho= 256/27.
	\]
	The event $B_r^\bullet(\overline{\cT}_n^{(p)}) = \Delta $ means that $\cT_n^{(p)}$ is equal to result of inserting an arbitrary triangulation of the $q$-gon in the outer face (that is, the face whose boundary is the top cycle) of $\Delta$, and that the uniformly marked vertex of $\cT_n^{(p)}$ got drawn from the inner vertices of this $q$-gon. Thus, for sufficiently large $n$
	\begin{align}
		\label{eq:rapdasarmas}
		\Prb{ B_r^\bullet(\overline{\cT}_n^{(p)}) = \Delta } = \frac{\#\ndT_{n-(N-p),q}}{\#\ndT_{n,p}} \left( 1 - \frac{N-p}{n} \right).
	\end{align}
	By~\eqref{eq:tnpasymp}, it follows that
	\begin{align}
		\lim_{n \to \infty} \Prb{ B_r^\bullet(\overline{\cT}_n^{(p)}) = \Delta }  = \frac{C(q)}{C(p)}  \rho^{p - N}.
	\end{align}
	The number $N$ of vertices of $\Delta$ satisfies
	\[
		N = \#\E_{\mathrm{d}}(\Delta) + \sum_{v \in \cF^*} \Inn(M_v),
	\]
	with $\E_{\mathrm{d}}(\Delta)$ denoting the combined collection of edges  of the cycles $\partial_j \Delta$ for $0 \le j \le r$. Moreover, 
	\[
		\#\E_{\mathrm{d}}(\Delta) = \sum_{i=1}^q \# \tau_i = q + \sum_{v \in \cF^*} c_v.
	\]
	Thus,
	\begin{align}
		\label{eq:xzibit}
				\lim_{n \to \infty} \Prb{ B_r^\bullet(\overline{\cT}_n^{(p)}) = \Delta }  = \frac{C(q)}{C(p)}  \rho^{p - q} \prod_{v \in \cF^*} \rho^{-\Inn(M_v) - c_v}.
	\end{align}
	Furthermore,
	\[
		\sum_{v \in \cF^*} (c_v -1) = p - q.
	\]
	Hence, multiplying~\eqref{eq:xzibit} with $1 = (\alpha / \rho)^{p-q- \sum_{v \in \cF^*} (c_v-1)}$ yields 
	\begin{align*}
	\lim_{n \to \infty} \Prb{ B_r^\bullet(\overline{\cT}_n^{(p)}) = \Delta }  &= \frac{\alpha^{-q} C(q)}{\alpha^{-p}C(p)}   \prod_{v \in \cF^*} \alpha^{-c_v +1}\rho^{-\Inn(M_v) -1 } \\
	&= \frac{\alpha^{-q} C(q)}{\alpha^{-p}C(p)}   \prod_{v \in \cF^*} \theta(c_v) \frac{\rho^{-\Inn(M_v) }}{Z(c_v +2)},
	\end{align*}
where we set for all $k \ge 0$
\begin{align}
	\label{eq:deftheta}
	\theta(k) = \frac{1}{\rho} \alpha^{-k+1} Z(k+2).
\end{align}
Using Equation~\eqref{eq:zpexpr}, it follows by elementary calculations  that
\begin{align}
	g_\theta(x) = \sum_{k =0}^{\infty}x^k \theta(k) =  1 - \left(1 + \frac{1}{1-x} \right)^{-2}.
\end{align}
Furthermore,
\[
	g_\theta(1)  = 1
\]
and
\[
g_\theta'(1) = 1. 
\]
Hence $g_\theta$ is the density function of a probability distribution with mean $1$.
\end{proof}

\begin{remark}
	The distribution $\theta$ for the type III case considered here turns out to be identical to the offspring distribution in the type II case~\cite{zbMATH02213763} and the type I case~\cite{zbMATH07144469}. However, we are faced with different constraints on the trees.
\end{remark}

Recall that $\cT_n$ denotes a uniform triangulation with $n+1$ vertices. In the work~\cite{MR2013797} the type III Uniform Infinite Planar Triangulation (UIPT) was constructed and shown to be the local limit of large uniform type III triangulations:
\begin{proposition}[\cite{MR2013797}]
	\label{pro:convvvv}
In the local topology,
\begin{align}
	\label{eq:convtinfty}
	\cT_n \convdis \cT_\infty
\end{align}
as $n\to \infty$. 
\end{proposition}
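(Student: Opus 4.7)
The plan is to establish convergence in the local topology by computing the limits of probabilities of the cylinder events $\{B_r^\bullet(\cdot) = \Delta\}$ for each height $r \ge 1$ and each simple triangulation of the cylinder $\Delta$ whose bottom cycle has length $3$ (so that it can be realized as a hull of $\cT_n$, whose root face has degree $3$). Since the collection of such cylinder events generates the Borel $\sigma$-algebra of the local topology on rooted triangulations, it suffices to show that the corresponding limit weights form a consistent and normalized family. Kolmogorov's extension theorem then produces the law of $\cT_\infty$ and yields $\cT_n \convdis \cT_\infty$ automatically.

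The key computation mirrors the one in the proof of Lemma~\ref{le:brtntria}, but without the vertex-marking factor. Let $\Delta$ have total number of vertices $N$ and top cycle of length $q \ge 3$. The event $B_r^\bullet(\cT_n) = \Delta$ occurs iff the region above $\partial^*\Delta$ is an arbitrary simple triangulation of the $q$-gon. Viewing $\cT_n$ as an element of $\ndT_{n-2,3}$ and noting that the outer triangulation carries $n+1-N$ inner vertices, one obtains
\begin{align*}
\Prb{B_r^\bullet(\cT_n) = \Delta} = \frac{\#\ndT_{n+1-N,q}}{\#\ndT_{n-2,3}}.
\end{align*}
Applying the asymptotics \eqref{eq:tnpasymp}, the $n^{-5/2}$ factors cancel, and after the substitution \eqref{eq:deftheta} one obtains
\begin{align*}
\lim_{n\to\infty}\Prb{B_r^\bullet(\cT_n) = \Delta} = \frac{(64/9)^{-q}C(q)}{(64/9)^{-3}C(3)}\prod_{v \in \cF^*}\theta(c_v)\,\frac{(256/27)^{-\Inn(M_v)}}{Z(c_v+2)},
\end{align*}
which admits a natural branching interpretation: offspring counts are $\theta$-distributed and slots are decorated by independent simple Boltzmann triangulations (Definition~\ref{def:boltzmann}).

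Consistency of the limit weights across $r$ then reduces to the criticality $\sum_k k\theta(k) = 1$ together with the fact that the Boltzmann decorations have unit total mass by \eqref{eq:zpdef}. Total mass one at each fixed $r$ follows by summing out the decorations and then the forest layer by layer; the residual identity for $\sum_{q \ge 3}(9/64)^q C(q)$ is a matter of reading off the generating series \eqref{eq:zseries}. Once this is checked, Kolmogorov's theorem yields a probability measure on infinite rooted simple triangulations that we identify as $\cT_\infty$.

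The main obstacle is the admissibility condition~(6) of Definition~\ref{eq:defadmissible}, requiring that no vertex of the spine $\tau_1$ at height less than $r$ be the unique fertile ancestor of its generation. This constraint is global along the spine rather than local to a single generation, so the Galton--Watson description must be corrected by an inclusion--exclusion or rerooting argument when passing between admissible forests of heights $r$ and $r+1$. One has to verify that the correction does not perturb the limiting weights, which it does not since the forbidden spine configurations contribute only exponentially small corrections due to $\theta(k) < 1$ for each $k$. Auxiliary checks that $\cT_\infty$ is almost surely locally finite and one-ended follow from the tail estimate $\theta(k) \sim \tfrac{3}{2\sqrt{\pi}}k^{-5/2}$ together with the uniform enumeration bounds in Lemma~\ref{le:eet}.
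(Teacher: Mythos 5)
The paper does not actually prove this proposition---it is cited verbatim from the reference [MR2013797] (Angel's construction of the type~III UIPT), and the local convergence is taken as a black box throughout. Your proposal attempts a from-scratch proof, which is a genuinely different and more ambitious route than the paper takes; unfortunately it contains a serious gap at exactly the point where the difficulty lies.

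The gap is the normalization (and, equivalently, the tightness) of the limit weights. You assert that ``Total mass one at each fixed $r$ follows by summing out the decorations and then the forest layer by layer; the residual identity \ldots is a matter of reading off the generating series \eqref{eq:zseries}.'' But the forests appearing here are not ordinary $\theta$--Bienaym\'e--Galton--Watson forests: Definition~\ref{eq:defadmissible} imposes (ii) at least three vertices per generation and (vi) the spine condition excluding generations in which a single vertex of $\tau_1$ fathers everything. These constraints strictly remove mass, and it is not a formal consequence of criticality $\sum_k k\theta(k)=1$, nor of $\sum_n (27/256)^n\#\ndT_{n,p}=Z(p)$, that after this pruning and after multiplying by $(64/9)^{-q}C(q)/\bigl((64/9)^{-3}C(3)\bigr)$ the total over $q$ and over admissible forests is exactly $1$. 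In the paper this identity appears after Corollary~\ref{co:formula} and is \emph{deduced from} the already-known existence of $\cT_\infty^{(p)}$, not the other way around; so an argument that relies on it without an independent derivation is circular. Your claim that the forbidden spine configurations ``contribute only exponentially small corrections'' does not resolve this: the corrections are indeed exponentially small in $r$, but the normalizing constant $(64/9)^{-3}C(3)$ is also fine-tuned, and you have given no reason why the two exactly balance.

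Concretely, what is missing is a tightness statement uniform in $n$: one needs to show, say, that for every $\epsilon>0$ there is a finite family of cylinder maps capturing $1-\epsilon$ of the probability $\Prb{B_r^\bullet(\overline{\cT}_n)=\cdot}$, uniformly over large $n$. With that in hand, the finite-$n$ normalization $\sum_\Delta\Prb{B_r^\bullet(\overline{\cT}_n)=\Delta}=1$ passes to the limit and your consistency check closes; without it the limit weights could sum to strictly less than $1$. Proving this tightness requires controlling both the top-cycle length $L_r$ and the total vertex count of the hull uniformly in $n$, which is genuinely nontrivial combinatorial work (and is in fact the content of the cited reference). There are also two smaller issues you should fix: (1) the hull $B_r^\bullet$ of an unpointed finite triangulation is ambiguous --- the paper points $\overline{\cT}_n^{(p)}$ to select which complementary component to discard, and the correct finite-$n$ formula carries a factor $(1 - (N-p)/n)$ as in Lemma~\ref{le:brtntria}; (2) Kolmogorov's extension theorem produces a measure with prescribed hull marginals, but identifying it as a law on locally finite one-ended rooted triangulations and upgrading hull convergence to local (ball) convergence requires a separate argument, which you only gesture at.
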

 Furthermore,~\cite{MR2013797} showed that the UIPT is almost surely~\emph{one-ended}, that is, deleting any finite number of vertices of $\cT_\infty$ may leave us with several connected components, but exactly one of them is infinite.

The degree $d(\cT_\infty)$ of the origin of the root-edge of $\cT_\infty$ was determined in~\cite[Lem. 4.1]{MR2013797} to satisfy
\begin{align}
	\label{eq:infp}
	\Pr{d(\cT_\infty)= p} =  \frac{2(2p-3)!}{(p-3)!(p-1)!} \left( \frac{3}{16} \right)^{p-1}
\end{align}
for each $p \ge 3$. Using the asymptotics for the central binomial coefficient, it follows that
\begin{align}
	\label{eq:infpasymptotic}
	\Pr{d(\cT_\infty)= p}  \sim  \frac{1}{\sqrt{2 \pi}} \sqrt{p} \left( \frac{3}{4} \right)^{p-1}
\end{align}
as $p \to\infty$. 

If we condition $\cT_n$ on having root degree $p$, and then delete the origin of the root-edge, we obtain a uniform simple triangulation of the $p$-gon with $n$ vertices in total. The bottom cycle is oriented in a clock-wise way. The root-edge is chosen canonically to be the one that originates from the destination of the original root-edge of $\cT_n$ and points in the direction of the bottom cycle. 

Equation~\eqref{eq:infp} ensures that root of the UIPT assumes any integer $p \ge 3$ with positive probability. Hence the same construction is possible for the UIPT:

\begin{definition}[Type III Uniform Infinite Triangulation of the $p$-gon]
	 For each integer $p \ge 3$ we let $\cT_\infty^{(p)}$ denote the result of conditioning the root vertex of the type III UIPT $\cT_\infty$ on having degree $p$, and then deleting it.
\end{definition}
The root-edge of $\cT_\infty^{(p)}$ is chosen canonically like in the case for finite triangulations. It follows directly from Equation~\eqref{eq:convtinfty} that the map  $\cT_\infty^{(p)}$ is the local limit of the uniform triangulation of the $p$-gon:  
\begin{corollary}
	\label{co:cotnptinfty}
	We have
	\[
		\cT_n^{(p)} \convdis \cT_\infty^{(p)}
	\]
	in the local topology as $n \to \infty$. 
\end{corollary}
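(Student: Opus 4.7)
The plan is to derive Corollary \ref{co:cotnptinfty} from Proposition \ref{pro:convvvv} via a conditioning-then-deletion argument. The root-vertex degree is determined by the ball of radius $1$ around the root, hence the event $\{d(\cT_n) = p\}$ is both open and closed in the local topology. By Equation \eqref{eq:infp}, $\Pr{d(\cT_\infty) = p} > 0$ for every $p \ge 3$. Combined with Proposition \ref{pro:convvvv}, a standard Portmanteau-type argument then gives $\Pr{d(\cT_n) = p} \to \Pr{d(\cT_\infty) = p}$ and, more importantly, that the law of $\cT_n$ conditioned on $\{d(\cT_n) = p\}$ converges in the local topology to the law of $\cT_\infty$ conditioned on $\{d(\cT_\infty) = p\}$.

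Next, I would verify that the operation of deleting the root origin and re-rooting canonically at the destination of the original root-edge defines a continuous map on the subspace of rooted simple triangulations whose root has degree $p$. The crucial observation is that in a simple triangulation, the neighbors of a vertex of degree $p$ form a simple cycle of length $p$, so after deleting such a vertex any two of its former neighbors remain within graph distance at most $\lfloor p/2 \rfloor$ of each other. Consequently, for fixed $p$ and $r \ge 1$, the ball of radius $r$ around the new root-edge in the deleted map is determined by the ball of radius $r + p$ around the original root in the full map, which delivers the required continuity. The continuous mapping theorem then transports the conditional convergence through the deletion.

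Finally I would identify both sides. As explained in the paragraph preceding the corollary, applying this operation to $\cT_n$ conditioned on $\{d(\cT_n) = p\}$ yields a uniformly random element of $\ndT_{n-p,p}$, hence a copy of $\cT_{n-p}^{(p)}$; the inverse direction adds a vertex inside the outer face connected to all boundary vertices, producing a bijection with $\{T \in \cT_n : d(T) = p\}$ that ensures the conditional law is uniform on $\ndT_{n-p,p}$. Applying the same operation to $\cT_\infty$ conditioned on $\{d(\cT_\infty) = p\}$ yields $\cT_\infty^{(p)}$ by definition. Combining the previous two steps gives $\cT_{n-p}^{(p)} \convdis \cT_\infty^{(p)}$ as $n \to \infty$, which is the claim after the reindexing $m = n-p$. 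The only delicate point is the continuity of the vertex-deletion operation, but the boundedness of the root degree after conditioning reduces this to the straightforward local computation sketched above.
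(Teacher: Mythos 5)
Your proof is correct and takes the same route the paper intends: the paper simply states that the corollary ``follows directly'' from the local convergence $\cT_n \convdis \cT_\infty$ together with the definition of $\cT_\infty^{(p)}$ as the conditioned-and-root-deleted UIPT, and you supply exactly the missing details (clopenness of the degree event, positivity of its limiting probability, continuity of the deletion map on that clopen set, and the identification of the deleted conditional laws). One minor remark: since deleting a vertex can only increase distances, the ball of radius $r$ about the new root in the deleted map is already determined by the ball of radius $r+1$ about the old root, so the cruder bound $r+p$ — while valid — is not needed for the continuity argument.
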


Since the UIPT is almost surely one-ended, it immediately follows that $\cT_\infty^{(p)}$ is almost surely one-ended as well.

All neighbourhoods of the root-edge of $\cT_\infty^{(p)}$ are finite, hence all neighbourhood of the bottom cycle of $\cT_\infty^{(p)}$ are finite as well. It follows that all connected components of the complement of $\cT_\infty^{(p)}$ have a finite boundary. Since  $\cT_\infty^{(p)}$ is almost surely one-ended, it follows that exactly one of them is infinite. 

This allows us to construct for all integers $r \ge 1$ the hull $B_r^\bullet(\cT_\infty^{(p)})$ by adding to $B_r(\cT_\infty^{(p)})$ all connected components of its complement, except the unique infinite component. By construction, the hull $B_r^\bullet(\cT_\infty^{(p)})$ is finite.

By Lemma~\ref{le:brtntria}, it follows that
\begin{align}
 \Prb{ B_r^\bullet(\cT_\infty^{(p)}) = \Delta } = \frac{(64/9)^{-q} C(q)}{(64/9)^{-p}C(p)}   \prod_{v \in \cF^*} \theta(c_v) \frac{(256/27)^{-\Inn(M_v) }}{Z(c_v +2)}.
\end{align}
Summing over the countably many possible configurations~$(M_v)_{v \in \cF^*}$ we obtain:

\begin{corollary}
	\label{co:formula}
	For all $\cF \in \ndF_{p,q,r}$ we have
	\begin{align*}
		\Prb{ \cF \text{ is associated to }B_r^\bullet(\cT_\infty^{(p)}) } = \frac{(64/9)^{-q} C(q)}{(64/9)^{-p}C(p)}   \prod_{v \in \cF^*} \theta(c_v).
	\end{align*}
\end{corollary}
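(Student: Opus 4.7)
The plan is to obtain Corollary~\ref{co:formula} by summing the hull probability formula from Lemma~\ref{le:brtntria} (in its extension to $\cT_\infty^{(p)}$ recorded in the display just above the corollary) over all admissible families of slot decorations. More precisely, the event that a given admissible forest $\cF \in \ndF_{p,q,r}$ is the skeleton of $B_r^\bullet(\cT_\infty^{(p)})$ is the disjoint union, over all families $(M_v)_{v \in \cF^*}$, of the events $\{B_r^\bullet(\cT_\infty^{(p)}) = \Delta\}$, where $\Delta$ is the simple triangulation of the cylinder reconstructed from $\cF$ and $(M_v)_{v \in \cF^*}$. So the task reduces to a per-slot summation.

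First I would write
\[
\Prb{ \cF \text{ is associated to }B_r^\bullet(\cT_\infty^{(p)}) } = \frac{(64/9)^{-q} C(q)}{(64/9)^{-p}C(p)}\prod_{v \in \cF^*} \theta(c_v) \,\Sigma_v,
\]
where $\Sigma_v$ is the sum of $(256/27)^{-\Inn(M_v)}/Z(c_v+2)$ over all admissible values of $M_v$. For $v \in \cF^*$ with $c_v \ge 1$, the admissible $M_v$ range over $\bigcup_{n \ge 0} \ndT_{n,c_v+2}$ and, by the defining equation~\eqref{eq:zpdef} of $Z$, one gets
\[
\Sigma_v = \frac{1}{Z(c_v+2)}\sum_{n \ge 0} (27/256)^n \#\ndT_{n,c_v+2} = 1.
\]
For $v \in \cF^*$ with $c_v = 0$ (i.e.\ a leaf of $\cF$ at height strictly less than $r$), there is only the place-holder value with $\Inn(M_v) = 0$, and the convention $Z(2)=1$ makes $\Sigma_v = 1$ as well. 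Multiplying out gives the claimed formula.

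The only thing that needs checking is that the reconstruction map $(\cF,(M_v)_{v\in\cF^*}) \mapsto \Delta$ is indeed a bijection onto the set of simple triangulations of the cylinder with bottom perimeter $p$, top perimeter $q$, and height $r$, and that the admissibility conditions of Definition~\ref{eq:defadmissible} together with the simplicity constraint on each $M_v$ exactly match the condition that $\Delta$ be simple. These facts were established in the construction of the skeleton decomposition and recalled in the remark preceding Lemma~\ref{le:brtntria}, so the argument is essentially a Fubini-type reorganisation of the formula of Lemma~\ref{le:brtntria}. The main (and only) delicate point is the normalisation $Z(2)=1$, which makes the $c_v=0$ leaves contribute trivially; apart from this, the computation is routine.
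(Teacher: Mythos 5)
Your proof is correct and follows exactly the paper's argument: the paper likewise obtains the corollary by summing the hull formula (recorded for $\cT_\infty^{(p)}$ in the display above the corollary) over the slot configurations $(M_v)_{v\in\cF^*}$, with each slot summing to $1$ via the definition~\eqref{eq:zpdef} of $Z$ and the convention $Z(2)=1$ for zero-offspring vertices. No gaps, and no meaningful departure from the paper's route.
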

We let $\mathbb{P}_{p,r}$ denote the law of $B_r^\bullet(\cT_\infty^{(p)})$ on the collection $\ndC_{p,r}$ of all triangulations of the cylinder of height $r$ with a bottom cycle of length $p$. We let $\mathbf{P}_{p,r}$ denote the law of the associated forest from the collection 
\[
	\ndF_{p,r} := \bigcup_{q=1}^\infty \ndF_{p,q,r}.
\]
By construction, $\mathbf{P}_{p,r}$ is a probability measure, and hence
\begin{align}
	\sum_{q=3}^\infty \sum_{\cF \in \mathbb{F}_{p,q,r}} \frac{(64/9)^{-q}C(q)}{(64/9)^{-p}C(p)} \prod_{v \in \cF^*} \theta(c_v) = 1.
\end{align}
Compare with~\cite[Lem. 3]{zbMATH07144469}, where an analogous equation was verified for type I triangulation via a direct calculation instead. 

\subsection{A comparison principle}
\label{sec:comparision}

Let $1 \le r < s$ and $p \ge 3$ be integers, and let  $\Delta \in \mathbb{C}_{p,s}$.  Let $q$ denote the length of the cycle $\partial_r \Delta$. We may view $\Delta$ as the result of gluing a triangulation $\Delta'' \in \ndC_{q, s-r}$ at the top cycle of a triangulation $\Delta \in \ndC_{p, r}$. From Corollary~\ref{co:formula} it follows that 
\[
	\mathbb{P}_{p,s}(\Delta) = \mathbb{P}_{p,r}(\Delta') \mathbb{P}_{q,s-r}(\Delta'').
\]
We let $L_r^{(p)}$ denote the length of the top cycle of $B_r^\bullet(\cT_\infty^{(p)})$.

If we  delete the origin $o$ of the root-edge of $\cT_\infty$, we obtain a  simple triangulation of the polygon whose length equals the degree of $o$ in $\cT_\infty$. We described this degree  $d(\cT_\infty)$ in Equation~\eqref{eq:infp}.  The bottom cycle is oriented in a clock-wise way. The new root-edge of the bottom-cycle  is chosen canonically to be the one that originates from the destination of the original root-edge of $\cT_\infty$ and points in the direction of the bottom cycle. 

It will be notationally convenient to set $\cT_\infty^{(0)} = \cT_\infty$ and interpret $B_{r}^\bullet(\cT_\infty^{(0)})$ as a  ``triangulation of the cylinder with a bottom cycle of length~$0$.'' Its associated forest is defined to be the one corresponding to $B_{r}^\bullet(\cT_\infty^{(0)}) \setminus \{o\}$, with the ordering of the trees determined by the root-edge on the bottom cycle. We let $L_r$ denote the length of its top cycle. Hence  $L_r$ is distributed like the length of the top cycle of $B_{r-1}^\bullet( \cT_\infty^{(D)})$ for a random independent integer $D$ that is distributed like the root degree $d(\cT_\infty)$ described in Equation~\eqref{eq:infp}. Here, we set $L_1 = D$ to cover the case $r=1$.

By a slight abuse of notation, we let $B_s^\bullet(\cT_\infty^{(p)}) \setminus B_r^\bullet(\cT_\infty^{(p)})$  denote the triangulation of the cylinder of height $s-r$ that contains all faces from $B_s^\bullet(\cT_\infty^{(p)})$, except those that also lie in $B_r^\bullet(\cT_\infty^{(p)})$. We view $B_s^\bullet(\cT_\infty^{(p)}) \setminus B_r^\bullet(\cT_\infty^{(p)})$ as rooted at the edge of the bottom cycle that corresponds to the root of the first tree in the skeleton of $B_r^\bullet(\cT_\infty^{(p)})$.
Thus, conditional on $L_r^{(p)} = q$ (with $p = 0$ or $p \ge 3$), the triangulation of the cylinder $B_s^\bullet(\cT_\infty^{(p)}) \setminus B_r^\bullet(\cT_\infty^{(p)})$ is distributed according to $\mathbb{P}_{q, s-r}$ and is independent from $B_r^\bullet(\cT_\infty^{(p)})$.

We let $Y = (Y_r)_{r \ge 0}$ denote a Bienaym\'e--Galton--Watson process with offspring distribution $\theta$ (from Lemma~\ref{le:brtntria}) that starts with $k$ individuals under the probability measure $\mathbb{P}_k$. Thus, the probability generating function of $Y_r$ under $\mathbb{P}_1$ is the $r$th iterate $g_\theta^{(r)}$ of $g_\theta$. By induction, it follows as in~\cite[Eq. (16)]{zbMATH07144469} that
\begin{align}
	\label{eq:nowayjose}
	\mathbb{E}_1[x^{Y_r}] = g_\theta^{(r)}(x) = 1 - \left(r + \frac{1}{\sqrt{1-x}} \right)^{-2}.
\end{align}
Using singularity analysis~\cite{MR2483235}, it follows that
\begin{align}
	\label{eq:nowsubexp}
	\mathbb{P}_1(Y_r = k) \sim \frac{3r}{2\sqrt{\pi}} k^{-5/2}
\end{align}
as $k \to \infty$. Furthermore,
\begin{align}
	\mathbb{P}_q(Y_{r} = p) = [x^p] \left(g_\theta^{(r)}(x)\right)^q,
\end{align}
and in particular
\begin{align}
	\label{eq:trivbound00435}
	\mathbb{P}_q(Y_{r} = 0) = \left(g_\theta^{(r)}(0)\right)^q = \left(1 - \frac{1}{(r+1)^2}\right)^q.
\end{align}

We let $\ndF_{p,q,r}$ denote the collection of all $(p,q,r)$-admissible forests. We define set  $\ndF'_{p,q,r}$ of all pointed forests satisfying 
\begin{enumerate}[\qquad i)]
	\item The forest is an ordered sequence $(\tau_1, \ldots, \tau_q)$ of planted plane trees.
	\item Each generation of the forest has at least $3$ vertices.
	\item All trees have height at most $r$.
	\item The forest has exactly $p$ vertices with height $r$.
	\item The distinguished vertex is a leaf with height $r$.
	\item For all $1 \le j < r$, no vertex of  height $j$ is parent to all vertices of $\cF$ with height $j+1$.
\end{enumerate}
 We let $\ndF''_{p,q,r}$ denote the collection of all unmarked forests satisfying all requirements except v). 	Naturally, there is a $1$ to $q$ correspondence between $\ndF_{p,q,r}$ and $\ndF'_{p,q,r}$, with any forest from $\ndF_{p,q,r}$  corresponding to its $q$ cyclically permuted versions. 
 Furthermore, there is a $p$ to $1$ correspondence between $\ndF'_{p,q,r}$ and $\ndF''_{p,q,r}$, with any forest from $\ndF''_{p,q,r}$ corresponding to its $p$ versions with a marked leaf at height $r$. 

Let $a \in ]0,1[$ and let $N_r^{(a)}$ be uniformly distributed over $\{\lfloor a r^2 \rfloor +1, \ldots, \lfloor a^{-1} r^2 \rfloor \}$. Let $\tau_1, \tau_2, \ldots$ denote independent copies of a $\theta$-Bienaym\'e--Galton--Watson tree. For each $j \ge 0$ and any tree $\tau$ we let $[\tau]_j$ denote the tree truncated at height $j$. That is, we delete all vertices with height strictly larger than $j$. 

For all $1 \le r <s$ we let $\cF_{r,s}^{(0)}$ denote the skeleton of $B_s(\cT_\infty^{(0)}) \setminus B_r(\cT_\infty^{(0)})$. We also write $\widetilde{\cF}_{r,s}^{(0)}$ for the unmarked forest obtained by forgetting the marked vertex of $\cF_{r,s}^{(0)}$    and applying a uniform random cyclic permutation to the $L_s$ trees of~$\cF^{(0)}_{r,s}$. Thus, conditional on $L_r=p$ and $L_s=q$, $\widetilde{\cF}_{r,s}^{(0)}$ is uniformly distributed over $\ndF_{p,q,s-r}''$.

The following result is similar to the corresponding bound for type III triangulations~\cite[Lem. 4]{zbMATH07144469}. However, the proof is more complicated since, contrarily to type I triangulations, for type III triangulations we do not have a bijection between arbitrary triangulations and triangulations of the $1$-gon.
\begin{lemma}
	\label{le:analem}
	There exists $C_0>0$ such that for all $\alpha \ge 0$ and integers $r \ge 1$ and $q \ge 3$
	\begin{align}
		\label{eq:1pac}
		\Pr{L_r = q} \le C_0/r^2
	\end{align}
	and
	\begin{align}
		\label{eq:2pac}
		\Pr{L_r > \alpha r^2} \le C_0 \exp(- \alpha/5).
	\end{align}
\end{lemma}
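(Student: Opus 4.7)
The plan is to reduce the bound on $\Pr{L_r=q}$ to a computable expectation against the $\theta$-Bienaym\'e--Galton--Watson process, using the marked-leaf/cyclic permutation correspondence recalled in Section~\ref{sec:comparision} to extract a decisive combinatorial factor $p/q$. Without this factor, the naive generating-function bound only yields $\Pr{L_r=q}=O(1/r)$, which is off by a factor of $r$ from the target.

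Concretely, I would start from $L_r \eqdist L_{r-1}^{(D)}$ together with Corollary~\ref{co:formula} and write
\[
\Pr{L_r=q} = \sum_{p\ge 3}\Pr{D=p}\,\frac{(64/9)^{-q}C(q)}{(64/9)^{-p}C(p)}\,S(p,q,r-1),
\]
where $S(p,q,r-1):=\sum_{\cF\in\ndF_{p,q,r-1}}\prod_{v\in\cF^*}\theta(c_v)$. A direct computation from the explicit formulas~\eqref{eq:infp} and~\eqref{eq:cpasymp}, combined with the identity $\frac{64}{9}\cdot\frac{27}{256}=\frac{3}{4}$, gives $\Pr{D=p}/((64/9)^{-p}C(p))=C_2(3/4)^p$ exactly for all $p\ge 3$ and some constant $C_2>0$. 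Next, the $p$-to-$q$ correspondence between $\ndF_{p,q,r-1}$ and $\ndF''_{p,q,r-1}$ gives $S(p,q,r-1)=(p/q)T(p,q,r-1)$, and dropping admissibility constraints yields $T(p,q,r-1)\le \mathbb{P}_q(Y_{r-1}=p)$. Exchanging summation then produces
\[
\Pr{L_r=q} \le \frac{C_2(64/9)^{-q}C(q)}{q}\,\mathbb{E}_q\!\left[Y_{r-1}(3/4)^{Y_{r-1}}\right].
\]

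The expectation is computable in closed form from~\eqref{eq:nowayjose}: since $g_\theta^{(r-1)}(3/4)=1-(r+1)^{-2}$ and $(g_\theta^{(r-1)})'(3/4)=8/(r+1)^3$, the chain rule yields
\[
\mathbb{E}_q\!\left[Y_{r-1}(3/4)^{Y_{r-1}}\right] = \frac{6q(1-(r+1)^{-2})^{q-1}}{(r+1)^3}.
\]
Combined with the uniform bound $(64/9)^{-q}C(q)\le c\sqrt{q}$ (which follows from~\eqref{eq:cpasymptotoo} and the explicit form of $C(q)$), this gives
\[
\Pr{L_r=q} \le \frac{C\sqrt{q}\exp(-q/(r+1)^2)}{(r+1)^3}
\]
for some constant $C>0$ independent of $q$ and $r$. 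Bound~\eqref{eq:1pac} then follows by maximizing the right-hand side over $q\ge 3$: the function $\sqrt{q}\exp(-q/(r+1)^2)$ attains its maximum $\Theta(r+1)$ near $q\sim (r+1)^2/2$, yielding $\Pr{L_r=q}\le C_0/r^2$. For~\eqref{eq:2pac} I would sum the bound over $q>\alpha r^2$ via integral comparison; the substitution $u=q/(r+1)^2$ turns the resulting integral into $\Gamma(3/2,u_0)$ with $u_0=\alpha r^2/(r+1)^2\ge \alpha/4$ for $r\ge 1$, and the standard estimate $\Gamma(3/2,\alpha/4)\le C_0 e^{-\alpha/5}$ closes the argument.

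The main obstacle is identifying the marked-leaf factor $p/q$ and using it to replace $\mathbb{E}_q[(3/4)^{Y_{r-1}}]=(1-(r+1)^{-2})^q$ by $\mathbb{E}_q[Y_{r-1}(3/4)^{Y_{r-1}}]$. This substitution effectively differentiates the probability generating function at $3/4$ and produces the crucial extra factor $8/(r+1)^3$, which is precisely what matches $\sqrt{q}\sim r$ at the saddle $q\sim r^2$ to give the target rate $r^{-2}$. This is also the point where the proof departs most visibly from the type I argument in~\cite{zbMATH07144469}, since we work directly with the Galton--Watson expectation rather than relying on a bijection with triangulations of the $1$-gon.
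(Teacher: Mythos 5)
Your proposal is correct and arrives at the same bound $\Pr{L_r=q}\le C\sqrt{q}\,(1-(r+1)^{-2})^{q-1}/(r+1)^3$ as the paper's proof, but reaches it by a cleaner route at the decisive step. Both approaches start from the recursion over the root degree $D$, apply Corollary~\ref{co:formula}, pass via the $q$-to-$1$ and $1$-to-$p$ correspondences between $\ndF_{p,q,r-1}$, $\ndF'_{p,q,r-1}$, $\ndF''_{p,q,r-1}$ to extract the crucial combinatorial factor $p/q$, and drop the admissibility constraints to replace the weighted sum over forests by $\mathbb{P}_q(Y_{r-1}=p)$. Where they diverge is in evaluating $\sum_{p\ge 3}\frac{\Pr{D=p}}{p^{-1}(64/9)^{-p}C(p)}\mathbb{P}_q(Y_{r-1}=p)$: the paper introduces the auxiliary generating function $f(x)=\frac{1024\sqrt{6\pi}(2-x)x^3}{(3x-4)^2}$, writes the sum as a contour integral of $g_\theta^q(z)f(1/z)/z$ over a circle $|z|=R$ with $3/4<R<1$, computes residues at $0$ and $3/4$, and observes the residue at $0$ is negative and can be dropped; you instead notice that $\Pr{D=p}/((64/9)^{-p}C(p))$ equals a constant times $(3/4)^p$ exactly (which I verified from~\eqref{eq:infp} and~\eqref{eq:cpasymp}: the base simplifies to $\frac{3}{16}\cdot\frac{64}{9}\cdot\frac{9}{16}=\frac{3}{4}$), so after the extra factor $p$ the sum is (up to the constant, and dropping the nonnegative terms $p<3$) precisely $\mathbb{E}_q[Y_{r-1}(3/4)^{Y_{r-1}}]$, which you compute by differentiating $(g_\theta^{(r-1)})^q$ at $x=3/4$. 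Your derivative evaluations $g_\theta^{(r-1)}(3/4)=1-(r+1)^{-2}$ and $(g_\theta^{(r-1)})'(3/4)=8/(r+1)^3$ are correct. This buys transparency: no contour integral machinery, no need to observe a sign cancellation, and the role of the $p$-factor as a differentiation of the generating function becomes explicit. The subsequent tail bound~\eqref{eq:2pac} via the incomplete gamma estimate is the same integral-comparison argument the paper uses, just packaged differently. One small item worth spelling out, as the paper does: the recursion $L_r\eqdist L_{r-1}^{(D)}$ is stated in Section~\ref{sec:comparision} with the convention $L_1=D$, so the boundary case $r=1$ deserves a sentence; your formulas $g_\theta^{(0)}(3/4)=3/4=1-2^{-2}$ and $(g_\theta^{(0)})'(3/4)=1=8/2^3$ happen to remain consistent, so the argument goes through, but it is better to say so explicitly (or handle $r=1$ directly from $L_1\eqdist D$ and~\eqref{eq:infpasymptotic}, as in the paper).
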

\begin{proof}
	Let us first treat the case $r \ge 2$.
Using Corollary~\ref{co:formula} and the discussed correspondences between $\ndF_{p,q,r-1}$, $\ndF_{p,q,r-1}'$, and $\ndF_{p,q,r-1}''$, it follows that
	\begin{align}
		\label{eq:ineqalrq}
		\Pr{L_r = q} &= 	\sum_{p \ge 3} \Pr{d(\cT_\infty) = p} \Pr{L_{r-1}^{(p)} =q} \\
				&= \sum_{p \ge 3} \Pr{d(\cT_\infty) = p} \sum_{\cF \in \ndF_{p,q,r-1}}  \frac{(64/9)^{-q} C(q)}{(64/9)^{-p}C(p)}   \prod_{v \in \cF^*} \theta(c_v) \nonumber\\
				&= \sum_{p \ge 3} \Pr{d(\cT_\infty) = p}   \frac{q^{-1}(64/9)^{-q} C(q)}{p^{-1}(64/9)^{-p}C(p)}  \sum_{\cF \in \ndF_{p,q,r-1}''} \prod_{v \in \cF^*} \theta(c_v) \nonumber\\
				&\le   \sum_{p \ge 3} \Pr{d(\cT_\infty) = p}   \frac{q^{-1}(64/9)^{-q} C(q)}{p^{-1}(64/9)^{-p}C(p)}  \mathbb{P}_q(Y_{r-1} = p).\nonumber
		\end{align}
	The inequality sign in the last line is due to the restrictions on the forests from $\ndF''_{p,q,r-1}$.
	By Equation~\eqref{eq:cpasymptotoo},
		\begin{align}
			\label{eq:dontforgetme}
			q^{-1}(64/9)^{-q} C(q) \sim \frac{9 \sqrt{3}}{2048 \pi \sqrt{2}}  \frac{1}{\sqrt{q}}
		\end{align}
	as $q \to \infty$. 
	Moreover,
	\[
		\mathbb{P}_q(Y_{r-1} = p) = [x^p] (g_\theta^{(r-1)}(x))^q.
	\]
	By Equations~\eqref{eq:cpasymp}, \eqref{eq:infp}  and standard calculations
	\begin{align*}
		f(x) &:= \sum_{p \ge 3}   \frac{\Pr{d(\cT_\infty) = p} }{p^{-1}(64/9)^{-p}C(p)}  x^p \\
		&= \frac{1024 \sqrt{6 \pi } (2-x) x^3}{(3 x-4)^2}.
	\end{align*}
	Note that $f$ is analytic on $\{z \in \ndC \mid |z| < 4/3\}$, and the probability generating function $g_\theta^q$ is analytic at least on $\{ z \in \ndC \mid |z| < 1\}$.  Hence 
	\begin{align}
		\sum_{p \ge 3}   \frac{\Pr{d(\cT_\infty) = p} }{p^{-1}(64/9)^{-p}C(p)}  \mathbb{P}_q(Y_{r-1} = p) = \frac{1}{2\pi i} \int_\gamma \frac{g_\theta^q(z) h(1/z)}{z} \,\,\mathrm{d}z
	\end{align}
	for $\gamma: [0,1] \to \ndC, t \mapsto R \exp(2\pi i t)$ for arbitrary $3/4 < R < 1$. Note that in the domain $\{z \in \ndC \mid |z| < 1\}$ the function $f(1/z)$ has poles exactly in $0$ and $3/4$. Hence, by the residue theorem and standard calculations
	\begin{align*}
		 \frac{1}{2\pi i} \int_\gamma \frac{g_\theta^q(z) h(1/z)}{z} \,\,\mathrm{d}z &= \mathrm{Res}_{0} \left(\frac{g_\theta^q(z) h(1/z)}{z}\right) + \mathrm{Res}_{3/4} \left(\frac{g_\theta^q(z) h(1/z)}{z}\right).
	\end{align*}
	Equation~\eqref{eq:nowayjose} and standard calculations yield
	\begin{multline*}
		\mathrm{Res}_{0} \left(\frac{g_\theta^q(z) h(1/z)}{z}\right)  \\= -\frac{256 \sqrt{\frac{2 \pi }{3}} q \left(6 q+17 (r-1)^3+42 (r-1)^2+16
			(r-1)-6\right) \left(1-\frac{1}{r^2}\right)^q}{9 (r-1)^2 r^2 (r+1)^2}
	\end{multline*}
and
	\begin{align*}
	\mathrm{Res}_{3/4}  = \frac{16384 \sqrt{\frac{2 \pi }{3}} q \left(  1 - \frac{1}{(1+r)^2}\right)^{q-1}}{9 (r+1)^3}.
\end{align*}
Note that
\[
	\mathrm{Res}_{0} \left(\frac{g_\theta^q(z) h(1/z)}{z}\right)  < 0.
\]
Hence, using Inequality~\eqref{eq:ineqalrq} and Equation~\eqref{eq:dontforgetme} it follows that
\begin{align}
	\label{eq:fromthis453}
	\Pr{L_r = q} &\le C' \sqrt{q} r^{-3} \left(  1 - \frac{1}{(1+r)^2}\right)^{q-1} \\
	&\le C'' \frac{1}{r^2} \sqrt{\frac{q}{r^2}} \exp\left(-\frac{q}{4r^2} \right). \nonumber
\end{align}
for  constants $C', C''>0$ that do not depend on $r$ or $q$. This readily verifies the existence of a constant $C_0$ such that Inequality~\eqref{eq:1pac} holds for all $q \ge 3$ and $r \ge 2$. We may without loss of generality assume that $C_0>1$, hence~\eqref{eq:1pac} also holds for $r=1$.

As for Inequality~\eqref{eq:2pac}, we obtain for $r \ge 2$
\begin{align*}
	\Pr{L_r > \alpha r^2} &\le C'' \sum_{q > \alpha r^2} \frac{1}{r^2} \sqrt{\frac{q}{r^2}} \exp\left(-\frac{q}{4r^2} \right) \\
	&\le C'' \frac{1}{r^2} \int_{\alpha r^2}^\infty  \sqrt{\frac{x}{r^2}} \exp\left(-\frac{x}{4r^2} \right) \,\,\mathrm{d}x  \\
	&\le C''' \exp(-\alpha/5),
\end{align*}
for some constant $C'''>0$ that does not depend on $\alpha$. As for $r=1$, since 
\[
	L_1 \eqdist d(\cT_\infty),
\]
we get from~\eqref{eq:infpasymptotic} that
\[
	\Pr{L_1 > \alpha} = O(\sqrt{\alpha} (3/4)^\alpha) = O(\exp(-\alpha/5))
\]
since $\log(3/4) \approx -0.287 < -1/5$. This completes the proof.
\end{proof}

We prove the following bound using analogous arguments as for~\cite[Prop. 5]{zbMATH07144469}.
\begin{proposition}
	\label{pro:comparison}
	For each $a \in]0,1[$ there exists $C_1>0$ such that for all large enough integers $r$, all integers $s >r$,   all integers $p,q \in \{\lfloor a r^2 \rfloor +1, \ldots, \lfloor a^{-1} r^2 \rfloor \}$, and all forests $\cF \in \mathbb{F}_{p,q,s-r}''$ we have
	\[
		\Prb{ \widetilde{\cF}_{r,s}^{(0)} = \cF} \le C_1 \Prb{ ([\tau_1]_{s-r}, \ldots, [\tau_{N_r^{(a)}}]_{s-r}) = \cF  }.
	\]
\end{proposition}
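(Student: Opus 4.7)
The plan is to compute both sides of the inequality explicitly via Corollary~\ref{co:formula} and then compare them term by term. Conditional on $L_r = p$, the cylinder $B_s^\bullet(\cT_\infty^{(0)}) \setminus B_r^\bullet(\cT_\infty^{(0)})$ is distributed like $B_{s-r}^\bullet(\cT_\infty^{(p)})$, so its skeleton $\cF_{r,s}^{(0)}$ on the event $L_s=q$ satisfies
\[
\Prb{\cF_{r,s}^{(0)} = \hat{\cF} \mid L_r = p} = \frac{(64/9)^{-q}\,C(q)}{(64/9)^{-p}\,C(p)}\prod_{v \in \hat{\cF}^*} \theta(c_v)
\]
for $\hat{\cF} \in \ndF_{p,q,s-r}$. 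I then pass to $\widetilde{\cF}_{r,s}^{(0)}$ by forgetting the mark and applying the uniform cyclic shift. The key combinatorial observation is that for a fixed $\cF \in \ndF''_{p,q,s-r}$, the pairs $(\hat{\cF},\sigma) \in \ndF_{p,q,s-r}\times\{0,\dots,q-1\}$ projecting to $\cF$ are in bijection with the $p$ leaves of $\cF$ at height $s-r$: each such leaf determines both the cyclic rotation placing its tree as $\tau_1$ and the shift $\sigma$ recovering $\cF$. Since $\prod_{v \in \hat{\cF}^*}\theta(c_v)$ depends only on the unordered unmarked forest and is therefore invariant under cyclic rotation and mark placement, summing these $p$ lifts with the weight $1/q$ from the uniform shift yields
\[
\Prb{\widetilde{\cF}_{r,s}^{(0)} = \cF} = \frac{p}{q}\,\Prb{L_r = p}\,\frac{(64/9)^{-q}\,C(q)}{(64/9)^{-p}\,C(p)}\prod_{v \in \cF^*}\theta(c_v).
\]

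On the right-hand side, independence of the $\tau_i$ and the truncation identity $\Prb{[\tau]_h = \tau'} = \prod_{v \in \tau',\,\mathrm{ht}(v) < h}\theta(c_v)$ give
\[
\Prb{([\tau_1]_{s-r},\dots,[\tau_{N_r^{(a)}}]_{s-r}) = \cF} = \frac{1}{\lfloor a^{-1} r^2\rfloor - \lfloor a r^2\rfloor}\prod_{v \in \cF^*}\theta(c_v).
\]
Taking the ratio, the $\theta$-weights cancel and the proposition reduces to proving that
\[
\frac{p}{q}\,\Prb{L_r = p}\,\frac{(64/9)^{-q}\,C(q)}{(64/9)^{-p}\,C(p)}\,\bigl(\lfloor a^{-1}r^2\rfloor - \lfloor a r^2\rfloor\bigr)
\]
is bounded by a constant depending only on $a$, uniformly in the remaining parameters. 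For $p,q\in\{\lfloor ar^2\rfloor+1,\dots,\lfloor a^{-1}r^2\rfloor\}$ one has $p/q \in [a^2, a^{-2}]$, the asymptotic~\eqref{eq:cpasymptotoo} gives $(64/9)^{-q}C(q)/(64/9)^{-p}C(p) \sim \sqrt{q/p} = O(1)$, and the floor difference is bounded by $a^{-1}r^2$. The decisive input is Lemma~\ref{le:analem}, which supplies $\Prb{L_r = p}\le C_0/r^2$; together with $p \le a^{-1} r^2$ this exactly absorbs the extra $r^2$ coming from the floor difference, producing an overall bound depending only on $a$ and the universal constants.

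The main obstacle is the combinatorial bookkeeping in the first step: one must count the lifts from $\ndF''_{p,q,s-r}$ back to pairs $(\hat{\cF},\sigma) \in \ndF_{p,q,s-r}\times\{0,\dots,q-1\}$ as exactly $p$, consistent with the correspondences $|\ndF_{p,q,s-r}'| = q\,|\ndF_{p,q,s-r}| = p\,|\ndF''_{p,q,s-r}|$ recorded before the statement. Combined with the $1/q$ factor from the uniform cyclic shift this produces the critical prefactor $p/q$ rather than $p$; an off-by-one error here would destroy the cancellation between $\Prb{L_r=p}$ and the length of the uniform range of $N_r^{(a)}$ on which the whole argument rests.
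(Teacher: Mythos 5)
Your proof is correct and follows essentially the same route as the paper: you compute $\Prb{\widetilde{\cF}_{r,s}^{(0)}=\cF}$ via Corollary~\ref{co:formula} and the $p$-to-$1$/$1$-to-$q$ correspondences (which you re-derive by directly counting the $p$ lifts $(\hat{\cF},\sigma)$), compare with the i.i.d.\ forest probability, and use Lemma~\ref{le:analem} together with the asymptotic~\eqref{eq:cpasymptotoo} to bound the resulting prefactor $\sqrt{p/q}\,\Prb{L_r=p}\,(\lfloor a^{-1}r^2\rfloor-\lfloor ar^2\rfloor)$ by a constant depending only on $a$. The only cosmetic difference is that you spell out the lift count explicitly rather than quoting the correspondences, and that the paper writes $\Pr{N_r^{(a)}=p}$ where one would expect $q$ (the number of trees), though the two values coincide so nothing is affected.
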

\begin{proof}
	We have
	\begin{align}
		\label{eq:pulseaudio}
		\Prb{ ([\tau_1]_{s-r}, \ldots, [\tau_{N_r^{(a)}}]_{s-r}) = \cF  } &= \Pr{N_r^{(a)} = p} \Prb{ ([\tau_1]_{s-r}, \ldots, [\tau_p]_{s-r}) = \cF  } \\
		&= \frac{1}{\lfloor a^{-1} r^2 \rfloor - \lfloor a r^2 \rfloor } \prod_{v \in \cF^*} \theta(c_v), \nonumber
	\end{align}
	with $\cF^*$ denoting the collection of vertices of $\cF$ with height strictly less than $s-r$. Let $\cF^\circ \in \ndF_{p,q,s-r}$ be any element corresponding to $\cF$ up to cyclic permutation and forgetting about the marked vertex. By Corollary~\ref{co:formula}
	\begin{align*}
		\Pr{{\cF}_{r,s}^{(0)} = \cF^\circ \mid L_r = p } &= \mathbf{P}_{p,s-r}(\cF^\circ) \\
		&= \frac{(64/9)^{-q} C(q)}{(64/9)^{-p}C(p)}   \prod_{v \in \cF^*} \theta(c_v).
	\end{align*}
	As discussed before, there is a $1$ to $q$ correspondence between $\ndF_{p,q,r}$ and $\ndF'_{p,q,r}$, with any forest from $\ndF_{p,q,r}$  corresponding to its $q$ cyclically permuted versions, and  a $p$ to $1$ correspondence between $\ndF'_{p,q,r}$ and $\ndF''_{p,q,r}$. Hence
	\begin{align}
		\label{eq:slim}
				\Pr{\widetilde{\cF}_{r,s}^{(0)} = \cF \mid L_r = p } 
		= \frac{q^{-1}(64/9)^{-q} C(q)}{p^{-1}(64/9)^{-p}C(p)}   \prod_{v \in \cF^*} \theta(c_v).
	\end{align}
	It follows from Equation~\eqref{eq:cpasymptotoo} and the assumptions on $p$ and $q$ that
	\begin{align}
		\label{eq:shady}
		\frac{q^{-1}(64/9)^{-q} C(q)}{p^{-1}(64/9)^{-p}C(p)} = O(\sqrt{p/q}) \le C_2
	\end{align}
	for a constant $C_2>0$ that only depends on $a$. By Lemma~\ref{le:analem}, it follows that
	\begin{align*}
			\Pr{\widetilde{\cF}_{r,s}^{(0)} = \cF } \le \frac{C_0 C_2}{r^2} \prod_{v \in \cF^*} \theta(c_v).
	\end{align*}
By Equation~\eqref{eq:pulseaudio}, it follows that
	\[
\Prb{ \widetilde{\cF}_{r,s}^{(0)} = \cF} \le C_1 \Prb{ ([\tau_1]_{s-r}, \ldots, [\tau_{N_r^{(a)}}]_{s-r}) = \cF  }
\]
for a constant $C_1>0$ that only depends on $a$.
\end{proof}

\begin{corollary}
	\label{co:jimmychoo}
	Let $u_0^{(n)}$ denote a uniform random vertex of the top cycle $\partial^* B_n^\bullet(\cT_\infty^{(0)})$. We may enumerate the vertices of $\partial^* B_n^\bullet(\cT_\infty^{(0)})$ in clockwise order starting at $u_0^{(n)}$ as $u_0^{(n)}, \ldots, u_{L_n-1}^{(n)}$. 
	Let $\delta>0$ and $0<a<1$. For sufficiently small $0<\eta<1/2$ and sufficiently large $n$ it holds with probability at most $\delta$ that simultaneously
	\begin{align}
		\label{eq:jimmy}
		a n^2 \le L_n \le a^{-1} n^2
	\end{align}
	and
	\begin{align}
		\label{eq:choo}
		a n^2 \le L_{n- \lfloor \eta n\rfloor} \le a^{-1} n^2
	\end{align}
	and that the left-most geodesics from $u_0^{(n)}$ and $u_{\lfloor a n^2 / 2\rfloor }^{(n)}$ coalesce before hitting $\partial^* B_{n - \lfloor \eta n\rfloor}^\bullet( \cT_\infty^{(0)})$.
\end{corollary}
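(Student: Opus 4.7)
The plan is to translate event (3) into a height condition on subforests of the skeleton decomposition of the annulus $B_n^\bullet(\cT_\infty^{(0)}) \setminus B_{r_n}^\bullet(\cT_\infty^{(0)})$, where $r_n := n - \lfloor \eta n \rfloor$, and then to reduce via Proposition~\ref{pro:comparison} to a standard extinction estimate for a $\theta$-Galton--Watson forest. Set $m := \lfloor a n^2 / 2 \rfloor$ and $h := \lfloor \eta n \rfloor$. By the characterisation of left-most geodesics recalled in Section~\ref{sec:app}, on event~\eqref{eq:jimmy} the geodesics starting at $u_0^{(n)}$ and $u_m^{(n)}$ coalesce before reaching $\partial^* B_{r_n}^\bullet(\cT_\infty^{(0)})$ if and only if, in the skeleton of the annulus, either the block $\cF'$ of the $m$ consecutive trees whose roots lie on the clockwise arc from $u_0^{(n)}$ to $u_m^{(n)}$, or the complementary block $\cF''$ of $L_n - m$ trees, has height strictly less than $h$. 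Since $u_0^{(n)}$ is uniform on the top cycle, the induced random cyclic shift is precisely the uniform cyclic permutation appearing in the definition of $\widetilde{\cF}^{(0)}_{r_n,n}$, so $\cF'$ and $\cF''$ are identified with the first $m$ and the last $L_n - m$ trees of $\widetilde{\cF}^{(0)}_{r_n,n}$.

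Next, pick $\tilde a \in \,]0, a[$ small enough that $[a n^2, a^{-1} n^2] \subseteq \{\lfloor \tilde a r_n^2 \rfloor + 1, \ldots, \lfloor \tilde a^{-1} r_n^2 \rfloor\}$ for all large $n$, uniformly in $\eta \in \,]0, 1/2[$; since $r_n \ge n/2$, the choice $\tilde a = a/4$ suffices. Writing $E_n$ for the event of the corollary and conditioning on $(L_{r_n}, L_n) = (p,q)$ with $p, q \in [a n^2, a^{-1} n^2]$, summation over admissible forests $\cF \in \ndF''_{p,q,h}$ together with Proposition~\ref{pro:comparison} yields
\begin{align*}
\Prb{E_n} \le C_1 \, \Prb{N_{r_n}^{(\tilde a)} \ge a n^2 \text{ and } \cF' \text{ or } \cF'' \text{ has height} < h},
\end{align*}
where on the right $\cF' = ([\tau_i]_h)_{i=1}^{m}$ and $\cF'' = ([\tau_i]_h)_{i=m+1}^{N_{r_n}^{(\tilde a)}}$ for independent $\theta$-Galton--Watson trees $\tau_i$. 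By Equation~\eqref{eq:nowayjose}, each tree satisfies $\Prb{\mathrm{height}(\tau_i) < h} = g_\theta^{(h)}(0) = 1 - (h+1)^{-2}$, so by independence $\Prb{\mathrm{height}(\cF') < h} = (1 - (h+1)^{-2})^{m}$. On the event $\{N_{r_n}^{(\tilde a)} \ge a n^2\}$ the block $\cF''$ contains at least $a n^2 - m \ge \lfloor a n^2 / 2 \rfloor$ trees, so a union bound gives
\begin{align*}
\Prb{E_n} \le 2 C_1 \left(1 - (\lfloor \eta n \rfloor + 1)^{-2}\right)^{\lfloor a n^2 / 2 \rfloor}.
\end{align*}
As $n \to \infty$ with $\eta$ fixed, the right-hand side converges to $2 C_1 \exp(-a / (2 \eta^2))$, which tends to $0$ as $\eta \to 0$. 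Choosing $\eta$ small enough that this limit is at most $\delta/2$, and then $n$ large, yields $\Prb{E_n} \le \delta$.

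I do not foresee a serious obstacle: the argument is a direct combination of Proposition~\ref{pro:comparison} with a standard $\theta$-Galton--Watson extinction estimate. The only bookkeeping point is to ensure that the window $[a n^2, a^{-1} n^2]$ for $(L_{r_n}, L_n)$ lies inside $\{\lfloor \tilde a r_n^2 \rfloor + 1, \ldots, \lfloor \tilde a^{-1} r_n^2 \rfloor\}$ uniformly in $\eta \in \,]0,1/2[$, which is handled by fixing $\tilde a < a$ small once and for all before choosing $\eta$.
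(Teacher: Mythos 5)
Your proposal is correct and follows essentially the same route as the paper: translate coalescence into a height condition on the two complementary blocks of $\widetilde{\cF}^{(0)}_{n-\lfloor\eta n\rfloor,n}$, pass to an i.i.d.\ $\theta$-Galton--Watson forest via Proposition~\ref{pro:comparison}, and then apply the extinction estimate from Equation~\eqref{eq:trivbound00435}. The only difference is that you spell out the bookkeeping more carefully than the paper does (the choice $\tilde a = a/4$ so that the window $[an^2,a^{-1}n^2]$ sits inside $\{\lfloor\tilde a r_n^2\rfloor+1,\ldots,\lfloor\tilde a^{-1}r_n^2\rfloor\}$, the explicit union bound over $\cF'$ and $\cF''$, and retaining $\{N_{r_n}^{(\tilde a)}\ge an^2\}$ so that $\cF''$ has at least $\lfloor an^2/2\rfloor$ trees), whereas the paper elides these and writes the final bound directly for $\lfloor an^2/2\rfloor$ trees.
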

\begin{proof}
	By the discussion at the end of Section~\ref{sec:app} it suffices  to bound the probability of Inequalities~\eqref{eq:jimmy} and~\eqref{eq:choo} holding simultaneously with the event that the subforest consisting of the first $\lfloor a n^2/2\rfloor$ trees of $\widetilde{\cF}^{(0)}_{n - \lfloor \eta n\rfloor, n}$ (starting this time from a uniformly selected location with a cyclic ordering of the trees) has  height strictly smaller than $\lfloor \eta n \rfloor$, or that the same holds for the $L_n - 1 - \lfloor a n^2 / 2\rfloor  \ge a n^2/2-2$ trees in the complementary subforest. Applying Proposition~\ref{pro:comparison} for $r = n - \lfloor \eta n \rfloor \in [n/2, n]$, this probability is bounded by \[
		\Prb{ \max_{1 \le i \le a n^2/4} \He(\tau_i) <  \lfloor \eta n\rfloor},
	\]
	up to a multiplicative constant that only depends on $a$. Here $(\tau_i)_{i \ge 1}$ denote independent $\theta$-Bienaym\'{e}--Galton--Watson trees and $\He(\cdot)$ their height. By Equation~\eqref{eq:trivbound00435} it follows that for $n$ large enough (depending on $\eta$)
	\begin{align*}
		 \Prb{ \max_{1 \le i \le a n^2/2} \He(\tau_i) <  \lfloor \eta n\rfloor} &= \mathbb{P}_{\lfloor a n^2/2 \rfloor} (Y_{\lfloor \eta n\rfloor} = 0) \\
		 &= \left(1 - \frac{1}{(\lfloor \eta n\rfloor+1)^2}\right)^{\lfloor a n^2/2 \rfloor} \\
		 &\le \exp\left( -a / (4 \eta^2)\right).
	\end{align*}
	Taking $\eta$ large enough, the total bound is hence smaller than $\delta$ for sufficiently large~$n$.
\end{proof}

\section{Simple half-plane triangulations}

\subsection{The type III Upper Half-Plane Triangulation}

Adapting the arguments from~\cite[Sec. 3.1]{zbMATH07144469}, we construct a triangulation of the upper half-plane $\ndR \times \ndR_{\ge 0}$. 

We let $\bar{\theta}$ with $\bar{\theta}(k) = k \theta(k)$ for $k \ge 1$ denote the size-biased version of $\theta$. We embed a modified version of Kesten's tree with a backwards growing spine into the half-plane. It's vertex set will be precisely the collection of points $(1/2 +i, j)$ for integers $i \in \ndZ$ and $j \in \ndZ_{\ge 0}$. 

The backwards growing spine of the tree consists of the points $(1/2, j)$, $j \ge 0$. For  each $j \ge 1$ the spine vertex at $(1/2, j)$ receives an independent number $m_j$ of children following the size biased distribution $\bar{\theta}$. The coordinates of the children are chosen to be $(1/2 + k, j-1)$ for $\ell_j - m_j \le k \le \ell_j -1$ with $\ell_j$ uniform over $\{1, \ldots, m_j\}$. This way, the unique spine child $(1/2, j-1)$ has  uniform rank. All non-spine vertices of the tree with positive $y$-coordinates created in this way become roots of independent copies of $\theta$-Bienaym\'e--Galton--Watson trees that we truncate when they hit the $x$-axis.

Note that by Equation~\eqref{eq:nowayjose}  a $\theta$-Bienaym\'e--Galton--Watson tree has height at least $r$ with probability $\frac{1}{(1+r)^2}$.  It follows that on both sides of the spine there are infinitely branches that hit the $x$-axis. Hence we may assign the coordinates of the vertices of all  branches in a way so that the vertex set of the entire modified Kesten tree is precisely~$\{ (1/2 +i, j) \mid i, j \in \ndZ, j \ge 0\}$, and so that its edges may be drawn as straight lines that only intersect at their endpoints.

We now construct the upper half-plane simple triangulation. For all $(i,j)$ with $i \in \ndZ$ and $j \in \ndZ_{\ge0}$ we draw a horizontal edge from $(i,j)$ to $(i+1,j)$. If $j \ge 1$ we construct a \emph{downward triangle} containing this edge such that the third vertex $(1/2 +k, j-1)$ is determined by letting $k$ be the minimal integer such that  $(1/2 + k, j-1)$ is a child of $(1/2 + i', j)$ for some $i' > i$. We merge each double-edge created in this way into a single-edge. This way,  edges are straight line-segments that do not cross.

This way, the modified Kesten tree uniquely determines the configuration of downward triangles. As before, we fill all slots by gluing independent copies of simple Boltzmann triangulations with the corresponding perimeters to their boundary. We call the resulting simple triangulation $\cU$ the type III UHPT for Upper Half-Plane Triangulation. We declare the edge from $(0,0)$ to $(1,0)$ as its oriented root-edge, and let $\partial \cU$ denote its (bottom) boundary. See Figure~\ref{fi:uhpt} for an illustration.

\begin{figure}[t]
	\centering
	\begin{minipage}{0.8\textwidth}
		\centering
		\includegraphics[width=1.0\linewidth]{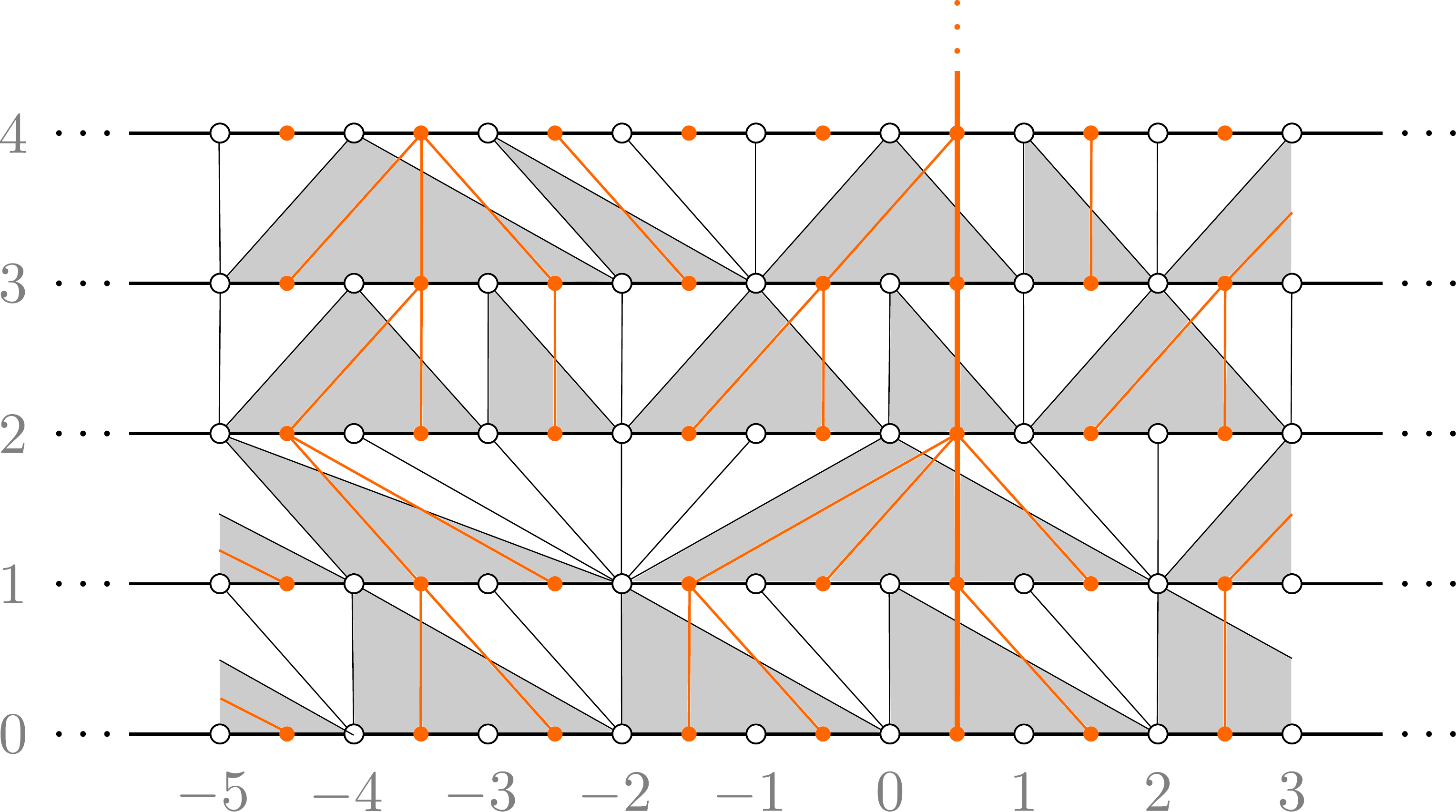}
		\caption{The type III upper half-plane triangulation.}
		\label{fi:uhpt}
	\end{minipage}
\end{figure}

We verify the following local convergence by adapting the proof of a similar convergence~\cite[Prop. 6]{zbMATH07144469} for type I triangulations. See also~\cite{angel2005scaling} for a prior construction and similar convergence for type II triangulations.

\begin{proposition}
	\label{pro:localconv}
	As $p \to \infty$,
	\[
		\cT_\infty^{(p)} \convdis \cU
	\]
	in the local topology.
\end{proposition}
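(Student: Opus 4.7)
The plan is to establish local convergence of $\cT_\infty^{(p)}$ to $\cU$ by showing that the skeleton decomposition of $\cT_\infty^{(p)}$ converges as $p \to \infty$ to the modified Kesten-tree construction underlying $\cU$. The crucial simplification is that in both models the slots cut out by the downward triangles are filled with independent simple Boltzmann triangulations of the appropriate perimeters (Lemma~\ref{le:brtntria}), so convergence of the skeleton structure near the root edge implies local convergence of the triangulation. Concretely, it suffices to prove that for every $r \ge 1$, the $(p, L_r^{(p)}, r)$-admissible skeleton forest of $B_r^\bullet(\cT_\infty^{(p)})$, viewed around the distinguished leaf corresponding to the root edge, converges to the modified Kesten tree truncated at forest height $r$.

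First, fix $r \ge 1$ and a finite \emph{local structure} $\cL$: a specification of a spine path $v_0, v_1, \ldots, v_r$ from a root ($v_0$, forest height $0$) to the distinguished leaf ($v_r$, forest height $r$), the offspring counts $m_j \ge 1$ of the spine vertices $v_j$ for $0 \le j < r$, the positions of the spine children among their siblings, and a finite collection of additional sibling sub-trees truncated at height $r$. I would compute the probability that $\cL$ appears in the skeleton of $B_r^\bullet(\cT_\infty^{(p)})$ by summing the weight
\[
\frac{(64/9)^{-q} C(q)}{(64/9)^{-p} C(p)} \prod_{v \in \cF^*} \theta(c_v)
\]
from Corollary~\ref{co:formula} over all $(p, q, r)$-admissible forests $\cF$ extending $\cL$. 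The extensions consist of the sibling trees on each side of the spine not specified by $\cL$, together with the trees of $\cF$ external to the spine tree $\tau_1$.

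Second, I would decompose these extensions into complementary side-forests and recognize the resulting sub-sums, via Corollary~\ref{co:formula} applied in reverse, as marginal probabilities for smaller cylinder triangulations. Combined with the asymptotic $C(p) \sim \frac{9\sqrt{3}}{2048 \pi \sqrt{2}}(9/64)^{-p} \sqrt{p}$ from~\eqref{eq:cpasymptotoo}, the limit $p \to \infty$ extracts precisely a factor of $m_j$ for each spine vertex, converting the plain Bienaym\'e--Galton--Watson weight $\theta(m_j)$ into the size-biased weight $\bar{\theta}(m_j) = m_j\, \theta(m_j)$. The resulting limiting probability matches the description of $\cU$: each spine vertex has size-biased offspring, the spine child rank is uniform among the siblings, and the remaining sibling branches are independent $\theta$-Bienaym\'e--Galton--Watson trees truncated at height $r$. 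Passing from skeleton to triangulation then uses that the slot fillings are i.i.d.\ Boltzmann triangulations in both models.

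The main obstacle lies in the admissibility constraint~(vi) of Definition~\ref{eq:defadmissible}, specific to type III triangulations: no vertex of the spine tree $\tau_1$ at any forest height $1 \le j < r$ may be parent to all vertices of $\cF$ at height $j+1$. This condition is absent in the type I analogue of~\cite[Prop.~6]{zbMATH07144469}, and forbidden configurations must be controlled in the sum over extensions. I would handle this using Lemma~\ref{le:analem} (tail bound $\mathbb{P}(L_r = q) \le C_0/r^2$) and Proposition~\ref{pro:comparison} (domination by independent $\theta$-Bienaym\'e--Galton--Watson trees), which together imply that each level of the skeleton near the root contains many vertices with high probability, so the offending configurations contribute a vanishing amount as $p \to \infty$. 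A secondary technical nuisance is the absence of a type~III bijection between arbitrary triangulations and those of a degenerate polygon, obliging us to manipulate the explicit partition function $Z(p)$ of~\eqref{eq:zpexpr} with the asymptotic~\eqref{eq:zpasymp} directly rather than invoking a type~I shortcut.
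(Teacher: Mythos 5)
Your high-level strategy matches the paper's: work via the skeleton decomposition, use Corollary~\ref{co:formula} to write down the weight of a given local forest configuration in $B_r^\bullet(\cT_\infty^{(p)})$, and take $p\to\infty$ to identify the modified-Kesten-tree picture. Two issues, one serious and one structural.

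The serious one is that the sentence ``Combined with the asymptotic $C(p)\sim\dots$, the limit $p\to\infty$ extracts precisely a factor of $m_j$'' hides the entire analytic core of the proof. After fixing the trees near $\tau_1$, what remains is a sum of the form
\[
A_p \;=\; \sum_{\ell\ge 3}\,\frac{(64/9)^{-\ell}C(\ell)}{(64/9)^{-p}C(p)}\;\mathbb{P}_\ell(Y_r=p-m_k),
\]
and you must show $\liminf_p A_p\ge 1$. This is genuinely delicate: $\ell$ ranges over \emph{all} positive integers, including $\ell\ll p$, and the summand does not obviously tend to anything nice term by term. The paper's argument splits into two parts you do not anticipate: (i) a big-jump estimate for sums of heavy-tailed i.i.d. random variables (via \cite[Cor.~2.1]{MR2440928}) to show that the range $\ell\le(1-\epsilon)p$ contributes $o(1)$; and (ii) the closed algebraic identity $\sum_{\ell\ge1}h(\ell)\,\mathbb{P}_\ell(Y_r=q)=h(q)$ with $h(\ell)=4^{-\ell}\binom{2\ell}{\ell}$, which follows from the explicit iterate $g_\theta^{(r)}(x)=1-(r+(1-x)^{-1/2})^{-2}$, and is what evaluates the remaining part of the sum exactly. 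Without these two ingredients, replacing $C(p)$ by its asymptotic does not by itself ``extract a factor of $m_j$''; you would still face an unresolved infinite sum. You also omit the final closing move, which is Scheff\'e-type: one only obtains a $\liminf$ lower bound on each probability, and convergence then follows because the limiting probabilities already sum to $1$.

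The structural issue concerns your handling of the admissibility constraint (vi). Invoking Lemma~\ref{le:analem} and Proposition~\ref{pro:comparison} is misdirected: those are statements about $L_r$ and $\widetilde{\cF}_{r,s}^{(0)}$ for the pointed model $\cT_\infty^{(0)}$, not about $\cT_\infty^{(p)}$ with $p\to\infty$, and they are not what resolves the constraint here. The paper disposes of the constraint for free: once the trees $(t_{-k},\dots,t_k)$ in a window of width $2k+1\ge 3$ around $\tau_1$ satisfy conditions~a)--d), \emph{every} extension to a full $(p,q,r)$-admissible forest automatically satisfies the constraints ii) and vi) of Definition~\ref{eq:defadmissible} (adding trees can only add vertices to a generation), so the sum over the remaining trees $\sigma_{k+1},\dots,\sigma_{\ell-k-1}$ is unconstrained. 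Separately, the paper does need to show $L_r^{(p)}\convp\infty$ as a preliminary (so that the $2k+1$ window trees are distinct); you do not flag this step at all. Your ``local structure around the spine path'' framing is workable in principle but would make the bookkeeping harder than the paper's choice of simply fixing the whole trees $t_{-k},\dots,t_k$ adjacent to $\tau_1$.
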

\begin{proof}
We have \begin{align}
L_r^{(p)} \convp \infty
\end{align} as $p \to \infty$, since for each fixed $q \ge 3$ and $r \ge 1$ we have similarly as in Inequality~\eqref{eq:ineqalrq} and using~\eqref{eq:nowsubexp} and~\eqref{eq:dontforgetme} that
	\begin{align*}
		\Pr{L_r^{(p)} = q} &\le \frac{q^{-1}(64/9)^{-q} C(q)}{p^{-1}(64/9)^{-p}C(p)}  \mathbb{P}_q(Y_{r} = p) \\
		&= O(\sqrt{p}) [x^p] (g_\theta^{(r)}(x))^q \\
		&= O(\sqrt{p}) [x^p] (g_\theta^{(r)}(x)) \\
		&= O(1 / p^2).
	\end{align*}
	Here we have used that as $p \to \infty$ \[
	[x^p] (g_\theta^{(r)}(x))^q \sim q [x^p] g_\theta^{(r)}(x),
	\]
	 since $q$ is constant and  $Y_r$ follows the power law in~\eqref{eq:nowsubexp}. 
	 
	 Let $\cB_r(\cT_\infty^{(p)})$ and $\cB_r(\cU)$ denote the planar maps consisting of all faces that are incident to a vertex with graph distance strictly less than $r$ from the root-vertex.  In order to prove $\cT_\infty^{(p)} \convdis \cU$ as $p \to \infty$ it suffices to show that for all $r \ge 1$
	 \begin{align}
	 	\label{eq:toshow14365140}
	 	\cB_r(\cT_\infty^{(p)}) \convdis \cB_r(\cU)
	 \end{align}
	 as random finite planar maps.
	 
	 To this end, for each $r \ge 1$ we write
	 \[
	 \cF_{0,r}^{(p)} = (\cJ_0^{(p)}, \ldots, \cJ_{L_r^{(p)} -1}^{(p)})
	 \]
	 for the skeleton of $B_r^\bullet(\cT_\infty^{(p)})$. Let $(t_{-k},\ldots, t_k)$ be a sequence sequence of finite plane trees  satisfying the following conditions:
	 \begin{enumerate}[\qquad a)]
	 	\item Their height is at most $r$.
	 	\item The tree $t_0$ carries  a marked leaf at height $r$ that belongs to $t_0$.
	 	\item Each generation of the forest has at least $3$ individuals.
	 	\item No generation consists entirely of children of the same parent.
	 \end{enumerate}
	 
	  We are going to show that 
	 \begin{multline}
	 	\label{eq:asdfqwer2345}
	 	\Prb{L_r^{(p)} \ge 4k, \cJ_{L_r^{(p)}-k}^{(p)} = t_{-k}, \ldots, \cJ_{L_r^{(p)}-1}^{(p)} = t_{-1}, \cJ_{0}^{(p)} = t_{0}, \dots, \cJ_{L_r^{(p)}+k}^{(p)} = t_k} \\ \to \Prb{ \Gamma_{(-k,r)} =t_{-k}, \ldots, \Gamma_{(k,r)} = t_k}
	 \end{multline}
 as $p \to \infty$. Here $\Gamma_{i,j}$ denotes the fringe subtree at $(1/2 +i, j)$ in the modified Kesten tree, interpreted as planted plane tree.  
 
 In order to see that this implies~\eqref{eq:toshow14365140}, note that for each $r \ge 1$ and  $\epsilon>0$ we may choose $k \ge 1$ sufficiently large such that there is a collection $\mathbf{F}_k$ of forests of the form $(t_{-k}, \ldots, t_k)$ satisfying conditions a), \ldots, d) with $\Pr{ (\Gamma_{(-k,r)}, \ldots, \Gamma_{(k,r)}) \in \mathbf{F}_k} > 1-\epsilon$ and the following properties: Conditional on this event the ball $\cB_r(\cU)$ is determined by $(\Gamma_{(-k,r)}, \ldots, \Gamma_{(k,r)})$ and the simple triangulations with a boundary inserted at the slots associated to the vertices of these trees.  Likewise, conditional on the event that $L_r^{(p)} \ge 4k$ and   $(\cJ_{L_r^{(p)}-k}^{(p)}, \ldots, \cJ_{L_r^{(p)}-1}^{(p)},\cJ_{0}^{(p)}, \dots, \cJ_{L_r^{(p)}+k}^{(p)}) \in \mathbf{F}_k$, the ball $\cB_r(\cT_\infty^{(p)})$ is also fully determined by these trees and the simple triangulations  of polygons inserted in the corresponding slots. Thus,~\eqref{eq:toshow14365140} follows from~\eqref{eq:asdfqwer2345}.

Now, in order to show~\eqref{eq:asdfqwer2345}, let $(t_{-k},\ldots, t_k)$ be a sequence sequence of finite plane trees satisfying conditions a), \ldots, d). Letting $\V^*(t_i)$ denote the collection of vertices with height strictly less than $r$, we have
	\begin{align}
		\Prb{ \Gamma_{(-k,r)} =t_{-k}, \ldots, \Gamma_{(k,r)} = t_k} = \prod_{i=-k}^k \prod_{v \in \V^*(t_i)} \theta(c_v).
	\end{align}
Here $c_v$ denotes the number of children of a vertex $v$.  For ease of notation, we set $\cF_{(k)} = (t_{-k}, \ldots, t_k)$ and write $\phi_k(\cF) = (\sigma_{\ell -k}, \ldots, \sigma_{\ell -1}, \sigma_0, \ldots, \sigma_k)$ for all $\cF = (\sigma_0, \ldots, \sigma_{\ell}) \in \cF_{p, \ell, r}$ with $\ell \ge 2k+ 1$. Let $m_k$ denote the number of vertices at height $r$ in $\cF_{(k)}$. By Corollary~\ref{co:formula}, it follows that for $p \ge m_k$ the left-hand side of~\eqref{eq:asdfqwer2345} is given by
\begin{align*}
 \sum_{\ell = 2k +1}^\infty \sum_{\substack{\cF \in \ndF_{p,\ell,r} \\ \phi_k(\cF) = \cF_{(k)} }} \frac{(64/9)^{-\ell} C(\ell)}{(64/9)^{-p}C(p)}   \prod_{v \in \cF^*} \theta(c_v).
\end{align*}
 Note that $\phi_k(\cF) = \cF_{(k)}$ already ensures that no generation consists entirely of children of the same parent, and that each generation has at least $3$ individuals. Hence we may rewrite this expression as
\begin{align*}
	\left(\prod_{i=-k}^k \prod_{v \in \V^*(t_i)} \theta(c_v)\right) \sum_{\ell = 4k}^\infty \frac{(64/9)^{-\ell} C(\ell)}{(64/9)^{-p}C(p)}  \sum_{\substack{\sigma_{k+1}, \ldots, \sigma_{\ell - k -1}}}	\left(\prod_{i=k+1}^{\ell -k -1} \prod_{v \in \V^*(\sigma_i)} \theta(c_v)\right),
\end{align*}
with the sum indices $\sigma_{k+1}, \ldots, \sigma_{\ell - k +1}$ ranging over forests of plane trees of height at most $r$ with a total number of vertices at height $r$ equal to $p - m_k$. In order to shorten notation, we set $\varphi(\ell) = (64/9)^{-\ell} C(\ell)$ and
\begin{align*}
	A_p &:= \sum_{\ell = 4k}^\infty \frac{\varphi(\ell)}{\varphi(p)}  \sum_{\substack{\sigma_{k+1}, \ldots, \sigma_{\ell - k -1}}}	\left(\prod_{i=k+1}^{\ell -k -1} \prod_{v \in \V^*(\sigma_i)} \theta(c_v)\right) \\
	&= \sum_{\ell = 4k}^\infty \frac{\varphi(\ell)}{\varphi(p)}  \mathbb{P}_{\ell - (4k)}(Y_r = p - m_k)\\
	&= \sum_{\ell = 0}^\infty \frac{\varphi(\ell+4k)}{\varphi(p)}  \mathbb{P}_{\ell}(Y_r = p - m_k)\\
	&\ge  \sum_{\ell = 3}^\infty \frac{\varphi(\ell)}{\varphi(p)}  \mathbb{P}_{\ell}(Y_r = p - m_k).
\end{align*}
Here we have used that, by Equation~\eqref{eq:cpasymp}, $\varphi$ is monotonically increasing. Writing
\begin{align}
	\tilde{h}(\ell) := \varphi(\ell)\ell^{-1},
\end{align}
it follows trivially that for $0< \epsilon < 1/4$
\begin{align}
	\label{eq:karda}
	A_p \ge (1-\epsilon) \sum_{\ell = \lfloor(1-\epsilon)p\rfloor +1}^\infty \frac{\tilde{h}(\ell)}{\tilde{h}(p)}  \mathbb{P}_{\ell}(Y_r = p - m_k).
\end{align}
By~\eqref{eq:cpasymptotoo} we know that
\begin{align}
	\tilde{h}(\ell) \sim \frac{9 \sqrt{3}}{2048 \pi \sqrt{2}} \frac{1}{\sqrt{\ell}}.
\end{align}
We define for all $\ell \ge 1$
\begin{align}
	\label{eq:htotheizzo}
	h(\ell) = 4^{-\ell} \binom{2\ell}{\ell} \sim \frac{1}{\sqrt{\pi \ell}}.
\end{align}
Thus, for $p$ large enough, it follows that 
\begin{align}
	\label{eq:kardabba}
	A_p \ge (1-2\epsilon) \sum_{\ell = \lfloor(1-\epsilon)p\rfloor +1}^\infty \frac{{h}(\ell)}{{h}(p)}  \mathbb{P}_{\ell}(Y_r = p - m_k).
\end{align}

Next, we will show that
\begin{align}
	\label{eq:kanyetoshow}
	\lim_{p \to \infty} \sum_{\ell=1}^{\lfloor (1-\epsilon) p \rfloor} \frac{h(\ell)}{h(p)}  \mathbb{P}_{\ell}(Y_r = p - m_k) = 0.
\end{align}
Recall that by Lemma~\ref{le:brtntria} the offspring distribution $\theta$ is critical. Hence   $Y_r - \ell$ under $\mathbb{P}_\ell$ is the sum of $\ell$ i.i.d. centred random variables that asymptotically follow the power law in~\eqref{eq:nowsubexp}. Hence, for $\ell \le (1-\epsilon)p$, we have that $\mathbb{P}_{\ell}(Y_r -\ell = p - \ell - m_k)$ is the probability for this centred sum with $\ell$ summands to assume a value 
\[
p - \ell - m_k \ge \ell(\epsilon/(1-\epsilon)) - m_k.
\]
Using results for the big-jump domain of random walks~\cite[Cor. 2.1]{MR2440928}, it follows that
there exists $\ell_0 >0$ and $C_\epsilon >0$ such that uniformly for all sufficiently large $p$ and all $\ell_0 \le \ell \le (1-\epsilon)p$ 
\begin{align}
	\label{eq:kanye1}
\mathbb{P}_{\ell}(Y_r -\ell = p - \ell - m_k) \le C_\epsilon \ell \mathbb{P}_1(Y_r -1 = p - \ell - m_k ).
\end{align}
Furthermore, it is clear that uniformly for all $3 \le \ell \le \ell_0$
\begin{align}
	\label{eq:kanye2}
\mathbb{P}_{\ell}(Y_r  = p  - m_k) \sim \ell  \mathbb{P}_1(Y_r = p - m_k).
\end{align}
as $p\to \infty$. Combining~\eqref{eq:kanye1} with~\eqref{eq:kanye2} and using~\eqref{eq:nowsubexp} it follows that there exists a constant $C_\epsilon'>0$ such that  uniformly for all sufficiently large $p$ and all integers $1 \le \ell \le (1-\epsilon)p$  
\begin{align}
	\label{eq:kanye3}
\mathbb{P}_{\ell}(Y_r = p  - m_k) \le C_\epsilon' \ell p^{-5/2}.
\end{align}
Using Equation~\eqref{eq:htotheizzo} it follows that 
\[
\sum_{\ell=1}^{\lfloor (1-\epsilon) p \rfloor} \frac{h(\ell)}{h(p)}  \mathbb{P}_{\ell}(Y_r = p - m_k) = O(p^{-2}) \sum_{\ell=1}^{\lfloor (1-\epsilon)p \rfloor } \ell^{1/2} \to 0
\]
as $p \to \infty$. This verifies~\eqref{eq:kanyetoshow}.

Combining~\eqref{eq:kanyetoshow} and~\eqref{eq:kardabba}, it follows that
\begin{align}
	\label{eq:almostthere43453}
	\liminf_{p \to \infty} A_p &\ge (1- 2\epsilon) \liminf_{p \to \infty} \sum_{\ell=1}^\infty \frac{h(\ell)}{h(p)} \mathbb{P}_\ell(Y_r = p  - m_k).
\end{align}
The reason for making the switch from $\tilde{h}$ to $h$ is that
\[
	\Pi(x) := \sum_{\ell \ge 1} \tilde{h}(x) = \frac{1}{\sqrt{1-x}} - 1
\]
satisfies, by standard calculations, 
\begin{align}
	\Pi( g_\theta^{(r)}(x) ) - \Pi( g_\theta^{(r)}(0) )  = \Pi(x).
\end{align}
Hence, for all $q \ge 1$
\begin{align}
	\sum_{\ell=1}^\infty {h(\ell)}\mathbb{P}_\ell(Y_r = q) = h(q).
\end{align}
Hence Inequality~\eqref{eq:almostthere43453} simplifies to
\begin{align*}
	\liminf_{p \to \infty} A_p &\ge (1- 2\epsilon) \liminf_{p \to \infty}  \frac{h(p-m_k)}{h(p)} \\
	&= 1- 2\epsilon. 
\end{align*}
Since $\epsilon$ was arbitrary, it follows that
\begin{align}
	\liminf_{p \to \infty} A_p \ge 1.
\end{align}
Thus, we have verified
\begin{align*}
	&\liminf_{p \to \infty} \Prb{L_r^{(p)} \ge 4k, \cJ_{L_r^{(p)}-k}^{(p)} = t_{-k}, \ldots, \cJ_{L_r^{(p)}-1}^{(p)} = t_{-1}, \cJ_{0}^{(p)} = t_{0}, \dots, \cJ_{L_r^{(p)}+k}^{(p)} = t_k} \\ &\ge \Prb{ \Gamma_{(-k,r)} =t_{-k}, \ldots, \Gamma_{(k,r)} = t_k}.
\end{align*}
The sum of the quantities on the right-hand side over possible choice of forests $(t_{-k}, \ldots, t_k)$ equals $1$. Hence this implies~\eqref{eq:asdfqwer2345} and completes the proof.
\end{proof}

\begin{remark}
	\label{re:uhptthesame}
	The difference between the type III and type I UHPT is less pronounced than the differences between the types of $\cT_n$, $\cT_n^{(p)}$, and $\cT_{\infty}^{(p)}$. This is because the skeleton of the type I UHPT almost surely satisfies the additional constraints  on the skeletons in the type III case, and because we recovered the same offspring distribution $\theta$ in the type III case. Hence the type III  UHPT may be constructed from the skeleton of the type I UHPT by contracting each degree $2$ slot into a single edge and by gluing independent simple Boltzmann triangulations with the corresponding perimeters into the remaining slots.
\end{remark}

\subsection{The type III Lower Half-Plane Triangulation}

We construct the type III lower half-plane triangulation in a similar way as the upper half-plane triangulation. It's skeleton has vertex set $\ndZ \times \ndZ_{\le 0}$ and instead of a modified Kesten tree we use a doubly infinite sequence $(\cJ)_{i \in \ndZ}$ of $\theta$-Bienaym\'e--Galton--Watson trees. These trees are embedded so that the root of $\cJ_i$ is $(1/2 +i, 0)$ for all $i \in \ndZ$, and the vertex set of this infinite forest is precisely $(1/2 + \ndZ) \times \ndZ_{\le 0}$. The vertex set of the trees $\cJ_i$ for $i \ge 0$ is precisely $(1/2 + \ndZ_{\ge0}) \times \ndZ_{\le 0}$.

The downward triangles are constructed in the same way as for the UHPT. That is, for all integers $i \in \ndZ$ and $j \in \ndZ_{\le 0}$ we draw a horizontal edge from $(i,j)$ to $(i+1,j)$ and construct a downward triangle whose third vertex has coordinates $(k, j-1)$ with $k$ the smallest integer such that $(1/2 + k, j-1)$ is a child of $(1/2 + i', j)$ for some $i'>i$. Each resulting pair of double edges is merged into a single edge. We glue independent Boltzmann triangulations with the corresponding perimeter into all remaining slots. We consider the resulting type III lower half-plane triangulation (LHPT) $\cL$ as rooted at the oriented edge from $(0,0)$ to $(1,0)$. 

\begin{remark}
	\label{re:lhptthesame} 
	Similar as mentioned in Remark~\ref{re:uhptthesame} for the UHPT, the type III lower half-plane triangulation may be constructed from the skeleton of the type I LHPT by contracting each slot of degree $2$ to a single edge, and by gluing independent simple triangulations with the corresponding perimeters into the remaining slots.
\end{remark}

In order to illustrate the relation between the LHPT and the model $\cT_{\infty}^{(p)}$ we mention the following property:

\begin{proposition}
	\label{pro:lhptlimit}
	Given integers $p \ge 3$ and $r \ge 1$, let $\tilde{B}_r^\bullet(\cT_\infty^{(p)})$ be obtained from the hull ${B}_r^\bullet(\cT_\infty^{(p)})$ by re-rooting at a uniformly selected edge of the top cycle, oriented so that the top face lies to its left. Then
	\[
	{B}_r^\bullet(\cT_\infty^{(p)}) \convdis \cL
	\]
	in the local topology.
\end{proposition}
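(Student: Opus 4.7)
My plan is to mirror the proof of Proposition~\ref{pro:localconv}, switching the viewpoint from the bottom cycle of $B_r^\bullet(\cT_\infty^{(p)})$ to its (large, random) top cycle. Since both $\tilde{B}_r^\bullet(\cT_\infty^{(p)})$ and $\cL$ are reconstructed from their skeleton forests by gluing independent simple Boltzmann triangulations of the prescribed perimeters into the slots (as described in Remark~\ref{re:lhptthesame}), it suffices to prove local convergence of the cyclically shifted skeleton around the uniformly chosen top edge toward the doubly infinite i.i.d.\ sequence $(\cJ_i)_{i \in \ndZ}$ of $\theta$-Bienaym\'e--Galton--Watson trees truncated at height $r$.

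First I would note that $L_r^{(p)} \convp \infty$ as $p \to \infty$ by the same estimate as at the beginning of the proof of Proposition~\ref{pro:localconv}. Then, fixing a window size $k \ge 1$ and a tuple $(t_{-k}, \ldots, t_k)$ of finite plane trees of height at most $r$ with a total of $m_k$ vertices at height $r$, I would use Corollary~\ref{co:formula} together with the $1$-to-$q$ and $p$-to-$1$ correspondences between $\ndF_{p,q,r}$, $\ndF'_{p,q,r}$, $\ndF''_{p,q,r}$ to write the probability that the cyclic shift $(\sigma_i)$ of the skeleton at the uniform top edge satisfies $L_r^{(p)} \ge 2k+1$ and $\sigma_{-k} = t_{-k}, \ldots, \sigma_k = t_k$ (with indices taken cyclically) as
\begin{align*}
	\Big( \prod_{i=-k}^{k} \prod_{v \in \V^*(t_i)} \theta(c_v) \Big) \cdot A_p^{(k)},
\end{align*}
where
\begin{align*}
	A_p^{(k)} = \sum_{\ell \ge 2k+1} \frac{\varphi(\ell)}{\varphi(p)} \sum_{(\sigma_{k+1}, \ldots, \sigma_{\ell - k - 1})} \prod_{i=k+1}^{\ell-k-1} \prod_{v \in \V^*(\sigma_i)} \theta(c_v),
\end{align*}
with $\varphi(\ell) = (64/9)^{-\ell} C(\ell)$ and the inner sum ranging over admissible completions whose total number of vertices at height $r$ equals $p - m_k$.

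Establishing $A_p^{(k)} \to 1$ then proceeds exactly as in Equations~\eqref{eq:kardabba}--\eqref{eq:almostthere43453}: drop the global admissibility constraints (which only costs a lower bound), replace $\tilde{h}(\ell) = \varphi(\ell)/\ell$ by the central binomial weights $h(\ell)$ from~\eqref{eq:htotheizzo}, discard the contribution from $\ell \le (1-\epsilon)p$ via the big-jump estimate~\eqref{eq:kanye3}, and apply the identity $\sum_{\ell \ge 1} h(\ell) \mathbb{P}_\ell(Y_r = q) = h(q)$ with $q = p - m_k$ to deduce $\liminf_{p \to \infty} A_p^{(k)} \ge 1 - 2\epsilon$ for any $\epsilon \in (0, 1/4)$, hence $\liminf_p A_p^{(k)} \ge 1$; summing over all admissible windows yields the matching upper bound. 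The main obstacle, as in Proposition~\ref{pro:localconv}, is the type III admissibility constraints (``at least three per generation'' and the no-sole-parent condition), but these become asymptotically vacuous when restricted to a fixed window once $L_r^{(p)} \to \infty$. The local limit of the skeleton is therefore $2k+1$ independent $\theta$-Bienaym\'e--Galton--Watson trees truncated at height $r$, and since the slot decorations are i.i.d.\ simple Boltzmann triangulations in both models, this yields $\tilde{B}_r^\bullet(\cT_\infty^{(p)}) \convdis \cL$ in the local topology.
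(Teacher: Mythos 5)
Your proposal takes the limit $p \to \infty$ with the height $r$ of the cylinder held fixed, mirroring Proposition~\ref{pro:localconv}. This cannot give $\cL$. For fixed $r$, the bottom cycle of the hull is at graph distance exactly $r$ from the new root vertex on the top cycle; any ball of radius larger than $r$ around the root therefore always meets the bottom face of the cylinder, whereas $\cL$ extends indefinitely downward from its single boundary $\cL_0$. Your own intermediate conclusion exposes the issue: you land on a local skeleton limit of independent $\theta$-Bienaym\'e--Galton--Watson trees \emph{truncated at height $r$}, while the skeleton of $\cL$ consists of untruncated $\theta$-Bienaym\'e--Galton--Watson trees. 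Thus what your computation would establish, granting the details, is $\tilde{B}_r^\bullet(\cT_\infty^{(p)}) \convdis \cL_{[0,r]}$ as $p \to \infty$ --- the LHPT restricted to its first $r$ layers, which still carries a second boundary at depth $r$ --- and not $\cL$.

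To reach $\cL$ the limit must be taken as $r \to \infty$: only then does the bottom boundary recede to infinity and the height-$r$ truncation of the skeleton trees disappear. The analogy with Proposition~\ref{pro:localconv} lies in the technique, not in the limit variable. In the upper half-plane case, the boundary on which the root sits has degree $p$ and the triangulation is already infinite on the other side, so $p \to \infty$ suffices; here the boundary on which the new root sits has degree $L_r^{(p)}$, and the opposite side of the cylinder has depth $r$, so both must grow, which is exactly what $r \to \infty$ achieves. The framework you set up --- reducing to local convergence of the cyclically shifted skeleton around a uniform top-cycle edge and then gluing in i.i.d.\ simple Boltzmann triangulations --- is the right one, but the window-probability asymptotics, the treatment of the admissibility constraints, and the vanishing of the height-$r$ truncation all need to be carried out in the regime $r \to \infty$, where the $2k+1$-tree window is far from the $p$ height-$r$ leaves with high probability.
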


The proof is by analogous arguments as for Proposition~\ref{pro:localconv}. We omit the  details, since we are not going to make use of Proposition~\ref{pro:lhptlimit} in what follows. The type III LHPT and UHPT satisfy a specific contiguity relation that we are going to describe now.

For each integer $r \ge 1$ we let $\cU_{[0,r]}$ denote the infinite rooted planar map consisting of the first $r$ layers of $\cU$. That is, we only keep those vertices and edges that lie in the strip $\ndR \times [0,r]$. Hence $\cU_{[0,r]}$ is the hull of radius $r$, corresponding to distances from the bottom boundary. Likewise, we let $\cL_{[0,r]}$ denote the rooted planar map consisting of the first $r$ layers of $\cL$. That is, we only keep those vertices and edges that lie in the strip $\ndR \times [-r,0]$. 	Moreover, we set $\cU_r= \ndZ \times \{r\}$ and $\cL_r= \ndZ \times \{-r\}$.

Recall that $\Gamma_{i,r}$ denotes the subtree of descendants of $(1/2 + i, r)$ in the modified Kesten tree in the construction of $\cU$. Thus, $(\Gamma_{i,r})_{i \in \ndZ \setminus\{0\}}$ are independent Bienaym\'e--Galton--Watson trees truncated at height $r$. For all $i \in \ndZ$ we let $\Gamma_{(i,r)}(r)$ denote the collection of vertices of $\Gamma_{i,r}$ with height $r$. We also let $K_r \ge 1$ be the first index $i \ge 1$ such that $\Gamma_{(i,r)}(r) \ne \emptyset$. 

Let $i_r <0$ denote the largest integer $i<0$ such that the tree $\cJ_i$ has height at least $r$. We let $\cJ_{i_r}(r)$ denote the collection of vertices of $\cJ_{i_r}$ at height $r$. 

\begin{proposition}
	\label{pro:omit1}
	Let $\widetilde{\cU}_{[0,r]}$ be obtained by re-rooting $\cU_{[0,r]}$ so that the root-edge is the horizontal edge from $(J_r, r)$ to $(J_r+1,r)$, with an index $J_r$ selected uniformly at random from $\{1, \ldots, K_r\}$. For any non-negative measurable function $f$ on the collection of rooted planar maps we have
	\[
		\Ex{ K_r f(\widetilde{\cU}_{[0,r]})} = \Ex{ \# \cJ_{i_r}(r) f(\cL_{[0,r]})  }
	\]
\end{proposition}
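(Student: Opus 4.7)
The plan is to compare both sides at the level of the skeleton decomposition, exploiting the classical size-bias identity $\bar{\theta}(k) = k \theta(k)$ between the ordinary and spine offspring distributions. I would first rewrite both sides by exchanging sum and expectation:
\begin{align*}
\Ex{K_r f(\widetilde{\cU}_{[0,r]})} = \Ex{\sum_{j=1}^{K_r} f(\cU_{[0,r]}^{(j)})}
\quad \text{and} \quad
\Ex{\# \cJ_{i_r}(r) f(\cL_{[0,r]})} = \Ex{\sum_{v \in \cJ_{i_r}(r)} f(\cL_{[0,r]})},
\end{align*}
where $\cU_{[0,r]}^{(j)}$ denotes $\cU_{[0,r]}$ re-rooted at the horizontal edge from $(j,r)$ to $(j+1,r)$.

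Next I would describe the skeleton of $\cU_{[0,r]}^{(j)}$ via the modified Kesten tree: the spine lies at horizontal position $-j$ relative to the new root, with $j-1$ fringe subtrees $\Gamma_{1,r}, \ldots, \Gamma_{j-1,r}$ of height $<r$ between the new root and the spine, and the subtree $\Gamma_{K_r,r}$ of height $\ge r$ at relative position $K_r - j$; along the spine the offspring distribution is $\bar{\theta}$, and the simple Boltzmann triangulations in every slot are independent conditionally on the skeleton. On the LHPT side, the classical spine decomposition for critical Galton--Watson trees biased by the number of vertices at depth $r$ shows that sampling $\cJ_{i_r}$ together with a uniform mark $v \in \cJ_{i_r}(r)$ is equivalent to sampling a tree with a spine of length $r$ from its root to $v$ along which the offspring distribution is $\bar{\theta}$; the other trees $\cJ_i$ for $i \neq i_r$ remain i.i.d.\ $\theta$-Galton--Watson trees subject only to the constraint that $\cJ_{i_r + 1}, \ldots, \cJ_{-1}$ have height $<r$.

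The matching goes through the correspondence $j \leftrightarrow -i_r$: under this identification both sides express the same marked skeleton measure on a doubly-infinite forest of truncated $\theta$-Galton--Watson trees decorated by one distinguished spine tree of length $r$ inserted at a negative relative index together with a marked leaf at depth $r$, and independent Boltzmann triangulations in every slot. The counting factor $K_r$ on the LHS and the size-bias factor $\# \cJ_{i_r}(r)$ on the RHS both produce precisely the weight needed to convert $\theta$-weighted spine offspring into $\bar{\theta}$-weighted spine offspring via $\bar{\theta}(k) = k\theta(k)$.

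The main technical obstacle is handling the asymmetry between the two-sided constraint structure of the re-rooted UHPT (involving both the spine at a negative relative position and the distinguished subtree $\Gamma_{K_r,r}$ of height $\ge r$ at a non-negative relative position) and the one-sided constraint structure imposed by $i_r$ on the LHPT side. This is most cleanly resolved by first proving the analogous identity for finite-$p$ configurations inside $\cT_\infty^{(p)}$ via direct summation over admissible forests using Corollary~\ref{co:formula}, and then passing to the limit $p \to \infty$ with the help of Proposition~\ref{pro:localconv} and Proposition~\ref{pro:lhptlimit} together with a domination argument on truncated cylinder events.
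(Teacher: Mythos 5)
Your core argument---rewriting both sides as a sum over a deterministic index, invoking the spine decomposition of a $\theta$--Bienaym\'e--Galton--Watson tree biased by its generation-$r$ size, and matching term by term through the identification $j \leftrightarrow -i_r$ via $\bar{\theta}(k)=k\theta(k)$---is exactly the skeleton-level computation that underlies this proposition. The paper itself does not redo that computation: it reduces the claim to a statement about the distribution of the downward-triangle configuration (legitimate because the slot fillings are independent given the skeleton), observes via Remarks~\ref{re:uhptthesame} and~\ref{re:lhptthesame} that the type~III UHPT and LHPT skeletons have the same law as their type~I counterparts, and then cites the corresponding type~I result verbatim. Your proposal is thus a self-contained rendition of the argument the paper delegates to, which is a perfectly reasonable (arguably more transparent) way to present it.

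The final paragraph, however, worries about an obstacle that is not there and then proposes a detour that would make things worse. Once you expand $\Ex{K_r f(\widetilde{\cU}_{[0,r]})} = \sum_{j\ge 1}\Exb{\one_{\{j\le K_r\}} f(\cU_{[0,r]} \text{ re-rooted at }j)}$, the event $\{j\le K_r\}$ only says that $\Gamma_{(1,r)},\ldots,\Gamma_{(j-1,r)}$ have empty generation $r$; it places no constraint on $\Gamma_{(j,r)},\Gamma_{(j+1,r)},\ldots$, and $\Gamma_{(K_r,r)}$ is merely where the random sum happens to stop, not a restriction on the measure. In the re-rooted coordinates this is precisely the one-sided constraint structure of the LHPT term with $i_r=-j$: spine tree at relative position $-j$, trees at relative positions $1-j,\ldots,-1$ restricted to height $<r$, everything else unconstrained $\theta$--Bienaym\'e--Galton--Watson. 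The two sums therefore match term by term already, with nothing two-sided to reconcile. The finite-$p$ approximation you suggest is consequently unnecessary, and it would additionally lean on Proposition~\ref{pro:lhptlimit}, whose proof the paper deliberately omits precisely because it is not used anywhere; invoking it here would add a nontrivial verification rather than remove one.
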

The proof is by  identical arguments as for the corresponding result~\cite[Prop. 8]{zbMATH07144469} for the type I case. No adaptions are necessary.  The reason for this is the following: In order to prove Proposition~\ref{pro:omit1}, it suffices to show that the distribution of the configuration of downward triangles is the same for $\widetilde{\cU}_{[0,r]}$, under the measure having density $K_r$ with respect to $\mathbb{P}$, and for $\cL_{[0,r]}$, under the measure having density $\# \cJ_{i_r}(r)$ with respect to $\mathbb{P}$. This result on the skeletons is exactly what was verified in~\cite[Prop. 8]{zbMATH07144469} in the type I case. By Remark~\ref{re:uhptthesame} and Remark~\ref{re:lhptthesame} we know that the skeletons of the UHPT and LHPT in the type III case are, respectively, the same as the skeletons of the UHPT and LHPT in the type I case. Hence, in order to verify Proposition~\ref{pro:omit1} we may copy the arguments for~\cite[Prop. 8]{zbMATH07144469} word by word, adjusting only the references to the corresponding intermediate results derived so far for type III triangulations. 

Having Proposition~\ref{pro:omit1} at hand, the following result may likewise be verified by identical arguments (without any adaption) as for the corresponding result~\cite[Cor. 9]{zbMATH07144469}:

\begin{corollary}
	For each $\epsilon>0$ we may choose $\delta>0$ small enough so that for each integer $r \ge 1$ and every measurable set $A$ the property $\Pr{\widetilde{\cU}_{[0,r]} \in A} \le \delta$ implies $\Pr{\cL_{[0,r]} \in A} \le \epsilon$.
\end{corollary}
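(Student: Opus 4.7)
The plan is to apply Proposition~\ref{pro:omit1} with $f = \one_A$ to obtain the change-of-measure identity
\begin{align*}
\Ex{K_r \one\{\widetilde{\cU}_{[0,r]} \in A\}} = \Ex{\#\cJ_{i_r}(r) \one\{\cL_{[0,r]} \in A\}},
\end{align*}
and then convert it into a probability comparison by truncating both random weights. Introducing parameters $\alpha,\beta>0$ to be chosen later, a Markov bound on the right and a truncation of $K_r$ on the left yield
\begin{align*}
\Pr{\cL_{[0,r]} \in A} \le \Pr{\#\cJ_{i_r}(r) < \alpha r^2} + \frac{\beta}{\alpha}\Pr{\widetilde{\cU}_{[0,r]} \in A} + \frac{1}{\alpha r^2}\Ex{K_r \one\{K_r > \beta r^2\}}.
\end{align*}

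The next task is to control the two noise terms uniformly in $r$. For $K_r$: the trees $(\Gamma_{i,r})_{i \ne 0}$ are independent $\theta$-Bienaym\'e--Galton--Watson trees truncated at height $r$, and by~\eqref{eq:trivbound00435} each survives to generation $r$ with probability $\mathbb{P}_1(Y_r \ge 1) = (r+1)^{-2}$. Hence $K_r$ is geometric with parameter $(r+1)^{-2}$, and a direct computation of the truncated mean of a geometric variable will give $\Ex{K_r \one\{K_r > \beta r^2\}} = O\bigl(r^2(\beta+1) e^{-\beta/4}\bigr)$ uniformly in $r$. This allows one to make the last summand smaller than $\epsilon/3$ by choosing $\beta$ large in terms of $\alpha$ and $\epsilon$.

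For the size-biased weight $\#\cJ_{i_r}(r)$, which has the law of $Y_r$ under $\mathbb{P}_1(\cdot \mid Y_r \ge 1)$, the explicit formula~\eqref{eq:nowayjose} gives after conditioning
\begin{align*}
\mathbb{E}_1\bigl[e^{-\lambda Y_r/r^2} \mid Y_r \ge 1\bigr] = 1 - (r+1)^2 \left(r + (1-e^{-\lambda/r^2})^{-1/2}\right)^{-2},
\end{align*}
which converges to $1 - (1+\lambda^{-1/2})^{-2} = O(\lambda^{-1/2})$ as $r \to \infty$. A Chernoff-type inequality
\[
\Prb{Y_r \le \alpha r^2 \mid Y_r \ge 1} \le e^{\lambda\alpha}\, \mathbb{E}_1\bigl[e^{-\lambda Y_r/r^2} \mid Y_r \ge 1\bigr],
\]
optimized near $\lambda = 1/(2\alpha)$, will yield $\Pr{\#\cJ_{i_r}(r) < \alpha r^2} = O(\sqrt{\alpha})$ uniformly for $r$ large, while the finitely many small values of $r$ are handled by the trivial bound $\#\cJ_{i_r}(r) \ge 1$. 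Choosing $\alpha$ small then keeps this contribution below $\epsilon/3$.

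With $\alpha$ and $\beta$ fixed, setting $\delta = \alpha\epsilon/(3\beta)$ forces the middle term to be at most $\epsilon/3$, and summing the three contributions gives $\Pr{\cL_{[0,r]} \in A} \le \epsilon$ as required. The main technical obstacle is the uniform-in-$r$ lower-tail bound for $\#\cJ_{i_r}(r)$; everything else reduces to elementary manipulation of the change-of-measure identity from Proposition~\ref{pro:omit1}. Because Remarks~\ref{re:uhptthesame} and~\ref{re:lhptthesame} identify the skeletons of the type III UHPT and LHPT with those in the type I case, this argument is literally the one from~\cite[Cor.~9]{zbMATH07144469} with the references to the type I intermediate results replaced by their type III counterparts.
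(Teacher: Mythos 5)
Your proposal is correct and follows essentially the same route that the paper takes, namely the change-of-measure identity from Proposition~\ref{pro:omit1} applied to $f=\one_A$, a decomposition isolating the lower tail of $\#\cJ_{i_r}(r)$ and the upper tail of the geometric variable $K_r$, and uniform-in-$r$ control of both tails (the former via the explicit conditional Laplace transform of $Y_r$ given survival, the latter via the exact geometric law with parameter $(r+1)^{-2}$). The only cosmetic slip is calling $\#\cJ_{i_r}(r)$ ``size-biased'': its law is $Y_r$ under $\mathbb{P}_1(\,\cdot\mid Y_r\ge 1)$, which is what you actually use, so the computation stands.
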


\section{Estimates for distances along the boundary}

The present section adapts results on distances along the boundaries of the UHPT and LHPT presented in~\cite[Sec. 4]{zbMATH07144469} for type I triangulations. As discussed in Remark~\ref{re:uhptthesame} and Remark~\ref{re:lhptthesame}, the configuration of downward triangles (that is, the skeletons) of the type III UHPT and LHPT are identical to the type I case. The only difference between type I and III for these infinite triangulations is that in the type III case we merge each slot of degree $2$ into a single edge, and fill the remaining slots with  independent simple (as opposed to unrestricted) Boltzmann triangulations (see Definition~\ref{def:boltzmann}) with the corresponding perimeter. For this reason, many of the proofs in~\cite[Sec. 4]{zbMATH07144469} require little to no adaption to treat the type III case. Hence we are going to provide proofs for results where small adaptions are needed, and refer the reader to the corresponding parts of~\cite[Sec. 4]{zbMATH07144469} for detailed justifications of results whose proof of the type I case could practically be copied word by word to treat the type III case.

\subsection{Layers of balls in the type III UHPT}

\label{sec:layers}

Let $r \ge 1$ be an integer. As before, we let $\cB_r(\cU)$ denote the planar map consisting of all faces that are incident to a vertex with graph distance strictly less than $r$ from the root-vertex of $\cU$. We define the hull $\cB_r^\bullet(\cU)$ as the complement of the unique infinite component of $\cB_r(\cU)$. Thus  $\cB_r^\bullet(\cU)$ is a triangulation with a simple boundary, consisting of a finite path on the boundary of $\cU$ that includes the root-edge of $\cU$, and a path of non-boundary edges that joins the two extremes of $\cB_r(\cU)$ lying on the boundary of $\cU$. We may view  $\cB_r(\cU)$ as marked at these two boundary vertices.
The proof of the following observation is analogous to the result~\cite[Lem. 10]{zbMATH07144469} for the type I case.

\begin{lemma}
	\label{eq:lemmorty}
	Let $A$ be simple triangulation with a boundary that is marked at two distinct vertices at the boundary that differ from the root vertex. Let $\tilde{\partial}A$ denote the part of $\partial A$ given by the path between the two distinguished vertices that contain the root edge. Suppose that $\Pr{\cB_r^\bullet(\cU) = A} >0$. Let $m \ge 2$ denote the number of edges of $\tilde{\partial}A$. Let $q \ge 1$ be the number of edges of $\partial A \setminus \tilde{\partial} A$. Also let $N \ge 0$ be the number of vertices of $A$ that do not lie on $\tilde{\partial}A$. Then
	\[
		\Pr{\cB_r^\bullet(\cU) = A} = (64/9)^{q-m} (256/27)^{-n}.
	\]
\end{lemma}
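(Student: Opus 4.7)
The plan is to exploit the explicit construction of $\cU$ via the modified Kesten tree together with simple Boltzmann triangulations filling the slots, and to decompose the event $\{\cB_r^\bullet(\cU) = A\}$ accordingly.

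First, inside $A$ identify the skeleton $\mathcal{S}_A$ of downward triangles (analogous to the skeleton of a triangulation of the cylinder from Section~\ref{sec:app}), together with the slot triangulations $(M_v)$ filling the non-downward-triangle regions. The data $(\mathcal{S}_A, (M_v))$ reconstructs $A$, so the event $\{\cB_r^\bullet(\cU) = A\}$ is the intersection of (i) the modified Kesten tree inducing the shape $\mathcal{S}_A$ inside the hull, and (ii) the Boltzmann triangulations matching $(M_v)$. The second event is immediate: by Definition~\ref{def:boltzmann} each slot of perimeter $c_v+2$ contributes an independent factor $\rho^{-\mathrm{Inn}(M_v)}/Z(c_v+2)$, where $\rho=256/27$.

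Next, I handle the skeleton event. Each non-spine vertex $v$ of $\mathcal{S}_A$ with $c_v$ children contributes a factor $\theta(c_v)$ coming from the $\theta$-Bienaymé--Galton--Watson construction of the non-spine trees. For the spine segment lying inside the hull, after summing over the admissible extensions of the spine above the hull and using the size-biased law $\bar\theta(k)=k\theta(k)$ together with the uniform choice of spine child, one recovers the same $\theta$-weights as for a non-spine vertex. Substituting the explicit form $\theta(c_v)=\rho^{-1}\alpha^{-c_v+1}Z(c_v+2)$ from Equation~\eqref{eq:deftheta} with $\alpha=64/9$, the $Z(c_v+2)$ normalisations cancel exactly against those of the slot weights, leaving only powers of $\alpha$ and $\rho$.

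Finally, I collect the remaining exponents by a counting argument on the skeleton forest. The exponent of $\alpha$ is $\sum_v (1-c_v)$ ranging over skeleton vertices not on $\tilde\partial A$, which telescopes to $q-m$ via the identity $\sum_v(c_v-1)=m-q$ relating the out-degrees to the difference of the top and bottom boundary lengths of the hull. The exponent of $\rho$ is $-\sum_v \mathrm{Inn}(M_v) - |\mathcal{S}_A^*|$, which equals $-N$ because every vertex of $A$ not on $\tilde\partial A$ is either an internal vertex of some slot or a skeleton vertex lying off $\tilde\partial A$.

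The main obstacle is the careful treatment of the spine: the size-biased weight $\bar\theta$ and the uniform choice of spine child must be disentangled from the i.i.d.\ $\theta$-weights of the remaining branches. Since by Remark~\ref{re:uhptthesame} the skeletons of the type~I and type~III UHPT coincide, and the corresponding statement for type~I triangulations is \cite[Lem.~10]{zbMATH07144469}, the spine bookkeeping transfers verbatim to the type~III setting; the only model-dependent ingredients are $\theta$ and the Boltzmann slot weights, and both have been matched above.
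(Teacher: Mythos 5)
Your approach is genuinely different from the paper's, and I don't think it closes. The paper proves this lemma by passing to a limit: by Proposition~\ref{pro:localconv} and Corollary~\ref{co:cotnptinfty} one writes $\Pr{\cB_r^\bullet(\cU) = A} = \lim_{p\to\infty}\lim_{n\to\infty}\Pr{A \sqsubset \cT_n^{(p)}}$, and then $\Pr{A \sqsubset \cT_n^{(p)}}$ is just the ratio $\#\ndT_{n-N,\,p+q-m}/\#\ndT_{n,p}$, whose asymptotics via~\eqref{eq:tnpasymp} and~\eqref{eq:cpasymptotoo} give the stated formula. That argument requires no decomposition of $A$ at all.

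Your route instead tries to read off a skeleton-plus-slots decomposition directly inside $A$, in analogy with the cylinder decomposition of Section~\ref{sec:app}, and then telescope. The central gap is that this decomposition is not well-defined from $A$ alone. The skeleton and the downward triangles of $\cU$ are defined in terms of heights, i.e.\ graph distances to $\partial\cU$; but $\cB_r^\bullet(\cU)$ is a graph-distance ball around the root vertex, and its far boundary $\partial A \setminus \tilde\partial A$ is a path whose vertices sit at assorted, not constant, heights. Those heights are not determined by $A$: a geodesic in $\cU$ from a boundary vertex of $A$ to $\partial\cU$ may exit through $\partial A \setminus \tilde\partial A$ and reach $\partial\cU \setminus \tilde\partial A$, so different extensions of $A$ to a full $\cU$ can assign different heights to the same boundary vertex. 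Consequently, which faces of $A$ are downward triangles, which regions are slots, and where the spine passes through the hull, are all ambiguous given only $A$, and the event $\{\cB_r^\bullet(\cU)=A\}$ does not factor cleanly over a skeleton of $A$. A symptom of this is in your telescoping step: the identity $\sum_v (c_v-1) = p-q$ holds for cylinder triangulations where both boundaries are cycles and all vertices of a given cycle have the same height, but $\partial A \setminus \tilde\partial A$ is an arc at varying heights, so there is no corresponding combinatorial identity giving $m-q$ directly. Finally, your appeal to Remark~\ref{re:uhptthesame} only transfers the skeleton law, not the claim that a ball hull can be carved out of it; the type~I reference you cite also proves its version of this lemma by the same limiting argument, not by a spine/skeleton decomposition of the ball.
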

\begin{proof}
	Given another triangulation $A'$ with a boundary, we write $A \sqsubset A'$ to denote that $A$ may be obtained as a subtriangulation of $A'$, with root edges coinciding such that $\tilde{\partial} A$ is part of $\partial A'$ and no other edge of $A$ is on $\partial A'$. By Proposition~\ref{pro:localconv}, it follows that if $\Pr{\cB_r^\bullet(\cU) = A} >0$ then
	\begin{align}
		\label{eq:fu}
		\Pr{\cB_r^\bullet(\cU) = A} = \lim_{p \to \infty} \Pr{ A \sqsubset \cT_\infty^{(p)} }.
	\end{align}
	Moreover, by Corollary~\ref{co:cotnptinfty} it follows that 
	\begin{align}
		\label{eq:abc}
		\Pr{ A \sqsubset \cT_\infty^{(p)} } =\lim_{n \to \infty} \Pr{ A \sqsubset \cT_n^{(p)} }.
	\end{align}
	Let $p > m$. We have $A \sqsubset \cT_n^{(p)}$ holds if and only if $\cT_n^{(p)}$ may be obtained by gluing a simple triangulation $T$ with a boundary of length $q + (p-m)$ to $A$, such that a part of length $q$ of the boundary of $T$ gets identified with $\partial A \setminus \tilde{\partial} A$. Hence, for $n$ large enough,
	\[
		\Pr{ A \sqsubset \cT_\infty^{(p)} } = \frac{\# \ndT_{n-N, p+q-m}}{\# \ndT_{n, p}}.
	\]
	Thus, by Equation~\eqref{eq:abc} and~\eqref{eq:tnpasymp}, it follows that 
	\[
		\Pr{ A \sqsubset \cT_\infty^{(p)} } = \frac{C(p+q-m)}{C(p)} \left( \frac{256}{27} \right)^{-N}.
	\]
	By Equation~\eqref{eq:fu} and~\eqref{eq:cpasymptotoo} it follows that
	\[
	\Pr{\cB_r^\bullet(\cU) = A} =   \left( \frac{64}{9} \right)^{q-m}  \left(\frac{256}{27} \right)^{-N}.
	\]
\end{proof}

We say an edge of $\partial\cB_r^\bullet(\cU)$ is \emph{internal} if it does not belong to $\partial \cU= \cU_0$. Let $E_1, \ldots, E_Q$, with $Q \ge 1$,  denote the internal edges of $\partial\cB_{r+1}^\bullet(\cU)$ in clockwise order. Let $(-L', 0)$ denote the left-most vertex of $\partial\cB_r^\bullet(\cU) \cap \partial \cU$ and $(R',0)$ the right-most vertex of $\partial\cB_r^\bullet(\cU) \cap \partial \cU$. Let $L''$ and $R''$ be defined analogously for $r+1$ instead of $r$.

For all $1 \le i \le Q$ the internal edge $E_i$ of $\partial\cB_{r+1}^\bullet(\cU) \cap \partial \cU$ links two vertices at distance $r+1$ from the root vertex of $\cU$ and is incident to a ``downward'' triangle whose third vertex $V_i$ belongs to $\partial\cB_r^\bullet(\cU) \cap \partial \cU$. We set $V_0$ to be the vertex $(-L',0)$, and $V_{Q+1}$ to be the vertex $(R',0)$. For $1 \le j \le Q+1$ let $S_j \ge 0$ denote the number of edges of $\partial\cB_r^\bullet(\cU)$ that lie between $V_{j-1}$ and $V_j$. This way, $S_1 + \ldots + S_{Q+1} = \mathbf{P}_r$ is the number of internal edges of $\partial\cB_r^\bullet(\cU)$.

We prove the next observation by following closely the arguments of the corresponding statement for type I triangulations~\cite[Prop. 11]{zbMATH07144469}. 

\begin{proposition}
	\label{pro:sha4530}
	For all integers $q \ge 1$, $k_1, k_2 \ge 0$, and $s_1, \ldots, s_q \ge 0$ we have
\begin{multline*}
	\Prb{Q = q, S_1=s_1, \ldots, S_{q+1}=s_{q+1}, L'' - L' = k_1 +1, R'' - R' = k_2 +1 \mid \cB_r^\bullet(\cU)} \\
	= \frac{1}{4} \one_{s_1 + \ldots + s_{q+1} = \mathbf{P}_r} \theta(s_1 + k_1) \theta(s_2) \cdots \theta(s_q) \theta(s_{q+1} + k_2).
\end{multline*}
\end{proposition}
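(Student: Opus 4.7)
The strategy is to compute the conditional probability directly by enumerating, for a fixed outcome $A$ of $\cB_r^\bullet(\cU)$, all outcomes $A'$ of $\cB_{r+1}^\bullet(\cU)$ compatible with $\cB_r^\bullet(A')=A$ and the prescribed combinatorial data, and then applying the closed-form expression of Lemma~\ref{eq:lemmorty} to both. Given $A$ with $\Pr{\cB_r^\bullet(\cU)=A}>0$, any admissible $A'$ is obtained by attaching a new layer to $A$ consisting of $q$ downward triangles---whose apex positions $V_1,\dots,V_q$ on the internal boundary of $A$ are determined by $s_1,\ldots,s_{q+1}$, and whose lateral positions relative to $\partial\cU$ are determined by $k_1,k_2$---together with $q+1$ simple triangulations with a boundary filling the slots delimited by the downward triangles. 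The $q-1$ interior slots (for $2\le j\le q$) have perimeter $s_j+2$, while the two corner slots incorporate the $k_1+1$ (resp.\ $k_2+1$) edges of $\partial\cU$ absorbed when passing from $\cB_r^\bullet(\cU)$ to $\cB_{r+1}^\bullet(\cU)$, and so have perimeters $s_1+k_1+2$ and $s_{q+1}+k_2+2$, respectively.

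Setting $\alpha=64/9$ and $\rho=256/27$, Lemma~\ref{eq:lemmorty} applied to $A$ and $A'$ yields
\[
\frac{\Pr{\cB_{r+1}^\bullet(\cU)=A'}}{\Pr{\cB_r^\bullet(\cU)=A}}
=\alpha^{(q-\mathbf P_r)-(k_1+k_2+2)}\,\rho^{-(N_{A'}-N_A)}.
\]
Only the exponent of $\rho$ depends on the fillings of the slots: it equals $T+\sum_j I_j$ where $T$ counts the new top-boundary vertices of $A'$ created by the new layer, and $I_j$ is the number of inner vertices of the triangulation filling the $j$-th slot. Summing $\rho^{-I_j}$ over the filling produces $Z(\mathrm{perim}_j)$, and a direct geometric inspection reveals that $T=q-1$, since the top-boundary path of $\partial A'$ connecting $(-L'',0)$ to $(R'',0)$ consists of the $q$ internal edges $E_1,\dots,E_q$ and hence has $q+1$ vertices, two of which already lie on $\tilde\partial A'$. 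Converting each $Z$-factor into a $\theta$-factor via $\theta(k)=\alpha^{-k+1}Z(k+2)/\rho$ from~\eqref{eq:deftheta}, collecting all $\alpha$-powers contributes $\alpha^{\mathbf P_r+k_1+k_2-(q+1)}$ and all $\rho$-powers contribute $\rho^{q+1}$; multiplying by the prefactor from the Lemma and by $\rho^{-T}=\rho^{-(q-1)}$ leaves the residual constant $\alpha^{-3}\rho^{2}=1/4$, independently of the combinatorial data. The indicator $\one_{s_1+\cdots+s_{q+1}=\mathbf P_r}$ then simply records that the apex positions partition the internal boundary of $A$.

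The main obstacle is the careful identification of the corner-slot geometry: one must verify that the two corner slots have perimeters $s_1+k_1+2$ and $s_{q+1}+k_2+2$, that the two extremes $(-L'',0)$ and $(R'',0)$ lie on the top-boundary path of $\partial A'$ (so that $T=q-1$), and that every new top-boundary vertex and every inner slot vertex contributes exactly once to $N_{A'}-N_A$. Once this geometric input is settled, the remaining argument is a direct algebraic verification that the residual powers of $\alpha$ and $\rho$ collapse to $1/4$, independently of $A$, $q$, the $s_j$'s, $k_1$ or $k_2$.
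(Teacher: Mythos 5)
Your proof is correct and follows essentially the same strategy as the paper's: it invokes Lemma~\ref{eq:lemmorty} twice, expresses the conditional probability as a ratio, identifies the exponent of $\rho$ as $(q-1)+\sum_j\Inn(\cM_j)$, sums $\rho^{-\Inn(\cM_j)}$ over the slot fillings to produce $Z$-factors, and converts these via~\eqref{eq:deftheta} so that the residual powers of $\alpha$ and $\rho$ collapse to $\alpha^{-3}\rho^2 = 1/4$. The geometric accounting — the corner slots absorbing the $k_1+1$ and $k_2+1$ boundary edges of $\partial\cU$, the $q-1$ new internal top-boundary vertices, and the cancellation of old vertices between $N_{A'}$ and $N_A$ — matches the paper's argument exactly.
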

\begin{proof}
	For ease of notation, we set
	\[
		\alpha = 64/9 \qquad \text{and} \qquad \rho= 256/27.
	\]
	Let $A$ denote a  simple triangulation with a boundary, with two distinct vertices marked on the boundary that also differ from the root vertex of $A$. Suppose that $\Pr{ \cB_r^\bullet(\cU) = A}$. We let $p$ denote the number of internal edges of $\partial A$. We have to show that the conditional probability
	\begin{align}
		\label{eq:ricktosstudy}
	\Prb{Q = q, S_1=s_1, \ldots, S_{q+1}=s_{q+1}, L'' - L' = k_1 +1, R'' - R' = k_2 +1 \mid \cB_r^\bullet(\cU) = A}
	\end{align}
	for non-negative integers $s_1, \ldots, s_{q+1} \ge 0$ and $k_1, k_2 \ge 0$ such that $s_1 + \ldots + s_{q+1} = p$, is given by
	\[
	\frac{1}{4} \theta(s_1 + k_1) \theta(s_2) \cdots \theta(s_q) \theta(s_{q+1} + k_2).
	\]
	Recall the notation $\sqsubset$ from Lemma~\ref{eq:lemmorty}. The probability in~\eqref{eq:ricktosstudy} equals
	\begin{align}
		\label{eq:exprtod08f}
		\sum_{A'} \Prb{ \cB_{r+1}^\bullet(\cU) = A' \mid \cB_r^\bullet(\cU) = A},
	\end{align}
	with the sum index $A'$ ranging over all simple triangulations with a boundary and two marked vertices on the boundary (distinct from each other, and distinct from the root vertex of $A'$), with the following properties:
	\begin{enumerate}[\qquad i)]
		\item $A \sqsubset A'$.
		\item $\tilde{\partial} A \subset \tilde{\partial} A'$.
		\item There are $k_1+1$ boundary edges between the left-most vertex of $\tilde{\partial}A$ and the left-most vertex of $\tilde{\partial} A'$.
		\item There are $k_2+1$ boundary edges between the right-most vertex of $\tilde{\partial}A$ and the right-most vertex of $\tilde{\partial} A'$.
		\item $\partial A' \setminus \tilde{\partial}A'$ has $q$ edges. Each of these is incident to a ``downward'' triangle with the third vertex on $\partial A \setminus \tilde{\partial} A$, and the configuration of downward triangles is characterized by $s_1, \ldots, s_{q+1}$.
	\end{enumerate}
	Given such a triangulation $A'$, let $N$ denote the number of vertices in $A'$ that are not vertices of $A$ or $\partial \cU$.  It follows by Lemma~\ref{eq:lemmorty} that
	\begin{align*}
		\Pr{ \cB_{r+1}^\bullet(\cU) = A' \mid \cB_r^\bullet(\cU) = A} &= \frac{\Pr{\cB_{r+1}^\bullet(\cU) = A'}}{\Pr{\cB_r^\bullet(\cU) = A}} \\
		&= \alpha^{q-p} \alpha^{-(k_1 + k_2 +2)} \rho^{-N}. 
	\end{align*}

The triangulation $A'$ is determined by $A$, the preceding properties, and simple triangulations  glued into the $q+1$ slots left by the downward triangles. Here we include the slots of degree $2$, which are always merged into a single edge. For $2 \le i  \le q$, the $i$th slot in clockwise direction has degree $s_i + 2$. For $i=1$, the slot has degree $s_1 + k_1 + 2$, and for $i=q+1$ it has degree $s_{q+1} + k_2 + 2$. Let $\cM_i$ denote the triangulation with a boundary that we glue into the $i$th slot, with $\cM_i$ set to a single edge if the slot has degree $2$. Let $\Inn(\cM_i) \ge 0$ denote the number of inner vertices. Then
\[
	N = q-1 + \sum_{i=1}^{q+1} \Inn(\cM_i). 
\]
Let $\tilde{s}_i = s_i$ for $2 \le i \le q$, and $\tilde{s}_1 = s_1 + k_1$, and $\tilde{s}_{q+1} = s_{q+1}+k_2$. Note that $\sum_{i=1}^{q+1}(\tilde{s}_i -1) = p - (q+1) + k_1 + k_2$. By Equation~\eqref{eq:deftheta}, $\theta(k) = \frac{1}{\rho} \alpha^{-k+1} Z(k+2)$, with $Z(2) = 1$ by convention. Hence
\begin{align*}
	 \alpha^{q-p} \alpha^{-(k_1 + k_2 +2)} \rho^{-N} &= \alpha^{-3} \rho^2 \prod_{i=1}^{q+1} \left( \frac{1}{\rho} \alpha^{-(\tilde{s}_i -1)} \rho^{-\Inn(\cM_i)} \right) \\
	 &= \frac{1}{4} \prod_{i=1}^{q+1} \left( \theta(\tilde{s}_i) \frac{ \rho^{-\Inn(\cM_i)} }{Z(\tilde{s}_i +2)} \right).
\end{align*}
Summing over all possible choices of $A'$ is equivalent to summing over all possible choices of triangulations $\cM_1, \ldots, \cM_{q+1}$ with the corresponding boundary lengths. Hence, by Equation~\eqref{eq:zpdef} and the convention $Z(2)=1$, the expression in~\eqref{eq:exprtod08f} simplifies to 
\begin{align*}
	\sum_{A'}  	\Pr{ \cB_{r+1}^\bullet(\cU) = A' \mid \cB_r^\bullet(\cU) = A} &=  \frac{1}{4} \prod_{i=1}^{q+1}\theta(\tilde{s}_i) . 
\end{align*}
This completes the proof.
\end{proof}

As in Equation~\eqref{eq:htotheizzo}, we set
\begin{align*}
	h(j) = 4^{-j} \binom{2j}{j} 
\end{align*}
for all integers $j \ge 0$.

With Proposition~\ref{pro:sha4530} at hand, the proof of the following corollary is identical to the proof of the corresponding result~\cite[Cor. 12]{zbMATH07144469} for the type I case. No adaptions are necessary.
\begin{corollary}
	For all integers $c,k \ge 0$,
	\begin{align*}
		\Prb{S_{Q+1} = c, R'' - R' = k+1 \mid \cB_r^\bullet(\cU)} = \one_{c \le \mathbf{P}_r} \frac{1}{2} \left(1 + h(\mathbf{P}_r-c)\right) \theta(c+k).
	\end{align*}
In particular,
\begin{align*}
	\Prb{ R'' - R' = k+1 \mid \cB_r^\bullet(\cU)} \le \theta([k, \infty[).
\end{align*}
\end{corollary}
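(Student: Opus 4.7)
The plan is to marginalize the joint formula from Proposition~\ref{pro:sha4530}. Setting $s_{q+1}=c$ and $k_2=k$ and summing over the free parameters $q \ge 1$, $k_1 \ge 0$, and $(s_1,\dots,s_q)$ with $s_1 + \cdots + s_q = \mathbf{P}_r - c$, the conditional probability in question equals
\[
\frac{\theta(c+k)}{4} \sum_{q \ge 1} \sum_{k_1 \ge 0} \sum_{\substack{s_1, \ldots, s_q \ge 0 \\ s_1 + \cdots + s_q = \mathbf{P}_r - c}} \theta(s_1 + k_1) \theta(s_2) \cdots \theta(s_q),
\]
provided $c \le \mathbf{P}_r$; if $c > \mathbf{P}_r$ the summation range is empty, producing the indicator in the claimed formula. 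The task therefore reduces to evaluating the inner triple sum.

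Next I would perform the sum over $k_1$ first, which replaces $\theta(s_1+k_1)$ by the tail probability $\hat\theta(s_1) := \theta([s_1,\infty[)$, and then recognize the $(q-1)$-fold convolution structure in $s_2,\dots,s_q$. Encoding this in generating functions, the inner sum becomes
\[
[x^{\mathbf{P}_r - c}] \, \hat g_\theta(x) \sum_{q\ge 1} g_\theta(x)^{q-1} = [x^{\mathbf{P}_r - c}] \, \frac{\hat g_\theta(x)}{1 - g_\theta(x)},
\]
where $\hat g_\theta(x) = (1 - x g_\theta(x))/(1-x)$ follows by interchanging the order of summation. Using the explicit formula from Lemma~\ref{le:brtntria}, a short computation yields $1 - g_\theta(x) = (1-x)/(1+\sqrt{1-x})^2$, and the resulting rational-with-surds expression collapses to
\[
\frac{\hat g_\theta(x)}{1 - g_\theta(x)} = \frac{(1+\sqrt{1-x})^2 + x}{1-x} = \frac{2 + 2\sqrt{1-x}}{1-x} = \frac{2}{1-x} + \frac{2}{\sqrt{1-x}}.
\]
Extracting coefficients via $[x^m](1-x)^{-1} = 1$ and $[x^m](1-x)^{-1/2} = h(m)$ gives $2(1 + h(\mathbf{P}_r - c))$, which together with the prefactor $\theta(c+k)/4$ yields the announced formula.

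For the ``in particular'' bound I would sum the main identity over $c \ge 0$. Since $h(m) = 4^{-m}\binom{2m}{m} \le 1$ for every $m \ge 0$, the factor $\tfrac{1}{2}(1 + h(\mathbf{P}_r - c))$ never exceeds~$1$, so
\[
\Prb{R'' - R' = k+1 \mid \cB_r^\bullet(\cU)} \le \sum_{c \ge 0} \theta(c + k) = \theta([k,\infty[).
\]
The only non-routine step is the generating-function simplification, but Lemma~\ref{le:brtntria} supplies the closed form of $g_\theta$, so this amounts to straightforward algebra; everything else is careful bookkeeping of the marginalization.
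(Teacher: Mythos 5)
Your proposal is correct and is the natural marginalization argument; the paper does not write its own proof but refers to the identical derivation in Curien--Le Gall for type~I triangulations, which is exactly the calculation you carried out. One small remark: the displayed formula for $g_\theta$ in Lemma~\ref{le:brtntria} contains a misprint (it reads $1+\frac{1}{1-x}$ where, consistent with Equation~\eqref{eq:nowayjose} applied at $r=1$ and with $\theta(0)=3/4$, $\theta(1)=1/8$, it should be $1+\frac{1}{\sqrt{1-x}}$); you silently used the corrected form, as your identity $1-g_\theta(x)=\frac{1-x}{(1+\sqrt{1-x})^2}$ shows, so the algebra leading to $\frac{\hat g_\theta(x)}{1-g_\theta(x)}=\frac{2}{1-x}+\frac{2}{\sqrt{1-x}}$ and the coefficient extraction are sound, as is the tail bound via $h(m)\le 1$.
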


\subsection{Distances along the boundary of the type III UHPT}

For each integer $r \ge 1$  let $(-L_r^\cU, 0)$ and $(0, R_r^\cU)$ denote the left-most and right-most vertex in $\partial \cB_r^\bullet(\cU) \cap \partial \cU$. 

\begin{proposition}
	\label{pro:audiom}
	For each $\epsilon>0$ there exists $K > 0$ such that
	\[
		\sup_{r \ge 1} \Pr{L_r^\cU \ge K r^2} \le \epsilon
	\]
	and
	\[
		\sup_{r \ge 1} \Pr{R_r^\cU \ge K r^2} \le \epsilon.
	\]
	For each integer $m \ge 1$ let $T_m := \min\{ r \ge 1 \mid R_r^\cU > m\}$. There exists $K'>0$ such that for all $m,j \ge 1$
	\[
		\Pr{ R_{T_m}^\cU - m > j} \le K' \sqrt{\frac{m}{m+j}}.
	\]
\end{proposition}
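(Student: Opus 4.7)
The approach is to adapt the proof of the corresponding statement in the type I setting, leveraging the preceding corollary and the asymptotic $\theta(k) \sim \frac{3}{2\sqrt{\pi}} k^{-5/2}$. Summing the point-probability bound gives a uniform conditional tail estimate
\[
\Prb{R_{r+1}^\cU - R_r^\cU > k \mid \cB_r^\bullet(\cU)} \le C k^{-1/2}
\]
for all $r \ge 1$ and $k \ge 1$, and analogously for the left-side increments of $L_r^\cU$. This is the only probabilistic input we will need.

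For the first two statements, I would couple the process $(R_r^\cU)_{r \ge 0}$ on an enlarged probability space with an i.i.d.\ sequence $(X_i)_{i \ge 1}$ of random variables of tail $\min(1, C k^{-1/2})$, so that $R_r^\cU \le \sum_{i=1}^r X_i$ almost surely. Since $X_1$ has regularly varying tail of index $1/2$, it lies in the domain of attraction of a one-sided stable law of index $1/2$; in particular $r^{-2} \sum_{i=1}^r X_i$ converges in distribution, so the family is tight and one can choose $K$ with $\Pr{\sum_{i=1}^r X_i \ge K r^2} \le \epsilon$ uniformly in $r \ge 1$. The bound $\sup_r \Pr{R_r^\cU \ge K r^2} \le \epsilon$ follows, and the same argument applied to the left-side increments yields the corresponding bound for $L_r^\cU$.

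For the overshoot estimate I would condition on the step preceding $T_m$ and apply the tail bound to obtain
\[
\Pr{R_{T_m}^\cU - m > j} \le C\, \Exb{(m - R_{T_m-1}^\cU + j)^{-1/2}}.
\]
The main obstacle is that the classical renewal theorem is unavailable in this heavy-tailed, infinite-mean setting, so we cannot directly conclude that $R_{T_m-1}^\cU$ is close to $m$. To handle this I would split according to the dyadic scale of the undershoot $m - R_{T_m - 1}^\cU$: on $\{m - R_{T_m-1}^\cU \in [A/2, A]\}$ for dyadic $A$ with $1 \le A \le 2m$, the contribution to the expectation is at most $(A/2 + j)^{-1/2} \Pr{m - R_{T_m - 1}^\cU \in [A/2, A]}$, and one controls the latter probability by combining the first part of the proposition (applied to a peeling time $r \sim \sqrt{m}$ at which $R_r^\cU$ is typically of order $m$) with the conditional increment tail bound to estimate the probability that a large jump of size comparable to $A$ occurs precisely at time $T_m$. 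Summing the dyadic contributions and balancing against $j$ should yield the announced bound $K' \sqrt{m/(m+j)}$.
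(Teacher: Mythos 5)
Your treatment of the first two bounds is sound: stochastic domination of the one‑step increments by i.i.d.\ random variables with tail $\min(1, C k^{-1/2})$ (regularly varying of index $1/2$), then tightness of $r^{-2}\sum_{i\le r}X_i$ via the one‑sided stable limit, gives the uniform‑in‑$r$ estimate. This is the natural route.

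The overshoot estimate, however, has a genuine gap, and it begins with your very first display. Writing the decomposition properly, $\{T_m=s,\ R_s^\cU>m+j\}=\{R_{s-1}^\cU\le m,\ R_s^\cU>m+j\}$, and conditioning on $\cB_{s-1}^\bullet(\cU)$ (which \emph{is} a legal conditioning, since $s-1$ is deterministic, whereas $T_m-1$ is \emph{not} a stopping time), one obtains after summing over $s$
\[
\Prb{R_{T_m}^\cU-m>j}\;\le\;C\,\Exb{\sum_{s\ge 1}\one_{R_{s-1}^\cU\le m}\,\bigl(m-R_{s-1}^\cU+j\bigr)^{-1/2}}
= C\,\Exb{\sum_{a\in\cR\cap[0,m]}(m-a+j)^{-1/2}},
\]
where $\cR=\{R_r^\cU:r\ge0\}$ is the set of visited levels. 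Your formula keeps only the term $a=R_{T_m-1}^\cU$ from this sum, which is strictly weaker than what the conditional increment bound actually yields, and in fact the resulting inequality $\Prb{R_{T_m}^\cU-m>j}\le C\,\Exb{(m-R_{T_m-1}^\cU+j)^{-1/2}}$ is \emph{false}. The point is the inspection paradox: given the undershoot $U:=m-R_{T_m-1}^\cU=b$, the jump at time $T_m$ is not a fresh sample of the increment law but is conditioned to exceed $b$; with a $k^{-1/2}$ tail the conditional overshoot tail is of order $(b/(b+j))^{1/2}$, not $(b+j)^{-1/2}$. Concretely, your right‑hand side is always at most $Cj^{-1/2}$, so your claimed bound would give $\Prb{R_{T_m}^\cU-m>2m}\le C(2m)^{-1/2}\to0$; yet the Dynkin--Lamperti arcsine theory for renewal processes with regularly varying increments of index $1/2$ (to which your own coupling reduces this) shows that $m^{-1}(R_{T_m}^\cU-m)$ has a nondegenerate limit law, so $\Prb{R_{T_m}^\cU-m>2m}$ stays bounded away from zero. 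The dyadic refinement you sketch does not repair this, because it is applied to $\Exb{(U+j)^{-1/2}}$ rather than to the range‑sum above; carrying your computation through gives a final answer of order $j^{-1/2}$, which is $\sqrt{m}$ below the announced (and correct) bound $\sqrt{m/(m+j)}$, confirming the starting inequality cannot hold.

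The correct quantity to control is the expected number of visited levels, $\Exb{\#(\cR\cap[0,m])}=\Exb{T_m}$, or more refined the renewal‑type measure $\Pr{a\in\cR}$. Once one has, say, $\Exb{T_m}\le C\sqrt{m}$ (a lower‑tail bound on $R_r^\cU$, not immediate from the increment \emph{upper} tail bound alone), the range‑sum display above yields the claimed estimate: for $j\ge m$ it gives $\le C\Exb{T_m}\,(m+j)^{-1/2}\le C'\sqrt{m/(m+j)}$, while for $j<m$ the bound is trivial. This is where the real work of the overshoot bound lies, and it is the piece your proposal does not supply.
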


Having the results of Section~\ref{sec:layers} at hand, the proof of Proposition~\ref{pro:audiom} is identical to the proof of the corresponding result~\cite[Prop. 13]{zbMATH07144469} for type I triangulations.  The same goes for the following result, which is the type III version of~~\cite[Prop. 14]{zbMATH07144469}:

\begin{proposition}
	Let $\epsilon>0$ and $A>0$ be given. There exists an integer $K>0$ such that for each $r \ge 1$\[
		\Prb{ \min_{\substack{0 \le i \le Ar^2 \\ j \ge K r^2}}  d_{\cU}((i,0), (j,0)) \le r} \le \epsilon.
	\]
\end{proposition}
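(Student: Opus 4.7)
The plan is to translate the event of interest, via translation invariance of the UHPT along its boundary, into a statement about the leftward reach of hulls centred at boundary vertices $(j,0)$ with $j$ large, and then to apply Proposition~\ref{pro:audiom} together with the spatial Markov property of $\cU$.

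First I would verify that $\cU$ is translation invariant along the boundary $\ndZ \times \{0\}$. This follows from the construction: the modified Kesten tree is invariant in law under horizontal shifts of its root position, and the Boltzmann triangulations filling the slots are i.i.d.\ indexed by the slots. Alternatively it can be obtained from the local convergence of Proposition~\ref{pro:localconv} together with an appropriate re-rooting invariance of $\cT_\infty^{(p)}$ in the limit $p \to \infty$. As a consequence, for every $k \in \ndZ$ the hull $B_r^\bullet(\cU,(k,0))$ has the same distribution as $\cB_r^\bullet(\cU)$, and in particular its leftward boundary extent $L_r^{(k)}$ is distributed as $L_r^\cU$.

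Next I would rewrite the event. If $d_\cU((i,0),(j,0)) \le r$ with $0 \le i \le Ar^2 < Kr^2 \le j$, then $(i,0) \in B_r^\bullet(\cU,(j,0))$, which forces $L_r^{(j)} \ge j - i \ge j - Ar^2$. Hence the event in the proposition is contained in
\[
\bigl\{\,\exists\, j \ge Kr^2 : L_r^{(j)} \ge j - Ar^2\,\bigr\},
\]
and on this event its infimum $J^*$ satisfies $L_r^{(J^*)} \ge (K-A)r^2$.

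The crux of the proof is bounding the probability of this event, and here I would follow the strategy of~\cite[Prop.~14]{zbMATH07144469}. A naive union bound in $j$ together with the tail of $L_r^\cU$ diverges, so one must exploit the spatial Markov property of $\cU$ to reduce to a single stopping-time problem controlled by the second assertion of Proposition~\ref{pro:audiom}. By left--right symmetry one introduces the stopping time $T'_m = \min\{r' \ge 1 : L_{r'}^\cU > m\}$, with overshoot tail $\Pr{L_{T'_m}^\cU - m > j} \le K'\sqrt{m/(m+j)}$. Combined with the first assertion of Proposition~\ref{pro:audiom} to control the initial scale, a leftward peeling exploration started from the right half-plane and run up to radius $r$ bounds $\Pr{J^* < \infty}$ by a quantity of order $\sqrt{A/K}$, which is $\le \epsilon$ provided $K$ is chosen large enough in terms of $A$ and $\epsilon$.

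The main obstacle is precisely this simultaneous control of $\{L_r^{(j)} \ge j - Ar^2\}$ over all $j \ge Kr^2$. A direct union bound fails because the tails of $L_r^\cU$ at the scale $(K-A)r^2$ are too weak to absorb the $O(r^2)$-sized family of starting points. The key technical point, inherited from the type~I argument, is that thanks to the spatial Markov structure of the UHPT a single leftward peeling exploration encodes the required information for all $j$ at once, so that the overshoot estimate of Proposition~\ref{pro:audiom} delivers the simultaneous bound.
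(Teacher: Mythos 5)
Your opening moves are correct and match the standard setup: translation invariance of $\cU$ along its boundary is a genuine consequence of the construction (or of Proposition~\ref{pro:localconv} together with cyclic invariance of $\cT_\infty^{(p)}$), and your reduction of the event to $\{\exists\, j \ge Kr^2 : L_r^{(j)} \ge j - Ar^2\}$ is exactly right, since $d_\cU((i,0),(j,0))\le r$ with $i<j$ forces $(i,0)\in B_r^\bullet(\cU,(j,0))$ and the boundary trace of the hull is an interval. You are also right that a naive union bound over $j$ cannot work, because the tail of $L_r^\cU/r^2$ is too heavy to beat an $O(r^2)$-sized family.

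However, the step where the actual work happens is not carried out. You write that ``a leftward peeling exploration started from the right half-plane and run up to radius $r$ bounds $\Pr{J^*<\infty}$ by a quantity of order $\sqrt{A/K}$,'' and that ``a single leftward peeling exploration encodes the required information for all $j$ at once.'' This is an assertion of the conclusion, not a proof. In particular: (i) you never specify what the ``exploration'' is (which hull, which stopping rule), (ii) you never say how, on the good event of the overshoot estimate, one actually rules out short paths between $[0,Ar^2]\times\{0\}$ and $[Kr^2,\infty)\times\{0\}$ --- the difficulty being that a short path from $(i,0)$ to $(j,0)$ need not pass near $(0,0)$, so neither $\{R_r^\cU<Kr^2\}$ nor $\{R^\cU_{T_{Ar^2}}<Kr^2\}$ alone is sufficient, one still has to control the part of the path outside the swallowed hull through the spatial Markov property of the complement --- and (iii) the quoted bound $\sqrt{A/K}$ is only the probability cost of a single overshoot event, not a bound on the full bad event. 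You explicitly flag this as ``the main obstacle'' and then defer to the type~I argument without reproducing it; but that is precisely the content of the proposition, and the paper itself only refers to~\cite[Prop.~14]{zbMATH07144469} for it.

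A smaller issue: you introduce the left-extent stopping time $T'_m$ and its overshoot, but what is actually needed after your reduction (which is phrased in terms of $L_r^{(j)}$ for $j$ ranging to $+\infty$) is a single renewal-type exploration that swallows the segment $[0,Ar^2]\times\{0\}$, controls its right overshoot via the second part of Proposition~\ref{pro:audiom}, and then uses the Markov property to reduce the remaining check to a fresh copy of $\cU$ --- a structure you gesture at but never set up. As it stands, the proposal correctly identifies the ingredients (translation invariance, Proposition~\ref{pro:audiom}, spatial Markov property) and correctly diagnoses why the easy approach fails, but it does not constitute a proof of the statement.
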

Here $d_\cU$ denotes the graph distance on the vertex set of the type III UHPT $\cU$.

\subsection{Distances along the boundary of the type III LHPT}

For each $i \in \ndZ$ and each integer $r \ge 1$ we define the \emph{left-most geodesic} from $(i,0)$ in $\cL$ as the infinite geodesic path $\omega$ in $\cL$ starting from $\omega(0) = (i,0)$ as follows: at each step $n \ge 0$, the path walks from the vertex $\omega(n) \in \cL_n$ to $\cL_{n+1}$ along the left-most edge that links $\omega(n)$ to $\cL_{n+1}$. Thus, the first $r$ edges on this path form a geodesic from $\omega(0)$ to $\cL_r$.  

We  define the left-most geodesic in $\cU$ from $(i,r)$  to $\partial \cU$ in the same way. This way, for $1 \le i < j$, the left-most geodesics from $(i,r)$ to $\partial \cU$ and from $(j,r)$ to $\partial \cU$ coalesce before or when hitting $\partial \cU$, if and only if none of the trees $\Gamma_{(i,r)}, \ldots, \Gamma_{(j-1,r)}$ has height $r$.

\begin{proposition}
	\label{pro:p1515}
	For each $\epsilon>0$ there exists an integer $K \ge 1$ such that for each integer $r \ge 1$
	\[
		\Prb{\min_{|j| \ge K r^2} d_{\cL}( (0,0), (j,0) ) \ge r} \ge 1- \epsilon.
	\]
	In particular, with $K' = 4K$ we have for all $r \ge 1$
	\[
			\Prb{\min_{|j| \ge 2K' r^2} \,\, \min_{-K'r^2 \le i \le K' r^2} d_{\cL}( (i,0), (j,0) ) \ge r} \ge 1- 2\epsilon.
	\]
\end{proposition}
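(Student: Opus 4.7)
My plan is to first establish a geometric lemma, the type III analogue of~\cite[Prop.~15]{zbMATH07144469}, stating that for integers $i < j$, if the maximum height $\max_{i \le i_0 < j} \He(\cJ_{i_0})$ of the intermediate trees is at least $r$, then $d_\cL((i, 0), (j, 0)) \ge r$. Since by Remark~\ref{re:lhptthesame} the skeleton of the type III LHPT is identical to that of the type I LHPT (both constructed from an independent family of $\theta$-Bienaym\'e--Galton--Watson trees), and since the type III modifications to the slot fillings---merging degree-$2$ slots into single edges and using simple Boltzmann triangulations in larger slots---only modify slot interiors without creating edges bridging multiple layers, the planar-topological argument of the type I case carries over essentially verbatim. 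The geometric intuition is that a tall tree $\cJ_{i_0}$ forces the associated descending column of downward triangles to extend to depth at least $r$, and any path in $\cL$ joining $(i,0)$ to $(j,0)$ must either reach this depth or cross the column at a vertex whose depth plus return length is already $\ge r$.

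Granted the lemma, the first statement follows from a direct probabilistic estimate on tree heights. The event $\{\exists j \ge K r^2 : d_\cL((0,0), (j, 0)) < r\}$, applied through the lemma to the minimal such $j$, forces $\max_{0 \le i < K r^2} \He(\cJ_i) < r$. Equation~\eqref{eq:trivbound00435} (the value $g_\theta^{(r)}(0)$ from~\eqref{eq:nowayjose}) gives $\Pr{\He(\cJ_0) < r} = 1 - (r+1)^{-2}$. Independence of the trees together with $(r+1)^2 \le 4 r^2$ for $r \ge 1$ yields
\[
\Prb{ \max_{0 \le i < K r^2} \He(\cJ_i) < r } \le \bigl(1 - (r+1)^{-2}\bigr)^{\lceil K r^2 \rceil} \le \exp(-K/4).
\]
A symmetric argument on the $j \le -K r^2$ side and a union bound give $\Prb{\min_{|j| \ge K r^2} d_\cL((0,0), (j, 0)) < r} \le 2\exp(-K/4) \le \epsilon$ once $K \ge 4 \log(2/\epsilon)$.

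For the ``in particular'' statement with $K' = 4K$, I observe that any pair $(i, j)$ with $i \in [-K' r^2, K' r^2]$ and $j \ge 2 K' r^2$ satisfies $j > i$ and $[i, j-1] \supseteq [K' r^2, 2K' r^2 - 1]$. Hence the existence of such a pair with $d_\cL((i, 0), (j, 0)) < r$ forces, by the geometric lemma, all $K' r^2$ trees $\cJ_{K' r^2}, \ldots, \cJ_{2K' r^2 - 1}$ to have height strictly less than $r$, an event of probability at most $(1 - (r+1)^{-2})^{K' r^2} \le \exp(-K)$. The analogous argument on the $j \le -2K' r^2$ side, using the trees $\cJ_{-2 K' r^2}, \ldots, \cJ_{-K' r^2 - 1}$, yields the same bound, and a union bound gives at most $2\exp(-K) \le 2\epsilon$ for $K$ chosen as above.

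The hard part will be the verification of the geometric lemma in the type III setting: one must confirm that a tall tree in the skeleton still forces a long graph distance in $\cL$ despite the slot modifications, which amounts to a careful planar-topological check showing that neither merging degree-$2$ slots nor replacing unrestricted by simple Boltzmann fillings can introduce paths that bypass the obstructing column of downward triangles associated with the tall tree. Since both modifications preserve the layered structure of the map and act only inside slots, this verification is routine and can be carried out in direct analogy to the type I argument; the remaining probabilistic estimates are then straightforward consequences of the explicit tail $(r+1)^{-2}$ for $\theta$-Bienaym\'e--Galton--Watson tree heights.
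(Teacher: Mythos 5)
Your proposed geometric lemma is false. In $\cL$, consecutive boundary vertices are always joined by a horizontal edge, so $d_{\cL}((i,0),(j,0)) \le |j-i|$ holds unconditionally; taking $j-i<r$ together with any configuration in which some $\cJ_{i_0}$ with $i\le i_0<j$ is tall already contradicts the lemma. The failure persists in the regime $j-i \ge r$ that you actually need: a tall tree $\cJ_{i_0}$ only forces the left-most geodesics from $(i_0,0)$ and $(i_0+1,0)$ not to coalesce within depth $r$; this does not obstruct arbitrary paths, since a path from $(i,0)$ to $(j,0)$ can cross that pair of geodesics at depth $0$ through the edge joining $(i_0,0)$ and $(i_0+1,0)$, or at any depth $1\le d<r$ through the slot triangulations there, all at no cost comparable to $r$. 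Non-coalescence of left-most geodesics is a statement about the separator structure of the skeleton, not a lower bound on graph distance, so the implication you rely on --- that $d_{\cL}((0,0),(j,0))<r$ for some $j\ge Kr^2$ would force all of $\cJ_0,\ldots,\cJ_{Kr^2-1}$ to have height below $r$ --- is not a consequence of the configuration of downward triangles, and the probabilistic estimate built on it does not prove the proposition. The same defect invalidates the ``in particular'' argument.

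The paper's route is of an entirely different nature and does not involve a tall-tree obstruction bound. The event in Proposition~\ref{pro:p1515} is determined by the strip $\cL_{[0,r]}$, so the corollary following Proposition~\ref{pro:omit1} (the UHPT--LHPT contiguity) transfers the estimate to the re-rooted upper half-plane strip $\widetilde{\cU}_{[0,r]}$: any $\cL_{[0,r]}$-measurable event whose $\widetilde{\cU}_{[0,r]}$-probability is below $\delta$ has $\cL_{[0,r]}$-probability below $\epsilon$. The $\widetilde{\cU}_{[0,r]}$-side is then controlled by the layer-by-layer analysis of $\cB_r^\bullet(\cU)$ in the preceding subsections (Proposition~\ref{pro:audiom} and the proposition immediately after it), which bound the boundary spread $L_r^\cU$, $R_r^\cU$ by $O(r^2)$ with high probability. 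You should also note that \cite[Prop.~15]{zbMATH07144469}, which you cite as your ``geometric lemma'', is in fact the type~I analogue of the statement being proved here, not an independent geometric input.
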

Here $d_{\cL}$ denotes the graph distances in the type III LHPT $\cL$. With the results of the preceding subsections at hand, the proof of Proposition~\ref{pro:p1515} is identical to the proof of the corresponding result~\cite[Prop. 15]{zbMATH07144469} in the type I case. The same goes for the next proposition, whose proof in the type I case~\cite[Prop. 16]{zbMATH07144469} requires no adaption.

\begin{proposition}
	Let $\delta, \gamma >0 $. There exists an integer $A \ge 1$ such that for all sufficiently large $n$ the following statements hold with probability at least $1 - \delta$:
	\begin{enumerate}
		\item For all $i \in \{-n+1, \ldots, n\}$, the left-most geodesic starting from $(i,0)$ coalesces with the left-most geodesic starting from $(-n + \lfloor 2 \ell n / A \rfloor, 0)$, for some $0 \le \ell \le A$, before hitting $\cL_{\lfloor \gamma \sqrt{n} \rfloor}$.
		\item For all $i,j \in \{-n+1, \ldots, n\}$, with $i<j$, there exists a path from $(i,0)$ to $(j,0)$ that stays in $\cL_{[0, \lfloor \gamma \sqrt{n} \rfloor]}$ and has length smaller than
		\[
			\left( \lfloor \frac{A(j-i)}{2n} \rfloor + 2\right)(1 + 2 \gamma \sqrt{n}).
		\]
	\end{enumerate}
\end{proposition}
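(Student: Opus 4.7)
Set $r_n = \lfloor \gamma \sqrt{n}\rfloor$ and, for $0 \le \ell \le A$, write $a_\ell = -n + \lfloor 2\ell n/A \rfloor$. Recall that the skeleton of $\cL$ is an i.i.d.\ sequence $(\cJ_i)_{i \in \ndZ}$ of $\theta$-Bienaym\'e--Galton--Watson trees planted at the boundary, and that for integers $i<j$ the left-most geodesics from $(i,0)$ and $(j,0)$ coalesce before reaching $\cL_r$ if and only if none of $\cJ_i,\dots,\cJ_{j-1}$ has height at least $r$. The plan is to introduce the ``sparse tall tree'' event
\[
\mathcal{E}_1 = \bigl\{\text{each interval } [a_\ell,a_{\ell+1}),\ 0\le \ell < A,\ \text{contains at most one index } k\text{ with }\He(\cJ_k) \ge r_n\bigr\},
\]
to show that $\mathcal{E}_1$ alone forces condition~(1), and then to exploit the coalescence picture on $\mathcal{E}_1$ to build the detours required for condition~(2).

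For condition~(1), observe that under $\mathcal{E}_1$ every index $i \in \{-n+1,\dots,n\}$ lies in some interval $[a_\ell,a_{\ell+1})$, and either no tall $\cJ_k$ sits in $[a_\ell,i)$---in which case the geodesic from $(i,0)$ coalesces with that from $(a_\ell,0)$ before $\cL_{r_n}$---or the unique tall tree of the interval lies in $[a_\ell,i)$, so that $(i,0)$'s geodesic coalesces with that from $(a_{\ell+1},0)$. To control $\mathbb{P}(\mathcal{E}_1^c)$ I would apply Equation~\eqref{eq:trivbound00435} with $q=1$ to get $\mathbb{P}(\He(\cJ_k)\ge r_n) = 1/(r_n+1)^2 = O(1/(\gamma^2 n))$, and then combine independence of the $\cJ_k$ with a binomial second-moment bound and a union bound over the $A$ intervals to obtain
\[
\mathbb{P}(\mathcal{E}_1^c) \le A \cdot \binom{\lceil 2n/A \rceil}{2}\Bigl(\tfrac{1}{(r_n+1)^2}\Bigr)^2 = O\!\left(\tfrac{1}{A\gamma^4}\right),
\]
which is smaller than $\delta$ once $A$ is large enough (depending on $\gamma,\delta$).

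For condition~(2), working on $\mathcal{E}_1$, I would take $\ell_i,\ell_j$ with $a_{\ell_i}\le i < a_{\ell_i+1}$ and $a_{\ell_j}\le j < a_{\ell_j+1}$, noting $\ell_j - \ell_i \le \lfloor A(j-i)/(2n)\rfloor + 1$, and list the waypoints $(i,0),(a_{\ell_i+1},0),\dots,(a_{\ell_j},0),(j,0)$. For each consecutive pair of waypoints $(p,0),(q,0)$ I would build a single ``hop'' that descends along the left-most geodesic from $(p,0)$, possibly traverses one horizontal edge at the common coalescence depth $d\le r_n$ (this single edge pays for stepping across the at-most-one tall tree inside the relevant interval), and then ascends along the left-most geodesic to $(q,0)$. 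Each such hop stays inside $\cL_{[0,r_n]}$ and has length at most $2r_n+1 \le 1 + 2\gamma\sqrt{n}$, and there are at most $(\ell_j-\ell_i)+2 \le \lfloor A(j-i)/(2n)\rfloor + 2$ of them, which delivers the advertised bound.

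The main obstacle will be the bookkeeping in~(2): merging the descent, the single horizontal step across a tall tree at coalescence depth, and the subsequent ascent into a \emph{single} hop of length $1+2\gamma\sqrt{n}$, rather than splitting it into two hops of length $\approx 2\gamma\sqrt{n}$, which is precisely the reason the sparsity guarantee $\mathcal{E}_1$ is tailored as above. Thanks to Remarks~\ref{re:uhptthesame} and~\ref{re:lhptthesame}, the skeletons of $\cU$ and $\cL$ in the type~III case are identical to their type~I counterparts, so the construction I just outlined is literally the one used in \cite[Prop.~16]{zbMATH07144469} and no further adaptation is required.
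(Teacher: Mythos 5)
The high-level strategy is on the right track (and matches what the paper defers to, namely the argument of \cite[Prop.~16]{zbMATH07144469}): fix the grid points $a_\ell = -n + \lfloor 2\ell n/A\rfloor$, control the number of ``tall'' skeleton trees $\cJ_k$ with $\He(\cJ_k)\ge r_n := \lfloor\gamma\sqrt n\rfloor$ via a second-moment bound of order $O(1/(A\gamma^4))$, and read off part~(1) from the coalescence criterion. That much is correct.

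The gap is in the hop construction for part~(2). You propose that, between consecutive waypoints $(p,0)$ and $(q,0)$ containing at most one tall tree $\cJ_m$, a single hop of the form ``descend from $(p,0)$ to a common coalescence depth $d\le r_n$, take \emph{one} horizontal edge to step over $\cJ_m$, ascend to $(q,0)$'' has length at most $2r_n+1$. This does not work: when $\cJ_m$ is tall, the left-most geodesics started at $(m,0)$ and $(m+1,0)$ \emph{do not} coalesce before $\cL_{r_n}$, and at any level $d$ they are separated by all vertices of $\cJ_m$ at depth $d$ — a cross-section that can be arbitrarily wide. There is no ``common coalescence depth,'' and a single horizontal edge at depth $d$ does not bridge the two sides of a tall tree. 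The only place where a tall tree has width one is at $\cL_0$ itself, so the correct construction must climb back up: from $(p,0)$ descend and re-ascend along coalescing left-most geodesics to $(m,0)$ (cost $\le 2r_n$), take the boundary edge from $(m,0)$ to $(m+1,0)$ (cost $1$), then descend and re-ascend to $(q,0)$ (cost $\le 2r_n$). A hop that contains a tall tree therefore costs about $4r_n+1$, not $2r_n+1$; your proposed hop is a factor two too optimistic.

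Consequently your event $\mathcal{E}_1$ (``at most one tall tree per block $[a_\ell,a_{\ell+1})$'') is also not quite the right sparsity condition. To obtain the stated bound one should route through the tall trees themselves: the path $(i,0)\to(m_1,0)\to(m_1+1,0)\to\cdots\to(m_k,0)\to(m_k+1,0)\to(j,0)$ uses $k+1$ coalescence V's of length $\le 2r_n\le 2\gamma\sqrt n$ each, plus $k$ boundary edges, where $k$ is the number of tall trees with index in $\{i,\ldots,j-1\}$; this totals $(k+1)\cdot 2r_n + k \le (k+1)(1+2\gamma\sqrt n)$. One therefore needs $k+1\le \lfloor A(j-i)/(2n)\rfloor + 2$ for all $i<j$, which requires the tall trees in $\{-n+1,\ldots,n\}$ to be pairwise at index distance $\ge 2n/A$ — slightly stronger than $\mathcal{E}_1$, but with the same $O(1/(A\gamma^4))$ complementary probability, so the choice of $A$ large (depending on $\delta,\gamma$) still closes the argument. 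The second-moment computation and part~(1) of your write-up survive this repair unchanged.
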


Recall that $L_n$ denotes the length of the top cycle of $B_n^\bullet(\cT_{\infty}^{(0)})$. We let $u_0^{(n)}$ denote a uniformly at random chosen vertex from the top cycle $\partial^* B_n^\bullet(\cT_{\infty}^{(0)})$. We let $u_1^{(n)}, \ldots, u_{L_n-1}^{(n)}$ denote the remaining vertices of $\partial^* B_n^\bullet(\cT_{\infty}^{(0)})$, listed in clock-wise order starting from $u_0^{(n)}$. It will be notationally convenient to treat the indices modulo $L_n$, so that $u_{i + L_n}^{(n)} = u_i$ for all $i \in \ndZ$.

We prove the next statement by following closely the arguments of the corresponding result~\cite[Prop. 17]{zbMATH07144469} for type I triangulations.

\begin{proposition}
	\label{pro:giveusana}
	Let $\gamma \in ]0, 1/2[$ and $\delta>0$ be given. For all integers $A \ge 1$ let $H_{n,A}$ denote the event that each left-most geodesic  from some vertex in $\partial^* B_n^\bullet(\cT_{\infty}^{(0)})$ to the root coalesces before time $\lfloor \gamma n\rfloor$ with one of the left-most geodesics to the root starting from $u_{\lfloor k n^2/A \rfloor}^{(n)}$ for an integer $k$ satisfying $0 \le k \le \lfloor n^{-2} L_n A\rfloor$. Then there exists a constant $A \ge 1$ such that for all large enough $n$
	\[
		\Pr{H_{n,A}} \ge 1 - \delta.
	\]
\end{proposition}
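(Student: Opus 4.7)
We reduce failure of $H_{n,A}$ (up to a small-probability length event) to the existence of a gap between consecutive marked vertices on the top cycle containing at least two \emph{tall} trees, and then bound this via a second-moment union bound. A tree of the sub-skeleton $\widetilde{\cF}^{(0)}_{n-\lfloor\gamma n\rfloor,n}$ is called tall if it attains the maximal height $\lfloor\gamma n\rfloor$; by Equation~\eqref{eq:trivbound00435}, a truncated $\theta$-Bienaym\'e--Galton--Watson tree is tall with probability $(\lfloor\gamma n\rfloor+1)^{-2}$.

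By Lemma~\ref{le:analem}, we may fix $a \in (0,1)$ small enough that the length event $\cE := \{L_n, L_{n-\lfloor\gamma n\rfloor} \in [an^2, a^{-1}n^2]\}$ has probability at least $1-\delta/2$ for all large $n$. On $\cE$, the number of marked vertices is at most $a^{-1}A+1$ and each gap between them accounts for $K := \lfloor n^2/A\rfloor$ consecutive trees of the sub-skeleton (up to a unit boundary effect). Using the characterization at the end of Section~\ref{sec:app}, we verify the key combinatorial claim: if the $k$-th gap contains at most one tall tree at position $j_1 \in \{0,\ldots, K-1\}$, then for any vertex $v$ at position $m$ past $u_k := u_{\lfloor kn^2/A\rfloor}^{(n)}$, the first $m$ trees of the gap (if $m \le j_1$) or the last $K-m$ trees (if $m > j_1$) are tall-free, so $v$ coalesces with $u_k$ or $u_{k+1}$ respectively in at most $\lfloor\gamma n\rfloor$ steps. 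If instead the gap contains two tall trees at positions $j_1 < j_2$, then for every $m$ with $j_1 < m \le j_2$ both short subforests (from $v$ to $u_k$ and from $v$ to $u_{k+1}$) contain a tall tree, and the two arcs from $v$ to any non-neighbouring $u_{k'}$ cross $\tau_{j_1}$ and $\tau_{j_2}$ respectively, so $v$ fails to coalesce with any marked vertex. Hence $H_{n,A}^c \cap \cE$ coincides with the event that some gap contains at least two tall trees.

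For the final bound, the uniform choice of $u_0^{(n)}$ induces translation invariance of $\widetilde{\cF}^{(0)}_{n-\lfloor\gamma n\rfloor,n}$, so every gap is distributed like the first $K$ trees of this forest. Marginalizing the pointwise density bound of Proposition~\ref{pro:comparison} -- valid once $A \ge 1/(a(1-\gamma)^2)$ so that $K \le \lfloor a(n-\lfloor\gamma n\rfloor)^2\rfloor+1$ for large $n$ -- onto this projection yields a bound by $C_1 = C_1(a)$ times the law of $K$ independent truncated $\theta$-Bienaym\'e--Galton--Watson trees. A union bound over pairs of trees then gives
\begin{align*}
\Prb{\text{a given gap contains at least two tall trees}} \le C_1 \binom{K}{2}(\lfloor\gamma n\rfloor+1)^{-4} \le \frac{C_1}{2 A^2 \gamma^4}.
\end{align*}
Summing over the at most $a^{-1}A+1$ gaps bounds the conditional probability by $O(1/(aA\gamma^4))$, which is $\le \delta/2$ for $A$ sufficiently large. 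Together with $\Pr{\cE^c} \le \delta/2$, this yields $\Pr{H_{n,A}^c} \le \delta$. The main subtlety lies in carrying out the second step cleanly: one must verify that the two tall trees in a failing gap also block coalescence with non-neighbouring marked vertices, which follows because they sit on opposite sides of the trapped vertex $v$ within the gap and therefore one lies on each of the two arcs from $v$ to any distant $u_{k'}$.
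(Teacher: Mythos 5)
Your proof is correct, and it follows a genuinely different route from the paper's on the crucial step. Both arguments agree on the setup: control $L_n$ and $L_{n-\lfloor\gamma n\rfloor}$ via Lemma~\ref{le:analem}, reduce failure of $H_{n,A}$ to the presence of tall (truncation-height) trees on both sides of some vertex within a gap, and transfer to i.i.d.\ $\theta$-Bienaym\'e--Galton--Watson trees via the pointwise density domination of Proposition~\ref{pro:comparison}. The divergence is in how one controls the tall trees. The paper argues by contradiction and a process-level limit: it shows the renormalised counting process of tall-tree indices converges to a Poisson process, deduces that the minimum gap between consecutive tall trees exceeds $\eta n^2$ with probability $\ge 1 - \delta/(2C_1)$ for some $\eta = \eta(a,\delta)$, and concludes by taking $A$ large enough that the mesh $n^2/A$ falls below $\eta n^2$. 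You instead apply a direct union bound over pairs of trees inside each of the $O(a^{-1}A)$ mesh intervals of length $K \approx n^2/A$, using that a single truncated tree is tall with probability $(\lfloor\gamma n\rfloor+1)^{-2}$ and that the i.i.d.\ trees are independent; this yields the explicit bound $O\!\left(C_1 a^{-1} A^{-1}\gamma^{-4}\right)$ and avoids both the Poisson convergence lemma and the argument by contradiction. Your route is more elementary, gives an effective estimate on how large $A$ must be (the paper's $\eta$ is only asserted to exist), and as a bonus cleanly records the combinatorial observation that the two blocking tall trees also obstruct coalescence with non-adjacent mesh points, a point the paper leaves implicit in passing from the coalescence criterion to the gap condition. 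The only small caveats: $C_1$ in Proposition~\ref{pro:comparison} itself depends on $a$ (it is $O(a^{-2})$), so the final bound is really $O(a^{-3}A^{-1}\gamma^{-4})$, which still tends to $0$ in $A$; and the reduction of the failure event to ``two tall trees in a gap'' only requires the inclusion $H_{n,A}^c\cap\cE\subset\{\text{some gap has}\ge 2\text{ tall trees}\}$ for the upper bound, which holds regardless of the precise boundary conventions at the edges of a gap.
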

\begin{proof}
	We use the notation from Section~\ref{sec:comparision}. Without loss of generality we may assume that the first tree in $\tilde{\cF}^{(0)}_{n - \lfloor \gamma n\rfloor, n}$ is the one whose root vertex corresponds  to the edge from $u_0^{(n)}$ to $u_1^{(n)}$. We write $\tilde{\cF}^{(0)}_{n - \lfloor \gamma n\rfloor, n} = (\tau_1^{(n)}, \ldots, \tau_{L_n}^{(n)})$. By the discussion on coalescence of geodesics at the end of Section~\ref{sec:app}, it follows that for $1 \le i < j \le n$ the left-most geodesics to the root from the vertex $u_i^{(n)}$ and from the vertex $u_j^{(n)}$ coalesce before or at time $\lfloor \gamma n \rfloor$ if the trees $\tau_{i+1}^{(n)}, \ldots, \tau_j^{(n)}$ have height strictly smaller than $\lfloor \gamma n \rfloor$. Thus, in order for $H_{n,A}$ to hold it is sufficient that for any integer $i$ with $1 \le i \le L_n$ there exists an integer $k$ with $0 \le k \le \lfloor n^{-2} L_n A \rfloor$ such that for all integers $j$ with $\min(i, \lfloor k n^2/A \rfloor) < j \le  \max(i, \lfloor k n^2/A \rfloor)$ the tree $\tau_j^{(n)}$ has height strictly smaller than $\lfloor \gamma n \rfloor$. We let $H_{n,A}'$ denote this event. By Lemma~\ref{le:analem}, there exists $a>0$ such that
	\begin{align}
		\label{eq:jim}
		a n^2 \le L_n \le a^{-1} n^2
	\end{align}
	and
	\begin{align}
		\label{eq:cho}
		a n^2 \le L_{n- \lfloor \gamma n\rfloor} \le a^{-1} n^2
	\end{align}
hold simultaneously with probability at least $1 - \delta / 2$ for all large enough $n$. Hence, in order to verify (for a suitable choice of $A$) that $\Pr{H_{n,A}} \ge 1 - \delta$, it suffices to show that~\eqref{eq:jim},~\eqref{eq:cho}, and the complement of $H_{n,A}'$ hold with probability at most~$\delta/2$.

 By Proposition~\ref{pro:comparison} we know that for each $a' \in]0,1[$ there exists $C_1>0$ such that for all large enough integers $r$, all integers $s >r$,   all integers $p,q \in \{\lfloor a' r^2 \rfloor +1, \ldots, \lfloor a'^{-1} r^2 \rfloor \}$, and all forests $\cF \in \mathbb{F}_{p,q,s-r}''$ we have
\[
\Prb{ \widetilde{\cF}_{r,s}^{(0)} = \cF} \le C_1 \Prb{ ([\tau_1]_{s-r}, \ldots, [\tau_{N_r^{(a')}}]_{s-r}) = \cF  },
\]
with $N_r^{(a')}$  uniformly distributed over $\{\lfloor a' r^2 \rfloor +1, \ldots, \lfloor a'^{-1} r^2 \rfloor \}$, and $(\tau_i)_{i \ge 1}$ denoting i.i.d. $\theta$-Bienaym\'{e}--Galton--Watson trees, and $[ \cdot ]_{s-r}$ denoting truncation at height $s-r$.

Using $r= n - \lfloor \gamma n \rfloor$, $s = n$, $a' = a(1-\gamma)^2$, we obtain the following: In order to show that (for suitable $A$)~\eqref{eq:jim},~\eqref{eq:cho}, and the complement of the event $H_{n,A}'$ hold simultaneously with probability at most~$\delta/2$, it suffices to show that uniformly for all integers $\ell$ with $an^2 \le \ell \le a^{-1} n^2$ it holds with probability at least $1 - \delta / (2 C_1)$ that the forest $(\tau_1, \ldots, \tau_\ell)$ is ``$n$-good'' in the sense that for all  $1 \le i \le m$ there exists an integer $k$ with $0 \le k \le \lfloor n^{-2} \ell A \rfloor$ such that for all integers $j$ with $\min(i, \lfloor k n^2/A \rfloor) < j \le  \max(i, \lfloor k n^2/A \rfloor)$ the tree $\tau_j$ has  height strictly less than $\lfloor \gamma n \rfloor$.

To this end, let $U_1 < \ldots < U_m$ denote the indices $U$ in $\{1, \ldots, \ell\}$ such that the height of $\tau_U$ is at least $\lfloor \gamma n \rfloor$. Note that by Equation~\eqref{eq:trivbound00435}, the probability for a $\theta$-Bienaym\'{e}--Galton--Watson tree to have height at least $\lfloor \gamma n \rfloor$ is equal to $\frac{1}{(\lfloor \gamma n \rfloor + 1)^2}$. 

Suppose that for infinitely many $n$ there exists an integer $\ell_n$ satisfying $an^2 \le \ell_n \le a^{-1} n^2$ such that the forest $(\tau_1, \ldots, \tau_{\ell_n})$ is $n$-good with probability strictly less than $1 - \delta / (2 C_1)$. By restricting to subsequences, we may without loss of generality assume that $\ell_n \sim c n^2$ for some constant $c$ satisfying $a \le c \le a^{-1}$. For all $t \ge 0$ we set
\[
	N_t := \# \{1 \le i \le m \mid U_i \le t\}.
\]
Then $(N_{\lfloor t \ell_n / c \rfloor})_{0 \le t \le c}$ converges in distribution in the Skorokhod sense to a Poisson process with parameter $\gamma^{-2}$. Setting $U_0 = 0$ and $U_{m+1} = \ell_n$, it follows that we may choose a constant $\eta>0$ that only depends on $\delta / (2 C_1)$ and  $a$ (since $c$ satisfies $a \le c \le a^{-1}$) such that for all sufficiently large $n$
\[
	U_{i+1} - U_i > \eta n^2, \qquad \text{for all $0 \le i \le m$}
\]
holds with probability at least $1 - \delta / (2 C_1)$. But this is only possible if $2/A > \eta$.

Thus, we may choose $A$ sufficiently large such that uniformly for all integers $\ell$ with $an^2 \le \ell \le a^{-1} n^2$ it holds with probability at least $1 - \delta / (2 C_1)$ that the forest $(\tau_1, \ldots, \tau_\ell)$ is ``$n$-good''. This completes the proof.
\end{proof}

\section{First passage percolation on the type III UIPT}

Throughout this section we let $\iota$ denote a random variable with  finite exponential moments and that there exists a constant $\kappa>0$ such that $\Pr{\iota \ge \kappa} = 1$. Dividing $\iota$ (and the lower bound $\kappa$) by $2 \Ex{\iota}$, we may without loss of generality assume that
\begin{align}
	\label{eq:iotaexp}
	\Ex{\iota} = 1/2.
\end{align}
This is done purely for notational convenience. The following  deviation bound for one-dimensional random walk may be found in most textbooks on the subject.
\begin{proposition}
	Let  $\iota_1,\ldots, \iota_n$ denote independent copies of $\iota$. There exists a constant $c>0$ such that for all sufficiently small $\lambda>0$ and all $n \ge 1$ and all $x \ge n/2$
\begin{align}
	\label{eq:iotabound}
	\Pr{\iota_1 + \ldots + \iota_n \ge x} \le \exp\left(c n \lambda^2 - \lambda(x - n/2)\right).
\end{align}
\end{proposition}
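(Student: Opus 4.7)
The statement is a standard Chernoff-type upper bound, so my plan is to follow the classical Cramér--Chernoff argument, relying on the assumption that $\iota$ has some finite exponential moment to control the moment generating function near the origin.

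First I would apply the exponential Markov inequality: for every $\lambda > 0$,
\[
\Prb{\iota_1 + \cdots + \iota_n \ge x} \le e^{-\lambda x}\, \Exb{e^{\lambda(\iota_1 + \cdots + \iota_n)}} = e^{-\lambda x}\, M(\lambda)^n,
\]
where $M(\lambda) = \Ex{e^{\lambda \iota}}$ and we used independence of $\iota_1,\ldots,\iota_n$. Since $\Ex{(1+\epsilon)^\iota} < \infty$ for some $\epsilon > 0$, the moment generating function $M$ is finite, hence smooth (indeed analytic), on a neighborhood of $0$, with $M(0) = 1$ and $M'(0) = \Ex{\iota} = 1/2$ by \eqref{eq:iotaexp}.

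The key step is then a second-order Taylor bound: there exist $\lambda_0 > 0$ and a constant $c > 0$ (depending only on the distribution of $\iota$) such that for all $0 \le \lambda \le \lambda_0$,
\[
M(\lambda) \le 1 + \frac{\lambda}{2} + c \lambda^2 \le \exp\!\left(\frac{\lambda}{2} + c\lambda^2\right).
\]
This is immediate from $M(\lambda) = 1 + \lambda/2 + O(\lambda^2)$ near $0$, with the implicit constant controlled by $\sup_{|\mu| \le \lambda_0} M''(\mu) < \infty$. Taking the $n$th power yields $M(\lambda)^n \le \exp(n\lambda/2 + c n \lambda^2)$.

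Substituting back into the Markov bound gives
\[
\Prb{\iota_1 + \cdots + \iota_n \ge x} \le \exp\!\left(-\lambda x + \frac{n\lambda}{2} + c n \lambda^2\right) = \exp\!\left(c n \lambda^2 - \lambda(x - n/2)\right),
\]
which is precisely \eqref{eq:iotabound}. The hypothesis $x \ge n/2$ plays no role in the proof itself; it only guarantees that the right-hand side is a non-trivial (decreasing in $\lambda$) bound, i.e.\ actually useful as a deviation inequality. There is no real obstacle here: the only thing to watch is that we may need to shrink $\lambda_0$ so that the $O(\lambda^2)$ remainder is genuinely controlled by a single constant $c$ uniformly in $\lambda \in (0,\lambda_0]$, which is an elementary consequence of the analyticity of $M$ around $0$.
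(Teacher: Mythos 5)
Your proof is correct and is exactly the standard Chernoff--Cramér argument that the paper alludes to when it describes the proposition as a textbook fact; the paper gives no proof of its own. The only thing worth being explicit about is the Taylor step: since $M$ is analytic on a neighbourhood of $0$, one can take $c=\tfrac12\sup_{0\le\mu\le\lambda_0}M''(\mu)$ and bound $M(\lambda)\le 1+\lambda/2+c\lambda^2\le\exp(\lambda/2+c\lambda^2)$ for $0<\lambda\le\lambda_0$, which you have correctly identified; the rest is substitution.
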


\subsection{Subadditivity in the type III LHPT}

 We let $d_{\mathrm{fpp}}^\cL$ denote the $\iota$-first-passage percolation distance on the type III LHPT $\cL$. Recall that $\cL_r = \ndZ \times \{-r\}$ for all integers $r \ge 0$. We let $\rho=(0,0)$ denote the root-vertex of $\cL$.

We prove the following result analogously to~\cite[Prop. 18]{zbMATH07144469}.
\begin{proposition}
	\label{pro:kingman}
	There exists a constant $c_{\mathrm{fpp}}^\cT \in [\kappa,1] $ such that
	\[
		r^{-1} d_{\mathrm{fpp}}^\cL(\rho, \cL_r) \convas c_{\mathrm{fpp}}^\cT
	\]
	as $r \to \infty$.
\end{proposition}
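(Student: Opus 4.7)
The plan is to apply Kingman's subadditive ergodic theorem (in Liggett's form) to the sequence $Y_r := d_{\mathrm{fpp}}^\cL(\rho, \cL_r)$. The engine driving this is a \emph{regenerative property} of the LHPT along horizontal layers, which I first establish. By Remark~\ref{re:lhptthesame} together with the construction of the type III LHPT, the skeleton of $\cL$ is a doubly infinite sequence of i.i.d.\ $\theta$-Bienaym\'{e}--Galton--Watson trees rooted along $\cL_0$, and the slots are filled with independent simple Boltzmann triangulations. Applying the branching property to the vertices of $\cL_m$, one sees that conditionally on the strip $\cL_{[0,m]}$ together with its edge weights, the part of $\cL$ strictly below $\cL_m$ is again an LHPT, independent of what lies above, and invariant in distribution under any measurable choice of a canonical root on $\cL_m$ thanks to horizontal translation invariance.

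Next I would produce a one-sided subadditive inequality. Let $w_m^\star \in \cL_m$ be an endpoint of an FPP-geodesic from $\rho$ realising $Y_m$, with ties broken by a fixed measurable rule; then $w_m^\star$ is $\sigma(\cL_{[0,m]})$-measurable. Let $\cL^{(m)}$ denote the portion of $\cL$ below $\cL_m$, re-rooted so that $w_m^\star$ plays the role of $\rho$. The regenerative property and horizontal invariance yield $\cL^{(m)} \eqdist \cL$ with $\cL^{(m)}$ independent of $\cL_{[0,m]}$. Setting $Y_n^{(m)} := d_{\mathrm{fpp}}^{\cL^{(m)}}(w_m^\star, \cL_n^{(m)})$, concatenation of paths gives
\[
Y_{m+n} \;\le\; Y_m + Y_n^{(m)}, \qquad Y_n^{(m)} \eqdist Y_n, \qquad Y_n^{(m)} \perp Y_m.
\]
Fekete's lemma applied to the subadditive sequence $E[Y_n]$ produces a constant $c_{\mathrm{fpp}}^\cT := \lim_n E[Y_n]/n = \inf_n E[Y_n]/n$. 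Iterating the above inequality along blocks of length $n$ yields $Y_{kn} \le \sum_{j=0}^{k-1} Y_n^{(j)}$ with i.i.d.\ summands, so by the strong law of large numbers $\limsup_{k} Y_{kn}/(kn) \le E[Y_n]/n$ a.s.; passing to the infimum over $n$ gives $\limsup_r Y_r/r \le c_{\mathrm{fpp}}^\cT$ a.s. The matching lower bound comes from Liggett's version of the subadditive ergodic theorem, whose hypotheses (subadditivity, stationarity of increments in the above sense, and $E[Y_1] < \infty$) are exactly what has been set up.

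Finally the bounds $c_{\mathrm{fpp}}^\cT \in [\kappa, 1]$ are immediate: any path from $\rho$ to $\cL_r$ uses at least $r$ edges, because the hull construction of the LHPT forces $d_\cL(\cL_0, \cL_r) = r$, and each edge weight is at least $\kappa$, so $Y_r \ge \kappa r$ and hence $c_{\mathrm{fpp}}^\cT \ge \kappa$; conversely, any fixed path of length $r$ from $\rho$ to $\cL_r$ has expected $\iota$-weight $r \cdot E[\iota] = r/2$, so $E[Y_r] \le r/2$ and $c_{\mathrm{fpp}}^\cT \le 1$. The main delicate point is the subadditivity step itself: $w_m^\star$ depends on the FPP weights in $\cL_{[0,m]}$, not merely on the map structure, so one must carefully disentangle the Markov property of the map from the independence of the weights on disjoint edge sets, and then invoke horizontal translation invariance of $\cL$ to argue that re-rooting at the random vertex $w_m^\star$ does not disturb the law of the part below.
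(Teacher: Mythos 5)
Your proof is correct and follows essentially the same route as the paper: establish subadditivity by re-rooting at a weight-minimizing vertex on an intermediate layer $\cL_m$, use the Markov/regenerative property and horizontal translation invariance of the LHPT to get stationarity and independence, and then apply Liggett's subadditive ergodic theorem. The paper is slightly more economical (it deduces constancy directly via Kolmogorov's zero--one law rather than your Fekete/SLLN block argument, and bounds $c_{\mathrm{fpp}}^\cT \le 1$ via the large-deviation estimate~\eqref{eq:iotabound} instead of the cleaner $\Ex{Y_r} \le r/2$ argument you give), but these are cosmetic differences, not different approaches.
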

\begin{proof}
	For all integers $0 \le m < n$ let $\cL_{[m,n]}$ denote the submap of $\cL$ that lies in the strip $\ndR \times [-n, -m]$.  We let $d_{\mathrm{fpp}}^{\cL_{[m,n]}}$ denote the first-passage percolation distance on the vertex set of $\cL_{[m,n]}$. That is, the length between points is the minimal sum of weights along joining paths that stay within $\cL_{[m,n]}$. This way,
	\[
		d_{\mathrm{fpp}}^\cL(u,v) \le d_{\mathrm{fpp}}^{\cL_{[m,n]}}(u,v)
	\]
	for all vertices $u$, $v$ of $\cL_{[m,n]}$. 
	
	Given integers $n,m \ge 1$, we let $x_m$ denote the left-most vertex of $\cL_m$ that satisfies
	\[
		d_{\mathrm{fpp}}^\cL(\rho,\cL_m) = d_{\mathrm{fpp}}^\cL(\rho,x_m).
	\]
	(Note that since $\iota \ge \kappa >0$ and since $\cL$ is locally finite, there actually exists a left-most vertex with that property. Otherwise, there would be an infinite number of  vertices  at $d_{\mathrm{fpp}}^\cL$-distance at most $m$ from the root. This  would entail that the $(m/\kappa)$-graph-distance neighbourhood of the root would be infinite. But locally finite graphs have the property, that any ball with finite radius has a finite number of vertices.)
	Then
	\begin{align*}
		d_{\mathrm{fpp}}^\cL(\rho,\cL_{m+n}) &\le d_{\mathrm{fpp}}^\cL(\rho,x_m) + d_{\mathrm{fpp}}^{\cL} (x_m,\cL_{m+n}) \\
		&\le d_{\mathrm{fpp}}^\cL(\rho,\cL_m) + d_{\mathrm{fpp}}^{\cL_{[m, m+n]}} (x_m,\cL_{m+n}).
	\end{align*}
	Note that $x_m$ is already determined by $\cL_{[0,m]}$, since no path starting in $\rho$ can leave $\cL_{[0,m]}$ without passing through $\cL_m$. By construction of $\cL$, in particular the independence of its layers, it follows that
	\begin{align*}
		d_{\mathrm{fpp}}^{\cL_{[m, m+n]}} (x_m,\cL_{m+n}) \eqdist d_{\mathrm{fpp}}^\cL(\rho,\cL_{n})
	\end{align*}
and that $d_{\mathrm{fpp}}^{\cL_{[m, m+n]}} (x_m,\cL_{m+n})$ is independent from $\cL_{[0,m]}$. Setting  $x_0 := \rho$, this allows us to apply Ligget's  version~\cite{zbMATH03927992} of Kingman's subadditive ergodic theorem to the triangular array $(X_{m,n})_{0 \le m <n}$ with $X_{m,n} = d_{\mathrm{fpp}}^{\cL_{[m,n]}} (x_m,\cL_{n})$ for all $0 \le m <n$. It follows that the limit
\[
	X := \lim_{n \to \infty} n^{-1}X_{0,n} = \lim_{r \to \infty} r^{-1} d_{\mathrm{fpp}}^\cL(\rho, \cL_r)
\]
exists almost surely. It is clear that $X \ge \kappa$ almost surely. Kolmogorov's zero–one law readily entails that $X$ is almost surely constant. Furthermore, Inequality~\eqref{eq:iotabound} entails that as $r$ becomes large, the sum of the weights corresponding to the $r$ edges on the left-most geodesic from the root of $\cL_r$ to $\rho$ is less than $r$ with a probability that tends exponentially fast to $1$. This entails that the constant $X$ satisfies $X \le 1$.
\end{proof}

\subsection{From the LHPT to the UIPT}

We prove the following result analogously to~\cite[Prop. 19]{zbMATH07144469}, with some adaptions since we assume $\iota$ to have finite exponential moments instead of an upper bound.

\begin{proposition}
	\label{pro:ring2ring}
	Let $0<\epsilon<1$ and $\delta>0$ be given. There exists $0 < \eta < 1/2$ such that for all large enough $n$ it holds with probability at least $1 - \delta$ that
	\[
		(1 - \epsilon) c_{\mathrm{fpp}}^\cT \eta n \le d_{\mathrm{fpp}}^{\cT_\infty^{(0)}}(v, \partial^* B_{n - \lfloor \eta n \rfloor}^\bullet(\cT_\infty^{(0)})) \le (1+ \epsilon) c_{\mathrm{fpp}}^\cT \eta n
	\]
	for all vertices $v \in \partial^*B_{n}^\bullet(\cT_\infty^{(0)})$.
\end{proposition}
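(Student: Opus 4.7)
My strategy is to reduce the fpp distance from an arbitrary top-cycle vertex $v$ to the cycle $\partial^* B_{n-\lfloor \eta n \rfloor}^\bullet(\cT_\infty^{(0)})$ to a handful of ``anchor'' distances which can in turn be controlled by comparison with the type III LHPT $\cL$. First I fix an auxiliary parameter $\gamma \in (0,\eta)$, to be chosen much smaller than $\epsilon \eta$, and apply Proposition~\ref{pro:giveusana} with this $\gamma$: this gives an integer $A\ge 1$ and an event of probability $\ge 1-\delta/3$ on which every top-cycle vertex $v$ has a left-most geodesic that coalesces, within the first $\lfloor \gamma n\rfloor$ steps, with the left-most geodesic from one of the $A+1$ anchors $v_k^*:=u_{\lfloor k n^2/A\rfloor}^{(n)}$. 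Since $\gamma<\eta$, this coalescence takes place strictly above $\partial^* B_{n-\lfloor \eta n\rfloor}^\bullet$, so $v$ and its associated anchor $v_{k(v)}^*$ share a common continuation toward the inner cycle.

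Next I control the anchor-to-vertex distances. Concatenating the two initial left-most-geodesic segments from $v$ and from $v_{k(v)}^*$ to their coalescence point gives a path of at most $2\lfloor \gamma n\rfloor$ edges, each carrying an independent copy of $\iota$. A union bound over anchors together with the deviation bound of Equation~\eqref{eq:iotabound} then yields that, with probability at least $1-\delta/3$,
\[
\max_{v \in \partial^* B_n^\bullet(\cT_\infty^{(0)})}\, d_{\mathrm{fpp}}^{\cT_\infty^{(0)}}(v,v_{k(v)}^*) \le 2\gamma n\, \Ex{\iota} + o(n) = \gamma n + o(n).
\]
Choosing $\gamma < \tfrac{\epsilon \kappa \eta}{4}$ and using $c_{\mathrm{fpp}}^\cT \ge \kappa$, this is below $\tfrac{\epsilon}{2}c_{\mathrm{fpp}}^\cT \eta n$ for $n$ large.

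The core step is to show that, with probability at least $1-\delta/3$, each anchor distance $d_{\mathrm{fpp}}^{\cT_\infty^{(0)}}(v_k^*,\partial^* B_{n-\lfloor\eta n\rfloor}^\bullet)$ lies in the interval $[(1-\tfrac{\epsilon}{2})c_{\mathrm{fpp}}^\cT\eta n,(1+\tfrac{\epsilon}{2})c_{\mathrm{fpp}}^\cT\eta n]$; combined with the triangle inequality applied along the bound of the previous paragraph, this produces the proposition. To establish it, I will use Proposition~\ref{pro:lhptlimit}: after re-rooting $B_n^\bullet(\cT_\infty^{(0)})$ at a uniform edge of the top cycle near $v_k^*$, the annulus $B_n^\bullet \setminus B_{n-\lfloor \eta n\rfloor}^\bullet$ is locally distributed like the first $\lfloor \eta n\rfloor$ layers of $\cL$, with $\partial^* B_{n-\lfloor\eta n\rfloor}^\bullet$ playing the role of $\cL_{\lfloor \eta n\rfloor}$. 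Proposition~\ref{pro:kingman} then identifies the LHPT fpp distance $d_{\mathrm{fpp}}^\cL(\rho,\cL_{\lfloor\eta n\rfloor})$, up to multiplicative error $1\pm\epsilon/2$, with $c_{\mathrm{fpp}}^\cT \eta n$, once $\eta n$ is large enough that the almost sure limit has kicked in.

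The main obstacle is this last transfer step, because the quantity $d_{\mathrm{fpp}}^\cL(\rho,\cL_r)$ with $r\asymp \eta n$ is not a bounded-scale local functional of $\cL$ in the sense covered by Proposition~\ref{pro:lhptlimit}: a priori it depends on all of $\cL_{[0,\lfloor \eta n\rfloor]}$, whose horizontal extent is unbounded. To handle this I plan to combine the boundary-distance estimates of Proposition~\ref{pro:audiom} and Proposition~\ref{pro:p1515}, which confine fpp-geodesics from $\rho$ to $\cL_r$ inside a horizontal window of width $O(r^2)$, with the contiguity between $\cU$ and $\cL$ (the corollary following Proposition~\ref{pro:omit1}) and the local convergence $\cT_\infty^{(p)}\to\cU$ of Proposition~\ref{pro:localconv}. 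Together these reduce the macroscopic event controlling the anchor distance to one depending on a bounded portion of $\cL$, where local convergence applies with uniformly small error in probability; the almost sure Kingman limit then closes the argument.
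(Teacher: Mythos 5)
Your outline correctly identifies the three pieces that close the argument—coalescence of left-most geodesics (Proposition~\ref{pro:giveusana}), bounding the vertex-to-anchor distance via~\eqref{eq:iotabound}, and identifying the anchor-to-inner-cycle distance with $c_{\mathrm{fpp}}^\cT \eta n$ via Kingman (Proposition~\ref{pro:kingman})—but the mechanism you propose for the transfer step has a genuine gap.

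You invoke the local limit $\cT_\infty^{(p)}\to\cL$ of Proposition~\ref{pro:lhptlimit} (and the local limit Proposition~\ref{pro:localconv}) to argue that the annulus $B_n^\bullet\setminus B_{n-\lfloor\eta n\rfloor}^\bullet$ looks like the first $\lfloor\eta n\rfloor$ layers of $\cL$, and you plan to recover uniformity in $n$ by confining fpp geodesics to horizontal windows of width $O(r^2)$ using Propositions~\ref{pro:audiom} and~\ref{pro:p1515}. The trouble is that a window of width $O((\eta n)^2)$ and height $\lfloor\eta n\rfloor$ is not a fixed-radius neighbourhood; it grows with $n$. Local convergence in the local topology only controls the law of balls of \emph{bounded} radius, with no rate, and so it cannot by itself transfer an event of the form $\{d_{\mathrm{fpp}}\le (1+\epsilon/2)c_{\mathrm{fpp}}^\cT\eta n\}$—which inherently depends on a region whose diameter diverges—with error uniform in $n$. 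The paper is explicit that Proposition~\ref{pro:lhptlimit} is stated only to illustrate the relation between the two models and is deliberately \emph{not} used in the sequel.

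What is actually needed, and what the paper uses, is the comparison principle of Proposition~\ref{pro:comparison}: conditional on $L_{n-\lfloor\eta n\rfloor}$ and $L_n$ lying in $[an^2,a^{-1}n^2]$ (an event controlled by Lemma~\ref{le:analem}), the law of the cyclically-rotated skeleton $\widetilde{\cF}^{(0)}_{n-\lfloor\eta n\rfloor,n}$ of the annulus is absolutely continuous with respect to the law of a forest of i.i.d.\ $\theta$-Bienaym\'e--Galton--Watson trees, with a Radon--Nikodym derivative bounded by a constant $C_1$ depending only on $a$ (hence uniform in $n$). Since the slot fillings are independent Boltzmann triangulations in both models, this transfers \emph{any} event about the annulus—including the macroscopic fpp event you need—to the corresponding event for $\cL_{[0,\lfloor\eta n\rfloor]}$ at cost a multiplicative $C_1$. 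This is the ingredient your argument is missing; replacing the local-limit appeal in your ``core step'' with Proposition~\ref{pro:comparison} (plus Corollary~\ref{co:jimmychoo} to control the coalescence events $\cH_{n,j}$) brings your outline in line with the paper's proof.
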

\begin{proof}
	Recall that $L_n$ denotes the length of the top cycle of $B_n^\bullet(\cT_{\infty}^{(0)})$, and that $u_0^{(n)}$ denotes a uniformly at random chosen vertex from  $\partial^* B_n^\bullet(\cT_{\infty}^{(0)})$. As before, we let $u_1^{(n)}, \ldots, u_{L_n-1}^{(n)}$ denote the remaining vertices of $\partial^* B_n^\bullet(\cT_{\infty}^{(0)})$, listed in clock-wise order starting from $u_0^{(n)}$. We treat the indices modulo $L_n$, so that $u_{i + L_n}^{(n)} = u_i$ for all $i \in \ndZ$.
	
	By Lemma~\ref{le:analem}, there exists  a constant $0<a<1/2$ (depending on $\delta$) such that for all $n \ge 2$ and $0 < \eta < 1/2$, the event
	\[
		\cE_n(\eta) := \{ \lfloor a n^2 \rfloor +1 \le L_n \le \lfloor a^{-1} n^2 \rfloor \} \cap \{ \lfloor a n^2 \rfloor +1 \le L_{n- \lfloor \eta n \rfloor } \le \lfloor a^{-1} n^2 \rfloor \}
	\]
	holds with probability at least $1 - \delta / 4$.
	
	Given $0 < \eta < 1/2$ and $j \in \ndZ$, let $\cH_{n,j}(\eta)$ denote the intersection of the event $\cE_n(\eta)$ with the event that the leftmost geodesics starting from $u^{(n)}_{j- \lfloor a n^2 /4 \rfloor }$ and $u^{(n)}_{j + \lfloor a n^2 / 4 \rfloor }$ do not coalesce before hitting $B_{n- \lfloor \eta n \rfloor }^\bullet(\cT_{\infty}^{(0)})$.  By Corollary~\ref{co:jimmychoo} (and making the constants in the bounds within its proof explicit) it follows that there is a constant $C_4>0$ such that for all large enough $n$ and each $j \in \ndZ$ that
	\begin{align}
		\label{eq:likesmallyeah0}
		\Pr{ \cE_n(\eta) \cap \left(\cH_{n,j}(\eta) \right)^c} \le C_4 a^{-2} \exp(-a / (4 \eta^2)).
	\end{align}
	Indeed,  the bound in Corollary~\ref{co:jimmychoo} is given by $O(1)C_1 \exp(-a / 4 \eta^2)$, with $O(1)$ denoting a bounded term that does not depend on anything. Furthermore, the constant $C_1$ from Proposition~\ref{pro:comparison} is of the form $(1/a - a)C_0 C_2$ with $C_2 = O(1/a)$, and $C_0$ from Lemma~\ref{le:analem} not depending on anything. Hence we arrive at a bound of the form $O(1) a^{-2} \exp(-a / (4 \eta^2))$. Note that this bound becomes worse the smaller we take $a$. This makes sense, since the smaller we take $a$, the less trees are between $u^{(n)}_{j- \lfloor a n^2 /4 \rfloor }$ and $u^{(n)}_{j + \lfloor a n^2 / 4 \rfloor }$, and hence the more likely it gets that none of them reach height $\eta n$.
	
	On the event $\cH_{n,j}(\eta)$, we define $\cG_{j}^{(\eta)}$ as the subregion of $B_n^\bullet(\cT_{\infty}^{(0)}) \setminus B_{n - \lfloor \eta n \rfloor }^\bullet(\cT_{\infty}^{(0)})$ containing $u_j^{(n)}$ that is bounded on one side by the leftmost geodesic from $u^{(n)}_{j- \lfloor a n^2 /4 \rfloor }$ and on the other side by the left-most geodesic from $u^{(n)}_{j + \lfloor a n^2 / 4 \rfloor }$. Moreover, let $\partial_\ell \cG_{j}^{(\eta)}$ denote the part of the boundary of $\cG_{j}^{(\eta)}$ that is contained in the union of these two geodesics.
	
	Let $\cA_{n,j}(\eta)$ denote the intersection of $\cH_{n,j}(\eta)$ with the event, where for some integer $i$ with $j - a n^2 / 16 \le i \le j + a n^2 / 16$, there is a path from $u_i^{(n)}$ to $\partial_\ell \cG_j^{(n)}(\eta)$ that stays in $B_n^\bullet(\cT_{\infty}^{(0)}) \setminus B_{n - \lfloor \eta n \rfloor }^\bullet(\cT_{\infty}^{(0)})$ and has length smaller than $4 \eta n / \kappa$. We are going to show that if we take $\eta$ sufficiently small, then for all large enough $n$ and all $j \in \ndZ$
	\begin{align}
		\label{eq:likesmallyeah}
		\lim_{\eta \downarrow 0} \limsup_{n \to \infty} \Pr{\cA_{n,j}(\eta)} = 0.
	\end{align}
	To this end, it suffices to consider the case $j=0$, since $u_j^{(0)}$ is a uniformly selected vertex of $\partial^*B_n^\bullet(\cT_\infty^{(0)})$. For each $i \in \ndZ$ we let $\cJ_i^{(n, \lfloor \eta n \rfloor )}$ denote the tree of the skeleton of $B_n^\bullet(\cT_{\infty}^{(0)}) \setminus B_{n - \lfloor \eta n \rfloor }^\bullet(\cT_{\infty}^{(0)})$ corresponding to the edge from $u_{i-1}^{(n)}$ to $u_i^{(n)}$. On the event $\cH_{n,0}(\eta)$, the region $\cG_0^{(n)}(\eta)$ is determined as planar map by the trees $\cJ_i^{(n, \lfloor \eta n \rfloor )}$ for $- \lfloor a n^2 /4 \rfloor < i \le \lfloor a n^2 / 4 \rfloor$, and by the Boltzmann triangulations used to fill the slots of the vertices of these trees with height strictly less than $\lfloor \eta n \rfloor$. 
	
	By Proposition~\ref{pro:comparison}, it follows that the probability for the event $\cA_{n,j}(\eta)$ can also be bounded by a $O(1/a^2)$ multiple of the similar event for the type III LHPT. That is, the event that in the lower half-plane model there is a path from $(i,0)$ for some $-an^2/16 \le i \le a n^2/16$ to the left-most geodesic from $(\lfloor a n^2/4 \rfloor, 0)$ or $(-\lfloor a n^2/4 \rfloor, 0)$ with length at most $4 \eta n / \kappa$. If there is such a path, then $(i,0)$ is at graph distance at most $8 \eta n / \kappa$ from $(\lfloor a n^2/4 \rfloor, 0)$ or $(-\lfloor a n^2/4 \rfloor, 0)$. By Proposition~\ref{pro:p1515}, it follows that taking $\eta$ small (with respect to $a$) this probability can be made arbitrarily small, uniformly for all sufficiently large $n$. This verifies Equation~\eqref{eq:likesmallyeah}.
	
	Using~\eqref{eq:likesmallyeah0} and~\eqref{eq:likesmallyeah} we may take $\eta$ small enough (depending on $a$) such that for large enough $n$ the probabilities for $\cE_n(\eta) \cap \left(\cH_{n,j}(\eta)\right)^c $ and $\cA_{n,j}(\eta)$ are both smaller than $a^2 \delta / 100$, uniformly for all $j$. We set
	\[
		\cB_n(\eta) := \left( \bigcap_{k=0}^{\lfloor 9a^{-2} \rfloor} \cH_{n, k \lfloor a n^2 / 8 \rfloor } (\eta)   \right) \cap  \left( \bigcap_{k=0}^{\lfloor 9a^{-2} \rfloor} \left(\cA_{n, k \lfloor a n^2 / 8 \rfloor }(\eta)\right)^c   \right).
	\]
	Hence, for large enough $n$
	\begin{align*}
		\Pr{	\cB_n(\eta)^c}  &\le \Pr{ (\cE_n(\eta))^c} + \sum_{k=0}^{\lfloor 9 a^{-2} \rfloor} \Prb{\cE_n(\eta) \cap \left(\cH_{n,k \lfloor a n^2 / 8 \rfloor}(\eta)\right)^c } \\&\quad+ \sum_{k=0}^{\lfloor 9 a^{-2} \rfloor} \Pr{\cA_{n, k \lfloor a n^2 / 8 \rfloor }(\eta)  } \\
		&\le \delta/4 + (9 a^{-2} + 1) a^2 \delta / 100  + (9a^{-2} + 1) a^2 \delta / 100 \\
		&\le \delta / 2.
	\end{align*}

	We let $d_{\mathrm{fpp}}^{(n)}$ denote the $\iota$-first-passage percolation metric that only takes into account paths that stay in $B_n^\bullet( \cT_\infty^{(0)})$. 
	For each $i \in \ndZ$ we let $\cD_i^{(n)}$ denote the event that
	\[
		 d_{\mathrm{fpp}}^ {(n)}(u_i^{(n)}, \partial^* B_{n - \lfloor\eta n \rfloor}^\bullet(\cT_\infty^{(n)})) \in [(1-\epsilon) c_{\mathrm{fpp}}^\cT \eta n, (1+\epsilon) c_{\mathrm{fpp}}^\cT \eta n].
	\]

	Let $k$ be an integer satisfying $0 \le k \le 9a^{-2}$. Clearly,
	\begin{align}
		\label{eq:bboy1}
		\Prb{ \cB_n(\eta) \cap ( \cD_i^{(n)})^c } \le \Prb{ \cH_{n, k \lfloor a n^2 / 8 \rfloor } \cap (\cA_{n, k \lfloor a n^2 / 8 \rfloor})^c \cap (\cD_i^{(n)})^c }.
	\end{align}
 Suppose that $\left(\cA_{n, k \lfloor a n^2 / 8 \rfloor }(\eta)\right)^c $ and $\cH_{n, k \lfloor a n^2 / 8 \rfloor }(\eta)$ both hold. Let $i$ be an integer satisfying 
 \[
  k \lfloor a n^2 / 8 \rfloor - a n^2/16 \le i \le  k \lfloor a n^2 / 8 \rfloor + a	 n^2 / 16.
  \] The minimal sum of link-weights along a path from $u_i^{(n)}$ to $\partial^* B_{n -\lfloor \eta n \rfloor}^\bullet(\cT_\infty^{(0)})$ that stays in $\partial^* B_{n }^\bullet(\cT_\infty^{(0)}) \setminus \partial^* B_{n -\lfloor \eta n \rfloor}^\bullet(\cT_\infty^{(0)})$ is with high probability determined by the region $\cG^{(n)}_{k \lfloor a n^2 / 8\rfloor}(\eta)$ and the corresponding link-weights. Indeed, if such a path with minimal sum of weights would hit $\partial_{\ell} \cG_{k \lfloor a n^2 /8\rfloor}^{(n)}(\eta)$, then this sum of weights would be at least $\kappa (4 \eta n /\kappa) = 4 \eta n$. However, we can reach $\partial^* B_{n -\lfloor \eta n \rfloor}^\bullet(\cT_\infty^{(0)})$ from $u_i^{(n)}$ via the left-most geodesic of path length $\lfloor \eta n\rfloor$, and by Inequality~\eqref{eq:iotabound} the sum of weights along that path is hence also at most $\lfloor \eta n\rfloor$ for all integers $i \in [k \lfloor a n^2 / 8 \rfloor - a n^2/16 , k \lfloor a n^2 / 8 \rfloor + a n^2 / 16]$ with probability at least $1 - an^2  \exp(-c_\tau \eta n)$ for some $c_\tau > 0$. Furthermore, on the event $\cH_{n, k \lfloor a n^2 / 8 \rfloor }$ the region $\cG^{(n)}_{k \lfloor a n^2 / 8\rfloor}(\eta)$ is determined by the trees $\cJ_m^{(n, \lfloor \eta n \rfloor)}$ for  $m \in [k \lfloor a n^2 / 8 \rfloor - \lfloor a n^2 / 4 \rfloor,  k \lfloor a n^2 / 8 \rfloor + \lfloor a n^2 / 4 \rfloor[$. By Proposition~\ref{pro:comparison}, it follows that for a constant $C_1 = O(1/a^2)$  
	\begin{align}
		\label{eq:bboy2}
		\Prb{ \cH_{n, k \lfloor a n^2 / 8 \rfloor } \cap (\cA_{n, k \lfloor a n^2 / 8 \rfloor})^c \cap (\cD_i^{(n)})^c } \le an^2  \exp(-c_\tau \eta n) + C_1 \Pr{F_n},
	\end{align}
with $F_n$ denoting the event that
\[
	d_{\mathrm{fpp}}^\cL( (0,0), \cL_{\lfloor \eta n \rfloor}) \notin [(1-\epsilon)c_{\mathrm{fpp}}^\cT \eta n, (1+\epsilon)c_{\mathrm{fpp}}^\cT \eta n].
\]

We let $A$ denote the integer from Proposition~\ref{pro:giveusana} where we replace $\gamma$ by $\epsilon c_{\mathrm{fpp}}^\cT \eta n / 2$ and $\delta$ by $\delta/8$. Using~Proposition~\ref{pro:giveusana}, it follows that the event
\begin{multline}
	\label{eq:oomph0}
	\cB_n(\eta) \cap \{ d_{\mathrm{fpp}}^{(n)}(v, \partial^* B_{n - \lfloor \eta n\rfloor}^\bullet(\cT_{\infty}^{(0)})) \notin [(1- 2 \epsilon)c_{\mathrm{fpp}}^\cT \eta n, (1+ 2 \epsilon)c_{\mathrm{fpp}}^\cT \eta n] \\ \text{ for some $v \in \partial^*B_n^\bullet(\cT_\infty^{(0)})$ }  \}
\end{multline}
is contained in the event
\begin{multline}
	\label{eq:oomph}
		\cB_n(\eta) \cap \{ d_{\mathrm{fpp}}^{(n)}(u^{(n)}_{\lfloor j n^2 / A \rfloor}, \partial^* B_{n - \lfloor \eta n\rfloor}^\bullet(\cT_{\infty}^{(0)})) \notin [(1-  \epsilon)c_{\mathrm{fpp}}^\cT \eta n, (1+  \epsilon)c_{\mathrm{fpp}}^\cT \eta n] \\ \text{ for some integer $0 \le j \le a^{-1}A$ }  \}
\end{multline}
except possibly on an event with probability at most $\delta/4$. To see this, note that if $\cB_n(\eta)$ holds (and hence $\cE_n(\eta)$ holds) and we discard the set set of probability at most $\delta/8$ considered in Proposition~\ref{pro:giveusana}, then for any vertex $v$ in $\partial^*B_n^\bullet(\cT_\infty^{(0)})$ we may walk along its left-most geodesic until it coalesces, after at most $\epsilon c_{\mathrm{fpp}}^\cT \eta n/2$ steps, with the left-most geodesic from a vertex of the form $u^{(n)}_{\lfloor j n^2 / A \rfloor}$ for an integer $0 \le j \le a^{-1}A$. Hence $v$ is at graph distance at most $\epsilon c_{\mathrm{fpp}}^\cT \eta n$ from  $u^{(n)}_{\lfloor j n^2 / A \rfloor}$, but we may additionally use Inequality~\eqref{eq:iotabound} to bound the probability that the sums of weights along any initial segment with length in $\{1, \ldots, \lfloor \epsilon c_{\mathrm{fpp}}^\cT \eta n/2 \rfloor\}$ of any of the at most $a^{-1}n^2$  left-most geodesics become becomes larger than   $\epsilon c_{\mathrm{fpp}}^\cT \eta n/2$. Specifically, with $\iota_1, \iota_2, \ldots$ denoting independent copies of $\iota$,  the bound
\[
	a^{-1} n^2 \sum_{\ell=1}^{\lfloor \epsilon c_{\mathrm{fpp}}^{\cT} \eta n / 2 \rfloor} \Pr{\iota_1 + \ldots +\iota_\ell \ge \epsilon c_{\mathrm{fpp}}^\cT \eta n/2}
\]
tends to zero as $n \to \infty$ by Inequality~\eqref{eq:iotabound}. Hence, we additionally have that $v$ has $d_{\mathrm{fpp}}^{(n)}$-distance smaller  than $u^{(n)}_{\lfloor j n^2 / A \rfloor}$ except on an event with probability at most $\delta/8$ for large enough $n$. 

Now, by Inequality~\eqref{eq:bboy1}, Inequality~\eqref{eq:bboy2}, Proposition~\ref{pro:kingman}, it follows that for sufficiently large $n$ it holds for all $i \in \{0, \ldots, \lfloor a^{-1} n^2 \rfloor \}$ that
\[
	\Prb{ \cB_n(\eta) \cap ( \cD_i^{(n)})^c } \le a \delta/ (4(A+1)).
\]
Hence, for large enough $n$, the probability of the event~\eqref{eq:oomph} is bounded by
\[
	\sum_{j=0}^{\lfloor a^{-1} A \rfloor} \Prb{ \cB_n(\eta) \cap ( \cD_{\lfloor j n^2 / A \rfloor}^{(n)})^c } \le ( \lfloor a^{-1} A \rfloor + 1) \frac{a \delta}{4(A+1)} \le \delta/4.
\]
Since  $\Pr{ \cB_n(\eta)^c} \le \delta / 2$,  and since the event~\eqref{eq:oomph0} is contained in the event~\eqref{eq:oomph} except possibly on an event with probability at most $\delta/4$, it follows that
\begin{multline*}
	\Pr{ d_{\mathrm{fpp}}^{(n)}(v, \partial^* B_{n - \lfloor \eta n\rfloor}^\bullet(\cT_{\infty}^{(0)})) \in [(1- 2 \epsilon)c_{\mathrm{fpp}}^\cT \eta n, (1+ 2 \epsilon)c_{\mathrm{fpp}}^\cT \eta n] \\ \text{ for all $v \in \partial^*B_n^\bullet(\cT_\infty^{(0)})$ }} \ge 1 - \delta.
\end{multline*}
We may replace $d_{\mathrm{fpp}}^{(n)}$ by $d_{\mathrm{fpp}}^{\cT_\infty^{(0)}}$ in the last bound, since $d_{\mathrm{fpp}}^{\cT_\infty^{(0)}} \le d_{\mathrm{fpp}}^{(n)}$  and for all $v \in \partial^* B_n^\bullet(\cT_\infty^{(0)})$
\[
	d_{\mathrm{fpp}}^{\cT_\infty^{(0)}}(v, \partial^* B_{n - \lfloor \eta n \rfloor }^\bullet(\cT_\infty^{(0)})) \ge \min_{v' \in \partial^* B_n^\bullet(\cT_\infty^{(0)})}  d_{\mathrm{fpp}}^{(n)}(v', \partial^* B_{n - \lfloor \eta n \rfloor }^\bullet(\cT_\infty^{(0)})).
\]
Hence the proof is complete.
\end{proof}

We prove the next result following closely the arguments of~\cite[Prop. 20]{zbMATH07144469}, with some adaptions due to  the class of link-weights under consideration.

\begin{proposition}
	\label{pro:yoyo}
	Given $0 < \epsilon < 1$, we have
	\[
		\Pr{ (c_{\mathrm{fpp}}^\cT - \epsilon) n  \le  d_{\mathrm{fpp}}^{\cT_\infty^{(0)}}(\rho, v) \le (c_{\mathrm{fpp}}^\cT + \epsilon) n   \,\,\text{ for all $v \in \partial^* B_n^\bullet(\cT_\infty^{(0)}) $} } \to 1 
	\]
	as $n \to \infty$. 
\end{proposition}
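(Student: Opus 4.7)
The plan is to iterate Proposition~\ref{pro:ring2ring} a bounded number of times, peeling off concentric hull layers of $\cT_\infty^{(0)}$ until reaching a scale negligible compared with $\epsilon n$. Set $\epsilon' = \epsilon/(4c_{\mathrm{fpp}}^\cT)$, fix $\eta \in (0,1/2)$ sufficiently small, and define $n_0 = n$ and $n_{k+1} = n_k - \lfloor \eta n_k \rfloor$, so that $n_k \sim (1-\eta)^k n$. Choose the integer $K$ large enough that $n_K \le \epsilon n/(4\max\{c_{\mathrm{fpp}}^\cT,1\})$ for all large $n$; crucially, $K$ depends only on $\eta$ and $\epsilon$. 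For each $k \in \{0,\ldots,K-1\}$, Proposition~\ref{pro:ring2ring} applied at scale $n_k$ with parameter $\epsilon'$ and failure tolerance $\delta/(2K)$ produces an event on which every $v \in \partial^*B_{n_k}^\bullet(\cT_\infty^{(0)})$ satisfies
\[
(1-\epsilon')c_{\mathrm{fpp}}^\cT\,\eta n_k \;\le\; d_{\mathrm{fpp}}^{\cT_\infty^{(0)}}\!\bigl(v,\partial^*B_{n_{k+1}}^\bullet(\cT_\infty^{(0)})\bigr) \;\le\; (1+\epsilon')c_{\mathrm{fpp}}^\cT\,\eta n_k;
\]
a union bound over $k$ yields a good event $\mathcal{E}_n$ of probability at least $1-\delta/2$ on which all these two-sided bounds hold simultaneously.

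For the upper bound, working on $\mathcal{E}_n$ and starting from an arbitrary $v = v_0 \in \partial^*B_n^\bullet$, iteratively select $v_{k+1} \in \partial^*B_{n_{k+1}}^\bullet$ realising $d_{\mathrm{fpp}}^{\cT_\infty^{(0)}}(v_k,\partial^*B_{n_{k+1}}^\bullet)$, so that by the triangle inequality
$d_{\mathrm{fpp}}^{\cT_\infty^{(0)}}(v,v_K) \le (1+\epsilon')c_{\mathrm{fpp}}^\cT(n-n_K) \le c_{\mathrm{fpp}}^\cT n + \epsilon n/4$.
The residual piece $d_{\mathrm{fpp}}^{\cT_\infty^{(0)}}(\rho,v_K)$ is controlled uniformly in the choice of $v_K$ by the weights along a graph-geodesic from $\rho$ to each vertex of $\partial^*B_{n_K}^\bullet$; each such geodesic has length $n_K$, and since $|\partial^*B_{n_K}^\bullet| = O(n_K^2)$ with high probability by Lemma~\ref{le:analem}, a union bound combined with Inequality~\eqref{eq:iotabound} shows that all these weights are at most $n_K \le \epsilon n/4$ with probability tending to $1$. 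Combining yields $d_{\mathrm{fpp}}^{\cT_\infty^{(0)}}(\rho,v) \le (c_{\mathrm{fpp}}^\cT + \epsilon)n$.

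For the lower bound, any path $\gamma$ from $\rho$ to $v \in \partial^*B_n^\bullet$ must successively cross each cycle $\partial^*B_{n_k}^\bullet$, since these cycles separate the root from $v$ in $\cT_\infty^{(0)}$. Letting $\sigma_k$ denote the first visit of $\gamma$ to $\partial^*B_{n_k}^\bullet$, one has $\sigma_K < \sigma_{K-1} < \cdots < \sigma_0$, and the subpaths $\gamma|_{[\sigma_{k+1},\sigma_k]}$ are pairwise disjoint in time. The weight of $\gamma|_{[\sigma_{k+1},\sigma_k]}$ is at least $d_{\mathrm{fpp}}^{\cT_\infty^{(0)}}(\gamma(\sigma_k),\partial^*B_{n_{k+1}}^\bullet) \ge (1-\epsilon')c_{\mathrm{fpp}}^\cT\,\eta n_k$ on $\mathcal{E}_n$, and summing over $k$ gives $d_{\mathrm{fpp}}^{\cT_\infty^{(0)}}(\rho,v) \ge (1-\epsilon')c_{\mathrm{fpp}}^\cT(n-n_K) \ge (c_{\mathrm{fpp}}^\cT - \epsilon)n$. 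The main subtlety is ensuring that the iteration can be performed with only a \emph{constant} number of scales (depending on $\epsilon$ alone), which is crucial for the union bound to preserve a nontrivial failure tolerance; this is guaranteed by the geometric decay $n_k \sim (1-\eta)^k n$, and a secondary technical point is the uniform control of the residual distance $d_{\mathrm{fpp}}^{\cT_\infty^{(0)}}(\rho,v_K)$ across the polynomially many candidate endpoints on $\partial^*B_{n_K}^\bullet$, for which the exponential-moment assumption on $\iota$ together with Inequality~\eqref{eq:iotabound} is decisive.
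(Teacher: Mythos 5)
Your overall skeleton (iterate Proposition~\ref{pro:ring2ring} over concentric annuli of width $\approx\eta n_k$ until the scale is negligible, then control the residual core via left-most geodesics, Lemma~\ref{le:analem}, and Inequality~\eqref{eq:iotabound}) is the same as the paper's. The arithmetic with $\epsilon'=\epsilon/(4c_{\mathrm{fpp}}^\cT)$, and the lower-bound argument via successive crossing times of the separating cycles, are also fine in outline.

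The genuine gap is in how you combine the scales. You ``fix $\eta$ sufficiently small,'' compute $K = K(\eta,\epsilon)$, and then apply Proposition~\ref{pro:ring2ring} with failure tolerance $\delta/(2K)$. But Proposition~\ref{pro:ring2ring} is an existence statement: given $(\epsilon',\delta')$ it produces \emph{some} $\eta$, which depends on $\delta'$. You cannot pick $\eta$ first and then supply a tolerance; and if you take the tolerance $\delta/(2K)$, the $\eta$ you are handed back changes $K$. That is a circular choice, and nothing in the statement of Proposition~\ref{pro:ring2ring} (no monotonicity in $\eta$, no quantitative dependence of $\eta$ on $\delta$) lets you break the cycle. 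The paper circumvents exactly this: it applies Proposition~\ref{pro:ring2ring} only once, with tolerance $\delta^2$, takes $q$ as many scales as this single $\eta$ dictates, and then uses Markov's inequality on the \emph{number of bad scales} rather than a union bound. Because a bad scale contributes at most $\eta n$ to the error and there are at most $\delta q$ of them with probability $\ge 1-\delta$, the total error from bad scales is $\le \delta q\eta n$, which is controlled via the preliminary constraint $\delta < \epsilon/(4|\log(\epsilon/16)|)$ and the elementary bound $\eta \le |\log(1-\eta)|$. Your union-bound version might be rescuable if one unpacked the proof of Proposition~\ref{pro:ring2ring} to extract how $\eta$ scales with $\delta$ (roughly $\eta\sim|\log\delta|^{-1/2}$, so $K\delta\to 0$), but that is extra analysis your proof does not carry out, and it is not available from the proposition as stated. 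As written, the argument does not close.

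A secondary, cosmetic point: in the lower bound you should work with the \emph{last} exit times from each cycle $\partial^*B_{n_k}^\bullet$ (as the paper does with $w_{(j)}$) rather than first visits, or at least argue carefully that the first-visit times are monotone and the resulting subpaths are edge-disjoint before summing their weights; otherwise the decomposition of the total weight is not justified.
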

\begin{proof}
	Let $0< \delta < \epsilon / (4 | \log(\epsilon / 16)|)$ be given. By Proposition~\ref{pro:ring2ring}, there exists $0 < \eta < 1/4$ such that for large enough $n$ it holds with probability at least $1- \delta^2$ that
		\[
	(c_{\mathrm{fpp}}^\cT - \epsilon/2) \lfloor \eta n \rfloor \le d_{\mathrm{fpp}}^{\cT_\infty^{(0)}}(v, \partial^* B_{n - \lfloor \eta n \rfloor}^\bullet(\cT_\infty^{(0)})) \le (c_{\mathrm{fpp}}^\cT + \epsilon/2) \lfloor \eta n \rfloor  
	\]
	for all $v \in \partial^* B_n^\bullet(\cT_\infty^{(0)}) $. We let $\cG_n$ denote this event.  We set $n_0 = n$, $n_1 = n - \lfloor \eta n \rfloor$, and inductively $n_i = n_{i-1} - \lfloor \eta n_{i-1} \rfloor$ for all $i \ge 1$. Set
	\[
		q = \left\lfloor \frac{\log(\epsilon/ 16)}{\log(1 - \eta)} \right\rfloor.
	\]
	Hence for large enough $n$ we have $n_q \le \epsilon n / 4$. Furthermore, for $n$ large enough,
	\[
		\Exb{\sum_{j=0}^{q-1} \one_{\cG_{n_j}^c} } \le \delta^2 q.
	\]
	By Markov's inequality,
	\[
	\Prb{\sum_{j=0}^{q-1} \one_{\cG_{n_j}^c} > \delta q} \le \delta.
	\]
	We let $\cH_n$ denote the event that $\sum_{j=0}^{q-1} \one_{\cG_{n_j}^c} \le \delta q$.
	
	Using Lemma~\ref{le:analem} and Inequality~\eqref{eq:iotabound} to bound the sums of weights along the left-most geodesics from points on $\partial^* B_{n_q}^\bullet(\cT_\infty^{(0)})$ to $\rho$, it follows that for sufficiently large $n$ with probability at least $1 - \delta$ all of these sums are less than $n_q$. We let $\tilde{\cH}_n$ denote the intersection of $\cH_n$ with this event and note that
	\[
		\Pr{\tilde{\cH}_n} \ge 1 - 2 \delta.
	\]

	 Suppose that $\tilde{\cH}_n$ holds. Let $v \in \partial^* B_n^\bullet(\cT_\infty^{(0)})$. We are going to inductively construct vertices $v_{(j)} \in \partial^* B_{n_j}^\bullet(\cT_\infty^{(0)})$ for $0 \le j \le q$, starting with $v_{(0)} = v$. Having construct $v_{(0)}, \ldots, v_{(j)}$ for some $j < q$, we define $v_{(j+1)}$ as follows. If $\cG_{n_j}$ holds, we set $v_{(j+1)}$ to some point in $\partial^* B_{n_{j+1}}^\bullet(\cT_\infty^{(0)})$ satisfying \[
	d_{\mathrm{fpp}}^{\cT_\infty^{(0)}}(v_{(j)}, v_{(j+1)}) = d_{\mathrm{fpp}}^{\cT_\infty^{(0)}}(v_{(j)}, \partial^* B_{n_{j+1}}^\bullet(\cT_\infty^{(0)})).
	\]
	Otherwise, we we set $v_{(j+1)}$ to some point in $\partial^* B_{n_{j+1}}^\bullet(\cT_\infty^{(0)})$ satisfying \[
	d_{\cT_\infty^{(0)}}(v_{(j)}, v_{(j+1)}) = n_j - n_{j+1}.
	\]
Since $v_{(q)} \in \partial^* B_{n_q}^\bullet(\cT_\infty^{(0)}) $ it follows by definition of $\tilde{\cH}_n$ that
	\begin{align}
		\label{eq:boundevent}
		d_{\mathrm{fpp}}^{\cT_\infty^{(0)}}(\rho, v_{(q)}) \le n_q \le  \epsilon n / 4.
	\end{align}
	Hence, if the event $\cH_n$ and the event~\eqref{eq:boundevent} simultaneously hold, we have
	\begin{align*}
		d_{\mathrm{fpp}}^{\cT_\infty^{(0)}}(\rho, v) &\le \frac{\epsilon n}{4} + \sum_{j=0}^{q-1} d_{\mathrm{fpp}}^{\cT_\infty^{(0)}}(v_{(j)}, v_{(j+1)}) \\
		&\le \frac{\epsilon n}{4} + \left(c_{\mathrm{fpp}}^\cT + \frac{\epsilon}{2}\right) \sum_{j=0}^{q-1}(n_j - n_{j+1}) + \delta q \max_{0 \le i < q}(n_i - n_{i+1}) \\
		&\le  \frac{\epsilon n}{4} + \left(c_{\mathrm{fpp}}^\cT + \frac{\epsilon}{2}\right)n + \delta q \eta n \\
		& \le \left(c_{\mathrm{fpp}}^\cT + {\epsilon}\right)n.
	\end{align*}
Here we used $\delta q \eta \le \epsilon/4$  in the last line since $\eta \in ]0,1[$ entails  $\eta \le | \log(1-\eta)|$. 

Furthermore, if $\omega$ is a path from $v$ to the root $\rho$, then for each integer $j \in \{0, \ldots, q\}$ we let $w_{(j)}$ denote the last point of $\omega$ that belongs to $\partial^*B_{n_j}^\bullet(\cT_\infty^{(0)})$. Using $c_{\mathrm{fpp}}^\cT \le 1$, $n_q \le \epsilon n /4$, and $\delta q \eta \le \epsilon / 4$, it follows that on the event $\cH_n$ for large enough $n$ the sum of weights of the edges of the path is bounded from below by
\begin{align*}
	\sum_{j=0}^{q-1} d_{\mathrm{fpp}}( w_{(j)}, \partial^*B_{n_j}^\bullet(\cT_\infty^{(0)})) &\ge (c_{\mathrm{fpp}}^\cT - \epsilon/ 2)(n_0 - n_q) - \delta q c_{\mathrm{fpp}}^\cT \max_{0 \le i < q}(n_i - n_{i+1}) \\
	&\ge n(c_{\mathrm{fpp}}^\cT - \epsilon/ 2)(1 - \epsilon/4) - \delta q \eta n \\
	& \ge (c_{\mathrm{fpp}}^\cT - \epsilon) n.
\end{align*}
Hence
\[
d_{\mathrm{fpp}}^{\cT_\infty^{(0)}}(\rho, v) \ge \left(c_{\mathrm{fpp}}^\cT - {\epsilon}\right)n.
\]
Since the event $\tilde{\cH}_n$  holds with probability at least $1 - 2\delta$ this completes the proof. 
\end{proof}

\section{The global shape of random simple triangulations}

Let $\mu_n$ denote the uniform measure on the vertex set of $\cT_n$. It was shown in~\cite{zbMATH06812193} that
\begin{align}
	\label{eq:convsimpleghp}
	(\cT_n, (3/8)^{1/4} d_{\cT_n}, \mu_n) \convdis (\mathbf{M}, d_{\mathbf{M}}, \mu_{\mathbf{M}})
\end{align}
in the Gromov--Hausdorff--Prokhorov sense, with $(\mathbf{M}, d_{\mathbf{M}}, \mu_{\mathbf{M}})$ denoting the Brownian map. This entails that if $(o_n^i)_{i \ge 1}$ denote independent uniform vertices of $\cT_n$, and $(u_i)_{i \ge 1}$ independent $\mu_{\mathbf{M}}$-samples of $\mathbf{M}$, then
\begin{align}
	\label{eq:bm1}
(3/8)^{1/4} n^{-1/4} d_{\cT_n}(o_n^1, o_n^2) \convdis d_{\mathbf{M}}(u_1, u_2).
\end{align}

Let $\tilde{\mu}_n$ denote the degree-biased measure on the vertex set of $\cT_n$. That is, $ \tilde{\mu}_n$ describes a vertex that this selected with probability proportional to its degree. The result~\cite[Cor. 3.2, Eq. (3.4)]{stufler2022scaling} used~\eqref{eq:convsimpleghp} (and employed similar arguments as for quadrangulations~\cite{zbMATH06847066}) to deduce that
\begin{align}
	d_{\mathrm{P}}(\mu_n, \tilde{\mu}_n) \convp 0,
\end{align}
with $d_{\mathrm{P}}$ denoting the Prokhorov-distance, and $\mu_n, \tilde{\mu}_n$ interpreted as measures on the rescaled triangulation $(\cT_n, (3/8)^{1/4} d_{\cT_n})$.  By~\cite[Cor. 11.6.4]{dudley_2002}, it follows that for each $n$ there exist a coupling between a uniformly selected vertex $o_n$ of $\cT_n$ and a $\tilde{\mu}_n$-distributed vertex $\tilde{o}_n$ such that
\begin{align}
	\label{eq:bm2}
	n^{-1/4} d_{\cT_n}(o_n, \tilde{o}_n) \convp 0.
\end{align}
Let $\rho_n$ denote the origin of the root-edge of $\cT_n$. The uniform simple triangulation $\cT_n$ is distributionally invariant under re-rooting at a uniformly selected corner, and the origin of the randomly selected and oriented new root-edge follows the degree-biased distribution $\tilde{\mu}_n$. Combining this with~\eqref{eq:bm1} and~\eqref{eq:bm2} it follows that 
\begin{align}
	\label{eq:yup1}
(3/8)^{1/4} n^{-1/4} d_{\cT_n}(\rho_n, o_n) \convdis d_{\mathbf{M}}(u_1, u_2).
\end{align}
It follows from~\cite[Prop. 10]{MR2571957} and~\eqref{eq:convsimpleghp} that for any fixed $k \ge 1$
\begin{align}
		\left(\cT_n, (3/8)^{1/4} d_{\cT_n}, (o_n^1,\ldots, o_n^k))\right) \convdis \left(\mathbf{M}, d_{\mathbf{M}}, (u_1, \ldots, u_k)\right)
\end{align}
with respect to the $k$-pointed Gromov--Hausdorff metric. Since $\mu_{\mathbf{M}}$ almost surely has full-support, it follows from~\cite[Lem. 4.1]{stufler2022mass} that
\begin{align}
	\label{eq:yup2}
	d_{\mathrm{H}}( \{u_1, \ldots, u_k\}, \mathbf{M}) \convas 0
\end{align}
as $k \to \infty$, with $d_{\mathrm{H}}$ denoting the Hausdorff metric. Combining~\eqref{eq:yup1} and~\eqref{eq:yup2}, it follows that for all $\epsilon, \delta>0$ there exists a number $K$ such that for all $n \ge 2$
\begin{align}
	\label{eq:approxmesh}
	\Prb{ \sup_{v \in \V(\cT_n)} \inf_{1 \le i \le K} d_{\cT_n}(v, o_n^i) > \epsilon n^{1/4}  } < \delta.
\end{align}
To be precise, it follows from~\eqref{eq:yup1} and~\eqref{eq:yup2} that~\eqref{eq:approxmesh} holds for sufficiently large $n$. However, increasing $K$ allows us to easily treat any finite number of indices $n$.

For any vertex $x \in \V(\cT_n)$ and any number $b>0$ we let $\mfB_{a}(\cT_n, x)$ denote the open ball with radius $b$ centred at $x$.
Using again that  $\mu_{\mathbf{M}}$ almost surely has full-support, it follows from the convergence~\eqref{eq:convsimpleghp} and~\cite[Cor. 3.4]{stufler2022mass} that for all $\epsilon, \delta>0$ there exists $b>0$ such that for all large enough $n$
\begin{align}
	\label{eq:ballmass}
	\Prb{\inf_{x \in \V(\cT_n)} \# \mfB_{\epsilon n^{1/4} }(\cT_n, x)  < b n   } < \delta.
\end{align}

\section{First-passage percolation on random finite simple triangulations}

Recall that $\cT_n$ denote the uniform simple triangulation with $n+1$ vertices. It will be notationally convenient to treat the root vertex $\rho_n$ of $\cT_n$ as a bottom-cycle of length $0$, making $\cT_n = \cT_n^{(0)}$ a triangulation of the ``$0$-gon''. We let $\overline{\cT}_n$ denote the result of marking a uniformly selected non-root vertex $o_n$ of $\cT_n$. For any integer $r \ge 1$, the hull $B_r^\bullet(\overline{T}_n)$ makes sense if $d_{\cT_n}(\rho_n, o_n) > r$. If $d_{\cT_n}(\rho_n, o_n) \le r$ we set $B_r^\bullet(\overline{T}_n)$ to some place-holder value. By abuse of  notation, we let $\ndC_{0,r}$ denote collection of triangulations of the cylinder with height $r$ and bottom-cycle length $0$. For any $t \in \ndC_{0,r}$ we let $N(t)$ denote the number of  non-root vertices.

Compare the following lemma with~\cite[Lem. 22]{zbMATH07144469}.

\begin{lemma}
	\label{le:finbod9}
	There is a constant $\bar{c}>0$ such that for all  $r \ge 1$, $t \in \ndC_{0,r}$, and $n > N(t)$ we have
	\begin{align}
		\Pr{ B_r^\bullet(\overline{\cT}_n) = t} \le \bar{c} \left( \frac{n}{n - N(t)} \right)^{3/2} \Pr{ B_r^\bullet({\cT}_\infty^{(0)} ) = t}.
	\end{align}
\end{lemma}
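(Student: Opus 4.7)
The strategy is to derive explicit enumerative expressions for both probabilities and then apply the two-sided bounds on $\#\ndT_{n,p}$ from Lemma~\ref{le:eet}. For the finite case, writing $q$ for the length of the top cycle of $t$, the event $B_r^\bullet(\overline{\cT}_n) = t$ occurs exactly when $\cT_n$ is obtained by pasting a simple triangulation of the $q$-gon with $n - N(t)$ inner vertices into the top face of $t$, and when the marked non-root vertex is one of those $n - N(t)$ freshly added inner vertices. Since $|\cT_n| = \#\ndT_{n-2,3}$, counting (triangulation, marked vertex) pairs yields
\[
	\Pr{B_r^\bullet(\overline{\cT}_n) = t} = \frac{\#\ndT_{n-N(t), q}}{\#\ndT_{n-2, 3}} \cdot \frac{n - N(t)}{n}.
\]
Passing to the limit $n \to \infty$ in this formula and applying the sharp asymptotic~\eqref{eq:tnpasymp} to both counts shows
\[
	\Pr{B_r^\bullet(\cT_\infty^{(0)}) = t} = \frac{C(q)}{C(3)} \left( \frac{256}{27}\right)^{2 - N(t)}.
\]

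To bound the ratio, I would invoke Lemma~\ref{le:eet}: its upper bound yields $\#\ndT_{n - N(t), q} \le c \cdot C(q) (n-N(t))^{-5/2} (256/27)^{n - N(t)}$ uniformly in $q \ge 3$ and $n - N(t) \ge 1$, while its lower bound taken with $\alpha = 3$ (so that the hypothesis $3 \le 3\sqrt{n-2}$ reduces to $n \ge 3$) yields $\#\ndT_{n-2, 3} \ge c(3) \cdot C(3) (n-2)^{-5/2} (256/27)^{n-2}$. Substituting these into the ratio, the factors $C(q)$, $C(3)$ and the exponential terms all cancel, leaving
\[
	\frac{\Pr{B_r^\bullet(\overline{\cT}_n) = t}}{\Pr{B_r^\bullet(\cT_\infty^{(0)}) = t}} \le \frac{c}{c(3)} \left( \frac{n-2}{n - N(t)} \right)^{5/2} \cdot \frac{n - N(t)}{n} \le \frac{c}{c(3)} \left( \frac{n}{n - N(t)} \right)^{3/2},
\]
which gives the claim with $\bar{c} = c/c(3)$.

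The conceptual content of the argument is that Lemma~\ref{le:eet} reduces the comparison to polynomial prefactors, and that the extra factor $(n - N(t))/n$ coming from the uniform choice of the marked vertex is exactly what downgrades the naive exponent $5/2$ (coming from the $n^{-5/2}$ terms) to the claimed $3/2$. The main subtlety I expect is that Lemma~\ref{le:brtntria} gives an explicit formula for $\Pr{B_r^\bullet(\cT_\infty^{(p)}) = t}$ only for $p \ge 3$, so the case $p = 0$ at stake here needs to be handled by direct passage to the limit of the finite-$n$ formula together with~\eqref{eq:tnpasymp}; this detour is light and avoids invoking Lemma~\ref{le:brtntria} at $p=0$. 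Since Lemma~\ref{le:eet} applies uniformly for $n \ge 3$, no separate treatment of small $n$ is needed.
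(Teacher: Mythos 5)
Your proposal is correct and follows essentially the same route as the paper: both establish the exact finite-$n$ formula $\Pr{B_r^\bullet(\overline{\cT}_n)=t}=\frac{\#\ndT_{n-N(t),q}}{\#\ndT_{n-2,3}}\cdot\frac{n-N(t)}{n}$, obtain the limiting probability for $\cT_\infty^{(0)}$ by letting $n\to\infty$ via~\eqref{eq:tnpasymp}, and then apply the upper bound from Lemma~\ref{le:eet} to the numerator count and the lower bound (with fixed perimeter $3$) to the denominator count, so that the polynomial factors reorganize into $(n/(n-N(t)))^{3/2}$. Your explicit remark that the extra factor $(n-N(t))/n$ from the marked vertex is what lowers the exponent from $5/2$ to $3/2$ is exactly the mechanism in the paper's proof, and your observation that $N(t)\ge 2$ (in fact $N(t)\ge 3$ since the top cycle has length $\ge 3$) is what justifies $\frac{n-2}{n-N(t)}\le\frac{n}{n-N(t)}$.
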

\begin{proof}
	We let $p \ge 3$ denote the length of the top cycle $\partial^* t$ of $t$.
	Analogously as for Equation~\eqref{eq:rapdasarmas} we have
	\begin{align}
	\label{eq:barab1}
		\Pr{ B_r^\bullet(\overline{\cT}_n) = t} = \frac{\#\ndT_{n - N(t),p}}{\#\ndT_{n-2}} \left( 1 - \frac{N(t)}{n} \right).
	\end{align}
	By Equation~\eqref{eq:tn0} and Equation~\eqref{eq:tnp} and the local convergence $\cT_n \convdis \cT_\infty$ from Proposition~\ref{pro:convvvv} it follows that
	\begin{align}
		\label{eq:bruhatord}
		\Pr{B_r^\bullet({\cT}_\infty^{(0)}) = t}  = \frac{4096}{27} \sqrt{\frac{2 \pi }{3}} C(p) \left( \frac{27}{256} \right)^{N(t)}.
	\end{align}
	Equation~\eqref{eq:barab1} and Lemma~\ref{le:eet} also entail that there exists a constant $c^*>0$ such that
	\begin{align}
		\Pr{ B_r^\bullet(\overline{\cT}_n) = t} \le c^* C(p) \left( \frac{n}{n - N(t)} \right)^{3/2} \left( \frac{27}{256} \right)^{N(t)}.
	\end{align}
	Hence
	\begin{align}
		\Pr{ B_r^\bullet(\overline{\cT}_n) = t} \le c^* \frac{4096}{27} \sqrt{\frac{2 \pi }{3}} \left( \frac{n}{n - N(t)} \right)^{3/2} \Pr{ B_r^\bullet({\cT}_\infty^{(0)} ) = t}.
	\end{align}
\end{proof}

We will need the next bound in order to deal with the class of link-weights under consideration.

\begin{proposition}
	\label{pro:roughtnbound}
	Let $0 < \epsilon  < 1/4$ be given.
	Then
	\begin{align}
		\label{eq:fffbound1}
		\lim_{n \to\infty} \Prb{ d_{\mathrm{fpp}}^{\cT_n} (x,y) \le d_{\cT_n}(x,y) \text{ for all $x,y \in \V(\cT_n)$ with $d_{\cT_n}(x,y) \ge  n^{\epsilon}$ }  } = 1
	\end{align}
	and
	\begin{align}
				\label{eq:fffbound2}
			\lim_{n \to\infty} \Prb{ d_{\mathrm{fpp}}^{\cT_n} (x,y) \le n^{\epsilon} \text{ for all $x,y \in \V(\cT_n)$ with $d_{\cT_n}(x,y) \le n^\epsilon$ }  } = 1.
	\end{align}
\end{proposition}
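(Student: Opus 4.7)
The plan is to bound $d_{\mathrm{fpp}}^{\cT_n}(x,y)$ from above by the sum of link-weights along a graph geodesic from $x$ to $y$, and then apply the one-dimensional deviation bound~\eqref{eq:iotabound} together with a union bound over the at most $(n+1)^2$ pairs of vertices of $\cT_n$. Since the link-weights are independent of the underlying map, we may fix a measurable rule that, for each pair $(x,y) \in \V(\cT_n)^2$, selects a graph geodesic $\pi_{xy}$ joining them in $\cT_n$. Conditionally on $\cT_n$, the sum $\sum_{e \in \pi_{xy}} \iota_e$ is then a sum of exactly $k := d_{\cT_n}(x,y)$ independent copies of $\iota$, and $d_{\mathrm{fpp}}^{\cT_n}(x,y) \le \sum_{e \in \pi_{xy}} \iota_e$.

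Choose once and for all a constant $\lambda > 0$ small enough such that $c_1 := \lambda/2 - c\lambda^2 > 0$, where $c$ is the constant from~\eqref{eq:iotabound}. For~\eqref{eq:fffbound1}, apply~\eqref{eq:iotabound} to the sum along $\pi_{xy}$ with the parameters $n$ and $x$ both equal to $k \ge n^\epsilon$, obtaining
\[
\Prb{ \sum_{e \in \pi_{xy}} \iota_e \ge d_{\cT_n}(x,y) \,\big|\, \cT_n } \le \exp(-c_1 k) \le \exp(-c_1 n^\epsilon).
\]
A union bound over the at most $(n+1)^2$ pairs of vertices bounds the probability that~\eqref{eq:fffbound1} fails by $(n+1)^2 \exp(-c_1 n^\epsilon)$, which tends to $0$.

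For~\eqref{eq:fffbound2}, apply~\eqref{eq:iotabound} to the sum along $\pi_{xy}$ with $n$ replaced by $k$ and $x$ taken equal to $n^\epsilon$; the hypothesis $n^\epsilon \ge k/2$ holds since $k \le n^\epsilon$. Bounding $c k \lambda^2 \le c n^\epsilon \lambda^2$ and $\lambda(n^\epsilon - k/2) \ge \lambda n^\epsilon / 2$ gives
\[
\Prb{ \sum_{e \in \pi_{xy}} \iota_e \ge n^\epsilon \,\big|\, \cT_n } \le \exp\bigl(n^\epsilon(c\lambda^2 - \lambda/2)\bigr) = \exp(-c_1 n^\epsilon),
\]
and another union bound over the at most $(n+1)^2$ pairs completes the proof.

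No substantive obstacle arises here: the argument is a direct combination of the i.i.d.~tail bound~\eqref{eq:iotabound} with a crude union bound, made possible by the normalization $\mathbb{E}[\iota] = 1/2$ from~\eqref{eq:iotaexp}, which leaves a linear-in-$k$ gap between the typical total weight $k/2$ along a graph geodesic and both target thresholds $k$ and $n^\epsilon$. The only reason to split into two regimes is that in~\eqref{eq:fffbound2} the target threshold $n^\epsilon$ does not scale with $k$, so the bound is stated in terms of $n^\epsilon$ rather than $k$.
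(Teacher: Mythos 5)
Your proposal is correct and follows essentially the same approach as the paper: bound the first-passage-percolation distance by the total weight along a fixed graph geodesic, apply the tail bound~\eqref{eq:iotabound}, and take a union bound over the at most $(n+1)^2$ vertex pairs (with the conditional-on-$\cT_n$ estimate being uniform and hence usable unconditionally). The only difference is cosmetic: you make the choice of $\lambda$ and the resulting $\exp(-c_1 n^\epsilon)$ rate explicit, whereas the paper simply notes that the supremum tends to zero by~\eqref{eq:iotabound}.
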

\begin{proof}
	Let $(\iota_i)_{i \ge 1}$ denote independent copies of $\iota$. Recall that $\Ex{\iota}=1/2$, and that $\iota$ has finite exponential moments. 
	
	There are at most $(n+1)^2$ pairs of vertices with graph distance at least $n^\epsilon$ from each other. Note that the maximal graph distance between any two points is at most $n$. Conditional on $\cT_n$, we may fix a geodesic between each such pair. The sum of weights along these marked geodesics is an upper bound for their first-passage percolation distance. Hence, conditional on $\cT_n$, the probability that there exists $x,y \in \cT_n$ with $d_{\cT_n}(x,y) \ge  n^{\epsilon}$ but $d_{\mathrm{fpp}}^{\cT_n} (x,y) > d_{\cT_n}(x,y)$ is bounded by
	\[
		n^2 \sup_{n^{\epsilon} \le k \le n} \Pr{\iota_1 + \ldots + \iota_k > k}.
	\]
	This bound does not depend on $\cT_n$, hence it also holds without conditioning on~$\cT_n$. Furthermore, it tends to zero by Inequality~\eqref{eq:iotabound}. This verifies Equation~\eqref{eq:fffbound1}.
	
	Similarly, the probability that there exist vertices $x,y \in \V(\cT_n)$ with $d_{\cT_n}(x,y) \le n^\epsilon$ but $d_{\mathrm{fpp}}^{\cT_n} (x,y) > n^{\epsilon}$ is bounded by
	\[
		n^2 \sup_{1 \le k \le n^{\epsilon}} \Pr{\iota_1 + \ldots + \iota_k > n^{\epsilon}}.
	\]
	This bound also tends to zero by  Inequality~\eqref{eq:iotabound}. This completes the proof.
\end{proof}

We prove the next result following  the arguments of the proof of~\cite[Lem. 22]{zbMATH07144469}.
\begin{proposition}
	\label{pro:protypical848}
	For each $\epsilon>0$,
	\[
		\Prb{ | d_{\mathrm{fpp}}^{\cT_n} (\rho_n, o_n) - c_{\mathrm{fpp}}^{\cT}  d_{\cT_n}(\rho_n, o_n) | > \epsilon n^{1/4} } \to 0
	\]
	as $n \to \infty$.
\end{proposition}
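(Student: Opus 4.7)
The plan is to transfer Proposition~\ref{pro:yoyo} from the UIPT to the finite model $\overline{\cT}_n$ via the absolute-continuity estimate in Lemma~\ref{le:finbod9}, and then sandwich $d_{\mathrm{fpp}}^{\cT_n}(\rho_n, o_n)$ around $c_{\mathrm{fpp}}^{\cT} D_n$, where $D_n := d_{\cT_n}(\rho_n, o_n)$. First I would localize: by~\eqref{eq:yup1} the rescaled distance $n^{-1/4}D_n$ converges in distribution to the non-degenerate, a.s.\ strictly positive Brownian-map distance $(3/8)^{-1/4} d_{\mathbf{M}}(u_1, u_2)$, so for any $\delta>0$ one may fix $0 < a < A < \infty$ with $\Pr{D_n \in [a n^{1/4}, A n^{1/4}]} \ge 1 - \delta/4$ for all large $n$. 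Next, fix a small parameter $0 < \eta < a/4$, set $k_n := \lfloor \eta n^{1/4}\rfloor$, and introduce the finite deterministic grid $r_j := j k_n$ for $1 \le j \le J := \lceil A/\eta\rceil+1$. On the good event, the largest index $j^\star$ with $r_{j^\star} \le D_n - k_n$ is well-defined and satisfies both $r_{j^\star} \in [\tfrac12 a n^{1/4}, A n^{1/4}]$ and $D_n - r_{j^\star} \in [k_n, 2 k_n]$.

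Second, I would bound the hull $B_{r_{j^\star}}^\bullet(\overline{\cT}_n)$: by~\eqref{eq:ballmass} applied with its $\epsilon$ replaced by $\eta/2$, there exists $b > 0$ such that with probability at least $1 - \delta/4$ the ball $\mfB_{k_n/2}(\cT_n, o_n)$ has at least $b n$ vertices. Every vertex of this ball has graph distance at least $D_n - k_n/2 > r_{j^\star}$ from $\rho_n$ and is connected to $o_n$ within the complement of $B_{r_{j^\star}}(\cT_n)$, so it lies in the $o_n$-component and is disjoint from $B_{r_{j^\star}}^\bullet(\overline{\cT}_n)$. This yields $N(B_{r_{j^\star}}^\bullet(\overline{\cT}_n)) \le (1-b) n$, so Lemma~\ref{le:finbod9} gives the bounded density $\Pr{B_{r_j}^\bullet(\overline{\cT}_n) = t} \le \bar c\, b^{-3/2}\, \Pr{B_{r_j}^\bullet(\cT_\infty^{(0)}) = t}$ for every admissible hull $t$ with $N(t) \le (1-b)n$. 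Since the link weights are i.i.d.\ independently of the triangulation, the same density bound transfers to the joint law of the hull together with its link weights.

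The hull-measurable event I would transfer is $\cA_r := \{ d_{\mathrm{fpp}}^{B_r^\bullet}(\rho, v) \in [(c_{\mathrm{fpp}}^\cT - \epsilon')r,\ (c_{\mathrm{fpp}}^\cT + \epsilon')r]\ \text{for all } v \in \partial^* B_r^\bullet\}$, where $\epsilon'>0$ is a parameter to be chosen. The lower bound inside $\cA_r$ follows for the UIPT from Proposition~\ref{pro:yoyo} together with the trivial inequality $d_{\mathrm{fpp}}^{B_r^\bullet} \ge d_{\mathrm{fpp}}^{\cT_\infty^{(0)}}$, while the upper bound is obtained by rerunning the telescoping construction in the proof of Proposition~\ref{pro:yoyo} with the \emph{restricted} metric $d_{\mathrm{fpp}}^{(n)}$ (available through Proposition~\ref{pro:ring2ring}) in place of the unrestricted one, so that the concatenated path remains inside $B_r^\bullet(\cT_\infty^{(0)})$. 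Hence $\Pr{\cA_r^c \text{ in } \cT_\infty^{(0)}} \to 0$ as $r \to \infty$, and the density bound above upgrades this to $\Pr{\cA_{r_j}^c \text{ in } \overline{\cT}_n} \to 0$ for each fixed $j$. Taking a union bound over the $O(1)$ scales $r_1, \ldots, r_J$, the event $\cE_n := \bigcap_{j=1}^J \cA_{r_j}$ holds with probability at least $1-\delta/2$ for all $n$ sufficiently large.

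Finally, on the intersection of the previous good events I would close the argument by a sandwich. For the lower bound, any path from $\rho_n$ to $o_n$ must exit $B_{r_{j^\star}}^\bullet(\overline{\cT}_n)$ through a first crossing vertex $v_0 \in \partial^* B_{r_{j^\star}}^\bullet$, and its initial segment lies inside the hull, so its weight is at least $d_{\mathrm{fpp}}^{B_{r_{j^\star}}^\bullet}(\rho_n, v_0) \ge (c_{\mathrm{fpp}}^\cT - \epsilon') r_{j^\star}$ by $\cA_{r_{j^\star}}$. For the upper bound, fix $x^\star \in \partial^* B_{r_{j^\star}}^\bullet$ at graph distance $D_n - r_{j^\star}$ from $o_n$; Proposition~\ref{pro:roughtnbound}\,\eqref{eq:fffbound1} yields $d_{\mathrm{fpp}}^{\cT_n}(x^\star, o_n) \le D_n - r_{j^\star}$ with high probability, since $D_n - r_{j^\star} \ge k_n$ eventually exceeds $n^{\epsilon_0}$ for any $\epsilon_0 < 1/4$. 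Combining $d_{\mathrm{fpp}}^{\cT_n}(\rho_n, o_n) \le d_{\mathrm{fpp}}^{B_{r_{j^\star}}^\bullet}(\rho_n, x^\star) + d_{\mathrm{fpp}}^{\cT_n}(x^\star, o_n)$ with $\cA_{r_{j^\star}}$ and $|D_n - r_{j^\star}| \le 2 k_n$ yields $|d_{\mathrm{fpp}}^{\cT_n}(\rho_n, o_n) - c_{\mathrm{fpp}}^\cT D_n| \le \epsilon' A n^{1/4} + 2\eta n^{1/4}$, and it suffices to pick $\epsilon'$ and $\eta$ small enough that $\epsilon' A + 2\eta < \epsilon$. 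The main obstacle throughout is ensuring that Lemma~\ref{le:finbod9} applies with a bounded Radon--Nikodym density at the critical scale $r \sim n^{1/4}$, where the hull already occupies a positive fraction of $\cT_n$; it is precisely the mass lower bound~\eqref{eq:ballmass} applied to the marked vertex $o_n$ that keeps the $o_n$-component macroscopic and makes the transfer quantitative.
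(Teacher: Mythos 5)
Your proposal follows essentially the same route as the paper: localize $n^{-1/4}d_{\cT_n}(\rho_n,o_n)$ to a compact interval using the Brownian-map scaling limit~\eqref{eq:yup1}, show via the ball-mass bound~\eqref{eq:ballmass} that the hull at a scale just below $d_{\cT_n}(\rho_n,o_n)$ contains at most $(1-b)n$ vertices, transfer a hull-measurable concentration event from $\cT_\infty^{(0)}$ to $\overline{\cT}_n$ with the Radon--Nikodym bound of Lemma~\ref{le:finbod9} and Proposition~\ref{pro:yoyo}, take a union bound over the $O(1)$ scales, and close with a sandwiching step using Proposition~\ref{pro:roughtnbound}. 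Your explicit remark that the transferred event $\cA_r$ must be phrased in the hull-restricted metric $d_{\mathrm{fpp}}^{B_r^\bullet}$ (so that it is genuinely $B_r^\bullet$-measurable) and that the upper-bound half of Proposition~\ref{pro:yoyo} must then be rerun with the restricted metric $d_{\mathrm{fpp}}^{(n)}$ from Proposition~\ref{pro:ring2ring} is a point of extra care that the paper leaves implicit when invoking Proposition~\ref{pro:yoyo}.
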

\begin{proof}
	By~\eqref{eq:yup1} we know that $n^{-1/4} d_{\cT_n}(\rho_n, o_n)$ is stochastically bounded. Hence it suffices to prove that for all $\epsilon>0$ and $0 < \nu < 1$ we have 
	\begin{align}
		\label{eq:yesyesyes}
		\Prb{ \Bigg \vert \frac{d_{\mathrm{fpp}}^{\cT_n} (\rho_n, o_n)}{d_{\cT_n}(\rho_n, o_n)} - c_{\mathrm{fpp}}^{\cT}  \Bigg \vert > 2 \epsilon } < \nu
	\end{align}
for large enough $n$. 

Let $r \ge 1$. Given $t \in \ndC_{0,r}$ with root vertex $\rho$, we let $a_\epsilon(t)$ denote the random variable with $a_\epsilon(t)=1$ if
\[
	\sup_{x \in \partial^* t} \left |  \frac{d_{\mathrm{fpp}}^t(\rho, x) }{d_t(\rho, x)} -  c_{\mathrm{fpp}}^{\cT}  \right | \ge \epsilon
\]
and $a_\epsilon(t)=0$ otherwise.

Let $0<b<1$. Using~Lemma~\ref{le:finbod9}, it follows that
\begin{align*}
	&\Prb{ a_\epsilon( B_r^\bullet(\overline{\cT}_n)) = 1,  \# B_r^\bullet(\overline{\cT}_n) \le (1-b)n} \\
	 &\qquad= \sum_{\substack{ t \in \ndC_{0,r} \\ N(t) + 1 \le (1-b)n  }} \Prb{  B_r^\bullet(\overline{\cT}_n) = t  }\Prb{ a_\epsilon(t) = 1} \\
	 &\qquad\le \bar{c} b^{-3/2} \sum_{\substack{ t \in \ndC_{0,r} }} \Prb{  B_r^\bullet({\cT}_\infty^{(0)}) = t  }\Prb{ a_\epsilon(t) = 1} \\
	 &\qquad= \bar{c} b^{-3/2} \Prb{ a_\epsilon( B_r^\bullet({\cT}_\infty^{(0)})) = 1}.
\end{align*}
By Proposition~\ref{pro:yoyo}, it follows that
\begin{align}
	\label{eq:saens}
	\lim_{r \to \infty} \sup_{n \ge 1} \Prb{ a_\epsilon( B_r^\bullet(\overline{\cT}_n)) = 1,  \# B_r^\bullet(\overline{\cT}_n) \le (1-b)n} = 0.
\end{align}

Let $0< \alpha < \beta < \gamma$. We let $\mfB_r(\cT_n, o_n)$ denote the ball of radius $r$ centred at $o_n$ in $\cT_n$. We also define the event
\[
	D_{\beta, \gamma, n} = \{\beta n^{1/4} < d_{\cT_n}(\rho_n, o_n) \le \gamma n^{1/4}\}.
\]
Note that if $D_{\beta, \gamma, n}$ holds, then the ball $\mfB_{(\beta - \alpha)n^{1/4}}(\cT_n, o_n)$ is contained in the complement of the hull $B^\bullet_{\lfloor \alpha n^{1/4} \rfloor}(\overline{\cT}_n)$. Hence
\[
	D_{\beta, \gamma, n}  \cap \{ \# \mfB_{(\beta - \alpha)n^{1/4}}(\cT_n, o_n) > bn\} \subset \{ B^\bullet_{\lfloor \alpha n^{1/4} \rfloor}(\overline{\cT}_n)  \le (1-b)n\}.
\]
Using Equation~\eqref{eq:saens}, it follows that 
\begin{align}
	\lim_{n \to \infty} \Prb{a_\epsilon( B_{\lfloor \alpha n^{1/4} \rfloor}^\bullet(\overline{\cT}_n)) = 1, D_{\beta, \gamma, n},  \# \mfB_{(\beta - \alpha)n^{1/4}}(\cT_n, o_n) > bn} = 0.
\end{align}

Given $0<y<1$, it follows from Inequality~\eqref{eq:ballmass} that there exists $0<b<1$ such that
\begin{align}
	\liminf_{n \to \infty} \Prb{ \# \mfB_{(\beta - \alpha)n^{1/4}}(\cT_n, o_n) > bn  } \ge y.
\end{align}
Since we may take $y$ arbitrarily close to $1$, it follows that
\begin{align}
	\label{eq:usem3}
		\lim_{n \to \infty} \Prb{a_\epsilon( B_{\lfloor \alpha n^{1/4} \rfloor}^\bullet(\overline{\cT}_n)) = 1, D_{\beta, \gamma, n}} = 0.
\end{align}

Now, suppose that $0< \epsilon < 1$ and $0 < \delta < \epsilon/2$. For all integer $j$ with \begin{align}
	\label{eq:deltarange}
\lfloor \delta^{-1} \rfloor < j \le \lfloor \delta^{-3} \rfloor
\end{align}
 we set
\begin{align*}
	\alpha_j &= j \delta^2, \\
	\beta_j &= (j+1)\delta^2, \\
	\gamma_j &= (j+2) \delta^2.
\end{align*}
Hence
\begin{multline*}
	\Prb{ \bigcup_{j= \lfloor \delta^{-1} \rfloor +1}^{\lfloor \delta^{-3} \rfloor} D_{\beta_j, \gamma_j, n}} \\ = \Prb{ (\lfloor \delta^{-1} \rfloor + 2)\delta^2 n^{1/4} < d_{\cT_n}(\rho_n, o_n) \le (\lfloor \delta^{-3} \rfloor + 2)\delta^2 n^{1/4} }.
\end{multline*}
Using~\eqref{eq:yup1}, it follows that we may take $\delta$ sufficiently small such that for all large enough $n$
\begin{align*}
	\Prb{ \bigcup_{j= \lfloor \delta^{-1} \rfloor +1}^{\lfloor \delta^{-3} \rfloor} D_{\beta_j, \gamma_j, n}} \ge 1 - \frac{\nu}{4}.
\end{align*}
Using~\eqref{eq:usem3} for the values $(\alpha_j, \beta_j, \gamma_j)_j$ under consideration, it follows that
\begin{align}
	 \Prb{a_\epsilon( B_{\lfloor \alpha n^{1/4} \rfloor}^\bullet(\overline{\cT}_n)) = 0, \bigcup_{j= \lfloor \delta^{-1} \rfloor +1}^{\lfloor \delta^{-3} \rfloor} D_{\beta_j, \gamma_j, n}} \ge 1 - \nu/2
\end{align}
for sufficiently large $n$. Let $F_n$, $F_n'$ denote the events considered in~\eqref{eq:fffbound1} and~\eqref{eq:fffbound2} for $\epsilon= 1/8$. By Proposition~\ref{pro:roughtnbound} it follows that 
\begin{align}
	\label{eq:yesthisevent}
	\Prb{a_\epsilon( B_{\lfloor \alpha n^{1/4} \rfloor}^\bullet(\overline{\cT}_n)) = 0, \bigcup_{j= \lfloor \delta^{-1} \rfloor +1}^{\lfloor \delta^{-3} \rfloor} D_{\beta_j, \gamma_j, n}, F_n, F_n'} \ge 1 - \nu.
\end{align}

In order to prove Inequality~\eqref{eq:yesyesyes} it hence suffices to verify that
\[
\Bigg \vert \frac{d_{\mathrm{fpp}}^{\cT_n} (\rho_n, o_n)}{d_{\cT_n}(\rho_n, o_n)} - c_{\mathrm{fpp}}^{\cT}  \Bigg \vert > 2 \epsilon
\]
holds on the event considered in~\eqref{eq:yesthisevent}.  To this end, let $j$ be an integer satisfying~\eqref{eq:deltarange}, and suppose that  $a_\epsilon( B_r^\bullet(\overline{\cT}_n)) = 0$ and $D_{\beta_j, \gamma_j, n}$ hold. Then
\begin{align*}
	d_{\mathrm{fpp}}^{\cT_n}(\rho_n, o_n) &\ge  \min\{ d_{\mathrm{fpp}}^{\cT_n}(\rho_n, x) \mid x \in \partial^* B_{\lfloor \alpha n^{1/4} \rfloor}^\bullet(\overline{\cT}_n)  \} \\
	&\ge (c_{\mathrm{fpp}}^{\cT}   - \epsilon  )\lfloor \alpha_j n^{1/4} \rfloor.
\end{align*}
Since
\[
	\frac{\alpha_j}{\gamma_j} = \frac{j}{j+2} \ge 1 - \frac{2}{j} > 1 - 2 \delta  > 1 - \epsilon, 
\]
it follows that
\begin{align*}
	\frac{d_{\mathrm{fpp}}^{\cT_n} (\rho_n, o_n)}{d_{\cT_n}(\rho_n, o_n)}   &\ge \frac{(c_{\mathrm{fpp}}^{\cT}   - \epsilon ) \lfloor \alpha_j n^{1/4} \rfloor  }{ \gamma_j n^{1/4} } \\
	&\ge c_{\mathrm{fpp}}^{\cT}   - 2\epsilon.  
\end{align*}
At the same time
\begin{align*}
	d_{\mathrm{fpp}}^{\cT_n} (\rho_n, o_n) &\le  \max\left\{ d_{\mathrm{fpp}}^{\cT_n}(\rho_n, x) \mid x \in \partial^* B_{\lfloor \alpha n^{1/4} \rfloor}^\bullet(\overline{\cT}_n) \right\} + \max( \lfloor \gamma_j n^{1/4} \rfloor - \lfloor \alpha_j n^{1/4} \rfloor, n^{1/8}) \\
	&\le (c_{\mathrm{fpp}}^{\cT}   + \epsilon ) \lfloor \alpha_j n^{1/4} \rfloor +  \max( \lfloor \gamma_j n^{1/4} \rfloor - \lfloor \alpha_j n^{1/4} \rfloor, n^{1/8}).
\end{align*}
Consequently, for sufficiently large $n$
\begin{align*}
	\frac{d_{\mathrm{fpp}}^{\cT_n}(\rho_n, o_n)}{d_{\cT_n}(\rho_n, o_n)} &\le \frac{(c_{\mathrm{fpp}}^{\cT}   + \epsilon ) \lfloor \alpha_j n^{1/4} \rfloor +  \max( \lfloor \gamma_j n^{1/4} \rfloor - \lfloor \alpha_j n^{1/4} \rfloor, n^{1/8}) }{\beta_j n^{1/4}} \\
	&\le c_\mathrm{fpp}^\cT + 2 \epsilon.
\end{align*}
This completes the proof.
\end{proof}

We are now ready to finalize the proof of our first main theorem. With all preparations done, the final steps are analogous to the proof of~\cite[Thm. 1]{zbMATH07144469}, with small adjustments to treat unbounded link-weights.

\begin{proof}[Proof of Theorem~\ref{te:main0}]
	The random triangulation $\cT_n$ is stochastically invariant under re-rooting at a uniformly selected oriented root-edge. Furthermore, $\cT_n$ has $n+1$ vertices and hence $3(n-1)$ edges. Hence it follows from Proposition~\ref{pro:protypical848} that
	\begin{align}
		\label{eq:expconv}
		\frac{1}{n+1} \frac{1}{6(n-1)} \Exb{ \sum_{v \in \V(\cT_n)} \sum_{e \in \overrightarrow{\E}(\cT_n)}  \one_{|d_{\mathrm{fpp}}^{\cT_n}(e_*, v) - c_{\mathrm{fpp}}^{\cT_n} d_{\cT_n}(e_*, v)| > \epsilon n^{1/4} }  } \to 0
	\end{align}
	as $n \to \infty$. Here $\overrightarrow{\E}(\cT_n)$ denotes the $6(n-1)$-element set of oriented edges of $\cT_n$, and for any $e \in \overrightarrow{\E}(\cT_n)$ we let $e^*$ denote its origin. It follows from~\eqref{eq:expconv} that the following lower bound
		\begin{align}
			\label{eq:rerootdfdf}
		 \frac{1}{6(n+1)^2} \Exb{ \sum_{v, \tilde{v} \in \V(\cT_n)} \one_{|d_{\mathrm{fpp}}^{\cT_n}(v, \tilde{v}) - c_{\mathrm{fpp}}^{\cT_n} d_{\cT_n}(v, \tilde{v})| > \epsilon n^{1/4} }  }
	\end{align}
 also tends to zero. In other words, if $(o_n^i)_{i \ge 1}$ denote independent uniform vertices of $\cT_n$, then
 \begin{align}
 	\label{eq:kan}
 			\Prb{ | d_{\mathrm{fpp}}^{\cT_n} (o_n^1, o_n^2) - c_{\mathrm{fpp}}^{\cT}  d_{\cT_n}(o_n^1, o_n^2) | > \epsilon n^{1/4} } \to 0
 \end{align}
as $n \to \infty$. 
By Inequality~\eqref{eq:approxmesh},  for all $\epsilon, \delta>0$ there exists a number $K$ such that for all $n \ge 2$
\begin{align}
	\label{eq:tik}
	\Prb{ \sup_{v \in \V(\cT_n)} \inf_{1 \le i \le K} d_{\cT_n}(v, o_n^i) < \epsilon n^{1/4}  } > 1- \delta.
\end{align}
Since $K$ is constant,~\eqref{eq:kan} entails that
\begin{align}
	\label{eq:trick}
	 \Prb{ | d_{\mathrm{fpp}}^{\cT_n} (o_n^i, o_n^j) - c_{\mathrm{fpp}}^{\cT}  d_{\cT_n}(o_n^i, o_n^j) | < \epsilon n^{1/4} \text{ for all $1 \le i,j \le K$} } \to 1.
\end{align}
By Proposition~\ref{pro:roughtnbound}, we additionally have
\begin{align}
	\label{eq:track}
\Prb{ d_{\mathrm{fpp}}^{\cT_n} (x,y) \le \max(d_{\cT_n}(x,y), n^{1/8}) \text{ for all $x,y \in \V(\cT_n)$  }  } \to 1.
\end{align}
Suppose that the events under consideration in~\eqref{eq:tik},~\eqref{eq:trick}, and~\eqref{eq:track} hold. Then for any two vertices $x,y \in \V(\cT_n)$ we can find indices $1 \le i,j \le K$ with 
\[
d_{\cT_n}(x,o_n^i) < \epsilon n^{1/4} \qquad \text{and} \qquad d_{\cT_n}(y,o_n^j) < \epsilon n^{1/4}.
\]
Using the triangle inequality twice, this entail
\[
	|d_{\cT_n}(x,y) - d_{\cT_n}(o_n^i, o_n^j)| \le 2 \epsilon n^{1/4}.
\]
Furthermore, using $n^{1/8} < \epsilon n^{1/4}$, we have
\[
d^{\cT_n}_{\mathrm{fpp}}(x,o_n^i) < \epsilon n^{1/4} \qquad \text{and} \qquad d^{\cT_n}_{\mathrm{fpp}}(y,o_n^j) < \epsilon n^{1/4}.
\]
Since $|d^{\cT_n}_{\mathrm{fpp}}(o_n^i,o_n^j) - c_{\mathrm{fpp}}^{\cT} d_{\cT_n}(o_n^i,o_n^j)| < \epsilon n^{1/4}$, applying the triangle inequality twice yields 
\[
	|d^{\cT_n}_{\mathrm{fpp}}(x,y) - c_{\mathrm{fpp}}^{\cT} d_{\cT_n}(o_n^i, o_n^j)| \le 3 \epsilon n^{1/4}.
\]
Hence
\[
	|d^{\cT_n}_{\mathrm{fpp}}(x,y) - c_{\mathrm{fpp}}^{\cT} d_{\cT_n}(x, y)| \le 5 \epsilon n^{1/4}.
\]
For large enough $n$, the  events under consideration in~\eqref{eq:tik},~\eqref{eq:trick}, and~\eqref{eq:track} hold jointly with probability at least $1 - 2 \delta$. Since $\delta$ was arbitrary, the proof is complete.
\end{proof}

We may also adapt~\cite[Thm. 2]{zbMATH07144469} to the type III case:

\begin{proposition}
	\label{pro:fppontinfinity}
	Let $\epsilon>0$ be given. Then
	\[
			\lim_{r \to \infty} \Prb{ \sup_{x,y \in\V(B_r(\cT_{\infty}))} \left| d_{\mathrm{fpp}}^{\cT_\infty}(x,y) - c_{\mathrm{fpp}}^{\cT} d_{\cT_\infty}(x,y)  \right| > \epsilon r} = 0.  
	\]
	Letting $B_r^{\mathrm{fpp}}$ denote the ball of radius $r$ with respect to $d_{\mathrm{fpp}}^{\cT_\infty}$, we have
	\[
	\lim_{r \to \infty}	\Prb{ B_{(1- \epsilon)r/  c_{\mathrm{fpp}}^{\cT}}(\cT_\infty) \subset B_r^{\mathrm{fpp}}(\cT_\infty)  \subset B_{(1- \epsilon)r/  c_{\mathrm{fpp}}^{\cT}}(\cT_\infty) } = 1.
	\]
\end{proposition}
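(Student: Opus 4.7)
The plan is to adapt the proof of Theorem~\ref{te:main0} to the UIPT, using Proposition~\ref{pro:yoyo} as the principal input. Both claims of the proposition follow from a single uniform concentration statement,
\begin{align*}
 \sup_{x,y \in B_r(\cT_\infty)} \bigl| d_{\mathrm{fpp}}^{\cT_\infty}(x,y) - c_{\mathrm{fpp}}^\cT\, d_{\cT_\infty}(x,y) \bigr| \le \epsilon r,
\end{align*}
holding with probability tending to~$1$ as $r \to \infty$ for every fixed $\epsilon>0$. The ball inclusion is then immediate: if $v \in B_r^{\mathrm{fpp}}(\cT_\infty)$ then $c_{\mathrm{fpp}}^\cT d_{\cT_\infty}(\rho,v) \le r+\epsilon r$, and conversely if $d_{\cT_\infty}(\rho,v)$ is small enough then so is $d_{\mathrm{fpp}}^{\cT_\infty}(\rho,v)$.

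The first step is to handle the case $x=\rho$. I would apply Proposition~\ref{pro:yoyo} simultaneously at the $O(1/\epsilon)$ scales $n_k := \lfloor k \epsilon r / K \rfloor$ for $1 \le k \le \lceil K/\epsilon \rceil$ via a union bound, so that on a high-probability event
\[
 \bigl|\,d_{\mathrm{fpp}}^{\cT_\infty}(\rho,v) - c_{\mathrm{fpp}}^\cT n_k\,\bigr| \le \tfrac{\epsilon r}{K} \qquad \text{for every } v \in \partial^* B_{n_k}^\bullet(\cT_\infty) \text{ and every }k.
\]
For a general $v \in B_r(\cT_\infty)$ with $d_{\cT_\infty}(\rho,v) \ge \epsilon r/K$, fix a graph geodesic $\gamma$ from $\rho$ to $v$ and let $v^\ast$ be its last crossing of $\partial^* B_{n_k}^\bullet(\cT_\infty)$ for the unique $k$ with $n_k \le d_{\cT_\infty}(\rho,v) < n_{k+1}$. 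The residual graph distance $d_{\cT_\infty}(v^\ast,v) \le \epsilon r/K$ is translated into an fpp bound by the deterministic lower bound $d_{\mathrm{fpp}} \ge \kappa\, d_{\cT_\infty}$ together with the upper bound from adapting Proposition~\ref{pro:roughtnbound} via the deviation estimate~\eqref{eq:iotabound} applied to the weight sum along this residual segment. Vertices with $d_{\cT_\infty}(\rho,v) < \epsilon r / K$ are handled directly by the same two bounds.

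To upgrade to arbitrary pairs, I would follow the covering strategy used at the end of the proof of Theorem~\ref{te:main0}. Choose a constant $K' = K'(\epsilon,\delta)$ and an $\epsilon r / K'$-net $\{v_1, \ldots, v_{K'}\} \subset B_r(\cT_\infty)$; the existence of such a net of constant cardinality with high probability is the UIPT analogue of~\eqref{eq:approxmesh}, and follows from the local convergence $\cT_n \convdis \cT_\infty$ of Proposition~\ref{pro:convvvv} combined with the finite-case bound. Then apply the first step after re-rooting $\cT_\infty$ at each $v_i$ in turn. Two applications of the triangle inequality yield $|d_{\mathrm{fpp}}(x,y) - c_{\mathrm{fpp}}^\cT d_{\cT_\infty}(x,y)| \le O(\epsilon)\, r$ for all $x,y\in B_r(\cT_\infty)$, which upon adjusting $\epsilon$ closes the proof.

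The main obstacle is the re-rooting of the UIPT in the second step, since $\cT_\infty$ is not stationary under an arbitrary root change. The cleanest way to bypass this is to establish the analogue of the proposition directly for $\cT_n$ on its ball $B_r(\cT_n)$ of fixed graph radius~$r$, by a variant of the proof of Theorem~\ref{te:main0} yielding error $\epsilon r$ (rather than $o_p(n^{1/4})$) through the exact re-rooting invariance of $\cT_n$, and then passing to the limit $n \to \infty$ with $r$ fixed via the local convergence $\cT_n \convdis \cT_\infty$.
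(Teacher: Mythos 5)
Your first step (radial concentration from $\rho$ via Proposition~\ref{pro:yoyo} at $O(1/\epsilon)$ scales) is a plausible ingredient, although it glosses over a few things: vertices hidden in finite components absorbed into the hull $B_{n_k}^\bullet(\cT_\infty)$ are not separated from $\rho$ by the top cycle $\partial^* B_{n_k}^\bullet(\cT_\infty)$, so a geodesic to such a vertex need not cross it; and you implicitly need a polynomial-in-$r$ volume bound on $B_r(\cT_\infty)$ for the union bound over residual geodesic segments. But the genuine gap is in the second step. You correctly identify that the UIPT is not invariant under re-rooting at an arbitrary vertex, but your proposed fix does not repair this. Establishing the $\epsilon r$-concentration on $B_r(\cT_n)$ uniformly for $n$ large and then "passing to the limit $n\to\infty$ with $r$ fixed via local convergence" is circular: by local convergence, the statement for $B_r(\cT_n)$ with $r$ fixed and $n\to\infty$ \emph{is} the statement for $B_r(\cT_\infty)$ — nothing is gained. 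Moreover, the only quantitative input available, Theorem~\ref{te:main0}, controls errors at scale $o_p(n^{1/4})$; for $r$ fixed and $n\to\infty$ this window is vastly larger than $\epsilon r$, so the scales do not match and no "variant of the proof of Theorem~\ref{te:main0} yielding error $\epsilon r$" can follow from the same re-rooting-invariance argument without new ideas.

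The paper avoids re-rooting entirely and avoids Proposition~\ref{pro:yoyo}. It defines $A_r^{(\delta)}$ as the deterministic set of cylinder triangulations for which the (link-weight) probability of $\epsilon r$-concentration on the inner $r$-ball is at least $1-\delta$, and shows $\Prb{B_{Kr}^\bullet(\cT_\infty^{(0)}) \in A_r^{(\delta)}}\ge 1-\delta$. The crucial move is to compare the laws of $B_{Kr}^\bullet(\cT_\infty^{(0)})$ and $B_{Kr}^\bullet(\overline{\cT}_n)$ for the specific choice $n = 2\alpha r^4$, so that $r \asymp n^{1/4}$ and Theorem~\ref{te:main0} lands at exactly the right scale. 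A hull-volume estimate from~\cite{zbMATH06701703} restricts attention to triangulations with $N(t)\asymp r^4$ and $|\partial^*t|\lesssim r^2$, on which the reverse of Lemma~\ref{le:finbod9} (derived from Lemma~\ref{le:eet}) gives $\Prb{B_{Kr}^\bullet(\overline{\cT}_{2\alpha r^4}) = t}\ge c'\Prb{B_{Kr}^\bullet(\cT_\infty^{(0)})=t}$. This absolute-continuity device is what transfers Theorem~\ref{te:main0} to the UIPT, and it is the idea missing from your proposal.
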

\begin{proof}
	Clearly the first part of the statement implies the second. 
	
	Recall that in our notation $\cT_\infty= \cT_\infty^{(0)}$, so that we may use notation introduced for triangulations of polygons.
	 Given $0< \delta <1$ we let $A_r^{(\delta)}$ denote the collection of rooted planar maps $M$ satisfying the Inequality
	\begin{multline*}
		\Pr{ |d_{\mathrm{fpp}}^M(x,y) - c_{\mathrm{fpp}}^\cT d_M(x,y)| \le \epsilon r \text{ for all $x,y \in \V(M)$ with $d_M(\rho,x), d_M(\rho,y)\le r$}} \\ \ge 1- \delta.
	\end{multline*}
	Here $\rho$ denotes the root vertex of $M$.
	
	In order to prove the first part of the statement, it suffices to show that for each integer $K \ge 1$ we have
	\[
		\Prb{ B_{K r}^\bullet(\cT_\infty^{(0)}) \in A_r^{(\delta)} } \ge 1 - \delta
	\]
	for all sufficiently large integers $r$. Reason being that, since $\iota\ge \kappa>0$, if $K$ is sufficiently large, then first-passage percolation distance and graph distance between  any points $x$ and $y$ from $B_r(\cT_\infty^{(0)})$ is determined by the graph $B^\bullet_{Kr}(\cT_\infty^{(0)})$.
	
	Hence, let $K \ge 1$ be a given integer. Recall that $N(t) + 1$ denotes the number of vertices of  a triangulation $t \in \ndC_{0,r}$. It follows from~\cite[Thm. 2]{zbMATH06701703} that there exist integers $\alpha, \beta \ge 2$ such that
	\[
		D_r := \{\ndC_{0, Kr} \mid N(t) \ge \alpha r^4 \text{ or } N(t) \le \alpha^{-1} r^4 \text{ or } |\partial^* t| \ge \beta r^2 \}
	\]
	satisfies
	\[
		\Prb{B_{Kr}^\bullet(\cT_{\infty}^{(0)}) \in D_r} \le \delta / 2.
	\]
	To be precise,~\cite[Thm. 2]{zbMATH06701703} treats the type II UIPT. However,~\cite[Sec. 6]{zbMATH06701703} explains how the proof may easily be adapted to related models.
	
	Let $t \in \ndC_{0, Kr} \setminus D_r$. Using~\eqref{eq:barab1},~\eqref{eq:bruhatord} and Lemma~\ref{le:eet}, it follows that there are constants $c,c'>0$ that do not depend on $t$ such that
	\begin{align*}
		\Prb{ B_{Kr}^\bullet(\overline{\cT}_{2 \alpha r^4} ) = t} &\ge c C(|\partial^*t|) (256/27)^{-N(t)} \\
		&\ge c' \Prb{B_{Kr}^\bullet(\cT_{\infty}^{(0)}) = t}. 
	\end{align*}

	Hence, 
	\begin{align*}
		&\Prb{ B_{K r}^\bullet(\cT_\infty^{(0)}) \notin A_r^{(\delta)} } \\
		&\qquad\le \Prb{ B_{K r}^\bullet(\cT_\infty^{(0)}) \in D_r } + \sum_{t \in (A_r^{(\delta)})^c \cap (\ndC_{1, Kr} \setminus D_r)}  	\Prb{ B_{Kr}^\bullet( \cT_\infty^{(0)}) = t} \\
		&\qquad\le \delta/2 + (c')^{-1} \sum_{t \in (A_r^{(\delta)})^c \cap (\ndC_{1, Kr} \setminus D_r)}  	\Prb{ B_{Kr}^\bullet(\overline{\cT}_{2 \alpha r^4} ) = t} \\
		&\qquad\le \delta/2 + (c')^{-1}\Prb{ B_{Kr}^\bullet(\overline{\cT}_{2 \alpha r^4} ) \notin A_r^{(\delta)}}.
	\end{align*}
	By Theorem~\ref{te:main0} entails that $\Prb{ B_{Kr}^\bullet(\overline{\cT}_{2 \alpha r^4} ) \notin A_r^{(\delta)}} \to 0$ as $r \to \infty$. Hence
	\[
		\Prb{ B_{K r}^\bullet(\cT_\infty^{(0)}) \notin A_r^{(\delta)} } < \delta
	\]
	for all sufficiently large integers $r$.
\end{proof}

\section{First-passage percolation on  dual maps}

As before we let $\iota$ denote a random variable with  finite exponential moments such that there exists a constant $\kappa>0$ such that $\Pr{\iota \ge \kappa} = 1$. Without loss of generality we may assume that $\Ex{\kappa}  = 1/2$. This is done purely for notational convenience.

\subsection{Degree bounds}

For any triangulation $t$ we let $d(t)$ denote the degree of the root vertex of $t$, that is, of the origin of the root-edge.

\begin{proposition}[{\cite[Lem. 4.1]{MR2013797}}]
	\label{prop:rootdegsyay}
	For each $\epsilon>0$ there exists a constant $C$ such that
	\begin{align}
		\label{eq:boundtnkdeg}
		\Pr{d(\cT_n) = k} \le C (3/4 + \epsilon)^k
	\end{align} 
for all $n \ge 2$ and $k \ge 0$. Furthermore, for $k$ and $n$ large enough,
\begin{align}
	\label{eq:bound304593}
	\frac{\Pr{d(\cT_n) = k+1}}{\Pr{d(\cT_n) = k}} \le 3/4 + \epsilon.
\end{align}
\end{proposition}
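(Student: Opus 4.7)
The plan is to reduce both inequalities to explicit estimates on the counts $\#\ndT_{n,p}$. First, a simple triangulation of size $n+1$ with root-vertex of degree $k$ corresponds bijectively, via deletion of the root vertex and canonical re-rooting of the residual $k$-gon, to a simple triangulation of the $k$-gon with $n-k$ inner vertices; the $k$ neighbours of the root are distinct since the map is simple, and no new multi-edges appear upon deletion. Since simple triangulations are $3$-connected, every vertex has degree at least $3$, so $\Pr{d(\cT_n) = k} = 0$ for $k \le 2$, while for $3 \le k \le n$,
\[
\Pr{d(\cT_n) = k} = \frac{\#\ndT_{n-k,k}}{\#\ndT_{n-2,3}}.
\]

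For \eqref{eq:boundtnkdeg} I would apply the upper bound of Lemma~\ref{le:eet} to the numerator and the matching lower bound to the denominator (valid for $p = 3$ and all $n \ge 1$, since $3 \le \alpha \sqrt n$ for any fixed $\alpha \ge 3$), together with the asymptotic $C(k) \sim c_1 \sqrt{k}(64/9)^k$ from \eqref{eq:cpasymptotoo} and the key arithmetic identity $(64/9)\cdot(27/256) = 3/4$, to arrive at
\[
\Pr{d(\cT_n) = k} \le c_2 \sqrt{k}\,(3/4)^k\,\bigl(n/(n-k)\bigr)^{5/2}.
\]
For $k \le n/2$ the polynomial correction is bounded by $2^{5/2}$, and $\sqrt{k}(3/4)^k \le C(\epsilon)(3/4+\epsilon)^k$ closes that case; for $n/2 < k < n$ the correction is crudely bounded by $(2k)^{5/2}$, so a polynomial in $k$ times $(3/4)^k$ is again dominated by $(3/4+\epsilon)^k$. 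The boundary case $k = n$ is peeled off using the closed form $\#\ndT_{0,n} = 2(2n-5)!/((n-3)!(n-1)!) \sim c_3 \, 4^n/n^{5/2}$, which gives $\Pr{d(\cT_n) = n} = \Theta((27/64)^n)$, and $27/64 < 3/4$ absorbs it into the bound.

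For \eqref{eq:bound304593} the explicit formula \eqref{eq:tnp} allows one to simplify the ratio $\#\ndT_{n-k-1,k+1}/\#\ndT_{n-k,k}$ directly to
\[
\frac{2(2k-1)(k-1)}{(k-2)k} \cdot \frac{(n-k)\bigl(3(n-k)+2k-3\bigr)}{\bigl(4(n-k)+2k-6\bigr)\bigl(4(n-k)+2k-5\bigr)}.
\]
The first factor tends to $4$ as $k \to \infty$; writing the second factor in the variable $t = k/(n-k)$, it converges to $(3+2t)/(4+2t)^2$, which is monotonically decreasing on $t \ge 0$ with maximum value $3/16$ at $t = 0$; and for bounded $n-k$ the second factor itself vanishes as $k \to \infty$. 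Hence the product is at most $3/4 + \epsilon$ once $k$ and $n$ are sufficiently large. The principal technical obstacle is the regime $k$ close to $n$ in \eqref{eq:boundtnkdeg}, where the polynomial correction $(n/(n-k))^{5/2}$ is no longer uniformly bounded; the resolution is that the exponential factor $(3/4)^k$ with $k \ge n/2$ absorbs any such polynomial, and the very last case $k = n$ is treated separately via the explicit count written above.
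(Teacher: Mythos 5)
Your proof is correct in essence and takes a genuinely different route: the paper offers no proof of this proposition, merely citing \cite[Lem.~4.1]{MR2013797} and remarking that the two bounds come, respectively, from the statement and the proof of that lemma. You instead derive both bounds self-containedly from the paper's explicit count~\eqref{eq:tnp}, the asymptotic~\eqref{eq:cpasymptotoo}, and the two-sided estimate of Lemma~\ref{le:eet}, via the root-vertex deletion bijection the paper itself invokes to relate $\cT_n$ conditioned on root degree $k$ to a uniform element of $\ndT_{n-k,k}$. This exposes the base $3/4$ as the ratio $(256/27)^{-1}\cdot(64/9)$ of the two exponential growth rates, which the black-box citation leaves opaque, and it keeps the argument entirely inside the machinery already set up in Section~4.

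A few small repairs are needed. The claim $\Pr{d(\cT_n)=k}=0$ for $k\le 2$ fails at $n=2$: $\cT_2$ is the bare triangle, which is simple but not $3$-connected and has every vertex of degree exactly $2$, so this case must be peeled off (the bound then holds by taking $C$ large, since your identity $\Pr{d(\cT_n)=k}=\#\ndT_{n-k,k}/\#\ndT_{n-2,3}$ requires $k\ge 3$ and hence $n\ge 3$). Your asymptotic for $\#\ndT_{0,n}$ should read $\sim c_3\,4^n/n^{3/2}$ rather than $n^{5/2}$: with $m=n-3$ one has $\#\ndT_{0,n}=2\,\tfrac{2m+1}{(m+1)(m+2)}\binom{2m}{m}$ and the rational prefactor is $\Theta(1/m)$; this yields $\Pr{d(\cT_n)=n}=\Theta(n(27/64)^n)$, still absorbed by $(3/4+\epsilon)^n$. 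Finally, the pointwise language (``tends to $4$'', ``converges to $(3+2t)/(4+2t)^2$'') in the argument for \eqref{eq:bound304593} should be upgraded to a bound uniform over $m=n-k\ge 0$; for instance, $\frac{2(2k-1)(k-1)}{(k-2)k}=4+\frac{2(k+1)}{k(k-2)}$ and
\[
\frac{m(3m+2k-3)}{(4m+2k-6)(4m+2k-5)}\le\frac{m(3m+2k)}{(4m+2k-6)^2}\le\frac{3}{16}\Bigl(1+\frac{6}{2k-6}\Bigr)^2,
\]
the last step using that $m\mapsto\frac{m(3m+2k)}{(4m+2k)^2}$ is increasing with supremum $3/16$; multiplying the two gives $\frac{3}{4}(1+o(1))$ uniformly in $m\ge 0$ as $k\to\infty$, which is exactly what the statement requires.
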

To be precise, the first bound is in the statement of \cite[Lem. 4.1]{MR2013797}, the second is verified in its proof. From this we may deduce a similar bound for triangulations of the $p$-gon:

\begin{corollary}
	\label{co:degbound345}
	For each $0<\epsilon<1/4$ there exists a constant $C>0$ such that
	\begin{align}
		\Pr{d(\cT_n^{(p)}) = k} \le C (3/4 + \epsilon)^k
	\end{align} 
	for all integers $p \ge 3$,  $n \ge 0$, $k \ge 2$.
\end{corollary}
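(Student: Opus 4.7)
My plan is to proceed by direct combinatorial enumeration. Writing $\Pr{d(\cT_n^{(p)}) = k} = \#\ndT_{n,p}^{(k)} / \#\ndT_{n,p}$, where $\ndT_{n,p}^{(k)} \subset \ndT_{n,p}$ consists of those triangulations whose root vertex has degree $k$, I would first establish the combinatorial upper bound
\[
\#\ndT_{n,p}^{(k)} \le C' \#\ndT_{n-k+2,\, p+k-3}
\]
by removing the root $r$ together with its $k$ incident edges. In the generic case the $k-1$ triangular faces at $r$ form a fan with outer boundary $w_0, w_1, \ldots, w_{k-1}$, where $w_0, w_{k-1}$ are $r$'s boundary neighbours and $w_1, \ldots, w_{k-2}$ are interior neighbours; excising this fan yields a simple triangulation of the resulting $(p+k-3)$-gon with $n-k+2$ inner vertices.

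Given this bound, the next step is to estimate the ratio $\#\ndT_{n-k+2,p+k-3} / \#\ndT_{n,p}$. Using the exact enumeration formula~\eqref{eq:tnp}, Lemma~\ref{le:eet}, and the large-$p$ asymptotic for $C(p)$ from~\eqref{eq:cpasymptotoo}, a direct calculation that crucially exploits the identity $(64/9)\cdot(27/256) = 3/4$ produces
\[
\frac{\#\ndT_{n-k+2,\, p+k-3}}{\#\ndT_{n,p}} \le C \,(3/4)^k \sqrt{(p+k)/p}.
\]
Since $p \ge 3$, one has $\sqrt{(p+k)/p} \le \sqrt{1+k/3}$, and $\sqrt{1+k/3}\,(3/4)^k = o((3/4+\epsilon)^k)$ for every $\epsilon > 0$. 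Combining the two steps then yields $\Pr{d(\cT_n^{(p)}) = k} \le C(3/4+\epsilon)^k$ as required, completing the proof via this enumeration route rather than by directly invoking the ratio bound of Proposition~\ref{prop:rootdegsyay}.

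The main obstacle is justifying the combinatorial bound $\#\ndT_{n,p}^{(k)} \le C' \#\ndT_{n-k+2,p+k-3}$ rigorously. Nothing in the simplicity constraint prevents an interior neighbour $w_i$ of $r$ from coinciding with one of the boundary vertices $x_j$ of the $p$-gon (this creates neither a loop nor a multi-edge), and in this case removing $r$ produces a region pinched at the common vertex rather than a clean $(p+k-3)$-gon, so the naive reduction to $\ndT_{n-k+2,p+k-3}$ breaks down. I would handle such configurations by a finer decomposition that enumerates the pinch positions and splits the pinched region into a sequence of simple polygonal pieces, absorbing the combinatorial overhead into the constant $C'$. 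A secondary but milder issue is ensuring Lemma~\ref{le:eet}'s bounds apply uniformly in $n$ and $p$: the lower bound there only covers the regime $p \le \alpha \sqrt{n}$, and in the complementary regime the explicit formula~\eqref{eq:tnp} combined with Stirling-type estimates must be used to verify the ratio asymptotic directly.
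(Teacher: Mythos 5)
Your approach is genuinely different from the paper's, and the gap you identify is more serious than the proposal suggests. The paper does not enumerate $\ndT_{n,p}^{(k)}$ at all: it exploits the bijective correspondence (described at the beginning of Section~4) between triangulations of the $p$-gon and triangulations of the sphere conditioned on root degree, yielding
\[
\Pr{d(\cT_n^{(p)}) = k} = \frac{\Pr{d(\cT_n) = k+p-3}}{\Pr{d(\cT_n) \ge p-1}},
\]
after which Proposition~\ref{prop:rootdegsyay} (both the absolute bound~\eqref{eq:boundtnkdeg} and the ratio bound~\eqref{eq:bound304593}) does all the remaining work. This sidesteps precisely the difficulty you flag, because in the boundaryless $\cT_n$ the root vertex has no bottom cycle to be ``pinched'' against.

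Your direct enumeration route stalls at the step you yourself call the main obstacle, and I do not think the proposed remedy can be made to work in the form stated. The root vertex of a simple triangulation of the $p$-gon can have chords to any subset of the non-adjacent boundary vertices $x_2,\ldots,x_{p-2}$, with no a priori bound on their number; each chord pinches the excised region into another polygon. Decomposing a configuration with $m$ pinches gives $m+1$ polygonal pieces, and the product of their counts, summed over the pinch positions and over all ways of distributing the inner vertices among the pieces, is not visibly dominated by a \emph{constant} multiple of $\#\ndT_{n-k+2,\,p+k-3}$ uniformly in $n$, $p$, $k$. Indeed, plugging the asymptotic $C(q) \sim \mathrm{const}\cdot (64/9)^{q}\sqrt{q}$ from~\eqref{eq:cpasymptotoo} into even a single-pinch term shows that the size of the correction depends on $p$ and $k$ in a way you would need to control carefully, not simply ``absorb into $C'$.'' Either carry out that summation in full (which amounts to reproving the enumerative asymptotics of $\ndT_{n,p}$ with a marked root degree), or adopt the paper's reduction to $\cT_n$, which is shorter and uses only already-established ingredients.
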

\begin{proof}	
We may order the edges incident to the root edge of $\cT_n$ in clock-wise order, starting from the root-edge. The result of deleting the first $p-3$ edges after the root-edge (but not the root-edge itself)  in the conditioned map $(\cT_n \mid d(\cT_n) \ge p-1)$ is distributed like a uniform simple triangulation of the $p$-gon with $n+1$ vertices. Thus, the probability for the root vertex of the $p$-gon to have degree $k$ is equal to
\[
q_{k,p,n} := \frac{\Pr{d(\cT_n) = k+p-3}}{\Pr{d(\cT_n) \ge p-1}}.
\]
Using~\eqref{eq:bound304593}, it follows that there exist $p_0$ and $k_0$ such that for all $p \ge p_0$ and $k \ge k_0$ we have
\begin{align*}
	q_{k,p,n} &\le \frac{\Pr{d(\cT_n) = k+p-3}}{\Pr{d(\cT_n) = p-1}} \\
	&\le (3/4 + \epsilon)^{k+2}.
\end{align*}
Furthermore, for all $3 \le p \le p_0$ it follows by Inequality~\eqref{eq:boundtnkdeg} and the local convergence $\cT_n \convdis \cT_\infty$ from Proposition~\ref{pro:convvvv} that
\begin{align*}
	q_{k,p,n} &\le C \frac{(3/4 + \epsilon)^{k+p-3}}{\Pr{d(\cT_n) \ge p_0}} \\
	&\le C(1 + o(1)) \frac{(3/4 + \epsilon)^{k-3}}{\Pr{d(\cT_\infty) \ge p_0}}.
\end{align*}
This completes the proof.
\end{proof}

\begin{corollary}
	Let $\cT^{(p)}$ denote the Boltzmann triangulation of the $p$-gon for $p \ge 3$. For each $0<\epsilon<1/4$ there exists a constant $C>0$ that does not depend on~$p$ such that
	\[
		\Pr{d(\cT^{(p)}) \ge k} \le C (\epsilon + 3/4)^k
	\]
	holds for all $k \ge 0$.
\end{corollary}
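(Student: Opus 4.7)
The plan is to condition on the number of inner vertices of $\cT^{(p)}$ and then invoke the uniform bound from Corollary~\ref{co:degbound345}. By Definition~\ref{def:boltzmann}, the Boltzmann triangulation $\cT^{(p)}$ can be sampled in two steps: first, the number $N$ of inner vertices is chosen with $\Pr{N = n} = (27/256)^n \#\ndT_{n,p} / Z(p)$; then, conditionally on $N = n$, the triangulation is uniformly distributed over $\ndT_{n,p}$ and hence has the same law as $\cT_n^{(p)}$. In particular, for every $j \ge 0$,
\[
\Pr{d(\cT^{(p)}) = j} = \sum_{n \ge 0} \frac{(27/256)^n \#\ndT_{n,p}}{Z(p)} \Pr{d(\cT_n^{(p)}) = j}.
\]

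The key point is that the bound in Corollary~\ref{co:degbound345} is uniform in both $n$ and $p$. For $j \ge 2$ this yields $\Pr{d(\cT_n^{(p)}) = j} \le C'(3/4 + \epsilon)^j$ for some constant $C'$ independent of $n$ and $p$, and substituting into the above identity (and using that the mixing weights sum to $1$) gives
\[
\Pr{d(\cT^{(p)}) = j} \le C'(3/4 + \epsilon)^j
\]
for all $j \ge 2$ and all $p \ge 3$.

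Summing the geometric tail for $k \ge 2$,
\[
\Pr{d(\cT^{(p)}) \ge k} \le \sum_{j \ge k} C'(3/4 + \epsilon)^j = \frac{C'}{1 - (3/4 + \epsilon)} (3/4 + \epsilon)^k,
\]
where the denominator is positive since $\epsilon < 1/4$. For the remaining values $k \in \{0,1\}$ we simply use the trivial bound $\Pr{d(\cT^{(p)}) \ge k} \le 1$, which is at most $C(3/4 + \epsilon)^k$ provided $C \ge (3/4 + \epsilon)^{-1}$. Enlarging $C$ if necessary to dominate both cases, the claimed inequality holds for all $k \ge 0$ and all $p \ge 3$.

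I do not foresee any real obstacle: the whole argument is a standard mixture-to-tail computation, and the only nontrivial ingredient — the uniform-in-$(n,p)$ pointwise geometric bound for $\Pr{d(\cT_n^{(p)}) = k}$ — has already been established in Corollary~\ref{co:degbound345}. The slight inconvenience is that Corollary~\ref{co:degbound345} is stated only for $k \ge 2$, but this is immaterial since the tail bound at $k = 0, 1$ is absorbed into the constant $C$.
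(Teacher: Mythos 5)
Your proof is correct and takes essentially the same route as the paper, which disposes of the statement in one line by observing that $\cT^{(p)}$ is a mixture of the $\cT_n^{(p)}$ and then invoking the uniform-in-$(n,p)$ bound of Corollary~\ref{co:degbound345}; you simply spell out the standard mixture-then-geometric-tail computation that the paper leaves implicit.
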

\begin{proof}
	The Boltzmann triangulation $\cT^{(p)}$ is a mixture of $(\cT_n^{(p)})_{n \ge 0}$. Hence the statement follows immediately from Corollary~\ref{co:degbound345}.
\end{proof}

We may also bound the maximum degree:
\begin{proposition}
	\label{pro:maxdeg}
	For each $0< \epsilon<1/4$ there exists a constant $C>0$ such that the maximum degree $\mathrm{MD}(\cT_n)$ of $\cT_n$ satisfies
	\[
		\Pr{ \mathrm{MD}(\cT_n) \ge k} \le C n (3/4 + \epsilon)^k.
	\]
\end{proposition}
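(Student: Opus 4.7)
The plan is to reduce the tail bound for the maximum degree to the already-established tail bound for the root degree from Proposition~\ref{prop:rootdegsyay} via a union bound combined with re-rooting invariance. The starting observation is that $\cT_n$ is uniformly distributed over the finite set of rooted simple triangulations with $n+1$ vertices, hence $3(n-1)$ edges and $6(n-1)$ corners, and its law is invariant under uniformly re-rooting at a new corner.

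First I would derive a size-biasing identity for the root degree. Since for any fixed unrooted triangulation $t$ the root-vertex coincides with a given vertex $v$ with probability $d_t(v)/(6(n-1))$, applying this to the functional $\one_{d_t(v) = j}$ yields for every $j \ge 1$
\[
\Pr{d(\cT_n) = j} = \frac{j}{6(n-1)} \Exb{\#\{v \in \V(\cT_n) : d_{\cT_n}(v) = j\}}.
\]
Summing this identity over $j \ge k$ gives
\[
\Exb{\#\{v \in \V(\cT_n) : d_{\cT_n}(v) \ge k\}} = \sum_{j \ge k} \frac{6(n-1)}{j}\Pr{d(\cT_n) = j} \le \frac{6(n-1)}{k}\,\Pr{d(\cT_n) \ge k}.
\]

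A Markov-type union bound combined with Proposition~\ref{prop:rootdegsyay} then yields
\[
\Pr{\mathrm{MD}(\cT_n) \ge k} \le \Exb{\#\{v \in \V(\cT_n) : d_{\cT_n}(v) \ge k\}} \le \frac{6(n-1)\tilde{C}}{k}(3/4 + \epsilon)^k \le C\,n\,(3/4 + \epsilon)^k
\]
for a suitable constant $C$ depending on $\epsilon$. There is no real obstacle in this argument: the only point requiring care is the size-biasing identity, which merely encodes the corner-based uniformity of the model. The $1/k$ factor gained along the way could be kept to sharpen the bound, but the statement only asks for the cruder form $C\,n\,(3/4 + \epsilon)^k$.
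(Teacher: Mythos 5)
Your proposal is correct and follows essentially the same route as the paper: both exploit re-rooting invariance of $\cT_n$ to bound $\Pr{\mathrm{MD}(\cT_n) \ge k}$ by $6(n-1)\Pr{d(\cT_n) \ge k}$ (the paper via a direct union bound over oriented edges, you via an explicit size-biasing identity plus a first-moment bound, which gains an extra harmless factor of $1/k$), and then conclude with the root-degree tail bound from Proposition~\ref{prop:rootdegsyay}.
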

\begin{proof}
	The triangulation $\cT_n$ is invariant under re-rooting at a uniformly selected and oriented root-edge. Hence the degree of the root of $\cT_n$ is distributed like the degree of a uniformly selected end of a uniformly selected edge. Since $\cT_n$ has $n+1$ vertices it has $3(n-1)$ edges. Using Inequality~\eqref{eq:boundtnkdeg}, it follows that
	\[
		\Pr{\mathrm{MD}(\cT_n) \ge k} \le 6(n-1) \Pr{d(\cT_n) \ge k}
	\]
	The statement now follows immediately from Inequality~\eqref{eq:boundtnkdeg}. 
\end{proof}

\subsection{Vertex degrees along paths}

Recall that $\F(\cT_{\infty}^{(0)})$ denotes the collection of faces of $\cT_{\infty}^{(0)}$.  For each integer $r \ge 1$ we let $\F_r(\cT_{\infty}^{(0)})$ denote the collection of downward triangles at height $r$ in $\cT_{\infty}^{(0)}$. For any face $f_0 \in \F_r(\cT_\infty^{(0)})$ we construct a \emph{downward path}, which is a specific dual path $\omega$ starting at $f_0$ and ending at the face to the right of the root-edge. Its construction is as follows. Let $v_0$ be the unique boundary vertex of $f_0$ that lies on $\F_{r-1} (\cT_\infty^{(0)})$. Since $\cT_{\infty}^{(0)}$ is a simple triangulation, we may order the faces incident to $v_0$ in a counter-clockwise way. If $r=1$ then $v_0$ is the root vertex of $\cT_\infty^{(0)}$ and the  downward path proceeds from $f_0$ in counter-clockwise order along these faces until it reaches the face that lies to the right of the oriented rooted. If $r \ge 2$, then the downward path proceeds from $f_0$ in counter-clockwise order along these faces until it reaches a downward triangle $f' \in \F_{r-1}(\cT_\infty^{(0)})$. From there the construction continues inductively. 

Note that the downward path has an initial segment $f_0, f_1, \ldots, f_N$ of downward triangles. The corresponding edges $e_0, \ldots, e_N$   on $\partial^*(B_r^\bullet(\cT_\infty^{(0)}))$ form a counter-clockwise path on the top cycle of $B_r^\bullet(\cT_\infty^{(0)})$, with $e_N$ being the first one with offspring in the skeleton of $\cT_\infty^{(0)}$. Then the downward path  ``crosses'' the slot of boundary size $c_{e_N}+2$, and arrives at $f'$. The time needed for this crossing is exactly equal to the degree of the root of the triangulation filling the hole. The degree bound for simple Boltzmann triangulations from Corollary~\ref{co:degbound345} hence provides an exponential bound for this time that does not depend on the perimeter of the slot.

We define downward paths in the same way for the type III LHPT $\cL$. For all integers $i \in \ndZ$ and $r \ge 0$ we let $f_{(i,r)}$ denote the unique downward triangle of $\cL$ that is incident to the edge between $(i-1,-r)$ and $(i, -r)$.
 The next lemma is analogous to~\cite[Lem. 24]{zbMATH07144469}.

\begin{lemma}
	\label{le:omegar}
	Let $\omega_r$ denote the downward path in $\cL$ connecting $f_{(0,0)}$ to a downward triangle incident to $\cL_r$. Let $|\omega_r|$ denote the length of $\omega_r$. There exist two constants $\mu>0$ and $K >1$ such that for all integers $r \ge 1$\[
		\Exb{\exp(\mu |\omega_r|)} \le K^r.
	\]
\end{lemma}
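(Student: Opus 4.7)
The plan is to decompose $|\omega_r|$ as a telescoping sum of $r$ per-level contributions, bound each contribution's exponential moment uniformly, and combine via the tower property. Write $|\omega_r| = \sum_{\ell=1}^{r} L_\ell$, where $L_\ell$ counts the dual edges traversed by $\omega_r$ during the $\ell$-th level transition, going from a downward triangle whose top edge lies on $\cL_{\ell-1}$ to one whose top edge lies on $\cL_{\ell}$. By the structural description in the paragraph preceding the lemma, each level segment consists of an initial sequence of $N_\ell + 1$ consecutive downward triangles sharing a common vertex $v_{\ell-1}$ at height $-(\ell-1)$, where $N_\ell$ is the index of the first edge on $\cL_{\ell-1}$ (counter-clockwise from the entry) whose corresponding skeleton tree has at least one offspring; followed by a single slot-crossing whose length equals the root degree $D_\ell$ of the simple Boltzmann triangulation filling the slot of perimeter $c_{e_{N_\ell}} + 2 \ge 3$. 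Hence $L_\ell \le N_\ell + D_\ell + 1$.

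To establish a uniform bound $\Ex{\exp(\mu L_\ell) \mid \cF_{\ell-1}} \le K$, where $\cF_{\ell-1}$ is the $\sigma$-algebra generated by the part of $\cL$ lying in the strip $\ndR \times [-(\ell-1), 0]$: by the Markov property of the $\theta$-Bienaym\'{e}--Galton--Watson skeleton, conditional on the past the subtrees rooted at vertices at level $-(\ell-1)$ are fresh independent $\theta$-BGW trees, so the offspring counts $(c_{e_i})_{i \ge 0}$ along $\cL_{\ell-1}$ are i.i.d.\ with law $\theta$. Since $g_\theta(0) = 1 - (1+1)^{-2} = 3/4$, we have $\Pr{c_e = 0} = 3/4$, so $N_\ell$ is stochastically dominated by a geometric random variable with success probability $1/4$, giving $\Ex{\exp(\mu N_\ell) \mid \cF_{\ell-1}} \le K_1$ for $\mu > 0$ sufficiently small. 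For $D_\ell$: given $c_{e_{N_\ell}}$, the slot is filled with a simple Boltzmann triangulation of perimeter $c_{e_{N_\ell}} + 2$, whose root degree has exponential moments uniformly in the perimeter by the corollary on Boltzmann root degrees proved above; thus $\Ex{\exp(\mu D_\ell) \mid \cF_{\ell-1}, N_\ell} \le K_2$. Combining (using that $D_\ell$ is conditionally independent of $\cF_{\ell-1}, N_\ell$ given $c_{e_{N_\ell}}$) yields the per-level bound with $K = e^\mu K_1 K_2$.

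Iterating via the tower property,
\[
\Ex{\exp(\mu |\omega_r|)} = \Ex{\prod_{\ell=1}^{r} \exp(\mu L_\ell)} \le K^r,
\]
as required.

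\textbf{Main obstacle.} The main technical point is setting up the conditional expectation argument so that the Markov structure of the LHPT skeleton cleanly yields a uniform-in-$\ell$ bound on $L_\ell$. In particular, the uniform-in-perimeter Boltzmann root-degree bound must be applied despite the (random) perimeter $c_{e_{N_\ell}} + 2$ of the encountered slot being correlated with $N_\ell$; this is handled by conditioning first on $c_{e_{N_\ell}}$ before invoking the Boltzmann estimate. Once the BGW Markov property and the conditional independence of the Boltzmann fillings are in place, the rest of the argument is a routine geometric-series iteration.
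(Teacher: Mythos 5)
Your proof is correct and follows essentially the same approach as the paper: decompose the downward path into per-layer increments, show each increment is $N_\ell + D_\ell$ with $N_\ell$ geometric (since $\Pr{c_e=0}=\theta(0)=3/4$) and $D_\ell$ having an exponential tail uniformly in the slot perimeter by the Boltzmann root-degree corollary, then multiply. The paper compresses your tower-property/Markov-property conditioning into the single remark ``since the layers of $\cL$ are i.i.d., it suffices to bound $\Exb{\exp(\mu|\omega_1|)}$''; your version just spells out that conditional-independence argument explicitly.
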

\begin{proof}
	Since the layers of $\cL$ are i.i.d., it suffices to show that there exists $\mu>0$ with
	\begin{align}
		\label{eq:gloCK}
		\Ex{\exp(\mu |\omega_1|)} < \infty.
	\end{align}
	By construction, the downward path starting at $f_{(0,0)}$ first visits  downward  triangles $f_{(0,0)}, \ldots,  f_{(-N,0)}$, with $N+1$ the first integer $i$ such that the tree $\cJ_{-i}$ has more than one vertex. Thus
	\begin{align}
		\label{eq:gloA}
		\Pr{N \ge k} = \theta(0)^k
	\end{align}
	for all integers $k \ge 0$. The slot associated to $f_{(-N,0)}$ is filled by an independent simple Boltzmann triangulation of the $(d+2)$-gon, with $d$ the number of children of the root of the tree $\cJ_{-N-1}$. Letting $\cD_{d+2}$ denote the degree of the root of this Boltzmann triangulation, it follows that
	\begin{align}
		\label{eq:gloB}
		|\omega_1| = N + \cD_{d+2}.
	\end{align}
	Corollary~\ref{co:degbound345} entails that there exists a constant $C>0$ that does not depend on $d$ such that
	\begin{align}
		\label{eq:C}
		\Pr{\cD_{d+2} \ge k} \le C (4/5)^k
	\end{align}
	for all integers $k \ge 0$. Combining Equation~\eqref{eq:gloA}, Equation~\eqref{eq:gloB}, and Inequality~\eqref{eq:C}, it follows that there exists $\mu>0$ small enough such that~\eqref{eq:gloCK} holds.
\end{proof}

We let $d_{\mathrm{fpp}}^\dagger$ denote the $\iota$-first-passage percolation distance on the dual. We let $\cL_r^\dagger$ denotes the collection of downward triangles incident to edges of $\cL_r$.

Since $\iota$ as finite exponential moments, we may take $\mu>0$ small enough so that the same statement of Lemma~\ref{le:omegar} holds for the sum of weights along the downward path.  Using  $\Pr{\iota>\kappa} = 1$, we obtain the next observation by fully analogous arguments as for Proposition~\ref{pro:kingman}.

\begin{proposition}
	\label{pro:king345}
	There exists a constant $c_{\mathrm{fpp}}^\dagger \ge \kappa$ such that
	\[
	r^{-1} d_{\mathrm{fpp}}^\dagger(f_{(0,0)}, \cL_r^\dagger) \convas c_{\mathrm{fpp}}^\cT
	\]
	as $r \to \infty$.
\end{proposition}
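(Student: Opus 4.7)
The plan is to mirror the proof of Proposition~\ref{pro:kingman} essentially verbatim, replacing the primal first-passage percolation by its dual counterpart and invoking Lemma~\ref{le:omegar} in place of the elementary random-walk deviation bound used in the primal argument to secure the finite upper bound for the normalised distance.

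For integers $0 \le m < n$ let $\cL_{[m,n]}$ denote the submap of $\cL$ lying in the strip $\ndR \times [-n,-m]$, and let $d_{\mathrm{fpp}}^{\cL_{[m,n]}^\dagger}$ denote the first-passage percolation distance on the dual when only paths crossing primal edges inside this strip are allowed. For $m \ge 1$, let $x_m^\dagger$ be the left-most face of $\cL_m^\dagger$ minimising $d_{\mathrm{fpp}}^\dagger(f_{(0,0)}, \cdot)$; its existence follows from $\iota \ge \kappa > 0$ and the local finiteness of $\cL^\dagger$ (all faces of $\cL$ have degree $3$, so the dual is a cubic planar graph). Exactly as in Proposition~\ref{pro:kingman}, $x_m^\dagger$ is measurable with respect to $\cL_{[0,m]}$ (since $\iota > 0$ makes any minimising dual path that leaves $\cL_{[0,m]}$ strictly more costly than the corresponding path truncated at its first crossing of $\cL_m^\dagger$), together with the subadditive relation
\[
d_{\mathrm{fpp}}^\dagger(f_{(0,0)}, \cL_{m+n}^\dagger) \le d_{\mathrm{fpp}}^\dagger(f_{(0,0)}, x_m^\dagger) + d_{\mathrm{fpp}}^{\cL_{[m,m+n]}^\dagger}(x_m^\dagger, \cL_{m+n}^\dagger).
\]
The i.i.d.\ layered structure of $\cL$ combined with horizontal translation invariance implies that the second summand is independent of $\cL_{[0,m]}$ and distributed like $d_{\mathrm{fpp}}^\dagger(f_{(0,0)}, \cL_n^\dagger)$. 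Setting $X_{m,n} := d_{\mathrm{fpp}}^{\cL_{[m,n]}^\dagger}(x_m^\dagger, \cL_n^\dagger)$ with $x_0^\dagger := f_{(0,0)}$ gives a triangular array satisfying the hypotheses of Liggett's version~\cite{zbMATH03927992} of Kingman's subadditive ergodic theorem, and Kolmogorov's zero-one law forces the resulting almost sure limit to be a deterministic constant $c_{\mathrm{fpp}}^\dagger$. The lower bound $c_{\mathrm{fpp}}^\dagger \ge \kappa$ follows from $\iota \ge \kappa$ combined with the fact that the dual graph distance from $f_{(0,0)}$ to $\cL_r^\dagger$ grows at least linearly in $r$, exactly as in the primal argument.

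The one point where a genuinely new ingredient is required, and hence the main obstacle, is ensuring $c_{\mathrm{fpp}}^\dagger < \infty$: in Proposition~\ref{pro:kingman} this was immediate from the single-edge-per-layer primal geodesic, but no such universally short dual path is available a priori. This is where Lemma~\ref{le:omegar} enters decisively. The downward path $\omega_r$ provides a dual path from $f_{(0,0)}$ to a face of $\cL_r^\dagger$ of length $|\omega_r|$, whence
\[
d_{\mathrm{fpp}}^\dagger(f_{(0,0)}, \cL_r^\dagger) \le \sum_{i=1}^{|\omega_r|} \iota_i,
\]
with $(\iota_i)_{i \ge 1}$ independent copies of $\iota$ that are independent of the skeleton of $\cL$ and hence of $|\omega_r|$. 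Wald's identity together with the normalisation $\Ex{\iota}=1/2$ yields $\Ex{d_{\mathrm{fpp}}^\dagger(f_{(0,0)}, \cL_r^\dagger)} \le \tfrac{1}{2}\Ex{|\omega_r|}$, while Jensen's inequality applied to Lemma~\ref{le:omegar} gives $\mu\,\Ex{|\omega_r|} \le \log \Ex{\exp(\mu|\omega_r|)} \le r \log K$. Thus $r^{-1}\Ex{X_{0,r}}$ is bounded, and the subadditive ergodic theorem forces $c_{\mathrm{fpp}}^\dagger \le \log(K)/(2\mu) < \infty$ almost surely, completing the argument.
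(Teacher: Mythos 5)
Your proposal is correct and follows essentially the paper's own approach: reduce to the Kingman--Liggett subadditive argument from Proposition~\ref{pro:kingman}, with $x_m^\dagger$ measurable with respect to $\cL_{[0,m]}$, and invoke Lemma~\ref{le:omegar} to secure finiteness of the limiting constant. The paper observes that, because $\iota$ has finite exponential moments, the exponential-moment bound of Lemma~\ref{le:omegar} extends from $|\omega_r|$ to the weighted sum along $\omega_r$ and then argues exactly as in Proposition~\ref{pro:kingman}; you instead bound $\Exb{X_{0,r}}$ directly via Wald and Jensen applied to Lemma~\ref{le:omegar}, which is a valid and equivalent way to reach the same conclusion rather than a substantive departure.
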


\subsection{Bounds on distances in the dual}

\label{sec:bodidu}

In the following, for each integer $r \ge 1$ we let $f_r$ denote a uniformly selected downward triangle at height $r$ in $\F_r(\cT_\infty^{(0)})$.

We prove the next lemma following closely the arguments for~\cite[Lem. 26]{zbMATH07144469}.

\begin{lemma}
	\label{le:preclema}
	There exist constants $K, \alpha, \beta >0$ such that for all integers $0 \le r < s$ we have
	\[
		\Prb{ d_{\mathrm{fpp}}^\dagger (f_s, B_r^\bullet(\cT_\infty^{(0)})) >  \alpha(s-r)  } \le K \exp(- \beta(s-r)).
	\]
	Here $ B_0^\bullet(\cT_\infty^{(0)})$ only contains the root vertex.
\end{lemma}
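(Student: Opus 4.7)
The plan is to upper-bound $d^\dagger_{\mathrm{fpp}}(f_s, B_r^\bullet(\cT_\infty^{(0)}))$ by the FPP length of a specific dual path, namely the downward path from $f_s$, thereby extending the argument of Lemma~\ref{le:omegar} from the LHPT to the UIPT $\cT_\infty^{(0)}$. Concretely, I would construct the downward path $\omega$ starting at $f_s$ exactly as described just above Lemma~\ref{le:omegar}. It traverses the $s-r$ layers of $B_s^\bullet(\cT_\infty^{(0)}) \setminus B_r^\bullet(\cT_\infty^{(0)})$, and its graph-length decomposes as $|\omega| = \sum_{j=1}^{s-r}(N_j + D_j)$, where at layer $s-j+1$ the path first visits $N_j$ consecutive downward triangles whose skeleton edges have zero offspring and then crosses a single non-trivial slot in $D_j$ steps, landing on a downward triangle at height $s-j$. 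By construction, $D_j$ equals the root degree of the independent simple Boltzmann triangulation filling that slot.

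Conditional on the skeleton $\cF$ of $B_s^\bullet(\cT_\infty^{(0)})$, whose law is given by Corollary~\ref{co:formula}, the $D_j$'s are conditionally independent, and by Corollary~\ref{co:degbound345} applied to simple Boltzmann triangulations of arbitrary perimeter, each has an exponential moment bounded by some constant $K_D$ uniformly in $\cF$. Hence $\Ex{\exp(\mu_D \sum_j D_j) \mid \cF} \le K_D^{s-r}$. The $N_j$'s are functions of $\cF$, and the crux is to show the uniform bound $\Ex{\exp(\mu_N \sum_j N_j)} \le K_N^{s-r}$. Heuristically, the counter-clockwise walk at layer $s-j+1$ encounters successive trees whose offspring counts are nearly i.i.d.\ $\theta$-distributed; since $\theta(0)=3/4$, each $N_j$ is approximately geometric with parameter $1/4$, giving the required exponential tail. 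To make this rigorous I would use an absolute-continuity comparison between the UIPT skeleton distribution from Corollary~\ref{co:formula} and an i.i.d.\ $\theta$-BGW forest, in the spirit of Proposition~\ref{pro:comparison}, using the uniform top-cycle control from Lemma~\ref{le:analem} and the root-degree tail from Equation~\eqref{eq:infpasymptotic} to handle the perimeter reweighting factor $(64/9)^{-q}C(q)/((64/9)^{-p}C(p))$.

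Combining these two bounds yields $\Ex{e^{\mu|\omega|}} \le (K_D K_N)^{s-r}$ for some $\mu>0$, and Markov then gives $\Pr{|\omega| > \alpha_0(s-r)} \le K' \exp(-\beta'(s-r))$ for $\alpha_0$ large enough. Conditional on $|\omega|$, the FPP length of $\omega$ is a sum of $|\omega|$ independent copies of $\iota$, so Inequality~\eqref{eq:iotabound} provides an exponential bound on the event $\{d_\omega > \alpha |\omega|\}$ for any $\alpha > \Ex{\iota} = 1/2$. Chaining these bounds and choosing the constants appropriately delivers the exponential estimate in the statement. The main obstacle will be the uniform exponential-moment bound on $\sum_j N_j$: the non-i.i.d.\ layered structure of the UIPT skeleton means that a direct appeal to Lemma~\ref{le:omegar} fails, and the argument must leverage the explicit perimeter-weighted form of Corollary~\ref{co:formula} together with the exponential tail of the top-cycle length from Lemma~\ref{le:analem} to reduce to an effective i.i.d.\ computation.
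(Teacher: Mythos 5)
Your proposal follows essentially the same route as the paper: both reduce the problem to bounding the length of the downward dual path from $f_s$ to $B_r^\bullet(\cT_\infty^{(0)})$, both use an absolute-continuity comparison (via the explicit perimeter-weighted skeleton law of Corollary~\ref{co:formula} together with the $L_s$ tail control of Lemma~\ref{le:analem}) to reduce to an effective i.i.d.\ computation, and both finish with the large-deviation bound~\eqref{eq:iotabound} for the link weights. The only organizational difference is that the paper carries out the domination at the level of the whole downward-path length (conditioning on $L_s=q$, picking up an $O(1/(r\sqrt{q}))$ density, and then invoking Lemma~\ref{le:omegar} for the LHPT as a black box before summing over $q$), whereas you propose to decompose $|\omega|=\sum_j(N_j+D_j)$, bound the slot-crossing degrees $D_j$ conditionally via Corollary~\ref{co:degbound345}, and treat $\sum_j N_j$ separately --- equivalent in substance but slightly more bookkeeping, since the $N_j$'s are already controlled once the forest law is dominated by the i.i.d.\ one.
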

\begin{proof}
	It suffices to prove this for the dual distance $d^\dagger_{\cT_\infty^{(0)}}$, since we may always bound the sum of weights along a $d^\dagger_{\cT_\infty^{(0)}}$-geodesic to get an upper bound for the $d_{\mathrm{fpp}}^\dagger$-distance.  Furthermore, we may assume that $r \ge 1$, since any downward triangle of height $1$ is incident to the root-vertex (and the total number of triangles incident to the root-vertex has an exponential tail by~\eqref{eq:infp}).
	
	Recall that $\widetilde{\cF}_{r,s}^{(0)}$ denotes the skeleton  of $B_s^\bullet(\cT_\infty^{(0)}) \setminus B_r(\cT_\infty^{(0)})$ after forgetting the marked vertex of $\cF_{r,s}^{(0)}$    and applying a uniform random cyclic permutation to the $L_s$ trees of~$\cF^{(0)}_{r,s}$. This way, conditional on $L_r=p$ and $L_s=q$, $\widetilde{\cF}_{r,s}^{(0)}$ is uniformly distributed over $\ndF_{p,q,s-r}''$. We may assume that $f_s$ is the downward triangle that corresponds to the root of the last tree in the forest $\widetilde{\cF}_{r,s}^{(0)}$.
	
		By Equations~\eqref{eq:slim} and~\eqref{eq:shady} it holds for all $p,q \ge 3$ and  $\cF \in \ndF_{p,q,s-r}''$	that
	\[
	\Prb{\widetilde{\cF}_{r,s}^{(0)} = \cF \mid L_r = p } = O(\sqrt{p/q})\prod_{v \in \cF^*} \theta(c_v). 
	\]
	Hence
	\[
	\Prb{\widetilde{\cF}_{r,s}^{(0)} = \cF  } = O(\sqrt{p/q})\Pr{ L_r = p} \prod_{v \in \cF^*} \theta(c_v). 
	\]
	Using Inequality~\eqref{eq:fromthis453}, it follows that
	\begin{align}
		\Prb{\widetilde{\cF}_{r,s}^{(0)} = \cF  } &= O\left(\frac{1}{\sqrt{q}}\right) \frac{p}{r^3}  \exp\left(-\frac{p}{4r^2} \right)\prod_{v \in \cF^*} \theta(c_v) \\
		&= O\left(\frac{1}{r \sqrt{q}}\right)\prod_{v \in \cF^*} \theta(c_v). \nonumber
	\end{align}
	Hence the law of $\widetilde{\cF}_{r,s}^{(0)}$ under $\Pr{ \cdot \cap \{L_s = q\}}$ is dominated by $O\left(\frac{1}{r \sqrt{q}}\right)$ times the law of a forest of $q$ independent $\theta$--Bienaym\'e--Galton--Watson trees, truncated at height $[s-r]$, and we may restrict the latter law to the event that the truncated forest reaches that height. The length of a downward path from $f_s$ to $B_r^\bullet(\cT_\infty^{(0)})$ is fully determined by the forest $\widetilde{\cF}_{r,s}^{(0)}$ and the triangulations filling the slots associated to vertices of that forest with height strictly less than $s-r$. Thus, the law of this length under $\Pr{ \cdot \cap \{L_s = q\}}$  is dominated by $O\left(\frac{1}{r \sqrt{q}}\right)$ times the law of the length of the downward path for triangulation of the cylinder of height $s-r$ whose cyclically permuted skeleton is a forest of $q$ independent $\theta$--Bienaym\'e--Galton--Watson trees truncated at height $s-r$ (restricted to the event that the truncated forest reaches that height), and whose slots are filled with independent type III Boltzmann triangulations of the corresponding perimeters. Thus, the law of the length of the downward path from $f_s$ to $B_r^\bullet(\cT_\infty^{(0)})$ under $\Pr{ \cdot \cap \{L_s = q\}}$  is dominated by $C/(\sqrt{q}(r+1))$ times the law of the downward path from $f_{(0,0)}$ to $\cL_{s-r}$ in the LHPT model. Using Lemma~\ref{le:omegar}, it follows that
	\begin{align*}
		\Prb{L_s = q, d_{\cT_\infty^{(0)}}^\dagger (f_s, B_r^\bullet(\cT_\infty^{(0)}))> \alpha(s-r)  }  &=O\left( \frac{1}{r\sqrt{q}}\right) \Pr{|\omega_{s-r}|>\alpha(s-r)} \\
		&= O\left( \frac{1}{r\sqrt{q}}\right) \exp(- \mu \alpha(s-r)) K^{s-r}.
	\end{align*}
Let us fix $\alpha>0$ and $\beta'>0$ with $\exp(- \mu \alpha) K \le \exp(- \beta')$. Then
\begin{align}
	\label{eq:11pac}
	\Prb{L_s = q, d_{\cT_\infty^{(0)}}^\dagger (f_s, B_r^\bullet(\cT_\infty^{(0)}))> \alpha(s-r)  } &=  O\left( \frac{1}{r\sqrt{q}}\right) \exp(- \beta'(s-r)).
\end{align}
Summing over $q$ and using~\eqref{eq:11pac} for $q \le (s-r)s^2$ and~\eqref{eq:2pac} for $q > (s-r)s^2$ we get
\begin{align*}
	&\Prb{d_{\cT_\infty^{(0)}}^\dagger (f_s, B_r^\bullet(\cT_\infty^{(0)}))> \alpha(s-r)  } \\
	&\qquad \le \Pr{L_s > (s-r)s^2} + O(1)\sum_{q=1}^{(s-r)s^2} \frac{1}{r \sqrt{q}} \exp(-\beta'(s-r)) \\
		&\qquad \le C_0 \exp(-(s-r)/5) + O(1) \frac{\sqrt{(s-r)s^2}}{r} \exp(-\beta'(s-r)).
\end{align*}
Choosing any $\beta$ with $0 < \beta  < \min(\beta', 1/5)$ it follows that the last display is bounded from above by $C \exp(- \beta(s-r))$ for some constant $C>0$.
\end{proof}

We deduce the following corollary analogously to~\cite[Cor. 27]{zbMATH07144469}.

\begin{corollary}
	\label{co:cobetatilde}
	Let $\alpha$ be as in Lemma~\ref{le:preclema}. Let $\delta>0$. For each integer $R \ge 1$, let $A_R(\delta)$ denote the event that
	\[
		d^\dagger_{\mathrm{fpp}}(f, B_r^\bullet(\cT_\infty^{(0)})) \le \alpha(s-r)
	\]
	holds for all $0 \le r < s \le R$ with $s-r \ge \delta R$, and each downward triangle $f$ at height~$s$. Then there exists a constant $\tilde{\beta}>0$ such that for large enough $R$
	\[
		\Prb{A_R(\delta)} \ge 1 - \exp(- \tilde{\beta}R).
	\]
\end{corollary}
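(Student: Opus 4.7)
The plan is to bootstrap Lemma~\ref{le:preclema}, which bounds the $d_{\mathrm{fpp}}^\dagger$-distance from a single uniformly random downward triangle at height $s$ to $B_r^\bullet(\cT_\infty^{(0)})$, into the desired simultaneous bound over \emph{all} downward triangles at height $s$, and then to take a union bound over the $O(R^2)$ admissible pairs $(r,s)$. The passage from a typical triangle to every triangle is the only delicate point, and the first-moment method handles it once $L_s$ is truncated at a polynomial threshold.

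First I would fix integers $0 \le r < s \le R$ with $s-r \ge \delta R$ and write $N_{r,s}$ for the number of downward triangles $f$ at height $s$ satisfying $d_{\mathrm{fpp}}^\dagger(f, B_r^\bullet(\cT_\infty^{(0)})) > \alpha(s-r)$. Let $f_s$ denote a uniformly chosen element of $\F_s(\cT_\infty^{(0)})$, sampled independently of the link-weights given $\cT_\infty^{(0)}$. Conditioning on $(\cT_\infty^{(0)}, \iota)$ yields $\Exb{N_{r,s}/L_s} = \Prb{d_{\mathrm{fpp}}^\dagger(f_s, B_r^\bullet(\cT_\infty^{(0)})) > \alpha(s-r)}$, which by Lemma~\ref{le:preclema} is at most $K \exp(-\beta(s-r))$. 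Next I would truncate $L_s$ at the level $M := s^3$: on $\{L_s \le s^3\}$, Markov's inequality gives
\[
\Prb{N_{r,s} \ge 1,\, L_s \le s^3} \le \Exb{N_{r,s}\,\mathbf{1}\{L_s \le s^3\}} \le s^3 \cdot K \exp(-\beta(s-r)),
\]
while Lemma~\ref{le:analem}, applied with the parameter $s$, bounds $\Prb{L_s > s^3}$ by $C_0 \exp(-s/5)$.

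Finally, using $s \ge s-r \ge \delta R$, both contributions are exponentially small in $R$, uniformly over the pair $(r,s)$. A union bound over the at most $R^2$ admissible pairs yields
\[
\Prb{A_R(\delta)^c} \le R^5 K \exp(-\beta \delta R) + C_0 R^2 \exp(-\delta R / 5),
\]
which is bounded by $\exp(-\tilde{\beta} R)$ for any $0 < \tilde{\beta} < \min(\beta \delta,\, \delta/5)$ and all sufficiently large $R$. No serious obstacle arises: the exponential decay provided by Lemma~\ref{le:preclema} comfortably absorbs the polynomial truncation level $s^3$, and the exponential tail of $L_s$ from Lemma~\ref{le:analem} controls the complementary event.
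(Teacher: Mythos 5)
Your proposal is correct and takes essentially the same route as the paper: a union bound over the $O(R^2)$ admissible pairs $(r,s)$, combined with Lemma~\ref{le:preclema} for the typical downward triangle and the tail bound of Lemma~\ref{le:analem} on $L_s$ to control the number of triangles at height $s$. The only cosmetic differences are the truncation threshold ($s^3$ versus the paper's $Rs^2$) and the framing of the passage from a typical to every triangle (first-moment Markov bound on $N_{r,s}$ versus the paper's direct union bound over the cyclically enumerated triangles $f_{(1)},\dots,f_{(Rs^2)}$), which are interchangeable.
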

\begin{proof}
	Let $r$ and $s$  satisfy $0 \le r < s \le R$ and $s-r \ge \delta R$. Let $f_{(1)}$ denote uniformly selected element of $\F_s(\cT_\infty^{(0)})$. Let $f_{(1)}, f_{(2)}, \ldots $ denote the successive downward triangles at height $s$ when walking around $\partial B_s^\bullet(\cT_\infty^{(0)})$ in clockwise order, starting from $f_{(1)}$. By Lemma~\ref{le:preclema}, we have
	\[
		\Prb{ d_{\mathrm{fpp}}^\dagger (f_{(j)}, B_r^\bullet(\cT_\infty^{(0)})) >  \alpha(s-r)  } \le K \exp(- \beta(s-r))
\]
for each $j$. Using Inequality~\eqref{eq:2pac}, it follows that
\begin{align*}
	&\Prb{d_{\mathrm{fpp}}^\dagger (f, B_r^\bullet(\cT_\infty^{(0)})) >  \alpha(s-r) \text{ for some $f \in F_s(\cT_\infty^{(0)})$ }} \\
	&\qquad \le \sum_{j=1}^{R s^2} \Prb{d_{\mathrm{fpp}}^\dagger (f_{(j)}, B_r^\bullet(\cT_\infty^{(0)})) >  \alpha(s-r)} + \Pr{L_s > Rs^2} \\
	&\qquad \le K R^3 \exp(- \beta \delta R) + C_0 \exp(-R/5).
\end{align*}
Summing over all possible values of $r$ and $s$ yields the statement.
\end{proof}

The next observation corresponds to the result~\cite[Lem. 28]{zbMATH07144469} for type I triangulations, and the proof is analogous.

\begin{lemma}
	\label{le:stateofthemind}
By abuse of notation, we also let $d_{\mathrm{fpp}}^\dagger$ denote the $\iota$-first-passage percolation distance on  the dual of $\cT_n$. 
Let $\alpha>0$ be as in Lemma~\ref{le:preclema}.	Let $0 < \epsilon < 1/4$. For each $n \ge 1$, let $E_n$ denote the event that
\[
	d_{\mathrm{fpp}}^\dagger(f,g) \le \alpha d_{\cT_n}(x,y)+ n^{\epsilon}
\]
holds for all $x,y \in \V(\cT_n)$ and $f,g \in \F(\cT_n)$ satisfying $x\triangleleft f$ and $y \triangleleft g$. Then $E_n$ holds with probability tending to $1$ as $n \to \infty$. 
\end{lemma}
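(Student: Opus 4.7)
The plan is to transfer the UIPT estimate of Corollary~\ref{co:cobetatilde} to the finite model $\cT_n$ via the absolute-continuity bound Lemma~\ref{le:finbod9}, and to absorb both the dependence on the specific representative faces $f,g$ and the short-graph-distance regime via the maximum-degree control of Proposition~\ref{pro:maxdeg}.

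First I would fix, for each $x\in\V(\cT_n)$, an arbitrary face $f_x\triangleleft x$. Proposition~\ref{pro:maxdeg} yields $\mathrm{MD}(\cT_n)\le C_0\log n$ with probability $1-o(1)$. On that event the dual distance between any two faces sharing a vertex is dominated by a sum of at most $C_0\log n$ i.i.d.\ copies of $\iota$, which by~\eqref{eq:iotabound} is at most $n^{\epsilon}/4$ uniformly in the choice of such a pair with probability $1-o(1)$. It therefore suffices to show that with probability $1-o(1)$ one has
\[
d_{\mathrm{fpp}}^\dagger(f_x,f_y)\le \alpha\, d_{\cT_n}(x,y)+\tfrac{n^{\epsilon}}{2}\qquad\text{for all } x,y\in\V(\cT_n).
\]
For pairs with $d:=d_{\cT_n}(x,y)\le n^{\epsilon/2}$ a graph geodesic, together with the fact that each original edge lies on exactly two triangles, yields a dual path of length at most $2d\,\mathrm{MD}(\cT_n)\le 2C_0\, n^{\epsilon/2}\log n$; by~\eqref{eq:iotabound} the corresponding sum of i.i.d.\ $\iota$-weights exceeds $n^{\epsilon}/2$ with probability at most $\exp(-c\, n^{\epsilon/2})$, which is absorbed by a union bound over the $O(n^2)$ pairs.

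For pairs with $d\ge n^{\epsilon/2}$, by re-rooting invariance I identify $x$ with $\rho_n$ and view $y$ as the marked vertex of $\overline{\cT}_n$. Setting $R=d$, Corollary~\ref{co:cobetatilde} provides an event $A_R(\delta)$ of probability at least $1-\exp(-\tilde\beta d)$ in $\cT_\infty^{(0)}$ on which $d_{\mathrm{fpp}}^\dagger(f,B_0^\bullet)\le \alpha d$ for every downward triangle $f$ at height $d$. Lemma~\ref{le:finbod9} transfers this event to $\cT_n$ with a prefactor $\bar c\,(n/(n-N))^{3/2}$, so on the event $\{\#B_{d-1}^\bullet(\overline{\cT}_n)\le(1-\eta)n\}$ the prefactor is bounded by a constant $c_\eta$ and we obtain
\[
\Prb{d_{\mathrm{fpp}}^\dagger(f_y,f_x)>\alpha d,\ \#B_{d-1}^\bullet\le(1-\eta)n}\le c_\eta\exp(-\tilde\beta\, n^{\epsilon/2}),
\]
which survives the $O(n^2)$ union bound over pairs.

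The main obstacle is ruling out the bad event $\{\#B_{d-1}^\bullet(\overline{\cT}_n)>(1-\eta)n\}$, where Lemma~\ref{le:finbod9} becomes useless. I would dispose of it by combining the Gromov--Hausdorff--Prokhorov convergence~\eqref{eq:convsimpleghp} with the uniform ball-mass bound~\eqref{eq:ballmass}: for any $\delta>0$ there exists $b>0$ such that w.h.p.\ every graph ball of radius $\delta n^{1/4}$ in $\cT_n$ contains at least $bn$ vertices, so whenever $d\le\mathrm{diam}(\cT_n)-\delta n^{1/4}$ the complement of $B_{d-1}^\bullet$ around $x$ contains such a ball around $y$ and hence has volume at least $bn$, forcing $\#B_{d-1}^\bullet\le (1-b)n$. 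The remaining $\delta n^{1/4}$-window of near-diameter pairs is handled by taking $\delta$ below the $n^{\epsilon}$-tolerance and appealing to $\mathrm{diam}(\cT_n)=O_p(n^{1/4})$ together with a symmetric estimate with $x$ and $y$ interchanged. Once the non-swallowing event is secured, combining the three regimes above produces $\Pr{E_n}\to 1$.
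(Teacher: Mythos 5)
Your skeleton matches the paper: transfer the UIPT estimate from Corollary~\ref{co:cobetatilde} via the absolute-continuity inequality of Lemma~\ref{le:finbod9}, use re-rooting invariance, and clean up with Proposition~\ref{pro:maxdeg}. However, the way you handle the prefactor $\bigl(n/(n-N(t))\bigr)^{3/2}$ in Lemma~\ref{le:finbod9} is both flawed and unnecessary.

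The flaw: you claim that for $d\le\mathrm{diam}(\cT_n)-\delta n^{1/4}$ the complement of $B_{d-1}^\bullet(\overline{\cT}_n)$ contains the full graph ball $\mfB_{\delta n^{1/4}}(\cT_n, y)$. This is not true. A vertex $z$ with $d_{\cT_n}(y,z)\le \delta n^{1/4}$ can have $d_{\cT_n}(x,z)$ as small as $d-\delta n^{1/4}$, which is well below $d-1$, so $z$ can lie in $B_{d-1}(\cT_n)$ and hence in the hull. What the hull definition guarantees is only that the component containing the marked vertex $y$ is removed; it does not carve out a metric ball around $y$. So the ball-mass argument via~\eqref{eq:ballmass} does not directly rule out the swallowing event $\{\#B_{d-1}^\bullet(\overline{\cT}_n)>(1-\eta)n\}$, and the near-diameter subcase you defer to the end is not small enough to sweep under the $n^{\epsilon}$ tolerance without further work.

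The fix (and what the paper does) is much simpler: there is no need to restrict to $\#B_{d-1}^\bullet\le(1-\eta)n$ at all. Since $N(t)\le n-1$ whenever $B_r^\bullet(\overline{\cT}_n)$ is defined, one has the crude bound $\bigl(n/(n-N(t))\bigr)^{3/2}\le n^{3/2}$. Combined with Corollary~\ref{co:cobetatilde}, this yields
\[
\Prb{d_{\cT_n}(\rho_n,o_n)>r,\ B_r^\bullet(\overline{\cT}_n)\in\cE}\le \bar c\, n^{3/2}\exp(-\tilde\beta r),
\]
and summing over $r\ge \lfloor n^{\epsilon}\rfloor$ gives a bound of order $n^{3/2}\exp(-\tilde\beta n^{\epsilon})$, which is stretched-exponentially small and beats the $O(n^2)$ union bound over pairs with room to spare. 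Thus the GHP/ball-mass machinery is not needed here. One further cosmetic difference: the paper first replaces $d_{\mathrm{fpp}}^\dagger$ by the dual graph distance $d_{\cT_n}^\dagger$ via a dual analogue of Proposition~\ref{pro:roughtnbound}, rather than estimating sums of $\iota$-weights along explicit dual paths; your direct estimate also works but is slightly heavier. Also note that the cost of moving between different faces incident to the same vertex, and the near-diagonal regime $d\le n^{\epsilon/2}$, both contribute a term of order $\mathrm{MD}(\cT_n)\cdot n^{\epsilon/2}$, so you need to run the argument with a smaller tolerance exponent $\epsilon'<\epsilon$ and then note $(\log n)^2 n^{\epsilon'}\le n^{\epsilon}$ for large $n$, as the paper does implicitly at the end of its proof.
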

\begin{proof}
Since (without loss of generality) we assumed that $\Ex{\iota}=1/2$, an analogous statement as in Proposition~\ref{pro:roughtnbound}	also holds for first-passage percolation on the dual of $\cT_n$. It follows that it suffices to prove the statement of Lemma~\ref{le:stateofthemind} for the dual graph distance $d_{\cT_n}^\dagger$.

As we before, we let $\rho_n$ denote the root vertex of $\cT_n$, and $\overline{\cT}_n$ the pointed version of $\cT_n$ obtained by marking a uniformly selected vertex $o_n$.  Recall also that we view $\cT_n = \cT_n^{(0)}$ as a triangulation of the  ``$0$-gon''. The hull $B_r^\bullet(\overline{\cT}_n)$ is well-defined provided that $0<r<  d_{\cT_n}(\rho_n, o_n)$. We let $\cE$ denote the collection of all triangulations $t$ with $t \in \ndC_{0,r}$ for some $r >1$, with the property that there is a face $f$ incident to $\partial^*t$ with dual graph distance from a face incident to $\rho_n$. 

By Lemma~\ref{le:finbod9} and Corollary~\ref{co:cobetatilde}, 
\begin{align*}
	\Prb{d_{\cT_n}(\rho_n, o_n) > r, B_r^\bullet(\overline{\cT}_n)\in \cE} &\le  \sum_{t \in \ndC_{0,r}} \one_{\cE}(t) 	\Prb{d_{\cT_n}(\rho_n, o_n) > r, B_r^\bullet(\overline{\cT}_n) = t} \\
	&\le \bar{c} n^{3/2} \sum_{t \in \ndC_{0,r}} \one_{\cE}(t) 	\Prb{ B_r^\bullet(\overline{\cT}_\infty^{(0)}) = t} \\
	&\le \bar{c} n^{3/2} \exp(- \tilde{b} r).
\end{align*}
Summing over all $r \ge \lfloor n^\epsilon \rfloor$, we get
\[
	\Exb{ \sum_{r=\lfloor n^\epsilon \rfloor}^\infty \one_{d_{\cT_n}(\rho_n, o_n) > r, B_r^\bullet(\overline{\cT}_n)\in \cE}  } \le \tilde{c} \exp(-a n^\epsilon)
\]
for some constants $\tilde{c},a > 0$. Hence
\begin{align}
	\Prb{d_{\cT_n}(\rho_n, o_n) > n^\epsilon, B_{d_{\cT_n}(\rho_n, o_n) -1 }^\bullet(\overline{\cT}_n)\in \cE} \le \tilde{c} \exp(-a n^\epsilon).
\end{align}

The vertex $o_n$ is adjacent to a vertex $v_0$ from $\partial^* B_{d_{\cT_n}(\rho_n, o_n)-1}^\bullet(\overline{\cT}_n)$. Any face $g$ incident to $o_n$ is at dual graph distance at most $\mathrm{MD}(\cT_n)$ from a face $g'$ incident to the edge between $o_n$ and $v_0$. Likewise, $g'$ is at dual graph distance at most $\mathrm{MD}(\cT_n)$ from a downward triangle at height $d_{\cT_n}(\rho_n, o_n)-1$. 
Hence, if $d_{\cT_n}(\rho_n, o_n) > n^\epsilon$ and  $B_{d_{\cT_n}(\rho_n, o_n) -1 }^\bullet(\overline{\cT}_n) \notin \cE$, then  we have
\[
	d_{\cT_n}^\dagger(f,g) \le \alpha d_{\cT_n}(\rho_n, o_n) + 2\mathrm{MD}(\cT_n)
\]
for all $f,g \in \F(\cT_n)$ with $\rho_n \triangleleft f$ and $o_n \triangleleft g$.

Using the stochastic re-rooting invariance of $\cT_n$ similarly as for~\eqref{eq:expconv} and~\eqref{eq:rerootdfdf}, it follows that
\begin{multline*}
	\Exb{ \sum_{v,v' \in \V(\cT_n)} \one_{d_{\cT_n}(v,v') > n^\epsilon} \one_{ d_{\cT_n}^\dagger(f,g) > \alpha d_{\cT_n}(v,v') + 2 \mathrm{MD}(\cT_n) \text{ whenever $v \triangleleft f$ and $v' \triangleleft g$} } } \\
	\le 6 (n+1)^2 \tilde{c} \exp(-a n^{\epsilon}).
\end{multline*}
Using Proposition~\ref{pro:maxdeg}  follows that with probability tending to $1$ as $n \to \infty$ we have
\[
	d_{\cT_n}^\dagger(f,g) \le \alpha d_{\cT_n}(v,v') + (\log n)^2
\]
whenever $v,v' \in \V(\cT_n)$, $d_{\cT_n}(v,v') > n^\epsilon$, and $v \triangleleft f$, $v' \triangleleft g$. The easy bound
\[
	d_{\cT_n}^\dagger(f,g) \le \mathrm{MD}(\cT_n)(d_{\cT_n}(v,v') +1)
\]
additionally implies that with probability tending to $1$ as $n \to \infty$ we have
\[
d_{\cT_n}^\dagger(f,g) \le (\log n)^2 (n^\epsilon+1)
\]
whenever $d_{\cT_n}(v,v') \le  n^\epsilon$, and $v \triangleleft f$, $v' \triangleleft g$. Since $\epsilon>0$ was arbitrary this completes the proof.
\end{proof}

We prove the next observation following closely the arguments of~\cite[Prop. 29]{zbMATH07144469}.

\begin{proposition}
	\label{pro:primal}
	Let $\epsilon, \delta >0$. There exists $0 < \eta < 1/2$ such that for sufficiently large $n$ it holds with probability at least $1 - \delta$ that for all $f \in \F_n(\cT_{\infty}^{(0)})$
	\[
		(1 - \epsilon) c_{\mathrm{fpp}}^\cT \lfloor \eta n \rfloor \le d_{\mathrm{fpp}}^\dagger(f, B_{n - \lfloor \eta n \rfloor}^\bullet (\cT_\infty^{(0)})) \le (1 + \epsilon) c_{\mathrm{fpp}}^\cT \lfloor \eta n \rfloor.
	\]
	Moreover, letting $f_*$ denote the face to the right of the root-edge of $\cT_{\infty}^{(0)}$,
	\[
	 \lim_{n \to \infty} \Prb{ (c_{\mathrm{fpp}}^\cT  - \epsilon) n \le d_{\mathrm{fpp}}^\cT(f_*, f) \le (c_{\mathrm{fpp}}^\cT  + \epsilon) n) \text{ for all $f \in \F_n(\cT_\infty^{(0)})$}  }.
	\]
\end{proposition}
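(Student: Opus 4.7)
The plan is to follow the template of Propositions \ref{pro:ring2ring} and \ref{pro:yoyo}, replacing the primal first-passage percolation distance by its dual counterpart and using Proposition \ref{pro:king345} in place of Proposition \ref{pro:kingman}. (The constant in the statement is naturally $c_{\mathrm{fpp}}^\dagger$ from Proposition \ref{pro:king345}; this is presumably the constant denoted $c_{\mathrm{fpp}}^\cT$ in the statement.) Fix $\epsilon,\delta>0$ and first establish the statement concerning the region $B_n^\bullet(\cT_\infty^{(0)})\setminus B_{n-\lfloor \eta n\rfloor}^\bullet(\cT_\infty^{(0)})$. Using Lemma~\ref{le:analem} choose $a\in(0,1)$ so the event $\cE_n(\eta)=\{\lfloor an^2\rfloor<L_n,L_{n-\lfloor \eta n\rfloor}\le \lfloor a^{-1}n^2\rfloor\}$ has probability at least $1-\delta/4$ for every $n$ large and every $\eta\in(0,1/2)$. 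For a uniformly random vertex $u_0^{(n)}$ of $\partial^*B_n^\bullet(\cT_\infty^{(0)})$, list the vertices clockwise as $u_0^{(n)},\ldots,u_{L_n-1}^{(n)}$.

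The first step mirrors Proposition~\ref{pro:ring2ring}. Using Corollary~\ref{co:jimmychoo}, one introduces for each $j$ the event $\cH_{n,j}(\eta)$ that the left-most primal geodesics from $u^{(n)}_{j\pm\lfloor an^2/4\rfloor}$ do not coalesce before hitting $\partial^*B_{n-\lfloor\eta n\rfloor}^\bullet(\cT_\infty^{(0)})$, and the enclosed region $\cG_j^{(\eta)}$. The counterpart of the ``escape'' event $\cA_{n,j}(\eta)$ must now be stated for dual paths: the bad event is that some dual path of weight smaller than $2 c_{\mathrm{fpp}}^\dagger\eta n$ starting at a downward triangle above $u^{(n)}_i$ for $|i-j\lfloor an^2/8\rfloor|\le an^2/16$ reaches the lateral boundary $\partial_\ell\cG_j^{(\eta)}$. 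By Lemma~\ref{le:stateofthemind} such a dual path corresponds to a primal path of graph-length at most $(2c_{\mathrm{fpp}}^\dagger\eta n+n^\epsilon)/\alpha$, so Proposition~\ref{pro:comparison} (comparison with the LHPT) together with Proposition~\ref{pro:p1515} (primal distances along the LHPT boundary) forces this bad event to have vanishing probability as $\eta\downarrow 0$.

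Next, on the good event, the dual distance from a downward triangle above $u_j^{(n)}$ to $\partial^*B_{n-\lfloor\eta n\rfloor}^\bullet(\cT_\infty^{(0)})$ is computed through paths inside $\cG_j^{(\eta)}$. The comparison principle shows that this region is dominated up to a constant factor by a slab of the LHPT of height $\lfloor\eta n\rfloor$, where Proposition~\ref{pro:king345} yields the concentration $d_{\mathrm{fpp}}^\dagger\approx c_{\mathrm{fpp}}^\dagger\eta n$ up to error $\epsilon c_{\mathrm{fpp}}^\dagger\eta n$. To pass from a random downward triangle to all of $\F_n(\cT_\infty^{(0)})$ one applies Proposition~\ref{pro:giveusana}: any downward triangle has a primal boundary vertex whose left-most geodesic coalesces with that of some anchor point $u^{(n)}_{\lfloor jn^2/A\rfloor}$ before height $n-\lfloor\eta n\rfloor$, so by Lemma~\ref{le:stateofthemind} again (and exponential tail control on the sum of weights along short dual paths, via Inequality~\eqref{eq:iotabound}) the dual distance from any $f\in\F_n(\cT_\infty^{(0)})$ to the hull agrees with that of its anchor up to $O(\epsilon n)$, uniformly in~$f$. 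Taking a union bound over the $O(1)$-many anchors gives Part~1.

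Part~2 is then obtained by iteration exactly as in the proof of Proposition~\ref{pro:yoyo}: set $n_0=n$ and $n_{i+1}=n_i-\lfloor\eta n_i\rfloor$ for $0\le i<q$ where $q=\lfloor\log(\epsilon/16)/\log(1-\eta)\rfloor$, apply the first part to each layer, control the number of ``bad'' layers by Markov's inequality, and conclude by telescoping the triangle inequality. The main obstacle is the dual version of the escape argument in Part~1: in the primal case one had the trivial inequality $d_{\mathrm{fpp}}^{\cT_\infty^{(0)}}\le d_{\cT_\infty^{(0)}}$ (up to concentration from Inequality~\eqref{eq:iotabound}), whereas in the dual setting one must absorb the multiplicative constant $\alpha$ coming from Lemma~\ref{le:preclema} and Lemma~\ref{le:stateofthemind}. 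This forces $\eta$ to be chosen smaller than in the primal argument, but once $\eta$ depends also on $\alpha$, the argument goes through without further complication.
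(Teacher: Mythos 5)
Your overall plan — adapt Propositions~\ref{pro:ring2ring} and~\ref{pro:yoyo} with Proposition~\ref{pro:king345} replacing Proposition~\ref{pro:kingman}, and patch the escape argument to account for dual paths — is exactly the route the paper takes, and your observation that the constant should read $c_{\mathrm{fpp}}^\dagger$ is correct (it is a typo in the statement). However, there is a concrete gap in how you convert the dual escape event into a primal one.

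You write that ``by Lemma~\ref{le:stateofthemind} such a dual path corresponds to a primal path of graph-length at most $(2c_{\mathrm{fpp}}^\dagger\eta n+n^\epsilon)/\alpha$.'' This does not work for three reasons. First, Lemma~\ref{le:stateofthemind} is a statement about the finite model $\cT_n$, not about $\cT_\infty^{(0)}$, on which the present proposition lives. Second, it is a probabilistic statement (it holds on an event $E_n$), not a deterministic one about individual paths, so it cannot be used to deduce ``a dual path of weight $W$ yields a primal path of length $\leq W/\alpha+\text{error}$.'' Third, and most importantly, the direction of the inequality is wrong: Lemma~\ref{le:stateofthemind} bounds the dual fpp distance from \emph{above} by $\alpha$ times the primal graph distance, so from $d_{\mathrm{fpp}}^\dagger<2c_{\mathrm{fpp}}^\dagger\eta n$ it gives no information at all about the primal graph distance. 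What is actually needed — and what the paper uses — is the deterministic and elementary observation that a dual path of graph-length $L$ in a triangulation visits $L+1$ faces, and since consecutive faces share an edge, one can extract a primal path of length at most $L+1$ joining boundary vertices of the endpoint faces. No lemma is needed for this. Combined with the lower bound $\iota\geq\kappa$, a dual path of graph-length $<4\alpha\eta n/\kappa$ then yields a primal path of length at most $4\alpha\eta n/\kappa+1$ from $u_i^{(n)}$ to $\partial_\ell\cG_j^{(n)}(\eta)$, to which the primal escape estimate (via Proposition~\ref{pro:p1515}) applies.

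A second, smaller omission: in Part 2 you speak of ``absorbing the multiplicative constant $\alpha$'' for the bad layers, but you never identify the tool that makes this quantitative. In the primal proof of Proposition~\ref{pro:yoyo}, bad layers were handled by picking $v_{(j+1)}$ at graph distance $n_j-n_{j+1}$ and invoking Inequality~\eqref{eq:iotabound}. The dual analogue is not an exponential moment bound on weights along a left-most geodesic — dual paths have random lengths even before weights enter — but Corollary~\ref{co:cobetatilde}, which gives $d_{\mathrm{fpp}}^\dagger(f,B_r^\bullet(\cT_\infty^{(0)}))\leq\alpha(s-r)$ uniformly over all $f\in\F_s(\cT_\infty^{(0)})$ and all $r<s$ with $s-r\geq\delta R$, with probability $1-\exp(-\tilde\beta R)$. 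This corollary is also what the paper invokes in Part 1 to get the deterministic upper bound on the dual fpp distance to the hull with ``sufficient probability''; your proposal does not cite it at all and would need to.
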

\begin{proof}
		Let $f$ be uniformly selected from $\F_n(\cT_\infty^{(0)})$. The proof of the first assertion is analogous to the proof of Proposition~\ref{pro:ring2ring}, using Proposition~\ref{pro:king345} instead of Proposition~\ref{pro:kingman}, and noting that Corollary~\ref{co:cobetatilde} already gives the bound $d_{\cT_{\infty}^{(0)}}(f, B_{n - \lfloor \eta n \rfloor}^\bullet(\cT_{\infty}^{(0)}) ) \le \alpha \lfloor \eta n \rfloor$ with sufficient probability.
		
		We describe the adaptions. Recall the notation from the proof of Proposition~\ref{pro:ring2ring}. For each $i \in \ndZ$ we write $f_i^{(n)} \in \F_n(\cT_\infty^{(0)})$ for the  downward triangle corresponding to the edge on $\partial B_n^\bullet(\cT_\infty^{(0)})$ from $u_i^{(n)}$ to $u_{i+1}^{(n)}$. Let $j$ be an integer with $f = f_j^{(n)}$. Analogous to Equation~\eqref{eq:likesmallyeah} we  have to bound the probability that there exists an integer $i$ with $j - a n^2/16 \le i \le j + an^2 / 16$ such that there is a dual path from $f_i^{(n)}$ to $\partial B_{n - \lfloor \eta n\rfloor }^\bullet(\cT_\infty^{(0)})$ with length smaller than $4 \alpha \eta n / \kappa$ that stays in $B_n^\bullet(\cT_\infty^{(0)})$ and leaves the region $\cG_j^{(n)}(\eta)$ before hitting $ B_{n - \lfloor \eta n\rfloor }^\bullet(\cT_\infty^{(0)})$. It is easy to see that if there is such a dual path then there is a path in $\cT_\infty^{(0)}$ from $u_i^{(n)}$ to $\partial_\ell \cG_j^{(n)}(\eta)$ in $B_n^\bullet(\cT_\infty^{(0)}) \setminus B_{n - \lfloor \eta n \rfloor}^\bullet(\cT_\infty^{(0)})$ with length at most $4\alpha \eta n / \kappa +1$. As argued in the proof of Proposition~\ref{pro:ring2ring}, the existence of such a path is unlikely.
		
		In order to adapt the final part of the proof of Proposition~\ref{pro:ring2ring}, we need to verify that for each $\beta>0$ we can find a sufficiently large integer $A$ such that with high probability any downward triangle at height $n$ is connected to one of the downward triangles $f_j^{(n)}$ for $0 \le j \le \lfloor a^{-1} A \rfloor$ by a dual path in $B_n^\bullet(\cT_\infty^{(0)})$ with sum of weights at most $\beta c_\mathrm{fpp}^\dagger \eta n$. In order to do so we may also use Proposition~\ref{pro:giveusana}. If $f = f_j^{(n)}$ and $f' = f_{j'}^{(n)}$ are two downward triangles at height $n$, then the left-most geodesics from $u_j^{(n)}$ and $u_{j'}^{(n)}$ coalesce above height $n' < n$ and hence the same property holds for the downward paths from $f$ and $f'$. Using the bounds on the lengths of downward paths in the proofs in Section~\ref{sec:bodidu}, we get the desired control on the sum of weights of the dual path from $f$ to $f'$ by concatenating the respective downward paths up to their coalescence time.
		
		We may prove the second assertion similarly to Proposition~\ref{pro:yoyo}. Using the notation from the proof of Proposition~\ref{pro:yoyo}, Corollary~\ref{co:cobetatilde} allows us to take care of the ``bad'' values of $i$ for which the bound
		\[
			(1 - \epsilon) c_\mathrm{fpp}^\dagger (n_i - n_{i+1}) \le d_{\mathrm{fpp}}^\dagger (f, B_{n_{i+1}}^\bullet(\cT_\infty^{(0)})) \le (1+ \epsilon) c_\mathrm{fpp}^\dagger (n_i - n_{i+1})
		\]
		fails for some $f \in \F_{n_i}(\cT_\infty^{(0)})$. The rest of the proof is the same.
\end{proof}

\subsection{Concentration of first-passage percolation distances in the dual}

We may carry out the proof of Theorem~\ref{te:main1} in the same way as the proof of~\cite[Thm. 3]{zbMATH07144469}.

\begin{proof}[Proof of Theorem~\ref{te:main1}]
	Our first step is to adapt Proposition~\ref{pro:protypical848}. Let $f_*$ denote the face to the right of the root-edge of $\cT_n$. Let $o_n$ denote a uniformly selected vertex of $\cT_n$ and let $f_n$ be a face incident to $o_n$. Then for each $\epsilon>0$
	\begin{align}
		\label{eq:aathand}
		\lim_{n \to\infty} \Prb{| d_{\mathrm{fpp}}^\dagger(f_*, f_n) - c_{\mathrm{fpp}}^\dagger d_{\cT_n}(\rho_n, o_n) | > \epsilon n^{1/4} } =0.
	\end{align}
The proof is analogous to the proof of Proposition~\ref{pro:protypical848}, using the same absolute continuity argument via Lemma~\ref{le:finbod9}, but using Prop.~\ref{pro:primal} instead of Proposition~\ref{pro:yoyo}. The only difference is that at the end of the proof where on the event $D_{\beta_j, \gamma_j, n}$  we use Lemma~\ref{le:stateofthemind} to bound $d_{\mathrm{fpp}}^\dagger(f_n, B^\bullet_{\lfloor \alpha_j n^{1/4}\rfloor }(\cT_n))$. 

With Equation~\eqref{eq:aathand} at hand, the proof of Theorem~\ref{te:main1} is analogous to the proof of Theorem~\ref{te:main0}, using Lemma~\ref{le:stateofthemind} to approximate the dual map by sufficiently large number of random faces with respect to $d_{\mathrm{fpp}}^\dagger$.
\end{proof}

A similar statement also holds for the first-passage percolation distance on $\cT_\infty$, compare with~\cite[Thm. 4]{zbMATH07144469}.

\begin{proposition}
	Let $0<\epsilon<1$. Then 
	\[
	\lim_{r \to \infty} \Prb{ \sup_{\substack{u,v \in \V(B_r(\cT_\infty))\\ u\triangleleft f, v\triangleleft g }} \left | c_{\mathrm{fpp}}^\dagger d_{\cT_\infty}(u,v) -  d_{\mathrm{fpp}}^\dagger(f,g)\right | > \epsilon r} = 0
	\]
	and
	\[
		\lim_{r \to \infty} \Prb{B_{(1-\epsilon)r/c_{\mathrm{fpp}}^\dagger}(\cT_\infty) \subset B_r^{\dagger, \mathrm{fpp}}(\cT_\infty) \subset B_{(1+\epsilon)r/c_{\mathrm{fpp}}^\dagger}(\cT_\infty) }  = 1.
	\]
\end{proposition}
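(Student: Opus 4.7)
The plan is to adapt the absolute-continuity argument of Proposition~\ref{pro:fppontinfinity}, replacing Theorem~\ref{te:main0} by Theorem~\ref{te:main1} and the corresponding primal-distance tools by their dual analogues (Corollary~\ref{co:cobetatilde} and Lemma~\ref{le:stateofthemind}). The second assertion follows directly from the first: on the event of uniform approximation, the claimed ball inclusions are immediate. I therefore concentrate on the first assertion.

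For a given $\delta > 0$, I will define $A_r^{(\delta)}$ to be the set of rooted planar maps $M$ with root vertex $\rho$ such that
\[
\mathbb{P}\bigl(|d^\dagger_{\mathrm{fpp}}(f,g) - c_{\mathrm{fpp}}^\dagger d_M(u,v)| \le \epsilon r \text{ for all } u,v \in \V(M),\, u \triangleleft f,\, v \triangleleft g,\, d_M(\rho,u) \vee d_M(\rho,v) \le r \bigr) \ge 1-\delta.
\]
Before invoking absolute continuity, I first argue that there exists an integer $K$ such that, with high probability, all quantities occurring in the event under consideration in the proposition are already determined by $B_{Kr}^\bullet(\cT_\infty)$. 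On the upper-bound side, Corollary~\ref{co:cobetatilde} together with~\eqref{eq:iotabound} and Proposition~\ref{pro:maxdeg}, exactly as used in the proof of Lemma~\ref{le:stateofthemind}, give $d^\dagger_{\mathrm{fpp}}(f,g) = O(r)$ with high probability, uniformly over all face pairs incident to vertices of $B_r(\cT_\infty)$. On the lower-bound side, the assumption $\iota \ge \kappa$ forces every dual path of fpp-weight $O(r)$ to have dual graph length $O(r)$, hence to stay within $B_{Kr}^\bullet(\cT_\infty)$ for $K$ large enough (depending only on $\alpha$ and $\kappa$).

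With $K$ fixed, it suffices to prove $\mathbb{P}(B_{Kr}^\bullet(\cT_\infty) \in A_r^{(\delta)}) \ge 1-\delta$ for large $r$. Mirroring the atypical-cylinder analysis of Proposition~\ref{pro:fppontinfinity}, I would apply~\cite[Thm.~2]{zbMATH06701703} (adapted as in~\cite[Sec.~6]{zbMATH06701703}) to find constants $\alpha_0, \beta_0$ with $\mathbb{P}(B_{Kr}^\bullet(\cT_\infty) \in D_r) \le \delta/2$, where $D_r$ is the set of cylinders $t$ with $N(t) \notin [\alpha_0^{-1} r^4, \alpha_0 r^4]$ or $|\partial^* t| \ge \beta_0 r^2$. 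For $t \notin D_r$, Equations~\eqref{eq:barab1}--\eqref{eq:bruhatord} together with Lemma~\ref{le:eet} give $\mathbb{P}(B_{Kr}^\bullet(\overline{\cT}_{2\alpha_0 r^4}) = t) \ge c' \mathbb{P}(B_{Kr}^\bullet(\cT_\infty) = t)$ for an absolute constant $c' > 0$, so summing yields
\[
\mathbb{P}(B_{Kr}^\bullet(\cT_\infty) \notin A_r^{(\delta)}) \le \delta/2 + (c')^{-1}\, \mathbb{P}(B_{Kr}^\bullet(\overline{\cT}_{2\alpha_0 r^4}) \notin A_r^{(\delta)}),
\]
and the latter tends to zero by Theorem~\ref{te:main1} applied with $n = 2\alpha_0 r^4$ (since then $n^{1/4} \asymp r$).

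The main obstacle is the containment argument of the second paragraph: quantifying that with high probability no dual fpp-geodesic between faces incident to $B_r(\cT_\infty)$ ever exits $B_{Kr}^\bullet(\cT_\infty)$. This requires combining several tools at once --- the downward-path construction of Corollary~\ref{co:cobetatilde}, the one-dimensional deviation bound~\eqref{eq:iotabound}, the maximum-degree estimate of Proposition~\ref{pro:maxdeg}, and the lower bound $\iota \ge \kappa$ --- but once it is in place the remainder of the argument proceeds in parallel with Proposition~\ref{pro:fppontinfinity}.
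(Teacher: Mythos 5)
Your proposal is correct and follows exactly the route the paper indicates: the paper's own proof consists of the single sentence "The proof is by a straight-forward adaption of the proof of Proposition~\ref{pro:fppontinfinity}," and you spell out precisely what that adaptation involves — the dual analogue of the set $A_r^{(\delta)}$, the localization to $B_{Kr}^\bullet(\cT_\infty)$ via Corollary~\ref{co:cobetatilde}, Proposition~\ref{pro:maxdeg}, and the deterministic lower bound $\iota\ge\kappa$, the atypical-cylinder control from \cite[Thm.~2]{zbMATH06701703}, the absolute-continuity comparison via Lemma~\ref{le:eet}, and finally the replacement of Theorem~\ref{te:main0} by Theorem~\ref{te:main1} with $n\asymp r^4$. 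This matches the paper's intended argument.
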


The proof is by a straight-forward adaption of the proof of Proposition~\ref{pro:fppontinfinity}.

\bibliographystyle{abbrv}
\bibliography{bmcubic}

\end{document}